\documentclass[12pt]{amsart}
\usepackage{txfonts}      
\usepackage{amssymb}
\usepackage{eucal}
\usepackage{amsmath}
\usepackage{amscd}
\usepackage{xcolor}
\usepackage{multicol}
\usepackage[all]{xy}           
\usepackage{graphicx}
\usepackage{color}
\usepackage{colordvi}
\usepackage{xspace}
\usepackage{amssymb}
\usepackage{tikz}
\usepackage{makecell}
\usepackage{appendix}
\usepackage{enumerate,enumitem}
\usepackage{amsthm}
\usepackage[thicklines]{cancel}
\usepackage[compress]{cite}
\usepackage{ifpdf}
\ifpdf
\usepackage[colorlinks,final,backref=page,hyperindex]{hyperref}
\else
\usepackage[colorlinks,final,backref=page,hyperindex,hypertex]{hyperref}
\fi

\usepackage[active]{srcltx} 
\newtheorem{thm}{Theorem}[section]
\newtheorem{lem}[thm]{Lemma}
\newtheorem{cor}[thm]{Corollary}
\newtheorem{pro}[thm]{Proposition}
\theoremstyle{definition}
\newtheorem{defi}[thm]{Definition}
\newtheorem{ex}[thm]{Example}
\newtheorem{rmk}[thm]{Remark}

\title[ Noncommutative pre-Poisson bialgebras and relative Rota-Baxter operators] {Noncommutativepre-Poisson bialgebras and relative Rota-Baxter operators}

\author{Hongliang Li}
\address{Department of Mathematics, Zhejiang International Studies University, Hangzhou, 310023} \email{honglli@126.com}

\author{Qinxiu Sun$^*$}
\address{Department of Mathematics, Zhejiang University of Science and Technology, Hangzhou, 310023} \email{qxsun@126.com}

\subjclass[2020]{17A30, 17B63, 17B38, 16T10,17B62 }

\keywords{noncommutative pre-Poisson algebra, noncommutative pre-Poisson Yang-Baxter equation, quasi-triangular noncommutative pre-Poisson bialgebra,
 factorizable noncommutative pre-Poisson bialgebra, relative Rota-Baxter operator}
\begin{document}
\begin{abstract}

In this paper, we develop the bialgebra theory for coherent noncommutative pre-Poisson algebras and 
establish equivalences among matched pairs, Manin triples, the phase space of noncommutative Poisson algebras
 and noncommutative pre-Poisson bialgebras.
The investigation of coboundary noncommutative pre-Poisson bialgebras naturally leads to
 the noncommutative pre-Poisson Yang-Baxter equation (NPP-YBE).
We prove that a symmetric solution of the NPP-YBE gives rise to a (coboundary) noncommutative 
pre-Poisson bialgebra. Moreover, we demonstrate how solutions without the symmetry 
condition can also generate such bialgebras. This motivates the introduction of quasi-triangular and 
factorizable noncommutative pre-Poisson bialgebras.
 In particular, we show that a solution of the NPP-YBE with an invariant skew-symmetric 
 part yields a quasi-triangular noncommutative pre-Poisson bialgebra. 
 Such solutions are further interpreted as relative Rota-Baxter operators with weights.
Finally, we establish a one-to-one correspondence between quadratic Rota-Baxter noncommutative pre-Poisson 
algebras and factorizable noncommutative pre-Poisson bialgebras.

\end{abstract}

\maketitle

\vspace{-1.2cm}

\tableofcontents

\vspace{-1.2cm}

\allowdisplaybreaks

\section{Introduction}

The purpose of this paper is to develop a bialgebra theory for noncommutative pre-Poisson algebras.
The study of coboundary noncommutative pre-Poisson bialgebras naturally gives rise to
 the noncommutative pre-Poisson Yang-Baxter equation (NPP-YBE).
 In particular, we investigate quasi-triangular and factorizable noncommutative pre-Poisson bialgebras,
  which constitute a subclass of coboundary noncommutative pre-Poisson bialgebras. 
Furthermore, we explicitly construct a correspondence linking factorizable noncommutative 
pre-Poisson bialgebras to quadratic Rota-Baxter noncommutative pre-Poisson algebras.  

\subsection{(Noncommutative) Poisson algebras and (noncommutative) pre-Poisson algebras}
A Poisson algebra is an algebraic structure that combines a Lie algebra with a
 commutative associative algebra, linked together by the Leibniz rule. 
 Poisson algebras originated from the study of Hamiltonian mechanics 
 and have since become important in many areas of mathematics and mathematical physics.
  These include Poisson geometry \cite{Va,W}, 
   algebraic geometry \cite{Gk,Po}, 
quantum mechanics \cite{Di}, quantum groups \cite{Dr} and quantization theory \cite{Hu,Ko}. 

Aguiar integrated the structures of Zinbiel algebras \cite{Lo1} and pre-Lie algebras \cite{B3} on the same vector space, 
thereby introducing the notion of a pre-Poisson algebra \cite{A1}. Zinbiel algebras are
Koszul dual to Leibniz algebras, and they are special
dendriform algebras. The anti-commutator of a Zinbiel algebra gives a commutative associative
algebra, and the commutator of a pre-Lie algebra gives a Lie algebra. A pre-Poisson algebra induces a 
Poisson algebra naturally
through the sub-adjacent commutative associative algebra of the Zinbiel algebra and
the sub-adjacent Lie algebra of the pre-Lie algebra. Conversely, a Rota-Baxter operator
 (more generally an $\mathcal{O}$-operator action) on a Poisson algebra gives 
rise to a pre-Poisson algebra. We can summarize these relations by the following diagram:
$$
\xymatrix{
\ar[rr] \mbox{{\bf Zinbiel algebra} + pre-Lie algebra }\ar[d]_{\mbox{sub-adjacent~~}}
                && \mbox{Pre-Poisson algebra}\ar[d]_{\mbox{sub-adjacent~~}}\\
\ar[rr] \mbox{{\bf comm associative algebra} + Lie algebra }\ar@<-1ex>[u]_{\mbox{~~Rota-Baxter ~operator}}
                && \mbox{ Poisson algebra. }\ar@<-1ex>[u]_{\mbox{~~Rota-Baxter~ operator}}}
$$

A noncommutative Poisson algebra, first formally introduced by Xu \cite{Xu}, provides a natural 
framework for various geometric contexts. It extends the classical 
notion of a Poisson algebra by replacing the underlying commutative associative algebra with a general 
(not necessarily commutative) associative algebra, while retaining a compatible Lie algebra
structure linked via the Leibniz rule. This relaxation of commutativity results in significantly 
richer and more complex algebraic behaviors. Noncommutative Poisson algebras have been studied by many authors
from different aspects, see \cite{Ku1, Ku2,Ku3,Yy2,Lbs} and reference therein.
 
\begin{defi}
A {\bf noncommutative Poisson algebra} is a triple $(A,\cdot, [ \ , \ ])$ such that $(A,\cdot)$ is an associative algebra
 (not necessarily commutative), $(A, [ \ , \ ])$ is a Lie algebra and the following Leibniz rule holds:
\begin{equation}\label{Np0}[x,y\cdot z]=[x,y]\cdot z+y\cdot[x,z],\quad \forall~ x,y,z\in A.\end{equation}
It is called {\bf coherent} if, in addition,
\begin{equation}\label{Np00}[x,y\cdot z]+[y,z\cdot x]+[z,x\cdot y]=0,\quad \forall~ x,y,z\in A.\end{equation}
\end{defi}

Since Zinbiel algebras can be viewed as commutative dendriform algebras \cite{Lo2}, 
a natural generalization is to combine dendriform algebras with pre-Lie algebras.
 This leads to the notion of a noncommutative pre-Poisson algebra \cite{Lbs}. 
 The correspondence between Poisson algebras and pre-Poisson algebras is formally analogous to
  that between their noncommutative versions, a relationship illustrated in the following diagram,
$$
\xymatrix{
\mbox{{\bf dendriform algebra} + pre-Lie algebra }\ar[rr] \ar[d]_{\mbox{sub-adjacent~~}}
                && \mbox{noncomm pre-Poisson algebra}\ar[d]_{\mbox{sub-adjacent~~}}\\
\mbox{{\bf associative algebra} + Lie algebra }\ar[rr] \ar@<-1ex>[u]_{\mbox{~~Rota-Baxter ~operator}}
                && \mbox{noncomm Poisson algebra}\ar@<-1ex>[u]_{\mbox{~~Rota-Baxter ~operator}}}
$$

\subsection{Quasi-triangular bialgebras }
A bialgebraic structure comprises an algebra and a coalgebra equipped with certain compatibility conditions. 
 In the early 1980s, Drinfeld established the theory of Lie bialgebras \cite{Dr1},
  which were found to have deep connections with the classical Yang-Baxter equation and classical integrable systems. 
  Subsequently, V. Zhelyabin introduced associative D-bialgebras in \cite {Z1,Z2}. 
  Aguiar later in \cite{A2} studied antisymmetric infinitesimal bialgebras as the associative analogue of Lie bialgebras, 
  which are also equivalent to double constructions of Frobenius algebras \cite{B2}. 
  Following the development of infinitesimal bialgebra theory \cite{A2,B2}, 
  similar bialgebraic frameworks have been extended to various other algebraic structures, 
  such as pre-Lie algebras \cite{B1}, Leibniz algebras \cite{Ts} and
   Novikov algebras \cite{Hbg}.
A Manin triple of Poisson algebras corresponds to a Poisson bialgebra \cite{Nb},
 offering a natural framework for constructing compatible Poisson brackets in integrable systems.
 Unlike in the commutative case, a direct generalization of Poisson bialgebra theory 
 is not straightforward due to fundamental differences in representation theory. In \cite{Lbs}, the authors
 developed the noncommutative Poisson bialgebra theory 
 using quasi-representations and the corresponding cohomology theory \cite{Yy1,Yy2}.
 
 Within the theory of Lie bialgebras, coboundary Lie bialgebras especially quasi-triangular Lie bialgebras,
 which play a fundamental role in mathematical physics. As a specialized subclass of quasi-triangular Lie bialgebras,
  factorizable Lie bialgebras \cite{Ls} provide a crucial connection between classical $r$-matrices and certain factorization problems.
   They exhibit diverse applications in integrable systems, see \cite{Rs, St} and references therein.
  Recently, these results on factorizable and quasi-triangular structures have been successfully 
   extended to antisymmetric infinitesimal bialgebras \cite{Sw}, pre-Lie bialgebras \cite{Wbl} and Leibniz bialgebras \cite{Bls}.
   
 \subsection{ Outline of the paper}

 A bialgebra theory has been developed for the usual (commutative) pre-Poisson algebras,
 so-called pre-Poisson bialgebras \cite{Zls}, in terms of the representation theory of pre-Poisson algebras.
 However, a direct generalization of this framework does not apply to noncommutative pre-Poisson algebras. 
 This limitation arises because the dual of the regular representation of a noncommutative Poisson algebra 
 is generally not a representation, but only a quasi-representation. To overcome this obstacle, 
 we develop a bialgebra theory specifically for coherent noncommutative pre-Poisson algebras. 
 Since every pre-Poisson algebra is in particular a coherent noncommutative pre-Poisson algebra, 
 such a generalization is both natural and well justified.

This paper is organized as follows. In Section 2, we introduce quasi-representations and matched pairs
 for noncommutative pre-Poisson algebras. The notion of a phase space of 
 a noncommutative Poisson algebra is also studied, which later be used to characterize the bialgebra theory.
  Section 3 is devoted to establishing a bialgebra theory for noncommutative pre-Poisson algebras. 
  Investigating the coboundary case leads to the noncommutative pre-Poisson Yang-Baxter equation (NPP-YBE), 
  whose symmetric solutions give rise to noncommutative pre-Poisson bialgebras.
   We further define $\mathcal{O}$-operators on noncommutative pre-Poisson algebras and
    use them to construct symmetric solutions of the NPP-YBE. 
    In Section 4, we explore quasi-triangular and factorizable noncommutative pre-Poisson bialgebras.
     We prove that the double of any noncommutative pre-Poisson bialgebra 
     naturally carries a factorizable structure. Finally, Section 5 presents the notion of quadratic 
     Rota-Baxter noncommutative pre-Poisson algebras and develops a one-to-one correspondence 
     between these and factorizable noncommutative pre-Poisson bialgebras.

 {\bf Notations.} Throughout the paper, $k$ is a field.  All vector spaces and algebras are over $k$. 
 All algebras are finite-dimensional, although many results still hold in the infinite-dimensional case.
 Let $V$ be a vector space with a binary operation $\ast$. Define linear maps
$L_{\ast}, R_{\ast},\mathrm{ad}:V\rightarrow \hbox{End}(V)$ by
 $L_{\ast}(a)b:=a\ast b, \  \ R_{\ast}(a)b:=b\ast a, \ \ \mathrm{ad}(a)b=a\ast b-b\ast a$~ for all$~a, b\in V$.
Assume that
 $r=\sum\limits_{i}a_i\otimes b_i \in V\otimes V$. Put
 \begin{small}
\begin{align*}
r_{12}\ast r_{13}:=\sum_{i,j}a_i\ast a_j\otimes b_i\otimes b_j,\;r_{23}\ast r_{12}:=\sum_{i,j}a_j\otimes a_i\ast b_j\otimes b_i,\;
r_{31}\ast r_{23}:=\sum_{i,j}b_i\otimes a_j\otimes a_i\ast b_j,\\
r_{21}\ast r_{13}:=\sum_{i,j}b_i\ast a_j\otimes a_i\otimes b_j,\;
r_{32}\ast r_{21}:=\sum_{i,j}b_j\otimes b_i\ast a_j\otimes a_i,\;
r_{31}\ast r_{32}:=\sum_{i,j}b_i\otimes b_j\otimes a_i\ast a_j,\\
r_{13}\ast r_{32}:=\sum_{i,j}a_i\otimes b_j\otimes b_i\ast a_j,\;
r_{23}\ast r_{21}:=\sum_{i,j}b_j\otimes a_i\ast a_j\otimes b_i,\;
r_{21}\ast r_{31}:=\sum_{i,j}b_i\ast b_j\otimes a_i\otimes a_j,\\
r_{23}\ast r_{13}:=\sum_{i,j}a_i \otimes a_j \otimes b_i\ast b_j,\;
r_{12}\ast r_{31}:=\sum_{i,j}a_i\ast b_j\otimes b_i\otimes a_j.
\end{align*}\end{small}

\section{(Quasi-) representations and phase spaces of noncommutative Poisson algebras}
In this section, we introduce the notions of (quasi-) representations, matched pairs
 and Manin triples for noncommutative pre-Poisson algebras. 
 We also consider the phase space of noncommutative Poisson algebras.
  Finally, we exhibit the relationship between matched pairs,
   Manin triples of noncommutative pre-Poisson algebras and the phase space of noncommutative Poisson algebras.
We begin by recalling the definitions of pre-Lie algebras \cite{B3}, 
dendriform algebras \cite{Lo2} and noncommutative pre-Poisson algebras \cite{Lbs}.

\subsection{(Quasi-) representations of noncommutative pre-Poisson algebras}
Let $A$ be a vector space with a bilinear product $\ast:A\otimes
A\longrightarrow A$, $(A,\ast )$ is called a pre-Lie algebra if
\begin{equation*}(x\ast y)\ast z-x\ast(y\ast z)=(y\ast x)\ast z-y\ast(x\ast
z),~\forall~x, y, z \in A.\end{equation*}

\begin{defi} A {\bf dendriform algebra} is a vector space $A$ together with two bilinear maps 
$\succ,\prec:A\otimes A\longrightarrow A$ such that the following equalities hold for all $x,y,z\in A$:
  \begin{align*}&
    (x\prec y)\prec z=x\prec (y\succ z+y\prec z),\\&
    (x\succ y)\prec z=x\succ(y\prec z),\\&
    x\succ(y\succ z)=(x\succ y+x\prec y)\succ z.
  \end{align*}
  \end{defi}

\begin{defi}
  A {\bf noncommutative pre-Poisson algebra} is a quadruple $(A,\succ,\prec,\ast)$ such that $(A,\succ,\prec)$ is a dendriform algebra and $(A,\ast)$ is a pre-Lie algebra satisfying the following compatibility conditions:
  \begin{eqnarray}
    \label{Np1}(x\ast y-y\ast x)\succ z&=&x\ast(y\succ z)-y\succ(x\ast z),\\
    \label{Np2} x\prec(y\ast z-z\ast y)&=&y\ast(x\prec z)-(y\ast x)\prec z,\\
    \label{Np3} (x\succ y+x\prec y)\ast z&=&(x\ast z)\prec y+x\succ(y\ast z).
  \end{eqnarray}
  A noncommutative pre-Poisson algebra $(A,\succ,\prec,\ast)$  is called {\bf coherent} if it also satisfies
  \begin{equation}\label{Np4}
    (x\succ y+x\prec y)\ast z=x\ast(y\succ z)+y\ast(z\prec x).
  \end{equation}
\end{defi}

Let $(A,\succ,\prec,\ast)$ be a (coherent) noncommutative pre-Poisson algebra.
Define
  	\begin{equation*}x\cdot y=x\succ y+x\prec y, \ \ \ [x,y]=x\ast y-y\ast x,~\forall ~x,y\in A.\end{equation*} 
  Then $(A,\cdot, [ \ , \ ])$ is a (coherent) noncommutative Poisson algebra, which is called the 
  sub-adjacent noncommutative Poisson algebra of $(A,\succ,\prec,\ast)$.
  
\begin{ex} \label{Ae}  Let $A$ be a 2-dimensional vector space with a basis $\{e_1,e_2\}$.
Define bilinear maps $\succ,\prec,\ast:A\otimes A \longrightarrow A$ respectively by
 (only non-zero multiplications are listed):
\begin{align*} &e_1\succ e_2=e_2,\ \ \ e_1 \succ e_1=e_1, \ \ \ e_1 \prec e_2=-e_2,
 \\& e_2\prec e_1=e_2 , \ \ \  e_1\ast e_1=e_1, \ \ \ e_2\ast e_1=e_2. \end{align*}  
  By a direct computation, $(A,\succ,\prec,\ast)$ is a coherent noncommutative pre-Poisson algebra.
\end{ex}

\begin{ex} \label{Ae1}
Let $A$ be a 3-dimensional vector space with a basis $\{e_1,e_2,e_3\}$.
Define binary products $\succ,\prec,\ast:A\otimes A \longrightarrow A$ respectively by
 (only non-zero multiplications are listed):
 \begin{align*}
e_2\succ e_2=e_3, \ \ \  e_2\prec e_2=-e_3,\ \ \
e_1\ast e_2=e_3, \ \ \ e_2\ast e_2=e_1.
	\end{align*}
By a direct computation, $(A,\succ,\prec,\ast)$ is a coherent noncommutative pre-Poisson algebra.
\end{ex}

We now turn to the study of quasi-representations and representations of noncommutative pre-Poisson algebras.

\begin{defi} 
Let $(A,\succ,\prec,\ast)$ be a noncommutative pre-Poisson algebra,
$V$ a vector space and $l_{\succ},r_{\succ},l_{\prec},r_{\prec},l_{\ast},r_{\ast}: A
		\longrightarrow \hbox{End} (V)$ be linear maps. 
\begin{enumerate}
	\item
		$(V,l_{\succ},r_{\succ},l_{\prec},r_{\prec},l_{\ast},r_{\ast})$ is called a
		{\bf quasi-representation} of $(A,\succ,\prec,\ast)$ if $(V,l_{\succ},r_{\succ},l_{\prec},r_{\prec})$ is
		a representation of the dendriform algebra $(A,\succ,\prec)$, $(V,l_{\ast},r_{\ast})$ is a representation 
of the pre-Lie algebra
		$(A,\ast)$ and they satisfy the following conditions for all $x,y\in A$:
		\begin{align}&\label{r1}l_{\succ}(x\ast y-y\ast x)=l_{\ast}(x)l_{\succ}(y)-l_{\succ}(y)l_{\ast}(x),\\&
		\label{r2}r_{\succ}(y)(l_{\ast}(x)-r_{\ast}(x))=l_{\ast}(x)r_{\succ}(y)-r_{\succ}(x\ast y),\\&
		\label{r3}l_{\prec}(x)(l_{\ast}(y)-r_{\ast}(y))=l_{\ast}(y)l_{\prec}(x)-l_{\prec}(y\ast x),\\&
		\label{r4}r_{\prec}(y\ast x-x\ast y)=l_{\ast}(y)r_{\prec}(x)-r_{\prec}(x)l_{\ast}(y),
	\\&\label{r5}r_{\ast}(y)(l_{\succ}(x)+l_{\prec}(x))=l_{\prec}(x\ast y)+l_{\succ}(x)r_{\ast}(y),
\\&\label{r6}r_{\ast}(x)(r_{\succ}(y)+r_{\prec}(y))=r_{\prec}(y)r_{\ast} (x)+r_{\succ}(y\ast x).\end{align}
	\item  A quasi-representation $(V,l_{\succ},r_{\succ},l_{\prec},r_{\prec},l_{\ast},r_{\ast})$ is called a
{\bf representation} of $(A,\succ,\prec,\ast)$ if the following conditions are satisfied:
\begin{align}&\label{r7}r_{\succ}(x)(l_{\ast}(y)-r_{\ast}(y))=l_{\succ}(y)r_{\ast}(x)-r_{\ast}(y\succ x),\\&
		\label{r8}l_{\prec}(x)(l_{\ast}(y)-r_{\ast}(y))=r_{\prec}(y)r_{\ast}(x)-r_{\ast}(x\prec y),\\&
		\label{r9}l_{\ast}(x\succ y+x\prec y)=r_{\prec}(y)l_{\ast}(x)+l_{\succ}(x)l_{\ast}(y).\end{align}
\end{enumerate}
	 \end{defi}
By Eqs.~(\ref{r7})-(\ref{r9}), we have
\begin{align}&\label{r70}r_{\cdot}(x)(l_{\ast}(y)-r_{\ast}(y))
+l_{\cdot}(y)(l_{\ast}(x)-r_{\ast}(x))=l_{\ast}(y\cdot x)-r_{\ast}(y\cdot x),
\\&\label{r80}r_{\cdot}(x)(l_{\ast}(y)-r_{\ast}(y))+(l_{\ast}(y)-r_{\ast}(y))r_{\cdot}(x)=r_{\cdot}(y\ast x-x\ast y).\end{align}

Let $A$ and $V$ be vector spaces. For
a linear map $f: A \longrightarrow \hbox{End} (V)$, define a linear
map $f^{*}: A \longrightarrow \hbox{End} (V^{*})$ by $\langle
f^{*}(x)u^{*},v\rangle=\langle u^{*},f(x)v\rangle$ for all $x\in A,
u^{*}\in V^{*}, v\in V$, where $\langle \ , \ \rangle$ is the usual
pairing between $V$ and $V^{*}$.

\begin{pro} \label{Dr} 
   Let $(A,\succ,\prec,\ast)$ be a noncommutative pre-Poisson algebra.
   \begin{enumerate}
 \item If $(V,l_{\succ},r_{\succ},l_{\prec},r_{\prec},l_{\ast},r_{\ast})$ is a quasi-representation of $(A,\succ,\prec,\ast)$, 
 then $(V^*,r_{\cdot}^{*},-l_{\prec}^{*},-r_{\succ}^{*},$ $l_{\cdot}^{*},r_{\ast}^{*}-l_{\ast}^{*},r_{\ast}^{*})$ is also a quasi-representation of $(A,\succ,\prec,\ast)$.

 \item If $(V,l_{\succ},r_{\succ},l_{\prec},r_{\prec},l_{\ast},r_{\ast})$ is a quasi-representation and satisfies
    \begin{align}&\label{r10}l_{\ast}(x)(x\succ y+x\prec y)=l_{\ast}(x)l_{\succ}(y)+l_{\ast}(y)r_{\prec}(x),\\&
		\label{r11}r_{\ast}(y)(l_{\succ}(x)+l_{\prec}(x))=l_{\ast}(x)r_{\succ}(y)+
r_{\ast} (y\prec x),\\&
		\label{r12}r_{\ast}(x)(r_{\succ}(y)+r_{\prec}(y))=r_{\ast} (y\succ x)+l_{\ast}(y)l_{\prec}(x)),\end{align}
   then $(V^*,r_{\cdot}^{*},-l_{\prec}^{*},-r_{\succ}^{*},l_{\cdot}^{*},r_{\ast}^{*}-l_{\ast}^{*},r_{\ast}^{*})$ is a representation of $(A,\succ,\prec,\ast)$.
    \item If $(V,l_{\succ},r_{\succ},l_{\prec},r_{\prec},l_{\ast},r_{\ast})$ is a quasi-representation of $(A,\succ,\prec,\ast)$,
    then both $(V,l_{\succ},r_{\prec},l_{\ast})$ 
   and $(V,l_{\cdot},r_{\cdot},r_{\ast}-l_{\ast},r_{\ast})$
     are quasi-representations of the sub-adjacent 
    noncommutative Poisson algebra $(A,\cdot,[ \ , \ ])$,
    where $r_{\cdot}=r_{\succ}+r_{\prec},~l_{\cdot}=l_{\succ}+l_{\prec}$.
   \end{enumerate}
\end{pro}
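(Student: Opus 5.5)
The plan is to dualize each defining identity. Two standard facts are taken as known: the dual of a representation of a dendriform algebra, $(V^*,r_\cdot^*,-l_\prec^*,-r_\succ^*,l_\cdot^*)$, is again a representation of $(A,\succ,\prec)$; and the dual of a representation of a pre-Lie algebra, $(V^*,l_\ast^*-r_\ast^*,-r_\ast^*)$, is again a representation. In our statement the pre-Lie part is written as $(r_\ast^*-l_\ast^*,r_\ast^*)$, so I first check that this sign convention also gives a representation of $(A,\ast)$. Concretely, I verify the two pre-Lie representation axioms directly by pairing with $v\in V$. The key tool is the rule $\langle f^*(x)g^*(y)u^*,v\rangle=\langle u^*,g(y)f(x)v\rangle$: dualizing reverses the order of composition and leaves linear dependence on $A$ unchanged. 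With this, each axiom for the dual triple becomes an identity for $(l_\ast,r_\ast)$ that is a linear combination of the original axioms.

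For part (1) I then verify the six compatibility conditions \eqref{r1}--\eqref{r6} for the dual data
\[
\tilde l_\succ=r_\cdot^*,\quad \tilde r_\succ=-l_\prec^*,\quad \tilde l_\prec=-r_\succ^*,\quad \tilde r_\prec=l_\cdot^*,\quad \tilde l_\ast=r_\ast^*-l_\ast^*,\quad \tilde r_\ast=r_\ast^*.
\]
For each one I substitute, transpose, and obtain an operator identity on $V$. The expected matchings are as follows:
\begin{itemize}
\item[(a)] \eqref{r1} for the dual (with $\tilde l_\succ=r_\cdot^*$) becomes $r_\cdot([x,y])=r_\cdot(y)(r_\ast-l_\ast)(x)-(r_\ast-l_\ast)(x)r_\cdot(y)$. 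I expect this to follow by adding \eqref{r2} and \eqref{r4}.
\item[(b)] \eqref{r2} and \eqref{r3} for the dual should swap into each other, possibly combined with \eqref{r1}.
\item[(c)] \eqref{r5} and \eqref{r6} for the dual involve $\tilde r_\ast=r_\ast^*$ and should be governed by \eqref{r5} and \eqref{r6} together with the dendriform axioms.
\end{itemize}
I will work through these term by term, using the dendriform representation identities to rearrange sums such as $l_\succ+l_\prec$.

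For part (2), note that a quasi-representation satisfying \eqref{r10}--\eqref{r12} is already known, by part (1), to have a dual that is a quasi-representation. It remains to check \eqref{r7}--\eqref{r9} for the dual. Transposing these three identities should produce exactly \eqref{r12}, \eqref{r11} and \eqref{r10} respectively, up to rearrangement using \eqref{r1}--\eqref{r6}. For example, \eqref{r9} for the dual reads $(r_\ast^*-l_\ast^*)(x\cdot y)=l_\cdot^*(y)(r_\ast^*-l_\ast^*)(x)+r_\cdot^*(x)(r_\ast^*-l_\ast^*)(y)$. Transposed, it gives an identity in $l_\ast$, $r_\ast$, $l_\cdot$, $r_\cdot$. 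Its $r_\ast$ part comes from \eqref{r5} and \eqref{r6}, and its $l_\ast$ part is \eqref{r10} read as $l_\ast(x\cdot y)=l_\ast(x)l_\succ(y)+l_\ast(y)r_\prec(x)$, where I interpret the typo in \eqref{r10} as $l_\ast(x\cdot y)$.

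For part (3) I rely on the quasi-representation notion for noncommutative Poisson algebras from \cite{Lbs,Yy1}: an associative bimodule, a Lie module, and a Leibniz-type compatibility. For $(V,l_\succ,r_\prec,l_\ast)$, I use the following:
\begin{itemize}
\item[(a)] The bimodule property of $(l_\succ,r_\prec)$ over $(A,\cdot)$ follows from the dendriform representation axioms.
\item[(b)] The fact that $l_\ast$ is a Lie module of $[\,,]$ is part of the pre-Lie representation.
\item[(c)] The Leibniz compatibilities are \eqref{r1} and \eqref{r4}.
\end{itemize}
For $(V,l_\cdot,r_\cdot,r_\ast-l_\ast)$, the structure is the sum representation, with $r_\ast-l_\ast$ a Lie module. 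Its compatibilities are obtained by summing \eqref{r1}--\eqref{r4} appropriately, using \eqref{r5} and \eqref{r6} to convert the terms involving $r_\ast$.

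The main obstacle is the bookkeeping in part (1), especially conditions \eqref{r5} and \eqref{r6} for the dual. There the roles of left and right actions get interchanged, and one must combine several original identities with the dendriform axioms. I would first tabulate the transpose of every term and then match these against linear combinations of \eqref{r1}--\eqref{r6}.
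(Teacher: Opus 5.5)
Your strategy is the paper's: quote the dendriform and pre-Lie dual results, transpose each of \eqref{r1}--\eqref{r9} for the dual data, and match the result against the hypotheses. However, two of the steps you predict fail as written.

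\textbf{Part (2), the dual \eqref{r9}.} Its transpose is $(r_\ast-l_\ast)(x\cdot y)=(r_\ast-l_\ast)(x)l_\cdot(y)+(r_\ast-l_\ast)(y)r_\cdot(x)$. The $r_\ast$-terms here do not come from \eqref{r5}, \eqref{r6}. Those identities only evaluate $l_\prec$ and $r_\succ$ on $\ast$-products, and $r_\ast(x\cdot y)$ occurs in no quasi-representation axiom. So it can only be eliminated with \eqref{r11} and \eqref{r12}. With $x,y$ interchanged these read $r_\ast(x)l_\cdot(y)=l_\ast(y)r_\succ(x)+r_\ast(x\prec y)$ and $r_\ast(y)r_\cdot(x)=r_\ast(x\succ y)+l_\ast(x)l_\prec(y)$. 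Substituting them reduces the identity exactly to your corrected \eqref{r10}, $l_\ast(x\cdot y)=l_\ast(x)l_\succ(y)+l_\ast(y)r_\prec(x)$. Thus the dual \eqref{r9} uses all three of \eqref{r10}--\eqref{r12}; the paper's one-line proof cites only \eqref{r11} and \eqref{r12}. By contrast, the dual \eqref{r7} and \eqref{r8} are literally \eqref{r12} and \eqref{r11}, with no rearrangement via \eqref{r1}--\eqref{r6}.

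\textbf{Part (3), the Lie module.} $r_\ast-l_\ast$ is not a Lie module. Since $l_\ast-r_\ast$ is one, $\rho=r_\ast-l_\ast$ satisfies $\rho([x,y])=-[\rho(x),\rho(y)]$. For the regular representation of Example \ref{Ae} this $\rho$ is $-\mathrm{ad}$. Being a module would force $\mathrm{ad}([e_1,e_2])=-\mathrm{ad}(e_2)$ to vanish, yet $\mathrm{ad}(e_2)e_1=e_2$. The garbled tuple in (c) has to be read as $(V,l_\cdot,r_\cdot,l_\ast-r_\ast)$, which is $(A,L_\cdot,R_\cdot,\mathrm{ad})$ in the regular case. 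With this sign:
\begin{itemize}
\item $l_\cdot([x,y])=[(l_\ast-r_\ast)(x),l_\cdot(y)]$ follows from \eqref{r1}, \eqref{r3}, \eqref{r5};
\item $r_\cdot([x,y])=[(l_\ast-r_\ast)(x),r_\cdot(y)]$ follows from \eqref{r2}, \eqref{r4}, \eqref{r6}.
\end{itemize}
These are exactly the transposes of the dual \eqref{r4} and the dual \eqref{r1} from part (1).

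\textbf{Part (1) bookkeeping.} The previous paragraph also corrects your matchings here. The dual \eqref{r1} needs \eqref{r6} in addition to \eqref{r2}, \eqref{r4}, to supply $r_\succ(y\ast x)$. The dual \eqref{r4}, which you did not list, needs \eqref{r1}, \eqref{r3}, \eqref{r5}. The dual \eqref{r2}, \eqref{r3}, \eqref{r5}, \eqref{r6} are just \eqref{r3}, \eqref{r2}, \eqref{r6}, \eqref{r5} after renaming $x\leftrightarrow y$. So no dendriform axioms enter there, and the obstacle you anticipate does not arise.

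\textbf{Sign convention.} With the paper's convention $\langle f^*(x)u^*,v\rangle=\langle u^*,f(x)v\rangle$, only $(r_\ast^*-l_\ast^*,r_\ast^*)$ is the dual pre-Lie representation. The form $(l_\ast^*-r_\ast^*,-r_\ast^*)$ belongs to the convention with a minus sign in $f^*$ and is not also a representation here. Your planned direct check is therefore necessary, not a formality.
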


\begin{proof} (a) By Proposition 3.2.2 \cite{B2} and Proposition 3.3 \cite{B1}, 
we know that $(V^*,r_{\cdot}^{*},-l_{\prec}^{*},-r_{\succ}^{*},l_{\cdot}^{*})$ is a
representation of $(A,\succ,\prec)$ and
$(V^{*},r_{\ast}^{*}-l_{\ast}^{*},r_{\ast}^{*})$ is a
representation of $(A,\ast)$. It therefore remains to verify that Eqs.~(\ref{r1})–(\ref{r6}) hold for the tuple
 $(V^*,r_{\cdot}^{*},-l_{\prec}^{*},-r_{\succ}^{*},l_{\cdot}^{*},r_{\ast}^{*}-l_{\ast}^{*},r_{\ast}^{*})$.
Indeed, Eq.~(\ref{r1}) follows from Eqs.~(\ref{r1}), (\ref{r4}) and (\ref{r6}); Eq.~(\ref{r2}) follows from Eq.~(\ref{r3});
Eq.~(\ref{r3}) follows from Eq.~(\ref{r2}); Eq.~(\ref{r4}) follows from Eqs.~(\ref{r1}) and (\ref{r3});
Eq.~(\ref{r6}) follows from Eq.~(\ref{r5}); Eq.~(\ref{r5}) follows from Eq.~(\ref{r6}).

(b) By Eq.~(\ref{r12}), we get that Eq.~(\ref{r7}) holds; by Eq.~(\ref{r11}), we get that Eq.~(\ref{r8}) holds;
by Eqs.~(\ref{r11}) and (\ref{r12}), we get that Eq.~(\ref{r9}) holds.

(c) It can routinely be checked.
\end{proof}

\begin{ex} \label {Cr} Let $(A,\succ,\prec,\ast)$ be a noncommutative pre-Poisson algebra.
Then  \begin{enumerate}
 \item $(A,L_{\succ}, R_{\succ},L_{\prec}, R_{\prec},L_{\ast}, R_{\ast})$ is a
representation of $(A,\succ,\prec,\ast)$, which is called the {\bf regular
representation} of $(A,\succ,\prec,\ast)$.
 \item $(A^{*},R_{\cdot}^{*},-L_{\prec}^{*},-R_{\succ}^{*},L_{\cdot}^{*},
-\mathrm{ad}^{*},R^{*}_{\ast})$ is a 
quasi-representation of $(A,\succ,\prec,\ast)$.
 \item $(A^{*},R_{\cdot}^{*},-L_{\prec}^{*},-R_{\succ}^{*},L_{\cdot}^{*},
 -\mathrm{ad}^{*},R^{*}_{\ast})$ is a
representation of $(A,\succ,\prec,\ast)$ if and only if $(A,\succ,\prec,\ast)$ is coherent.
 \item $(A, L_{\succ},R_{\prec},L_{\ast})$ is a representation of the sub-adjacent 
    noncommutative Poisson algebra $(A,\cdot,[ \ , \ ])$
 and $(A^{*},R_{\prec}^{*}, L_{\succ}^{*},-L_{\ast}^{*})$ is a quasi-representations of $(A,\cdot,[ \ , \ ])$.
\item  
 $(A^{*},R_{\prec}^{*}, L_{\succ}^{*},-L_{\ast}^{*})$ is a representation of the sub-adjacent 
    noncommutative Poisson algebra $(A,\cdot,[ \ , \ ])$ if  $(A,\succ,\prec,\ast)$ is coherent.
 \end{enumerate}
\end{ex}

Let us recall the notion of matched pairs of noncommutative Poisson algebras, as introduced in \cite{Lbs}.

\begin{pro} \label{Df}
Let $(P_1,\cdot_1,[ \ , \ ]_{1})$ and $(P_2,\cdot_2,[ \ , \ ]_{2})$ be two noncommutative Poisson algebras. Assume that 
 there are linear maps $l_{1},r_{1},\rho_1:P_1\longrightarrow \hbox{End} (P_2)$ and $l_{2},r_{2},\rho_2:P_2\longrightarrow\hbox{End} (P_1)$.
 such that $(P_1,P_2,l_{1},r_{1},l_{2},r_{2})$ is a matched pair of associative algebras and
  $(P_1,P_2,\rho_1,\rho_2)$ is a matched pair of Lie algebras. Moreover,
  the following equalities hold for all $x,y\in P_1,a,b\in P_2$, 
\begin{eqnarray}&&
\label{MP1} \rho_1(x)(a\cdot_2 b)=(\rho_{1}(x)a)\cdot_2 b+a\cdot_2 \rho_{1}(x)b-l_{1}(\rho_2(a)x)b-r_{1}(\rho_2(b)x)a,\\&&
 \label{MP2} l_{1}(x)[a,b]_{2}=[a,l_{1}(x)b]_{2}-\rho_{1}(r_{2}(b) x)a-l_{1}(\rho_2(a)x)b+(\rho_{1}(x)a)\cdot_2 b,\\&&
\label{MP3}  \rho_2(a)(x\cdot_1 y)=(\rho_2(a)x)\cdot_1 y+x\cdot_1 \rho_2(a)y-l_{2}(\rho_{1}(x)a)y-r_{2}(\rho_{1}(y)a)x,\\&&
\label{MP4} l_{2}(a)[x,y]_{1}=[x,l_{2}(a) y]_{1}-\rho_2(r_{1}(y)a)x-l_{2}(\rho_1(x)a)y+(\rho_2(a)x)\cdot_1 y.
\end{eqnarray}
Define two binary operations $\cdot$ and $[ \ , \ ]$ on $P_1\oplus P_2$ by
\begin{align*}&(x+a)\cdot(y+b)=x\cdot_{1}y+l_{\cdot_2}(a)y+r_{\cdot_2}(b)x+a\cdot_{2}b+l_{\cdot_1}(x)b+r_{\cdot_1}(y)a,\\&
[x+a,y+b]=[x,y]_{1}+\rho_2(a)y-\rho_2(b)x+[a,b]_{2}+\rho_1(x)b-\rho_1(y)a.
\end{align*}
Then $(P_1\oplus P_2,\cdot,[ \ , \ ])$ is a noncommutative Poisson algebra.
Denote this noncommutative Poisson algebra by $A_1\bowtie A_2$ and
$(P_{1},P_{2},l_{1},r_{1},\rho_1,l_{2},r_{2},\rho_2)$
satisfying the above conditions is called a {\bf matched pair of
noncommutative Poisson algebras}. Conversely, any noncommutative Poisson algebra that can be decomposed into a
direct sum of two noncommutative Poisson subalgebras is obtained from a matched pair of noncommutative Poisson algebras.
\end{pro}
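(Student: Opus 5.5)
The plan is to verify directly that the two operations on $P_1\oplus P_2$ satisfy the three defining axioms of a noncommutative Poisson algebra. Conversely, we will show that any decomposition into two subalgebras produces a matched pair.

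First, since $(P_1,P_2,l_1,r_1,l_2,r_2)$ is a matched pair of associative algebras, the standard result on matched pairs of associative algebras gives that $(P_1\oplus P_2,\cdot)$ is associative. Likewise, since $(P_1,P_2,\rho_1,\rho_2)$ is a matched pair of Lie algebras, the Lie matched pair theory gives that $(P_1\oplus P_2,[\ ,\ ])$ is a Lie algebra. Hence only the Leibniz rule (\ref{Np0}) needs checking.

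The rule $[X,Y\cdot Z]=[X,Y]\cdot Z+Y\cdot[X,Z]$ is trilinear in $X,Y,Z\in P_1\oplus P_2$. It therefore suffices to check it when each argument lies in $P_1$ or in $P_2$, which gives eight cases. Each case is split into its $P_1$- and $P_2$-components.
\begin{itemize}
\item The cases $(x,y,z)$ and $(a,b,c)$ reduce to the Leibniz rule in $P_1$ and in $P_2$.
\item For $(x,a,b)$, the $P_2$-component of $[x,a\cdot_2 b]$ is $-\rho_1(\cdot)$ of a product. Comparing with $[x,a]\cdot b+a\cdot[x,b]$ gives exactly (\ref{MP1}). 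The $P_1$-component reduces to the requirement that $\rho_2$ is compatible with $l_2,r_2$, and should follow from (\ref{MP3})–(\ref{MP4}) after symmetrizing.
\item The mixed cases $(a,x,b)$ and $(a,b,x)$ produce terms $l_1(x)[a,b]_2$ and $r_1(x)[a,b]_2$. These are matched by (\ref{MP2}). For the $r_1$ version we plan to use skew-symmetry of the bracket: write $[a,b\cdot x]$ in terms of $[x,\ \cdot\ ]$ and combine (\ref{MP2}) with (\ref{MP1}).
\item The remaining three cases are the mirror images with the roles of $P_1$ and $P_2$ exchanged, and they use (\ref{MP3}) and (\ref{MP4}).
\end{itemize}

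The main obstacle is bookkeeping in the mixed cases. There, only one ``left'' form of each compatibility is listed, namely (\ref{MP2}) with $l_1$. The ``right'' forms involving $r_1,r_2$ must be derived by combining the listed identities with the matched-pair identities of the associative and Lie parts. We would first check whether the right forms follow formally from the others. If they do not, we would, as in \cite{Lbs}, read them off from the component comparison; in that case all four listed identities together with the given matched-pair conditions are exactly what the cases require.

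For the converse, suppose $A=P_1\oplus P_2$ with both $P_i$ noncommutative Poisson subalgebras. We define $l_1(x)a,\ r_1(x)a,\ \rho_1(x)a$ as the $P_2$-components of $x\cdot a$, $a\cdot x$ and $[x,a]$, and $l_2,r_2,\rho_2$ symmetrically via the $P_1$-components. The classical converses then give the associative and Lie matched pairs. Projecting the Leibniz rule of $A$ applied to the appropriate mixed triples onto $P_2$ and $P_1$ yields (\ref{MP1})–(\ref{MP4}).
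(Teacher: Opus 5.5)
The paper itself gives no proof of this proposition; it only quotes \cite{Lbs}. Your strategy is the natural one: get associativity and the Jacobi identity from the two matched-pair hypotheses, then check the Leibniz rule componentwise on the eight type-cases. The gap is in the mixed cases, and it cannot be closed from the stated hypotheses.

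The six mixed cases produce twelve component identities, and only four of them are assumed:
\eqref{MP1} and \eqref{MP2} are the $P_2$-components of $(x,a,b)$ and $(a,x,b)$; \eqref{MP3} and \eqref{MP4} are the $P_1$-components of $(a,x,y)$ and $(x,a,y)$. The other eight are as follows.
The $P_2$-components of $(x,y,a)$, $(x,a,y)$, $(a,x,y)$ are $l_1([x,y]_1)=[\rho_1(x),l_1(y)]$, $r_1([x,y]_1)=[\rho_1(x),r_1(y)]$ and $\rho_1(x\cdot_1y)=r_1(y)\rho_1(x)+l_1(x)\rho_1(y)$. Together these say that $(P_2,l_1,r_1,\rho_1)$ is a representation of $P_1$.
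The $P_1$-components of $(a,b,x)$, $(a,x,b)$, $(x,a,b)$ are the mirror conditions for $(P_1,l_2,r_2,\rho_2)$.
The $P_2$-component of $(a,b,x)$ is
\[
r_1(x)[a,b]_2=[a,r_1(x)b]_2-\rho_1(l_2(b)x)a-r_1(\rho_2(a)x)b+b\cdot_2\rho_1(x)a .
\]
Its mirror, an identity for $r_2(a)[x,y]_1$, is the $P_1$-component of $(x,y,a)$.

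None of these eight follows from what is assumed, so the two reductions you sketch fail.

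Your claim that the $P_1$-component of $(x,a,b)$, namely $\rho_2(a\cdot_2b)=r_2(b)\rho_2(a)+l_2(a)\rho_2(b)$, follows from \eqref{MP3}--\eqref{MP4} is false. Take $P_1=kw$ with zero operations and $P_2=ke'$ with $e'\cdot_2e'=e'$ and zero bracket. Set $l_2(e')=r_2(e')=\rho_2(e')=\mathrm{id}$ and $l_1=r_1=\rho_1=0$. Every hypothesis holds, yet $[w,e'\cdot e']=-w$ while $[w,e']\cdot e'+e'\cdot[w,e']=-2w$.

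Likewise, no skew-symmetry trick extracts the $r_1$-identity from \eqref{MP1}--\eqref{MP2}. Take $P_1=ke$ with $e\cdot_1e=e$ and zero bracket, and $P_2=\mathrm{span}\{u,v\}$ with zero product and $[u,v]_2=v$. Let $r_1(e)$ be the projection onto $ku$ along $kv$ and all other maps zero. All hypotheses hold, and even the representation conditions hold, but $[v,u\cdot e]=-v$ whereas $[v,u]\cdot e+u\cdot[v,e]=0$.

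So the proposition as literally stated is false. Your fallback of reading the missing identities off the component comparison is really adding hypotheses. Your closing claim, that the four listed identities plus the associative and Lie matched-pair conditions are exactly what the cases require, is incorrect.

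The statement needs two additions: that $(P_2,l_1,r_1,\rho_1)$ and $(P_1,l_2,r_2,\rho_2)$ are representations of the noncommutative Poisson algebras $P_1$ and $P_2$ (compare Remark~\ref{La} and Proposition~\ref{Mp}, where such conditions are imposed), and the two $r$-versions above. With these, your twelve component computations match the hypotheses one-for-one and the forward direction closes.

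Your converse argument is fine as written. Because it projects the Leibniz rule onto all twelve identities, it also proves the converse of the corrected statement.
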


\begin{rmk} \label{La} \begin{enumerate}
\item If the linear maps
 $l_{2},r_{2},\rho_2: P_{2}\rightarrow \text{End}(P_1)$ are trivial, then the matched pair $(P_{1},P_{2},l_{1},r_{1},\rho_1,l_{2},r_{2},\rho_2)$
reduces to an A-noncommutative Poisson algebra. 
For the completeness, we list it in detail. Let $(P_1,\cdot_1,[ \ , \ ]_{1})$ and
$(P_2,\cdot_2,[ \ , \ ]_{2})$ be two noncommutative Poisson algebras. Assume that there are linear maps
$l_{1},r_{1},\rho_1:P_1\longrightarrow \hbox{End}(P_2)$
 such that
$(P_2,l_{1},r_{1},\rho_1)$ is a representation of  $(P_1,\cdot_1,[ \ , \ ]_{1})$ and
the following compatible conditions hold for all $x,y\in P_1,a,b\in P_2$:
\begin{small}
	\begin{align*}&l_1(x)(a\cdot_2 b)=(l_{1}(x)a)\cdot_2 b, \ \ \  \ \ r_1(x)(a\cdot_2 b)=a\cdot_2 (r_{1}(x)b),
\\&(r_{1}(x)a)\cdot_2 b=a\cdot_2 (l_{1}(x)b), \ \ \ \rho_1(x)[a,b]_2=[\rho_1(x)a,b]_2+[a,\rho_1(x)b]_2,
	\\& \rho_1(x)(a\cdot_2 b)=(\rho_{1}(x)a)\cdot_2 b+a\cdot_2 \rho_{1}(x)b,\ \ \ 
 l_{1}(x)[a,b]_{2}=[a,l_{1}(x)b]_{2}+(\rho_{1}(x)a)\cdot_2 b.
\end{align*}
\end{small}
 Then $(P_{2},l_{1},r_{1},\rho_1)$ is called
  an A-noncommutative Poisson algebra.
  \item If the linear maps
 $l_{2},r_{2},\rho_2: P_{2}\rightarrow \text{End}(P_1)$ and $(\cdot_2,[ \ , \ ]_{2})$ are trivial, 
 then the matched pair 
reduces to the semidirect product. Denote it simply by $A_{1}\ltimes A_2$.
  \end{enumerate}
\end{rmk}

\begin{pro}\label{Mp}
	 Let $(A_1,\succ_1,\prec_1,\ast_1)$ and $(A_2,\succ_2,\prec_2,\ast_2)$ be two noncommutative pre-Poisson algebras. Suppose that there are linear maps
$l_{\succ_1},r_{\succ_1},l_{\prec_1},r_{\prec_1},l_{\ast_1},r_{\ast_1}:A_1\longrightarrow
\hbox{End} (A_2)$ and
$l_{\succ_2},r_{\succ_2},l_{\prec_2},r_{\prec_2},l_{\ast_2},r_{\ast_2}:A_2\longrightarrow
\hbox{End} (A_1)$.
Define operations $\succ,\prec,\ast$ on $A_1\oplus A_2$ by
\begin{align}
&\label{ppmp eq1.1}(x+a)\succ
(y+b)=x{\succ_1}y+l_{\succ_2}(a)y+r_{\succ_2}(b)x+a{\succ_2}b+l_{\succ_1}(x)b+r_{\succ_1}(y)a,\\
&\label{ppmp eq1.2}(x+a)\prec
(y+b)=x{\prec_1}y+l_{\prec_2}(a)y+r_{\prec_2}(b)x+a{\prec_2}b+l_{\prec_1}(x)b+r_{\prec_1}(y)a,\\
&\label{ppmp eq1.3}(x+a)\ast
(y+b)=x{\ast_1}y+l_{\ast_2}(a)y+r_{\ast_2}(b)x+a{\ast_2}b+l_{\ast_1}(x)b+r_{\ast_1}(y)a,
\end{align}
for all $~x,y\in A_1,a,b\in A_2$.
Then $(A_{1}\oplus A_{2},\succ,\prec,\ast)$ is a noncommutative pre-Poisson algebra 
if and only if  the following conditions hold:
\begin{enumerate}
	\item $(A_2,l_{\succ_1},r_{\succ_1},l_{\prec_1},r_{\prec_1},l_{\ast_1},r_{\ast_1})$ is a representation of $(A_1,\succ_1,\prec_1,\ast_1)$.
	\item $(A_1,l_{\succ_2},r_{\succ_2},l_{\prec_2},r_{\prec_2},l_{\ast_2},r_{\ast_2})$ is a representation of $(A_2,\succ_2,\prec_2,\ast_2)$.
	\item $(A_1,A_2, l_{\succ_1},r_{\succ_1},l_{\prec_1},r_{\prec_1},l_{\succ_2},r_{\succ_2},l_{\prec_2},r_{\prec_2})$ 
is a matched pair of dendriform algebras.
	\item $(A_1,A_2,l_{\ast_1},r_{\ast_1},l_{\ast_2},r_{\ast_2})$ is a matched pair of pre-Lie algebras.
	\item The following compatible conditions hold for all $x,y\in A_1,a,b\in A_2$:
\begin{small}
	\begin{align}&\label{ppmp eq1.4}r_{\succ_2}(a)[x, y]_1=x\ast_{1}
		r_{\succ_2}(a)y+r_{\ast_2}(l_{\succ_1}(y)a)x-y\succ_{1}(r_{\ast_2}(a)x)-r_{\succ_2}(l_{\ast_1}(x)a)y,
\\&\label{ppmp eq1.5}
(r_{\ast_2}(a)x-l_{\ast_2}(a)x)\succ_1 y+l_{\succ_2}(l_{\ast_1}(x)a-r_{\ast_1}(x)a)y=x\ast_1(l_{\succ_2}(a)y)+
r_{\ast_2}(r_{\succ_1}(y)a)x-l_{\succ_2}(a)(x\ast_1 y),
\\&\label{ppmp eq1.6}(l_{\ast_2}(a)x-r_{\ast_2}(a)x)\succ_1 y-l_{\succ_2}(r_{\ast_1}(x)a-l_{\ast_1}(x)a)y=
l_{\ast_2}(a)(x\succ_1 y)-x\succ_1(l_{\ast_2}(a)y)
-r_{\succ_2}(r_{\ast_1}(y)a)x,
\\&\label{ppmp eq1.7}x\prec_1(r_{\ast_2}(a)y-l_{\ast_2}(a)y)+r_{\prec_2}(l_{\ast_1}(y)a-r_{\ast_1}(y)a)x
=y\ast_1(r_{\prec_2}(a)x)+r_{\ast_2}(l_{\prec_1}(x)a)y-r_{\prec_2}(a)(y\ast_1 x),
\\&\label{ppmp eq1.8}
x\prec_1(l_{\ast_2}(a)y-r_{\ast_2}(a)y)+r_{\prec_2}(r_{\ast_1}(y)a-l_{\ast_1}(y)a)x
=l_{\ast_2}(a)(x\prec_1y)-(l_{\ast_2}(a)x)\prec_1 y-l_{\prec_2}(r_{\ast_1}(x)a)y,
\\&\label{ppmp eq1.9}l_{\prec_2}(a)(x\ast_1y-y\ast_1x)=x\ast_1(l_{\prec_2}(a)y)+r_{\ast_2}(r_{\prec_1}(y)a)x-
(r_{\ast_2}(a)x)\prec_1y-l_{\prec_2}(l_{\ast_1}(x)a)y,
\\&\label{ppmp eq1.10}r_{\ast_2}(a)(x\cdot_1y)=(r_{\ast_2}(a)x)\succ_1 y+l_{\prec_2}(l_{\ast_1}(x)a)y+x\succ_1(r_{\ast_2}(a)y)
+r_{\succ_2}(l_{\ast_1}(y)a)x,
\\&\label{ppmp eq1.11}(r_{\cdot_2}(a)x)\ast_1y+l_{\ast_2}(l_{\cdot_1}(x)a)y
=r_{\prec_2}(a)(x\ast_1y)+x\succ_1(l_{\ast_2}(a)y)+r_{\succ_2}(r_{\ast_1}(y)a)x,
\\&\label{ppmp eq1.12}(l_{\cdot_2}(a)x)\ast_1y+l_{\ast_2}(r_{\cdot_1}(x)a)y
=(l_{\ast_2}(a)y)\prec_1 x+l_{\prec_2}(r_{\ast_1}(y)a)x+l_{\succ_2}(a)(x\ast_1y),\\&
\label{ppmp eq1.13}r_{\succ_1}(x)[a, b]_2=a\ast_{2}
		r_{\succ_1}(x)b+r_{\ast_1}(l_{\succ_2}(b)x)a-b\succ_{2}(r_{\ast_1}(x)a)-r_{\succ_1}(l_{\ast_2}(a)x)b,
\\&\label{ppmp eq1.14}
(r_{\ast_1}(x)a-l_{\ast_1}(x)a)\succ_2 b+l_{\succ_1}(l_{\ast_2}(a)x-r_{\ast_2}(a)x)b=a\ast_2(l_{\succ_1}(x)b)+
r_{\ast_1}(r_{\succ_2}(b)x)a-l_{\succ_1}(x)(a\ast_2 b),
\\&\label{ppmp eq1.15}(l_{\ast_1}(x)a-r_{\ast_1}(x)a)\succ_2 b-l_{\succ_1}(r_{\ast_2}(a)x-l_{\ast_2}(a)x)b=
l_{\ast_1}(x)(a\succ_2 b)-a\succ_2(l_{\ast_1}(x)b)
-r_{\succ_1}(r_{\ast_2}(b)x)a,
\\&\label{ppmp eq1.16}a\prec_2(r_{\ast_1}(x)b-l_{\ast_1}(x)b)+r_{\prec_1}(l_{\ast_2}(b)x-r_{\ast_2}(b)x)a
=b\ast_2(r_{\prec_1}(x)a)+r_{\ast_1}(l_{\prec_2}(a)x)b-r_{\prec_1}(x)(b\ast_1 a),
\\&\label{ppmp eq1.17}
a\prec_2(l_{\ast_1}(x)b-r_{\ast_1}(x)b)+r_{\prec_1}(r_{\ast_2}(b)x-l_{\ast_2}(b)x)a
=l_{\ast_1}(x)(a\prec_2b)-(l_{\ast_1}(x)a)\prec_2 b-l_{\prec_1}(r_{\ast_2}(a)x)b,
\\&\label{ppmp eq1.18}l_{\prec_1}(x)(a\ast_2b-b\ast_2a)=a\ast_2(l_{\prec_1}(x)b)+r_{\ast_1}(r_{\prec_2}(b)x)a-
(r_{\ast_1}(x)a)\prec_2b-l_{\prec_1}(l_{\ast_2}(a)x)b,
\\&\label{ppmp eq1.19}r_{\ast_1}(x)(a\cdot_2b)=(r_{\ast_1}(x)a)\succ_2 b+l_{\prec_1}(l_{\ast_2}(a)x)b+a\succ_2(r_{\ast_1}(x)b)
+r_{\succ_1}(l_{\ast_2}(b)x)a,
\\&\label{ppmp eq1.20}(r_{\cdot_1}(x)a)\ast_2b+l_{\ast_1}(l_{\cdot_2}(a)x)b
=r_{\prec_1}(x)(a\ast_2b)+a\succ_2(l_{\ast_1}(x)b)+r_{\succ_1}(r_{\ast_2}(b)x)a,
\\&\label{ppmp eq1.21}(l_{\cdot_1}(x)a)\ast_2b+l_{\ast_1}(r_{\cdot_2}(a)x)b
=(l_{\ast_1}(x)b)\prec_2 a+l_{\prec_1}(r_{\ast_2}(b)x)a+l_{\succ_1}(x)(a\ast_2b),\end{align}
\end{small}
\end{enumerate}
where $\cdot_1=\succ_1+\prec_1,~\cdot_2=\succ_2+\prec_2,~ [x,y]=x\ast_1 y-y\ast_1 x $ 
and $[a,b]=a\ast_2 b-b\ast_2 a $.
In this case, we
denote this noncommutative pre-Poisson algebra by $A_1\bowtie A_2$, and 
$(A_1, A_2,
l_{\succ_1},r_{\succ_1},l_{\prec_1},r_{\prec_1},l_{\ast_1},r_{\ast_1},$ \ \ \ $
l_{\succ_2},r_{\succ_2},l_{\prec_2},r_{\prec_2},l_{\ast_2},r_{\ast_2})$
satisfying the above conditions is called a {\bf matched pair of
noncommutative pre-Poisson algebras}. \end{pro}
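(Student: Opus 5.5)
The plan is a direct verification: expand each defining axiom of a noncommutative pre-Poisson algebra on $A_1\oplus A_2$ using Eqs.~(\ref{ppmp eq1.1})--(\ref{ppmp eq1.3}), and sort the result by type. By trilinearity, each axiom holds on $A_1\oplus A_2$ if and only if it holds whenever each of $x,y,z$ is chosen homogeneously from $A_1$ or $A_2$. That gives $2^3=8$ cases per axiom. Each case produces an identity with an $A_1$-component and an $A_2$-component, and both must vanish separately.

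First I would dispose of the axioms that involve only one of the two structures.
\begin{enumerate}
\item The dendriform axioms involve only $\succ,\prec$. By the known characterization of matched pairs of dendriform algebras, they hold on $A_1\oplus A_2$ exactly when $(A_1,\succ_1,\prec_1)$ and $(A_2,\succ_2,\prec_2)$ are dendriform (which is assumed), each $A_i$ carries a dendriform representation of the other, and the mixed matched-pair identities hold.
\item The pre-Lie axiom likewise reduces to the standard matched pair of pre-Lie algebras, which includes the pre-Lie representation parts.
\end{enumerate}
These pieces supply items (c) and (d), together with the dendriform and pre-Lie halves of the representation conditions in (a) and (b).

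Next I would treat the compatibility conditions (\ref{Np1})--(\ref{Np3}).
\begin{enumerate}
\item All three arguments in $A_1$: the condition holds by hypothesis on $A_1$. Likewise for all three in $A_2$.
\item Exactly one argument in $A_2$, say $z=a$ or $y=a$ or $x=a$: the $A_2$-component gives an operator identity in $x,y$ acting on $a$. These are exactly Eqs.~(\ref{r1})--(\ref{r9}) for $(A_2,l_{\succ_1},\dots,r_{\ast_1})$. For example, (\ref{Np1}) with $z=a$ gives (\ref{r1}), and with $x=a$ gives (\ref{r2}) and (\ref{r7}) after symmetrization. Collecting all three placements of $a$ across all three compatibilities yields (a). The $A_1$-component of the same cases gives identities in which $a$ acts on $A_1$ through $r_{\succ_2}(a)$, $l_{\ast_2}(a)$ and so on, mixed with the $A_1$-products. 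These are precisely Eqs.~(\ref{ppmp eq1.4})--(\ref{ppmp eq1.12}).
\item Exactly one argument in $A_1$: by symmetry, this gives (b) and Eqs.~(\ref{ppmp eq1.13})--(\ref{ppmp eq1.21}).
\end{enumerate}
The converse direction is the same computation read backwards. Every listed condition is a component of some homogeneous instance of an axiom, and the components together exhaust all instances, so the conditions are also sufficient.

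The main obstacle is bookkeeping rather than ideas. For each of the three compatibility axioms and each of the three positions of the lone foreign element, one must check that the component obtained matches exactly one listed equation, up to moving terms across and renaming. In particular, the representation identities (\ref{r7})--(\ref{r9}) arise only after combining placements: for instance, (\ref{Np3}) with $z=a$ and (\ref{Np1}) with $y=a$. I would organize this as a table of axiom versus position versus component, and check that every entry is used and every listed equation appears. One possible problem to watch for: some positions may produce a component that is a linear combination of two listed equations rather than a single one. In that case the equivalence still holds, because those positions together are equivalent to the listed set.
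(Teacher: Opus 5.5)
Your proposal is correct and follows essentially the same route as the paper. The paper likewise takes the dendriform and pre-Lie matched-pair characterizations as known, reduces everything to checking \eqref{Np1}--\eqref{Np3} on the $18$ mixed cases (three axioms times six placements), and splits each case into its $A_1$- and $A_2$-components. The bookkeeping is cleaner than you anticipate: each mixed case yields exactly one representation identity among \eqref{r1}--\eqref{r9} and exactly one of \eqref{ppmp eq1.4}--\eqref{ppmp eq1.21}, so no combining of placements or symmetrization is needed. For example, \eqref{Np1} at $(x,a,y)$ gives \eqref{r2}, at $(a,x,y)$ it gives \eqref{r7}, and \eqref{Np3} at $(x,y,a)$ gives \eqref{r9} on its own.
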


\begin{proof} To prove that $(A_1\oplus A_2,\succ,\prec,\ast)$ is a noncommutative pre-Poisson algebra,
		we only need to check that Eqs.~(\ref{Np1})-(\ref{Np3}) hold for 
all $x+a,y+b,z+c\in A_1\oplus A_2$ with $x,y,z\in A_1,a,b,c\in A_2$. 
Indeed, we may verify the following cases through direct computations:
\begin{enumerate}
\item Eq.~(\ref{Np1}) holds for $(x,y,a)$ if and only if Eqs.~(\ref {r1}) and (\ref {ppmp eq1.4}) hold.
\item Eq.~(\ref{Np1}) holds for $(x,a,y)$ if and only if Eqs.~(\ref {r2}) and (\ref {ppmp eq1.5}) hold.
\item Eq.~(\ref{Np1}) holds for $(a,x,y)$ if and only if Eqs.~(\ref {r7}) and (\ref {ppmp eq1.6}) hold.
\item Eq.~(\ref{Np1}) holds for $(a,b,x)$ if and only if Eqs.~(\ref {r1}) and (\ref {ppmp eq1.13}) hold.
\item Eq.~(\ref{Np1}) holds for $(a,x,b)$ if and only if Eqs.~(\ref {r2}) and (\ref {ppmp eq1.14}) hold.
\item Eq.~(\ref{Np1}) holds for $(x,a,b)$ if and only if Eqs.~(\ref {r7}) and (\ref {ppmp eq1.15}) hold.
\item Eq.~(\ref{Np2}) holds for $(x,y,a)$ if and only if Eqs.~(\ref {r3}) and (\ref {ppmp eq1.7}) hold.
\item Eq.~(\ref{Np2}) holds for $(x,a,y)$ if and only if Eqs.~(\ref {r8}) and (\ref {ppmp eq1.8}) hold.
\item Eq.~(\ref{Np2}) holds for $(a,x,y)$ if and only if Eqs.~(\ref {r4}) and (\ref {ppmp eq1.9}) hold.
\item Eq.~(\ref{Np2}) holds for $(a,b,x)$ if and only if Eqs.~(\ref {r3}) and (\ref {ppmp eq1.16}) hold.
\item Eq.~(\ref{Np2}) holds for $(a,x,b)$ if and only if Eqs.~(\ref {r8}) and (\ref {ppmp eq1.17}) hold.	
\item Eq.~(\ref{Np2}) holds for $(x,a,b)$ if and only if Eqs.~(\ref {r4}) and (\ref {ppmp eq1.18}) hold.
\item Eq.~(\ref{Np3}) holds for $(x,y,a)$ if and only if Eqs.~(\ref {r9}) and (\ref {ppmp eq1.10}) hold.
\item Eq.~(\ref{Np3}) holds for $(x,a,y)$ if and only if Eqs.~(\ref {r5}) and (\ref {ppmp eq1.11}) hold.
\item Eq.~(\ref{Np3}) holds for $(a,x,y)$ if and only if Eqs.~(\ref {r6}) and (\ref {ppmp eq1.12}) hold.
\item Eq.~(\ref{Np3}) holds for $(a,b,x)$ if and only if Eqs.~(\ref {r9}) and (\ref {ppmp eq1.19}) hold.
\item Eq.~(\ref{Np3}) holds for $(a,x,b)$ if and only if Eqs.~(\ref {r5}) and (\ref {ppmp eq1.20}) hold.	
\item Eq.~(\ref{Np3}) holds for $(x,a,b)$ if and only if Eqs.~(\ref {r6}) and (\ref {ppmp eq1.21}) hold.
\end{enumerate}
\end{proof}

\begin{rmk} \label{Ms} 
\begin{enumerate}
\item If the linear maps
 $l_{\succ_2},r_{\succ_2},l_{\prec_2},r_{\prec_2},l_{\ast_2},r_{\ast_2}: A_{2}\rightarrow \text{End}(A_1)$ are trivial, then the matched pair 
reduces to an A-noncommutative pre-Poisson algebra. 
For the completeness, we provide the details. Let $(A_{1},\succ_{1},\prec_{1},\ast_1)$ and
$(A_{2},\succ_{2},\prec_{2},\ast_2)$ be two noncommutative pre-Poisson algebras. Assume that there are linear maps
\begin{equation*}l_{\succ_1},r_{\succ_1},l_{\prec_1},r_{\prec_1},l_{\ast_1},r_{\ast_1}:A_1\longrightarrow \hbox{End}(A_2)\end{equation*}
 such that
$(A_2,l_{\succ_1},r_{\succ_1},l_{\prec_1},r_{\prec_1},l_{\ast_1},r_{\ast_1})$ 
is a representation of $(A_1,\succ_1,\prec_1,\ast_1)$,
and the following compatible conditions hold for all $x,y\in A_1,a,b\in A_2$:
\begin{small}
	\begin{align}
	&\label{ppmp eq1.22}r_{\succ_1}(x)(a\ast_{2} b-b\ast_{2} a)=a\ast_{2}r_{\succ_1}(x)b-b\succ_{2}(r_{\ast_1}(x)a),
\\&\label{ppmp eq1.23}(r_{\ast_1}(x)a-l_{\ast_1}(x)a)\succ_2 b=a\ast_2(l_{\succ_1}(x)b)-l_{\succ_1}(x)(a\ast_2 b),
\\&\label{ppmp eq1.24}(l_{\ast_1}(x)a-r_{\ast_1}(x)a)\succ_2 b=l_{\ast_1}(x)(a\succ_2 b)-a\succ_2(l_{\ast_1}(x)b),
\\&\label{ppmp eq1.25}a\prec_2(r_{\ast_1}(x)b-l_{\ast_1}(x)b)=b\ast_2(r_{\prec_1}(x)a)-r_{\prec_1}(x)(b\ast_1 a),
\\&\label{ppmp eq1.26}a\prec_2(l_{\ast_1}(x)b-r_{\ast_1}(x)b)=l_{\ast_1}(x)(a\prec_2b)-(l_{\ast_1}(x)a)\prec_2 b,
\\&\label{ppmp eq1.27}l_{\prec_1}(x)(a\ast_2b-b\ast_2a)=a\ast_2(l_{\prec_1}(x)b)-(r_{\ast_1}(x)a)\prec_2b,
\\&\label{ppmp eq1.28}r_{\ast_1}(x)(a\cdot_2b)=(r_{\ast_1}(x)a)\succ_2 b+a\succ_2(r_{\ast_1}(x)b),
\\&\label{ppmp eq1.29}(r_{\cdot_1}(x)a)\ast_2b=r_{\prec_1}(x)(a\ast_2b)+a\succ_2(l_{\ast_1}(x)b),
\\&\label{ppmp eq1.30}(l_{\cdot_1}(x)a)\ast_2b=(l_{\ast_1}(x)b)\prec_2 a+l_{\succ_1}(x)(a\ast_2b),
\\&\label{Dl1}r_{\ast_1}(x)(a\ast_{2} b-b\ast_{2} a)=a\ast_2(r_{\ast_1}(x)b)-b\ast_2(r_{\ast_1}(x)a),
\\&\label{Dl2}l_{\ast_1}(x)(a\ast_2 b)=(l_{\ast_1}(x)-r_{\ast_1}(x)a)\ast_2 b+a\ast_2(l_{\ast_1}(x)b),
\\&\label{Dl3}(l_{\prec_1}(x)a)\prec_2 b=l_{\prec_1}(x)(a\cdot_2 b), \ \ \  (l_{\succ_1}(x)a)\prec_2 b=l_{\succ_1}(x)(a\prec_2 b),\ \ \
\\&\label{Dl4} l_{\succ_1}(x)(a\succ_2 b)=(l_{\cdot_1}(x)a)\succ_2 b, \ \ \ (r_{\prec_1}(x)a)\prec_2 b=a\prec_2(l_{\cdot_1}(x) b),
\\&\label{Dl5} (r_{\succ_1}(x)a)\prec_2 b=a\succ_2(l_{\prec_1}(x) b),\ \ \  a\succ_2l_{\succ}(x)b=r_{\cdot}(x)a\succ_2 b,
\\&\label{Dl6} r_{\prec_1}(x)(a\prec_2 b)=a\prec_2(r_{\cdot_1}(x) b), \ \ \ r_{\prec_1}(x)(a\succ_2 b)=a\succ_2(r_{\prec_1}(x)b),
\\&\label{Dl7} a\succ_2(r_{\succ_1}(x)b)=r_{\succ_1}(x)(a\cdot_2 b).
\end{align}\end{small}
Then $(A_{2},l_{\succ_1},r_{\succ_1},l_{\prec_1},r_{\prec_1},l_{\ast_1},r_{\ast_1})$ is called
  an {\bf A-noncommutative pre-Poisson algebra}. If only Eqs.~(\ref{Dl1})-(\ref{Dl2}) are satisfied, $(A_{2},l_{\ast_1},r_{\ast_1})$ is called
  an {\bf A-pre-Lie algebra}. If only Eqs.~(\ref{Dl3})-(\ref{Dl7}) are satisfied, $(A_{2},l_{\succ_1},r_{\succ_1},l_{\prec_1},r_{\prec_1})$ is called
   an {\bf  A-dendriform algebra}.  A-dendriform algebra is named an action of $A_1$ on $A_{2}$ in \cite{Wa}.
  \item If the linear maps
 $l_{\succ_2},r_{\succ_2},l_{\prec_2},r_{\prec_2},l_{\ast_2},r_{\ast_2}: A_{2}\rightarrow \text{End}(A_1)$
  and $(\succ_{2},\prec_{2},\ast_2)$ are trivial, 
 then the matched pair 
reduces to the semidirect product. Denote it simply by $A_{1}\ltimes A_2$. In particular, if in addition, $A_1$ is coherent
and $(l_{\succ_1},r_{\succ_1},l_{\prec_1},r_{\prec_1},l_{\ast_1},r_{\ast_1})$ satisfying Eq.~(\ref{r10})-(\ref{r12}), 
then $A_1\ltimes A_2$ and $A_1\ltimes A_2^{*}$ both are coherent noncommutative pre-Poisson algebras.
  \end{enumerate}
\end{rmk}
\subsection{Phase spaces of noncommutative Poisson algebras}
We now introduce the notions of symplectic noncommutative Poisson algebras and the phase spaces of noncommutative Poisson algebras.

\begin{defi} A {\bf symplectic noncommutative Poisson algebra} is a noncommutative Poisson algebra $(A,\cdot, [ \ , \ ])$
together with a non-degenerate skew-symmetric bilinear form $\omega$ satisfying the following conditions for all $x,y,z\in A$,
\begin{align}&
\label{Bs1}\omega([x,y],z)+\omega([y,z],x)+\omega([z,x],y)=0,\\&
\label{Bs2}\omega (x\cdot y,z)+\omega (y\cdot z,x)+\omega (z\cdot x,y)=0,
\end{align}
that is, $(A, [ \ , \ ],\omega)$
is a symplectic Lie algebra \cite{B1} and $(A,\cdot,\omega)$ is an associative algebra with a Connes cocycle \cite{B2}.
\end{defi}

\begin{defi} A {\bf quadratic noncommutative pre-Poisson algebra} is a noncommutative pre-Poisson algebra $(A,\succ,\prec, \ast)$
endowed with a nondegenerate 
skew-symmetric bilinear form $\omega:A\otimes A\longrightarrow k$ satisfying the following invariance conditions for all
 $x,y,z\in A $
  \begin{align}&\label{Ib1}
    \omega(x\ast y,z)=-\omega(y,[x,z]),\\&
    \label{Ib2} \omega(x\succ y,z)=\omega(y,z\cdot x) ,\ \ \ 
    \omega(x\prec y,z)=\omega(x,y\cdot z),
  \end{align}
where $x\ast z-z\ast x=[x,z]$ and $x\cdot y=x\succ y+x\prec y$.\end{defi}

\begin{ex} Let $(A,\succ,\prec, \ast)$ be the 2-dimensional noncommutative pre-Poisson algebra with a basis 
$\{ e_1,e_2\}$ given in Example \ref{Ae}.
Define a nondegenerate 
skew-symmetric bilinear form $\omega$ on $A$ as follows:
$\omega(e_1,e_2)=-\omega(e_2,e_1)=1 $ and $\omega(e_2,e_2)=\omega(e_1,e_1)=0 $.
Then $(A,\succ,\prec, \ast,\omega)$ is a quadratic noncommutative pre-Poisson algebra.
\end{ex}

\begin{pro}\label{Ps1} Let  $(A, \cdot, [ \ , \ ],\omega)$ be a symplectic noncommutative Poisson algebra.
Then there is a compatible coherent
noncommutative pre-Poisson algebra $(A,\succ,\prec, \ast)$ given by equations~\eqref{Ib1}-\eqref{Ib2}.
Conversely, assume that $(A, \succ,\prec,\ast,\omega)$ is a quadratic noncommutative pre-Poisson algebra
and $(A, \cdot, [ \ , \ ])$ is
its sub-adjacent noncommutative Poisson algebra. Then $(A, \cdot, [ \ , \ ],\omega)$ is 
a symplectic noncommutative Poisson algebra.\end{pro}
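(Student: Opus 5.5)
For the first part, I will use the nondegeneracy of $\omega$ to \emph{define} $\succ,\prec,\ast$ through \eqref{Ib1}--\eqref{Ib2}. For each $x,y\in A$ the maps $z\mapsto -\omega(y,[x,z])$, $z\mapsto \omega(y,z\cdot x)$ and $z\mapsto \omega(x,y\cdot z)$ are linear functionals. Hence there are unique elements $x\ast y$, $x\succ y$, $x\prec y$ representing them, and these products are bilinear.

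I will then show that $(A,\succ,\prec,\ast)$ is compatible with the given structure, i.e.\ $x\succ y+x\prec y=x\cdot y$ and $x\ast y-y\ast x=[x,y]$. For the first identity,
\begin{equation*}
\omega(x\succ y+x\prec y,z)=\omega(y,z\cdot x)+\omega(x,y\cdot z)=-\omega(z\cdot x,y)-\omega(y\cdot z,x)=\omega(x\cdot y,z)
\end{equation*}
by skew-symmetry and \eqref{Bs2}. Similarly,
\begin{equation*}
\omega(x\ast y-y\ast x,z)=-\omega(y,[x,z])+\omega(x,[y,z])=\omega([x,z],y)+\omega([z,y],x)=\omega([x,y],z)
\end{equation*}
by \eqref{Bs1}.

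Next I will verify the axioms: the dendriform identities, the pre-Lie identity, and \eqref{Np1}--\eqref{Np3} together with the coherence condition \eqref{Np4}. The strategy is uniform. Pair each side of an identity with an arbitrary $w$ under $\omega$, and move all new products to the other slot using \eqref{Ib1}--\eqref{Ib2} repeatedly. Each side then becomes an expression in $\cdot$ and $[\,,]$ only, of the form $\omega(y,\Phi(x,z,w))$.

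For the dendriform and pre-Lie parts this is exactly the known construction. A Connes cocycle on an associative algebra yields a compatible dendriform algebra (the result of \cite{B2} quoted earlier). A symplectic form on a Lie algebra yields a compatible pre-Lie algebra via $\omega(x\ast y,z)=-\omega(y,[x,z])$ (\cite{B1}). So I will simply cite these results.

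For the mixed identities I will run the same transfer. For \eqref{Np1}, $\omega([x,y]\succ z,w)=\omega(z,w\cdot[x,y])$. On the other side, $\omega(x\ast(y\succ z),w)=-\omega(y\succ z,[x,w])=-\omega(z,[x,w]\cdot y)$, and
\begin{equation*}
\omega(y\succ(x\ast z),w)=\omega(x\ast z,w\cdot y)=-\omega(z,[x,w\cdot y]).
\end{equation*}
Comparing, \eqref{Np1} reduces to the Leibniz rule \eqref{Np0}:
\begin{equation*}
[x,w\cdot y]=[x,w]\cdot y+w\cdot[x,y].
\end{equation*}
I expect \eqref{Np2} and \eqref{Np3} to reduce in the same way to \eqref{Np0} with permuted arguments.

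The main obstacle is \eqref{Np4}, together with \eqref{Np3}, because these involve $\ast$ applied to a product sitting in the first slot. The first slot is not directly transferable by \eqref{Ib1}. I will handle it by using skew-symmetry to write
\begin{equation*}
\omega((x\cdot y)\ast z,w)=-\omega(z,[x\cdot y,w])=\omega(z,[w,x\cdot y]).
\end{equation*}
Then the reduced identity is a Leibniz-type relation for $[w,x\cdot y]$ mixing cyclic terms. Here I expect to need the derived coherence-type identity coming from \eqref{Bs1}--\eqref{Bs2}. Concretely, I will show that the symplectic conditions force the sub-adjacent algebra to behave coherently, i.e.\ \eqref{Np00} holds in the pairing. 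I would first try to expand $\omega([w,x\cdot y],z)$ cyclically using \eqref{Bs1}, and then use \eqref{Bs2} to move the $\cdot$.

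For the converse, I will start from a quadratic noncommutative pre-Poisson algebra and verify \eqref{Bs1} and \eqref{Bs2} for the sub-adjacent products. This is the reverse of the computations above. For \eqref{Bs2},
\begin{equation*}
\omega(x\cdot y,z)=\omega(y,z\cdot x)+\omega(x,y\cdot z)=-\omega(z\cdot x,y)-\omega(y\cdot z,x).
\end{equation*}
For \eqref{Bs1},
\begin{equation*}
\omega([x,y],z)=-\omega(y,[x,z])+\omega(x,[y,z]),
\end{equation*}
which rearranges to the cyclic sum being zero by skew-symmetry.
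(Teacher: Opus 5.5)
Your overall route is the paper's: you define the products by nondegeneracy through \eqref{Ib1}--\eqref{Ib2}, quote the known results for the dendriform and pre-Lie parts, and transfer each mixed identity through $\omega$. Your explicit compatibility check is a useful addition. However, you have misplaced where the difficulty lies, and the one nontrivial step is only announced, not proved.

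First, \eqref{Np4} is not an obstacle. \eqref{Ib1} applies with any element in the first slot, so $\omega((x\cdot y)\ast z,w)=-\omega(z,[x\cdot y,w])$. Likewise $\omega(x\ast(y\succ z),w)=-\omega(z,[x,w]\cdot y)$ and $\omega(y\ast(z\prec x),w)=-\omega(z,x\cdot[y,w])$. Thus \eqref{Np4} is exactly the Leibniz rule $[w,x\cdot y]=[w,x]\cdot y+x\cdot[w,y]$, and no coherence is needed; this is the paper's computation.

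Second, your prediction that \eqref{Np3} reduces to \eqref{Np0} is wrong. We have $\omega((x\ast z)\prec y,w)=-\omega(z,[x,y\cdot w])$ and $\omega(x\succ(y\ast z),w)=-\omega(z,[y,w\cdot x])$, so \eqref{Np3} is equivalent to
\begin{equation*}
[x,y\cdot w]+[y,w\cdot x]+[w,x\cdot y]=0 .
\end{equation*}
This is the coherence condition \eqref{Np00} itself, not an instance of the Leibniz rule, and it does not follow from \eqref{Np0} alone. For example, take the free nonunital associative algebra on $a,b,c$ modulo words of length $\geq 3$. Put $[a,b]=a$ and $[a,c]=[b,c]=0$, extend by the Leibniz rule, and let degree-two elements bracket trivially with each other. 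This is a noncommutative Poisson algebra, yet $[a,b\cdot c]+[b,c\cdot a]+[c,a\cdot b]=a\cdot c-c\cdot a\neq 0$. So the coherence identity you intended to extract from \eqref{Bs1}--\eqref{Bs2} is genuinely needed, but for \eqref{Np3}, and your proposal never actually establishes it.

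The gap closes easily once the order is changed. Prove \eqref{Np4} first, which uses only \eqref{Np0}. Then add the three instances of \eqref{Np4} obtained by cyclically permuting $(x,y,z)$. Since $\succ+\prec=\cdot$, the right-hand sides sum to $x\ast(y\cdot z)+y\ast(z\cdot x)+z\ast(x\cdot y)$. By compatibility this gives $\sum_{\mathrm{cyc}}[x,y\cdot z]=0$, i.e.\ \eqref{Np00} holds, and \eqref{Np3} follows.

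Your suggested direct route also works. Expand $\omega([x,y\cdot z],w)=-\omega(y\cdot z,[x,w])+\omega(x,[y\cdot z,w])$ by \eqref{Bs1}. Move the product in the first term by \eqref{Bs2} and apply \eqref{Np0} to the second; the cyclic sum then cancels term by term. In particular, every symplectic noncommutative Poisson algebra is automatically coherent.

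The paper's own proof disposes of \eqref{Np2}--\eqref{Np3} with ``analogously'', which skips exactly this point. So your instinct that \eqref{Np00} must enter is sound; it just belongs at \eqref{Np3} and has to be proved. Your converse argument is fine.
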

 
 \begin{proof} By Theorem 4.1.1 \cite{B2} and Theorem 2.2 \cite{B1}, we get that $(A,\succ,\prec)$ is a dendriform algebra
and $(A,\ast)$ is a pre-Lie algebra. Using Eqs.~\eqref{Np0} and \eqref{Ib1}-\eqref{Ib2}, we have for all $x,y,z,w\in A$, 
 \begin{align*}&
 \omega((x\ast y-y\ast x)\succ z,w)-\omega(x\ast(y\succ z)-y\succ(x\ast z),w)\\
   =&\omega(z,w\cdot[x, y])+\omega(z,[x, w]\cdot y)
   -\omega(z,[x,w\cdot y])
   \\=&0.
  \end{align*}
Thus, Eq.~\eqref{Np1} holds. Analogously, Eqs.~\eqref{Np2}-\eqref{Np3} hold. 
By Eqs.~\eqref{Ib1}-\eqref{Ib2} and \eqref{Np0}, we get 
\begin{align*}& \omega((x\cdot y)\ast z-x\ast(y\succ z)-y\ast(z\prec x),w)
\\=&-\omega(z,[x\cdot y,w])+\omega(y\succ z,[x, w])+\omega(z\prec x,[y,w])
\\=&-\omega(z,[x\cdot y,w])+\omega( z,[x, w]\cdot y)+\omega(z, x\cdot[y,w])
\\=&0,
\end{align*}
which yields that Eq.~\eqref{Np4} holds, that is, $(A,\succ,\prec, \ast)$ is coherent.
The converse part is easy to check.
The proof is completed.
\end{proof}

\begin{defi}
Let $(A,\cdot_A, [ \ , \ ]_A)$ be a noncommutative Poisson algebra and $A^{*}$ its dual space. If there is a
noncommutative Poisson algebra structure $(\cdot, [ \ , \ ])$ on the direct sum vector space $A\oplus A^{*}$ such that
$(A\oplus A^{*},\cdot, [ \ , \ ],\omega)$ is a symplectic noncommutative Poisson algebra, where $\omega$ is given by the following equation:
 \begin{equation}\label{C2}\omega(x+a, y+b) =\langle a,y\rangle-\langle x,b\rangle, \forall~x, y\in A,~a, b\in A^{*},\end{equation}
and both $(A,\cdot_A,[ \ , \ ]_A)$, $(A^{*},\cdot|_{A^{*}},[ \ , \ ]|_{A^{*}})$ are noncommutative Poisson subalgebras 
of $(A\oplus A^{*},\cdot, [ \ , \ ])$, then the symplectic noncommutative Poisson
algebra $(A\oplus A^{*},\cdot, [ \ , \ ],\omega)$ is called a {\bf phase space} of the noncommutative Poisson algebra $(A,\cdot_A,[ \ , \ ]_A)$.
\end{defi}

\begin{pro}\label{Ps2} A noncommutative Poisson algebra has a phase space if and only if there is a compatible
 coherent noncommutative pre-Poisson algebra.
\end{pro}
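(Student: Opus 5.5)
For the forward direction, suppose $(A\oplus A^{*},\cdot,[\ ,\ ],\omega)$ is a phase space of $(A,\cdot_A,[\ ,\ ]_A)$. I would apply Proposition~\ref{Ps1} to this symplectic noncommutative Poisson algebra. It gives a compatible coherent noncommutative pre-Poisson structure $(\succ,\prec,\ast)$ on $A\oplus A^{*}$, determined by Eqs.~\eqref{Ib1}--\eqref{Ib2} with respect to $\omega$. The point is then to show that $A$ is closed under $\succ,\prec,\ast$. Take $x,y\in A$. For $z\in A$ we have
\[
\omega(x\succ y,z)=\omega(y,z\cdot x),
\]
which vanishes because $A$ is a subalgebra and $\omega$ vanishes on $A\times A$ by Eq.~\eqref{C2}. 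The same holds for $\prec$ via $\omega(x,y\cdot z)$ and for $\ast$ via $-\omega(y,[x,z])$. Since $A$ is Lagrangian for $\omega$ (it is isotropic and $\dim A=\frac12\dim(A\oplus A^*)$), its $\omega$-orthogonal is $A$ itself, so $x\succ y,\ x\prec y,\ x\ast y\in A$. The restrictions are therefore a coherent noncommutative pre-Poisson algebra on $A$, as coherence and the defining identities restrict to subalgebras. Its sub-adjacent operations are the restrictions of $\cdot$ and $[\ ,\ ]$, which equal $\cdot_A$ and $[\ ,\ ]_A$ because $A$ is a noncommutative Poisson subalgebra. Hence the structure is compatible.

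For the converse, let $(A,\succ,\prec,\ast)$ be a compatible coherent noncommutative pre-Poisson algebra. By Example~\ref{Cr}(c), $(A^{*},R_{\cdot}^{*},-L_{\prec}^{*},-R_{\succ}^{*},L_{\cdot}^{*},-\mathrm{ad}^{*},R^{*}_{\ast})$ is a representation of $(A,\succ,\prec,\ast)$. By Proposition~\ref{Dr}(c), the pair $(A^{*},l_\cdot,r_\cdot,r_\ast-l_\ast,r_\ast)$ is a representation of the sub-adjacent noncommutative Poisson algebra. Here $l_\cdot=R_{\cdot}^*-L_{\prec}^*$, $r_\cdot=L_\cdot^*-R_\succ^*$ and the Lie part is $R_\ast^*+\mathrm{ad}^*=L_\ast^*$. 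Simplifying, and up to the sign convention for the Lie action, this is $(A^*,R_\succ^*,L_\prec^*,-L_\ast^*)$, i.e.\ essentially the representation $(A^{*},R_{\prec}^{*},L_{\succ}^{*},-L_{\ast}^{*})$ of Example~\ref{Cr}(e) in the appropriate form. In any case I would take the semidirect product $A\ltimes A^{*}$ (Remark~\ref{La}(b)) with the action $x\cdot a=R_\prec^*(x)a$, $a\cdot x=L_\succ^*(x)a$, $[x,a]=-L_\ast^*(x)a$, and choose the signs so that they match the invariance relations \eqref{Ib1}--\eqref{Ib2} under $\omega$ from Eq.~\eqref{C2}. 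This is a noncommutative Poisson algebra with $A$ a subalgebra and $A^*$ an abelian (hence Poisson) subalgebra.

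It then remains to check the cocycle conditions \eqref{Bs1}--\eqref{Bs2} for $\omega$ on $A\oplus A^*$; nondegeneracy and skew-symmetry are clear. By trilinearity and isotropy of $A$ and $A^*$, only the cases with two entries in $A$ and one in $A^*$ contribute. Take $x,y\in A$ and $a\in A^*$. Then \eqref{Bs2} reduces to
\[
\langle a, x\cdot y\rangle-\langle a, y\succ x\rangle\cdot(\pm)\ -\langle a,x\prec y\rangle\cdots=0,
\]
i.e.\ to $x\cdot y=x\succ y+x\prec y$. Likewise \eqref{Bs1} reduces to $[x,y]=x\ast y-y\ast x$. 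The main obstacle is the bookkeeping of the sign conventions for the dual actions, so that these reductions come out exactly to the sub-adjacent identities. I would fix the signs by requiring $\omega$-invariance in the form \eqref{Ib1}--\eqref{Ib2} and then verify the two cases directly.
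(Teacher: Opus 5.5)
Your proposal is correct and follows the paper's proof. In the forward direction you use Proposition~\ref{Ps1} together with the isotropy of $A$; your Lagrangian remark makes explicit a step the paper leaves implicit. In the converse you use the semidirect product $A\ltimes A^{*}$ with the representation $(A^{*},R_{\prec}^{*},L_{\succ}^{*},-L_{\ast}^{*})$ of Example~\ref{Cr}(e). With exactly the signs you wrote, the only nontrivial cases of \eqref{Bs2} and \eqref{Bs1} are $-\langle a,x\cdot y\rangle+\langle a,x\prec y\rangle+\langle a,x\succ y\rangle=0$ and $-\langle a,[x,y]\rangle-\langle a,y\ast x\rangle+\langle a,x\ast y\rangle=0$, so no further sign-fixing is needed.

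Drop the detour through Proposition~\ref{Dr}(c). That result only gives a quasi-representation, and your computation of the maps is wrong. Correctly, $l_\succ+l_\prec=R_{\prec}^{*}$, $r_\succ+r_\prec=L_{\succ}^{*}$ and $l_\ast-r_\ast=-L_{\ast}^{*}$, which are exactly the maps of Example~\ref{Cr}(e).
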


\begin{proof} Assume that $(A\oplus A^{*},\cdot, [ \ , \ ],\omega)$ is a phase space of
 the noncommutative Poisson algebra $(A,\cdot_A,[ \ , \ ]_A)$.
 In light of Proposition \ref{Ps1}, there is a compatible coherent noncommutative
  pre-Poisson algebra structure $(\succ,\prec,\ast)$ on $A\oplus A^{*}$ defined by Eqs.~\eqref{Ib1}-\eqref{Ib2}.
Due to $(A,\cdot_A,[ \ , \ ]_A)$ being a noncommutative Poisson subalgebra of $A\oplus A^{*}$, we obtain for all $x,y,z\in A$,
 \begin{align*}&
    \omega(x\ast y,z)=-\omega(y,[x,z])=0,\\&
  \omega(x\succ y,z)=\omega(y,z\cdot x)=0 ,\ \ \ 
    \omega(x\prec y,z)=\omega(x,y\cdot z)=0,
  \end{align*}
which means that $x\ast y,x\succ y,x\prec y\in A$, which indicates that
 $(A,\succ|_{A},\prec|_{A},\ast|_{A})$ and $(A,\succ_A,\prec_A,\ast_A)$ are consistent. Thus, the
 sub-adjacent noncommutative Poisson algebra of $(A,\succ|_{A},\prec|_{A},\ast|_{A})$ is exactly $(A,\cdot_A,[ \ , \ ]_A)$.
 
Conversely, assume that $(A,\succ_A,\prec_A, \ast_A)$ is a coherent noncommutative pre-Poisson algebra. 
By Example \ref{Cr}, we know that $(A^{*},R_{\prec}^{*}, L_{\succ}^{*},-L_{\ast}^{*})$ is a representation of $(A,\cdot_A, [ \ , \ ]_A)$.
 It follows that $(A\ltimes A^{*},\cdot,[ \ , \ ])$ is a noncommutative Poisson algebra,
 where 
 \begin{align*}&
 (x+a)\cdot (y+b)=x\succ_A y+R_{\prec}^{*}(x)b+L_{\succ}^{*}(y)a,\\&
 [x+a, y+b]=[x,y]_A -L_{\ast}^{*}(x)b+L_{\ast}^{*}(y)a, 
  \end{align*} 
  for all $x,y\in A,a,b\in A^{*}$.
 It is easy to check that $(A\ltimes A^{*},\cdot,[ \ , \ ],\omega)$ is a
   a phase space of the noncommutative Poisson algebra $(A,\cdot_A,[ \ , \ ]_A)$, where $\omega$ is defined by Eq.~\eqref{C2}.
\end{proof}

\begin{cor} \label{Ps3} Let $(A\oplus A^{*},\cdot, [ \ , \ ],\omega)$ be a phase space of
 the noncommutative Poisson algebra $(A,\cdot_A,[ \ , \ ]_A)$
and $(A\oplus A^{*},\succ,\prec,\ast)$ the compatible noncommutative pre-Poisson algebra. Then both 
$(A,\succ|_{A},\prec|_{A},\ast|_{A})$ and $(A^{*},\succ|_{A^{*}},\prec|_{A^{*}},\ast|_{A^{*}})$
are subalgebras of the noncommutative pre-Poisson algebra $(A\oplus A^{*},\succ,\prec,\ast)$.
\end{cor}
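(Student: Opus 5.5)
The idea is to use the explicit formulas for the compatible noncommutative pre-Poisson structure on $A\oplus A^{*}$ from Proposition \ref{Ps1}. By that proposition, the structure $(\succ,\prec,\ast)$ is uniquely determined by the invariance conditions \eqref{Ib1}--\eqref{Ib2} with respect to $\omega$ in \eqref{C2}, because $\omega$ is nondegenerate. So proving that $A$ and $A^{*}$ are closed under $\succ,\prec,\ast$ amounts to proving that the corresponding products are $\omega$-orthogonal to $A$, respectively to $A^{*}$. The key fact is that $A$ and $A^{*}$ are both Lagrangian for $\omega$: by \eqref{C2}, $\omega(x,y)=0$ for $x,y\in A$ and $\omega(a,b)=0$ for $a,b\in A^{*}$. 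Together with $\dim A=\dim A^{*}$, nondegeneracy gives $A^{\perp_\omega}=A$ and $(A^{*})^{\perp_\omega}=A^{*}$.

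\textbf{Step 1.} For $A$, I repeat the argument already given in the proof of Proposition \ref{Ps2}. Take $x,y,z\in A$. Since $A$ is a noncommutative Poisson subalgebra of $(A\oplus A^{*},\cdot,[\ ,\ ])$, we have $[x,z]\in A$ and $z\cdot x\in A$. Hence $\omega(x\ast y,z)=-\omega(y,[x,z])=0$ and $\omega(x\succ y,z)=\omega(y,z\cdot x)=0$. Similarly, $y\cdot z\in A$ gives $\omega(x\prec y,z)=\omega(x,y\cdot z)=0$. Therefore $x\ast y$, $x\succ y$ and $x\prec y$ all lie in $A^{\perp_\omega}=A$.

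\textbf{Step 2.} For $A^{*}$, I argue the same way with $a,b,c\in A^{*}$. By the definition of a phase space, $A^{*}$ is also a noncommutative Poisson subalgebra, so $[a,c]$, $c\cdot a$ and $b\cdot c$ lie in $A^{*}$. The three invariance identities then show that $a\ast b$, $a\succ b$ and $a\prec b$ are $\omega$-orthogonal to $A^{*}$. Hence they lie in $A^{*}$.

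\textbf{Step 3.} The dendriform, pre-Lie and compatibility identities \eqref{Np1}--\eqref{Np3} hold on all of $A\oplus A^{*}$, so they hold in particular on each subspace. Hence the restrictions are noncommutative pre-Poisson subalgebras. Coherence \eqref{Np4} is inherited in the same way.

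The only point needing care, and the main obstacle, is justifying $A^{\perp_\omega}=A$ and $(A^{*})^{\perp_\omega}=A^{*}$. I would verify this directly: write $u=y+b$. If $\omega(u,z)=0$ for all $z\in A$, then $\langle b,z\rangle=0$ for all $z\in A$, so $b=0$ and $u\in A$. The dual computation handles $A^{*}$.
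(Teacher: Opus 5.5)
Your proposal is correct and follows essentially the same route as the paper, whose proof simply points to the orthogonality computation in the proof of Proposition~\ref{Ps2}. That computation uses the invariance identities together with isotropy to show $A$ is closed, and you apply it to both $A$ and $A^{*}$ while making explicit the fact, used tacitly in the paper, that each of $A$ and $A^{*}$ is its own $\omega$-orthogonal complement.
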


\begin{proof} This follows directly from the proof of Proposition \ref{Ps2}. \end{proof}

\begin{defi}\label{defi:Manin}
  A {\bf Manin triple of noncommutative pre-Poisson algebras} is a quadruple $(A,A_1,A_2,\omega)$, 
  where $(A,\omega)$ is a quadratic noncommutative pre-Poisson algebra, 
  $(A_1,\succ_1,\prec_1,\ast_1)$ and $(A_2,\succ_2,\prec_2,\ast_2)$ are noncommutative pre-Poisson subalgebras of $A$,
  such that
  \begin{enumerate}
\item $A=A_1\oplus A_2$ as vector spaces;
\item $A_1$ and $A_2$ are isotropic with respect to $\omega$.
\end{enumerate}
 \end{defi}
 
\begin{pro} \label{Ps4}
Let $(A,A_1,A_2,\omega)$ be a Manin triple of noncommutative pre-Poisson algebras. 
Then the noncommutative pre-Poisson algebras $A$, $A_1$ and $A_2$ are coherent.
\end{pro}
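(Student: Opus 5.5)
The plan is to prove that $A$ is coherent directly from the invariance conditions \eqref{Ib1}--\eqref{Ib2}, reusing the computation from the proof of Proposition \ref{Ps1}. Coherence of $A_1$ and $A_2$ then follows at once: each is a noncommutative pre-Poisson subalgebra of $A$, and Eq.~\eqref{Np4} involves only the operations $\succ,\prec,\ast$ restricted to the subalgebra.

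The key step is to verify Eq.~\eqref{Np4} for $(A,\succ,\prec,\ast)$ by pairing both sides with an arbitrary $w\in A$ under $\omega$. Since $\omega$ is nondegenerate, it suffices to show
\[\omega\big((x\cdot y)\ast z-x\ast(y\succ z)-y\ast(z\prec x),w\big)=0 \quad \text{for all } x,y,z,w\in A.\]
Apply \eqref{Ib1} to each of the three terms to get $-\omega(z,[x\cdot y,w])+\omega(y\succ z,[x,w])+\omega(z\prec x,[y,w])$. Then use \eqref{Ib2}, in the forms $\omega(y\succ z,u)=\omega(z,u\cdot y)$ and $\omega(z\prec x,u)=\omega(z,x\cdot u)$. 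This turns the expression into
\[\omega\big(z,\,-[x\cdot y,w]+[x,w]\cdot y+x\cdot[y,w]\big).\]

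It remains to see that the second argument vanishes. Rewrite $-[x\cdot y,w]=[w,x\cdot y]$. By the Leibniz rule \eqref{Np0} in the sub-adjacent noncommutative Poisson algebra, $[w,x\cdot y]=[w,x]\cdot y+x\cdot[w,y]$. Using skew-symmetry of the bracket, this cancels exactly against $[x,w]\cdot y+x\cdot[y,w]$. Hence the pairing is zero for every $w$, and nondegeneracy yields Eq.~\eqref{Np4}.

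The only delicate point is the bookkeeping with \eqref{Ib2}. One must use the correct one of its two forms for each of $\succ$ and $\prec$, and track the sign conventions of the skew-symmetric $\omega$, so that the three terms match the Leibniz rule exactly. Beyond this, no structure of the Manin triple is actually needed beyond $(A,\omega)$ being quadratic; isotropy and the decomposition $A=A_1\oplus A_2$ play no role.
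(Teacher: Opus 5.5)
Your proof is correct and uses essentially the same argument as the paper: pair $(x\cdot y)\ast z-x\ast(y\succ z)-y\ast(z\prec x)$ against a test vector, apply \eqref{Ib1}--\eqref{Ib2}, and cancel with the Leibniz rule of the sub-adjacent algebra. The only difference is how it is organized. The paper takes $x,y,z\in A_1$, pairs against $a\in A_2$ (tacitly using isotropy and $A=A_1\oplus A_2$), and says the cases of $A$ and $A_2$ are analogous; you pair against an arbitrary $w\in A$ and obtain $A_1$, $A_2$ by restriction, which correctly shows that only the quadratic structure on $A$ is needed.
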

\begin{proof} By Eqs.~\eqref{Ib1}-\eqref{Ib2}  and \eqref{Np1}-\eqref{Np3},
 we have for all $x,y,z\in A_1$ and $a\in A_2$, 
\begin{align*}&\omega((x\cdot y)\ast z,a)-\omega(x\ast(y\succ z),a)-\omega(y\ast(z\prec x),a)
\\=&-\omega(z,(x\cdot y)\ast a-a\ast(x\cdot y))
+\omega(y\succ z,x\ast a-a\ast x)+\omega( z\prec x,y\ast a-a\ast y)
\\=&-\omega(z,(x\cdot y)\ast a-a\ast(x\cdot y))
+\omega( z,(x\ast a-a\ast x)\cdot y)
+\omega( z,x\cdot (y\ast a-a\ast y))
\\=&0,\end{align*}
which indicates that Eq.~\eqref{Np4} holds. Thus, $A_1$ is coherent. Analogously,
$A$ and $A_2$ are coherent.
\end{proof}

\begin{thm} \label{Mp1}
Suppose that $(A,\succ_{A},\prec_{A},\ast_{A})$ and $(A^{*},\succ_{A^{*}},\prec_{A^{*}},\ast_{A^{*}})$ are two 
 noncommutative pre-Poisson algebras and their sub-adjacent
noncommutative Poisson algebras are $(A,\cdot_A, [ \ , \ ]_A)$ and
 $(A^{*},\cdot_{A^{*}}, [ \ , \ ]_{A^{*}})$ respectively. Then the following conditions are equivalent:
\begin{enumerate}
\item \label{Pm1} $(A\oplus A^{*},\cdot,[ \ , \ ],\omega)$ is a phase space of the noncommutative Poisson algebra $(A,\cdot_A, [ \ , \ ]_A)$.
\item \label{Pm2} $(A\oplus A^{*},A, A^{*},\omega)$ is a Manin triple of noncommutative pre-Poisson algebras with the
bilinear form given by Eq.~\eqref{C2}
and the isotropic subalgebras are $A$ and $ A^{*}$.
\item \label{Pm3}
$(A, A^{*}, R_{\cdot_1}^{*},-L_{\prec_1}^{*},-R_{\succ_1}^{*},L_{\cdot_1}^{*},-\mathrm{ad_1}^{*},R^{*}_{\ast_1},
 R_{\cdot_2}^{*},-L_{\prec_2}^{*},-R_{\succ_2}^{*},L_{\cdot_2}^{*},-\mathrm{ad_2}^{*},R^{*}_{\ast_2})$ 
 is a matched pair of coherent noncommutative pre-Poisson algebras.
 \item \label{Pm4}
$(A, A^{*},R_{\prec_1}^{*}, L_{\succ_1}^{*},-L_{\ast_1}^{*},R_{\prec_2}^{*},L_{\succ_2}^{*},-L_{\ast_2}^{*}) $ is a matched pair of coherent 
noncommutative Poisson algebras.
\end{enumerate}
\end{thm}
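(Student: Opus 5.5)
We prove the cycle of equivalences \eqref{Pm1}$\Leftrightarrow$\eqref{Pm2}, \eqref{Pm2}$\Leftrightarrow$\eqref{Pm3} and \eqref{Pm1}$\Leftrightarrow$\eqref{Pm4}. Throughout, the structures on $A\oplus A^{*}$ are built from the given operations on $A$ and $A^{*}$, with mixed terms chosen so that $\omega$ from Eq.~\eqref{C2} is invariant.

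\emph{\eqref{Pm1}$\Leftrightarrow$\eqref{Pm2}.} Assume \eqref{Pm1}. By Proposition \ref{Ps1}, the symplectic noncommutative Poisson algebra $(A\oplus A^{*},\cdot,[\ ,\ ],\omega)$ carries a compatible quadratic (coherent) noncommutative pre-Poisson structure defined by Eqs.~\eqref{Ib1}-\eqref{Ib2}. By Corollary \ref{Ps3}, $A$ and $A^{*}$ are pre-Poisson subalgebras. The computation in the proof of Proposition \ref{Ps2} shows that the restricted operations on $A$ coincide with $\succ_A,\prec_A,\ast_A$; the same argument applied to $A^{*}$ gives its operations. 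Both are isotropic for $\omega$ because of Eq.~\eqref{C2}. Hence we have a Manin triple.

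Conversely, given the Manin triple in \eqref{Pm2}, the converse part of Proposition \ref{Ps1} shows that the sub-adjacent algebra with $\omega$ is symplectic. Its subalgebras $A$ and $A^{*}$ are noncommutative Poisson subalgebras, so it is a phase space.

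\emph{\eqref{Pm2}$\Leftrightarrow$\eqref{Pm3}.} Write the operations of $A\oplus A^{*}$ in the matched-pair form \eqref{ppmp eq1.1}-\eqref{ppmp eq1.3}. The key step is to identify the mixed terms using invariance. For $x,y\in A$ and $a\in A^{*}$, the $A^{*}$-component of $x\succ a$ is determined by pairing with $y$:
\[
\omega(x\succ a,y)=\omega(a,y\cdot x)=\langle a,y\cdot x\rangle,
\]
so this component equals $R_{\cdot}^{*}(x)a$. Similar pairings give $-L_{\prec}^{*}$, $-R_{\succ}^{*}$, $L_{\cdot}^{*}$ for the dendriform part, and $-\mathrm{ad}^{*}$, $R_{\ast}^{*}$ for the pre-Lie part (the latter from Eq.~\eqref{Ib1}). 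The $A$-components are obtained symmetrically by pairing with $A^{*}$.

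Thus the Manin triple structure is exactly the matched-pair product with these coadjoint-type maps. Proposition \ref{Mp} then says that the product is a noncommutative pre-Poisson algebra if and only if the tuple is a matched pair. Coherence of $A$, $A^{*}$ and the double follows from Proposition \ref{Ps4}.

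For the reverse direction, assume the matched pair in \eqref{Pm3}. Then $A\bowtie A^{*}$ is a noncommutative pre-Poisson algebra, and we check that $\omega$ satisfies \eqref{Ib1}-\eqref{Ib2}. This is a direct pairing computation using the definitions of the dual maps, and it amounts to running the identification above backwards.

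\emph{\eqref{Pm1}$\Leftrightarrow$\eqref{Pm4}.} The plan is the same. Invariance together with Eqs.~\eqref{Bs1}-\eqref{Bs2}, applied to the compatible pre-Poisson structure from Proposition \ref{Ps1}, shows that the mixed terms of the product on $A\oplus A^{*}$ are $R_{\prec}^{*},L_{\succ}^{*}$. For example,
\[
\langle x\cdot a,y\rangle=\omega(x\cdot a,y)=\omega(x\prec a+x\succ a,y),
\]
and the pieces are evaluated with \eqref{Ib2}. Likewise, the mixed terms of the bracket are $-L_{\ast}^{*}$, as in the proof of Proposition \ref{Ps2}. Proposition \ref{Df} then identifies "$A\oplus A^{*}$ is a noncommutative Poisson algebra containing both as subalgebras" with the matched-pair condition. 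The symplectic conditions \eqref{Bs1}-\eqref{Bs2} for $\omega$ then hold automatically from these dual-map formulas, because both summands are subalgebras and isotropic. Coherence holds since, by Example \ref{Cr}(e) and Proposition \ref{Ps2}, the relevant pre-Poisson structures are coherent.

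\emph{Main obstacle.} The hard step is the bookkeeping in \eqref{Pm2}$\Leftrightarrow$\eqref{Pm3}: verifying that the six dual maps recover each mixed product exactly, with the correct signs (for instance, $\ast$ yields both $-\mathrm{ad}^{*}$ and $R_{\ast}^{*}$). It also requires checking that the restrictions of the Manin-triple structure to $A$ and $A^{*}$ are the prescribed operations, which is what ensures the equivalence is an identification of the same algebra rather than merely an isomorphism.
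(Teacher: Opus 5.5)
Your proposal is correct and is essentially the paper's argument. You get (1)$\Rightarrow$(2) from Proposition~\ref{Ps1} and Corollary~\ref{Ps3}, and (2)$\Leftrightarrow$(3) by identifying the mixed products through the invariance of $\omega$ (which is what the paper's citations of Bai's dendriform and pre-Lie Manin-triple theorems supply) together with Propositions~\ref{Mp} and~\ref{Ps4}. The only rearrangement is that you tie (4) directly to (1) through Proposition~\ref{Df} and a check of \eqref{Bs1}--\eqref{Bs2}, rather than to (3).

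One caution: the computation in the proof of Proposition~\ref{Ps2} only shows that $A$ is closed under the induced operations, not that they coincide with $\succ_A,\prec_A,\ast_A$. That coincidence comes from reading (1) as the phase space built from the given data, as you stipulate at the outset and as the paper implicitly assumes.
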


\begin{proof} \eqref{Pm1} $\Longrightarrow$ \eqref{Pm2}
Assume that $(A\oplus A^{*},\cdot,[ \ , \ ],\omega)$ is a phase 
space of the noncommutative Poisson algebra $(A,\cdot_A, [ \ , \ ]_A)$.
According to Proposition \ref{Ps1}, there is a coherent noncommutative pre-Poisson algebra structure $(\succ,\prec,\ast)$
on $A\oplus A^{*}$ defined by Eqs.~\eqref{Ib1}-\eqref{Ib2} such that 
 $(A\oplus A^{*},\succ,\prec,\ast)$ is quadratic noncommutative pre-Poisson algebra.
 In view of Corollary \ref{Ps3}, $(A,\succ|_{A},\prec|_{A},\ast|_{A})$ and $(A^{*},\succ|_{A^{*}},\prec|_{A^{*}},\ast|_{A^{*}})$
are subalgebras of $(A\oplus A^{*},\succ,\prec,\ast)$.
It is obvious that $A$ and $ A^{*}$ are isotropic. Thus, $(A\oplus A^{*},A, A^{*},\omega)$ is
 a Manin triple of noncommutative pre-Poisson algebras.
 Combining Proposition 4.2 \cite{B1}, Theorem 4.1.5 \cite{B2} and Proposition \ref{Mp}, it follows directly that
\eqref{Pm2} $\Longleftrightarrow$ \eqref{Pm3} $\Longleftrightarrow$ \eqref{Pm4} $\Longrightarrow$ \eqref{Pm1}. 
\end{proof}

\section{Noncommutative pre-Poisson bialgebras and the noncommutative pre-Poisson Yang-Baxter equation}

\subsection{Dendriform bialgebras and pre-Lie bialgebras}
We start by reviewing the bialgebra theories for dendriform algebras (see \cite{B2}) and pre-Lie bialgebras (see \cite{B1}).
\begin{defi} 
	\begin{enumerate}
		\item A {\bf dendriform coalgebra} is a triple $(A,\Delta_{\succ},\Delta_{\prec})$, where
$A$ is a vector space and $\Delta_{\succ},\Delta_{\prec}:A\longrightarrow A\otimes A$ are linear maps such that
the following conditions hold:
\begin{align*}
( \Delta_{\succ}\otimes I)\Delta_{\prec}=(I\otimes\Delta_{\prec})\Delta_{\succ},\ \ \ 
(I\otimes \Delta_{\succ})\Delta_{\succ}=(\Delta\otimes I)\Delta_{\succ}, \ \ \
( \Delta_{\prec}\otimes I)\Delta_{\prec}=(I\otimes\Delta)\Delta_{\prec},\end{align*}
where $\Delta=\Delta_{\succ}+\Delta_{\prec}.$
\item A {\bf dendriform bialgebra} is a quintuple $(A,\succ,\prec,\Delta_{\succ},\Delta_{\prec})$,
 where $(A,\succ,\prec)$ is a dendriform algebra and $(A,\Delta_{\succ},\Delta_{\prec}
		)$ is a dendriform coalgebra such that for all $x,y\in A$,
	 the following compatible conditions hold:
\begin{align*}&\Delta_{\prec}(x\cdot_{A} y)=((I\otimes L_{\succ_{A}} (x))\Delta_{\prec_{A}}(y)+R_{\cdot_{A}}(y)\otimes I)\Delta_{\prec}(x),\\&
\Delta_{\succ}(x\cdot_{A} y)=(I\otimes L_{\cdot_{A}}(x))\Delta_{\succ}(y)+(R_{\prec_{A}} (y)\otimes I)\Delta_{\succ}(x),\\&
\Delta(x\prec y)=(R_{\prec} (y)\otimes I)\Delta(x)+(I\otimes L_{\prec} (x))\Delta_{\succ}(y),\\&
\Delta(x\succ y)=(I\otimes L_{\succ} (x))\Delta(y)+(R_{\succ} (y)\otimes I)\Delta_{\prec}(x),\\&
(L_{\cdot_{A}} (x)\otimes I-(I\otimes R_{\prec_{A}} (x))\Delta_{\prec}(y)
+\tau(L_{\succ_{A}} (y)\otimes I-I\otimes  R_{\cdot_{A}} (y))\Delta_{\succ}(x)=0,\\&
(L_{\succ} (x)\otimes I)\Delta(y)-(R_{\succ} (y)\otimes I)\tau\Delta_{\succ}(x)
+(I\otimes L_{\prec} (y))\tau\Delta_{\prec}(x)-(I\otimes R_{\prec} (x))\Delta(y)=0,\end{align*}
where $\cdot=\succ+\prec,~\Delta=\Delta_{\succ}+\Delta_{\prec}$ and $R_{\cdot}=R_{\prec}+R_{\succ},~L_{\cdot}=L_{\prec}+L_{\succ}$.
\end{enumerate}
\end{defi}

\begin{thm} \label{Cg1} Let $(A,\succ,\prec)$ be a dendriform algebra and 
$r=\sum_{i}a_i\otimes b_i\in A\otimes A$. Assume that
$\Delta_{\succ,r},\Delta_{\prec,r}$ defined by 
\begin{align}&\label{CB1}
\Delta_{\succ,r}(x)=(I\otimes L_{\cdot}(x)-R_{\prec}(x)\otimes I)\tau(r),
\\&\label{CB2}\Delta_{\prec,r}(x)=(R_{\cdot}(x)\otimes I-I\otimes L_{\succ}(x))r,
\end{align}
Then $(A,\succ,\prec,\Delta_{\succ,r},\Delta_{\prec,r})$ is a dendriform bialgebra if and only if the following equations hold:
\begin{align}\label{Pa1}&[Q(x\succ y)-I\otimes L_{\succ}(x)Q(y)](r-\tau(r))=0,\\&
\label{Pa2}\tau(Q(x))Q(y)(r-\tau(r))=0,\\&
\label{Pa3}(R_{\cdot}(x)\otimes I \otimes I-I \otimes I\otimes L_{\succ}(x))(r_{12}\cdot r_{13}-r_{23}\succ r_{12}-r_{13}\prec r_{32})
\\&+\sum_{i}(a_i\cdot x)\otimes Q(a_i)(r-\tau(r))-a_i\otimes[Q(x\succ b_i)(r-\tau(r))]=0,\nonumber\\&
\label{Pa4}(R_{\prec}(x)\otimes I \otimes I-I \otimes I\otimes L_{\succ}(x))(r_{31}\succ r_{23}+r_{21}\prec r_{13}-r_{23}\cdot r_{21})=0,
\\&
\label{Pa5}(R_{\prec}(x)\otimes I \otimes I-I \otimes I\otimes L_{\cdot}(x))(r_{32}\prec r_{21}+r_{12}\succ r_{31}-r_{31}\cdot r_{32})
\\&+[Q(b_i)(r-\tau(r))\otimes x\cdot a_i-Q( b_i\prec x)(r-\tau(r))\otimes a_i]=0,\nonumber\end{align}
where $Q(x)=I\otimes L_{\succ}(x)-R_{\prec}(x)\otimes I$.
\end{thm}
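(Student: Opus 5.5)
The plan is a direct verification, organized so that each axiom of a dendriform bialgebra becomes one of the conditions \eqref{Pa1}--\eqref{Pa5}. Since $(A,\succ,\prec)$ is assumed to be a dendriform algebra, only two things need checking for the maps \eqref{CB1}--\eqref{CB2}: the three coassociativity-type identities defining a dendriform coalgebra, and the six compatibility identities between $(\succ,\prec)$ and $(\Delta_{\succ,r},\Delta_{\prec,r})$.

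I would treat the compatibility conditions first, since they are linear in $r$. Substitute $\Delta_{\succ,r}(x)=(I\otimes L_\cdot(x)-R_\prec(x)\otimes I)\tau(r)$ and $\Delta_{\prec,r}(x)=(R_\cdot(x)\otimes I-I\otimes L_\succ(x))r$ into each identity. Then expand using the dendriform axioms, written as operator identities such as
\begin{align*}
L_\succ(x\cdot y)=L_\succ(x)L_\succ(y),\quad R_\prec(x\cdot y)=R_\prec(y)R_\prec(x),\quad L_\succ(x)R_\prec(y)=R_\prec(y)L_\succ(x),
\end{align*}
together with their $\cdot$-analogues. The terms proportional to $r$ should cancel identically, which is the usual coboundary phenomenon. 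What remains is expressed through $r-\tau(r)$ and the operator $Q(x)=I\otimes L_\succ(x)-R_\prec(x)\otimes I$. I expect the first four compatibility identities to reduce either to tautologies or to \eqref{Pa1}. The two ``twisted'' ones, which involve $\tau$, should reduce to \eqref{Pa2}, i.e.\ $\tau(Q(x))Q(y)(r-\tau(r))=0$. To organize this, I would write $\tau(r)=r-(r-\tau(r))$ everywhere and collect the terms containing the skew part.

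Next come the coalgebra identities, which are quadratic in $r$. For instance, for $(\Delta_{\prec}\otimes I)\Delta_{\prec}=(I\otimes\Delta)\Delta_{\prec}$, compute both sides on $x$ and write $r=\sum_i a_i\otimes b_i$. The expansion is a sum over $i,j$ of triple tensors, and these regroup into
\begin{align*}
(R_\cdot(x)\otimes I\otimes I-I\otimes I\otimes L_\succ(x))(r_{12}\cdot r_{13}-r_{23}\succ r_{12}-r_{13}\prec r_{32}),
\end{align*}
using associativity of $\cdot$ and the dendriform axioms to move $x$ outside. The leftover terms are exactly those in which $\tau(r)$ appears in place of $r$, and they produce the correction $\sum_i (a_i\cdot x)\otimes Q(a_i)(r-\tau(r))-a_i\otimes Q(x\succ b_i)(r-\tau(r))$. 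This yields \eqref{Pa3}. The same procedure applied to $(I\otimes\Delta_\succ)\Delta_\succ=(\Delta\otimes I)\Delta_\succ$ gives \eqref{Pa5}, and applied to the mixed identity $(\Delta_\succ\otimes I)\Delta_\prec=(I\otimes\Delta_\prec)\Delta_\succ$ gives \eqref{Pa4}. For the mixed identity I expect no skew-part correction, because one side uses $r$ and the other uses $\tau(r)$ symmetrically.

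The main obstacle is the bookkeeping in these quadratic computations. Each side produces roughly a dozen $\sum_{i,j}$ terms, and they must be matched against the leg notation $r_{pq}\ast r_{st}$ fixed in the Notations. To keep this under control, I would follow the scheme of the analogous proof for antisymmetric infinitesimal bialgebras in \cite{B2}. Concretely, I would fix a canonical form with $x$ acting only on the first or third leg, and apply the dendriform axioms only in the three forms listed above. The converse direction needs no separate argument: each reduction is an equivalence, since every step is an identity rewriting, so the bialgebra axioms hold exactly when \eqref{Pa1}--\eqref{Pa5} hold.
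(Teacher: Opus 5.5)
The paper does not prove this theorem itself; it is quoted from \cite{B2}. Your direct verification is the standard argument, and carrying it out confirms your plan, so there is no gap. Two details of your predicted bookkeeping do need correcting.

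First, the $\tau$-twisted identities do not both reduce to \eqref{Pa2}. Number the six compatibility identities in the order they appear in the definition of a dendriform bialgebra.
\begin{itemize}
\item The first, second and fifth hold identically for $\Delta_{\succ,r},\Delta_{\prec,r}$. The fifth collapses to $\bigl([L_{\cdot}(x),R_{\cdot}(y)]\otimes I+I\otimes[R_{\prec}(x),L_{\succ}(y)]\bigr)r=0$, which is associativity of $\cdot$ together with $(y\succ z)\prec x=y\succ(z\prec x)$.
\item The third and fourth each reduce to $\pm\bigl[Q(x\succ y)-(I\otimes L_{\succ}(x))Q(y)\bigr](r-\tau(r))=0$, i.e.\ \eqref{Pa1}.
\item Only the sixth yields \eqref{Pa2}, in the form $-\tau(Q(x))Q(y)(r-\tau(r))=0$, where $\tau(Q(x))=L_{\succ}(x)\otimes I-I\otimes R_{\prec}(x)$.
\end{itemize}
The final equivalence is unaffected.

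Second, the correction term in \eqref{Pa3} is misprinted, and you copied the misprint. Regroup $(\Delta_{\prec,r}\otimes I)\Delta_{\prec,r}(x)-(I\otimes\Delta_r)\Delta_{\prec,r}(x)$, with $\Delta_r=\Delta_{\succ,r}+\Delta_{\prec,r}$. The leftover terms with first leg $a_i\cdot x$ combine to $\sum_i(a_i\cdot x)\otimes Q(b_i)(r-\tau(r))$, with $Q(b_i)$ rather than $Q(a_i)$. As printed, the term is not even a well-defined function of $r\in A\otimes A$, since it is quadratic in the $a_i$. The remaining leftover terms give $-\sum_i a_i\otimes Q(x\succ b_i)(r-\tau(r))$ as stated, using $(a_i\cdot x)\succ b_j=a_i\succ(x\succ b_j)$ and $x\succ(b_i\prec a_j)=(x\succ b_i)\prec a_j$.

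Your predictions for \eqref{Pa4} and \eqref{Pa5} are exactly right. In the mixed identity, the terms $b_j\otimes(a_i\cdot x)\cdot a_j\otimes b_i$ cancel by associativity. What remains is $-(R_{\prec}(x)\otimes I\otimes I-I\otimes I\otimes L_{\succ}(x))$ applied to $r_{31}\succ r_{23}+r_{21}\prec r_{13}-r_{23}\cdot r_{21}$. The $\succ$-coassociativity identity produces \eqref{Pa5}, with the implicit $\sum_i$.
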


\begin{pro} \label{Cg2}
Let $(A,\succ,\prec)$ be a dendriform algebra and $r=\sum\limits_{i}a_{i}\otimes b_{i}\in A\otimes A$ symmetric.
Define linear maps $\Delta_{\succ,r},\Delta_{\prec,r}:A\longrightarrow A\otimes A$ 
by Eqs.~(\ref{CB1})-(\ref{CB2}).
Then $(A, \succ,\prec,\ast,\Delta_{\succ,r},\Delta_{\prec,r})$ 
 is a dendriform bialgebra if and only if
  the following equations hold:
\begin{align}\label{Dy1}&
(R_{\cdot}(x)\otimes I \otimes I-I \otimes I\otimes L_{\succ}(x))(r_{12}\cdot r_{13}-r_{23}\succ r_{12}-r_{13}\prec r_{23})=0,\\&
\label{Dy2}(R_{\prec}(x)\otimes I \otimes I-I \otimes I\otimes L_{\succ}(x))(r_{13}\succ r_{23}+r_{12}\prec r_{13}-r_{23}\cdot r_{12})=0,
\\&
\label{Dy3}(R_{\prec}(x)\otimes I \otimes I-I \otimes I\otimes L_{\cdot}(x))(r_{23}\prec r_{12}+r_{12}\succ r_{13}-r_{13}\cdot r_{23})
=0.\end{align}
\end{pro}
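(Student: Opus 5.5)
The plan is to specialize Theorem~\ref{Cg1} to the case $\tau(r)=r$. With $r$ symmetric we have $r-\tau(r)=0$. So every term in Eqs.~\eqref{Pa1}--\eqref{Pa5} that contains the factor $r-\tau(r)$ drops out. Eqs.~\eqref{Pa1} and \eqref{Pa2} become $0=0$. In \eqref{Pa3} and \eqref{Pa5} the correction sums $\sum_i(a_i\cdot x)\otimes Q(a_i)(r-\tau(r))$, $\sum_i a_i\otimes Q(x\succ b_i)(r-\tau(r))$ and their analogues also vanish. Therefore $(A,\succ,\prec,\Delta_{\succ,r},\Delta_{\prec,r})$ is a dendriform bialgebra if and only if the three reduced identities coming from \eqref{Pa3}, \eqref{Pa4} and \eqref{Pa5} hold. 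It remains to check that these three identities are exactly \eqref{Dy1}, \eqref{Dy2} and \eqref{Dy3}.

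That identification is a rewriting of tensor expressions using $\tau(r)=r$. In symmetric form, $\sum_i a_i\otimes b_i=\sum_i b_i\otimes a_i$, so inside any expression we may swap the roles of $a_i$ and $b_i$ (and separately of $a_j$ and $b_j$). Applying this to the Notations gives $r_{32}=r_{23}$, $r_{31}=r_{13}$ and $r_{21}=r_{12}$. For example, $r_{13}\prec r_{32}=\sum a_i\otimes b_j\otimes b_i\prec a_j$, and swapping $a_j\leftrightarrow b_j$ gives $\sum a_i\otimes a_j\otimes b_i\prec b_j=r_{13}\prec r_{23}$. Done term by term, the following substitutions turn each reduced identity into the corresponding target:
\begin{itemize}
\item In \eqref{Pa3}, $r_{13}\prec r_{32}$ becomes $r_{13}\prec r_{23}$, which gives \eqref{Dy1}.
\item In \eqref{Pa4}, $r_{31}\succ r_{23}$ becomes $r_{13}\succ r_{23}$, $r_{21}\prec r_{13}$ becomes $r_{12}\prec r_{13}$, and $r_{23}\cdot r_{21}$ becomes $r_{23}\cdot r_{12}$, which gives \eqref{Dy2}.
\item In \eqref{Pa5}, $r_{32}\prec r_{21}$ becomes $r_{23}\prec r_{12}$, $r_{12}\succ r_{31}$ becomes $r_{12}\succ r_{13}$, and $r_{31}\cdot r_{32}$ becomes $r_{13}\cdot r_{23}$, which gives \eqref{Dy3}.
\end{itemize}

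The only real care needed is in this index bookkeeping. The paper's notation $r_{pq}\ast r_{st}$ fixes which tensor slot receives the product, and a careless swap could move the product into the wrong slot. I would therefore expand each term explicitly as a sum $\sum_{i,j}$ of elementary tensors. I would also check that the relabelling stays consistent across all three factors and leaves the operators $R_{\cdot}(x)\otimes I\otimes I$, $I\otimes I\otimes L_{\succ}(x)$ and $I\otimes I\otimes L_{\cdot}(x)$ acting on the correct slots. Once these are verified, both directions of the equivalence follow immediately from Theorem~\ref{Cg1}.
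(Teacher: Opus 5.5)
Your proposal is correct and is the intended argument: the paper states this proposition without proof, as the specialization of Theorem~\ref{Cg1} to $\tau(r)=r$, where every term carrying $r-\tau(r)$ vanishes. Your substitutions (e.g.\ $r_{13}\prec r_{32}\mapsto r_{13}\prec r_{23}$, $r_{23}\cdot r_{21}\mapsto r_{23}\cdot r_{12}$, $r_{31}\cdot r_{32}\mapsto r_{13}\cdot r_{23}$) are correct against the paper's explicit sums, so \eqref{Pa3}--\eqref{Pa5} reduce exactly to \eqref{Dy1}--\eqref{Dy3}.
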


Let $(A,\succ,\prec)$ be a dendriform algebra and $r\in A\otimes A$. The equation 
\begin{equation} \label{YE6} D(r)=r_{12}\cdot r_{13}-r_{23}\succ r_{12}-r_{13}\prec r_{23}=0\end{equation}
is called the D-equation in $(A,\succ,\prec)$.

\begin{thm} \label{Zb1}
Let $(A,\succ,\prec)$ be a dendriform algebra and $r\in A\otimes A$.
If $r$ is a symmetric solution of the D-equation in $(A,\succ,\prec)$, then
 $(A,\succ,\prec,\Delta_{\succ,r},\Delta_{\prec,r})$ is a dendriform bialgebra.
\end{thm}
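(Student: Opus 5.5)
The plan is to read Theorem \ref{Zb1} off Proposition \ref{Cg2}, which already reduces everything to three tensor identities. Let $r=\sum_i a_i\otimes b_i$ be symmetric. Proposition \ref{Cg2} says that $(A,\succ,\prec,\Delta_{\succ,r},\Delta_{\prec,r})$ is a dendriform bialgebra if and only if Eqs.~(\ref{Dy1})--(\ref{Dy3}) hold. Its proof presumably comes from Theorem \ref{Cg1}: all terms containing $r-\tau(r)$ vanish, and $\tau(r)=r$ lets us rewrite $r_{32},r_{31},r_{21}$ as $r_{23},r_{13},r_{12}$. So it suffices to show that the three tensors
\[
r_{12}\cdot r_{13}-r_{23}\succ r_{12}-r_{13}\prec r_{23},\quad r_{13}\succ r_{23}+r_{12}\prec r_{13}-r_{23}\cdot r_{12},\quad r_{23}\prec r_{12}+r_{12}\succ r_{13}-r_{13}\cdot r_{23}
\]
are all zero when $D(r)=0$ and $r$ is symmetric. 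Then the operators $(R_\cdot(x)\otimes I\otimes I-I\otimes I\otimes L_\succ(x))$ and their analogues kill them trivially.

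The first tensor is exactly $D(r)$, so Eq.~(\ref{Dy1}) is immediate. For the other two, I would apply the permutations of the three tensor factors to $D(r)=0$, using $\sum_i a_i\otimes b_i=\sum_i b_i\otimes a_i$ to relabel the summation indices.

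Concretely, applying the flip $\sigma_{13}$ of factors 1 and 3 to $D(r)$ turns $r_{12}\cdot r_{13}$ into a sum with the product in the third slot. With symmetry this reads $r_{23}\cdot r_{13}$ in the notation of the paper, up to swapping $i,j$. Likewise $r_{23}\succ r_{12}$ becomes $r_{21}\succ r_{32}$-type terms, which equal $r_{12}\succ r_{23}$ after reindexing, and $r_{13}\prec r_{23}$ becomes a term with the product in the first slot.

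The key step is to match the results against the second and third tensors. I expect the second tensor to be the image of $D(r)$ under the cyclic permutation $(123)\mapsto(231)$, possibly with an overall sign. I expect the third tensor to be the image under $\sigma_{13}$ composed with a transposition. For each term I would write both sides explicitly as $\sum_{i,j}(\cdots)\otimes(\cdots)\otimes(\cdots)$ and use symmetry to move $a$'s and $b$'s so the expressions coincide.

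The main obstacle is purely bookkeeping: finding the right permutation for each of the two tensors. If no single permutation of $D(r)$ matches, I would instead express each tensor as a linear combination of permuted copies of $D(r)$. The only freedom is the six elements of $S_3$, each combined with index relabeling.
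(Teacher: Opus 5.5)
Your plan is correct and follows the route the result rests on. The paper states this theorem without proof (quoting \cite{B2} right after Proposition \ref{Cg2}), and it uses the same permutation device in Remark \ref{Ac6}. Concretely, take $r$ symmetric and $\sigma_{132}(x\otimes y\otimes z)=z\otimes x\otimes y$. Then the tensor in Eq.~(\ref{Dy2}) equals $-\sigma_{132}D(r)$, and the tensor in Eq.~(\ref{Dy3}) equals $-\sigma_{132}^{-1}D(r)=-D_2(r)$. So the two 3-cycles, each with sign $-1$, do the job, as you expected. A transposition does not work: it reverses the cyclic pattern of which slot the left factor of each product is paired with, and gives $D_1$-type tensors instead (e.g.\ $\sigma_{23}D(r)=-D_1(r)$).
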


\begin{defi}
	\begin{enumerate}
		\item A {\bf pre-Lie coalgebra} is a vector space $A$ together
		with a linear map $\delta: A \longrightarrow A\otimes A$ such that
\begin{small}
		\begin{align*}&
			(I\otimes\delta)\delta-(\tau\otimes I)(I\otimes\delta)\delta=(\delta\otimes I)\delta-(\tau\otimes I)(\delta\otimes I)\delta.\end{align*}
\end{small}
	\item A {\bf pre-Lie bialgebra} is a triple $(A,\ast,
	\delta)$, where $(A,\ast)$ is a pre-Lie algebra and $(A,
	\delta)$ is a pre-Lie coalgebra such that 
	 the following compatible conditions hold for all $x,y\in A$,
	\begin{small}
\begin{align*}&
		\delta(x\ast y-y\ast x)=(I\otimes \mathrm{ad}(x)+L_\ast(x) \otimes I)\delta(y)-
(I\otimes \mathrm{ad}(y)+L_\ast(y) \otimes I)\delta(x),\\
		&(\delta-\tau \delta)(x\ast y)=L_\ast(x) \otimes I+I \otimes L_\ast(x))(\delta-\tau \delta)(y)
+(I\otimes L_\ast(y))\delta(x)-(L_\ast(y)\otimes I)\tau\delta(x).
	\end{align*}
\end{small}
	\end{enumerate}	
\end{defi}

\begin{thm}\label{Cg3}
Let $(A,\ast)$ be a pre-Lie algebra and
$r=\sum_{i}a_{i}\otimes b_{i}\in A\otimes A$. Define a linear map
$\delta_{r}:A\rightarrow A\otimes A$ by 
\begin{equation}\label{CB3}
		\delta_{r}(x)=-(L_{\ast}(x)\otimes I+I\otimes \mathrm{ad}(x))r,\;\forall x\in A.\end{equation}
Then $(A,\ast,\delta_{r})$ is a pre-Lie bialgebra if and only if the following equations hold:
\begin{align}&\label{Pa6}(L_\ast(x) \otimes I\otimes I+I\otimes L_\ast(x) \otimes I+I \otimes \otimes I\otimes \mathrm{ad}(x))[[r,r]]=0,
\\&\label{Pa7}(P(x\ast y)-P(x)P(y)(r-\tau(r))=0,
\end{align}
where $P(x)=L_\ast(x) \otimes I+I\otimes L_\ast(x)$ and
$[[r,r]]=r_{13}\ast r_{12}+[r_{23},r_{12}]-r_{23}\ast r_{21}-[r_{13},r_{21}]+[r_{23},r_{13}]$.
\end{thm}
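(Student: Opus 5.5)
The plan is to verify the two defining conditions of a pre-Lie bialgebra directly for $\delta_r$: the pre-Lie coalgebra identity and the two compatibility conditions. Throughout I write $r=\sum_i a_i\otimes b_i$ and use only the pre-Lie identity in $(A,\ast)$.

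\emph{Compatibility conditions.} These are linear in $\delta_r$, so I treat them first.

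1. For the first compatibility, I substitute $\delta_r(x)=-(L_\ast(x)\otimes I+I\otimes\mathrm{ad}(x))r$ into
\[
\delta([x,y])-(I\otimes\mathrm{ad}(x)+L_\ast(x)\otimes I)\delta(y)+(I\otimes\mathrm{ad}(y)+L_\ast(y)\otimes I)\delta(x).
\]
   The terms $L_\ast(x)L_\ast(y)-L_\ast(y)L_\ast(x)-L_\ast([x,y])$ vanish by the pre-Lie identity. The terms $\mathrm{ad}(x)\mathrm{ad}(y)-\mathrm{ad}(y)\mathrm{ad}(x)-\mathrm{ad}([x,y])$ vanish by Jacobi. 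The mixed terms $L_\ast(x)\otimes\mathrm{ad}(y)$ appear twice with opposite signs and cancel. So this condition holds identically, with no constraint on $r$.

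2. For the second compatibility, I expand $(\delta_r-\tau\delta_r)(x\ast y)$ and compare it with the right-hand side. After applying the pre-Lie identity $L_\ast(x\ast y)=L_\ast(x)L_\ast(y)-L_\ast(y)L_\ast(x)+L_\ast(y\ast x)$ and cancelling, I expect everything to collect into an expression in $P(x\ast y)$ and $P(x)P(y)$ applied to $r-\tau(r)$. This should yield exactly Eq.~(\ref{Pa7}). The skew part appears because $\delta_r-\tau\delta_r$ only sees $r$ through the combination $r-\tau(r)$ in the $L_\ast$-terms.

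\emph{Coalgebra identity.} This is the main computation. I will compute
\[
(I\otimes\delta_r)\delta_r(x)-(\tau\otimes I)(I\otimes\delta_r)\delta_r(x)-(\delta_r\otimes I)\delta_r(x)+(\tau\otimes I)(\delta_r\otimes I)\delta_r(x)
\]
as a sum over $i,j$ of terms of the form $u\otimes v\otimes w$, with each entry built from $a_i,b_i,a_j,b_j,x$.

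The strategy is to push $x$ outward. I rewrite every term in which $x$ is nested inside a product with an $r$-entry, using the pre-Lie identity and Jacobi. The goal is to reach the form $(L_\ast(x)\otimes I\otimes I+I\otimes L_\ast(x)\otimes I+I\otimes I\otimes\mathrm{ad}(x))$ applied to the quadratic tensors $r_{13}\ast r_{12}$, $[r_{23},r_{12}]$, etc. The leftover terms are those that do not assemble into $[[r,r]]$; they should cancel among themselves by the pre-Lie identity, or combine with the antisymmetrization by $\tau\otimes I$.

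The main obstacle is exactly this bookkeeping. One has to identify which of the roughly sixteen families of terms form $[[r,r]]$, and check that the $\tau$-antisymmetrization kills the rest. I would organize the check by the position of $x$: first, second, or third tensor factor. Within each position, I would match terms using the notation conventions for $r_{pq}\ast r_{st}$ fixed in the Notations.

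\emph{Conclusion.} Combining the two parts, $(A,\ast,\delta_r)$ is a pre-Lie bialgebra if and only if Eqs.~(\ref{Pa6}) and (\ref{Pa7}) hold. For the ``only if'' direction, the computations above are identities, so each condition is literally equivalent to the corresponding equation. Alternatively, this is the pre-Lie analogue of Theorem~\ref{Cg1}, and the whole result can be cited from \cite{B1}.
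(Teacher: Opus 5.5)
The paper does not prove this statement; it quotes it from \cite{B1}, which your last sentence also does. The direct verification you outline has a genuine gap in the coalgebra step. Step 2 also runs into a problem with the compatibility condition as printed.

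\textbf{The coalgebra step.} For non-symmetric $r$, the terms that do not assemble into $[[r,r]]$ do \emph{not} cancel by the pre-Lie identity. The $\mathrm{ad}(x)$-terms in the third slot do give exactly $(I\otimes I\otimes \mathrm{ad}(x))[[r,r]]$, using Jacobi for the double brackets. The remaining terms, after applying $R_\ast(x\ast u)=R_\ast(u)R_\ast(x)+[L_\ast(x),R_\ast(u)]$, collapse to
\[
(I\otimes I\otimes I-\tau\otimes I)\big[(\delta_r\otimes I)\delta_r-(I\otimes\delta_r)\delta_r\big](x)=Q(x)[[r,r]]+\sum_i\big[(P(x\ast a_i)-P(x)P(a_i))(r-\tau(r))\big]\otimes b_i ,
\]
where $Q(x)=L_\ast(x)\otimes I\otimes I+I\otimes L_\ast(x)\otimes I+I\otimes I\otimes\mathrm{ad}(x)$.

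So the pre-Lie coalgebra identity is equivalent to \eqref{Pa6} only once \eqref{Pa7} is known, or when $r$ is symmetric. Your closing claim that each axiom is literally equivalent to its own equation is false. Here is an example:
\begin{itemize}
\item Take $A=k1\oplus ke$ with unit $1$ and $e\ast e=0$, so $\mathrm{ad}=0$, and take $r=1\otimes e$.
\item Then $\delta_r(x)=-x\otimes e$, and the dual product is $\zeta\ast\eta=-\langle \eta,e\rangle\zeta$.
\item This product is associative, so the coalgebra identity holds.
\item Yet $Q(1)[[r,r]]=2(1\otimes e\otimes e-e\otimes 1\otimes e)\neq 0$.
\end{itemize}
The missing step is to extract \eqref{Pa7} from the second compatibility first, and then use it to kill this residual. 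With that ordering, both directions of the equivalence go through.

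\textbf{Step 2.} Take the compatibility condition as printed, with $(I\otimes L_\ast(y))\delta(x)-(L_\ast(y)\otimes I)\tau\delta(x)$. Your expansion then leaves an extra term $-(1-\tau)(I\otimes \mathrm{ad}(y))\delta_r(x)$, which does not vanish in general. For example, use the pre-Lie product $e_1\ast e_1=e_1$, $e_2\ast e_1=e_2$ of Example~\ref{Ae}. There $r=e_1\otimes e_1$ is a symmetric solution of $S(r)=0$. At $x=e_1$, $y=e_2$ the printed condition reads $0=e_2\otimes e_1-e_1\otimes e_2$, which contradicts Theorem~\ref{Pb1}.

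The condition obtained from the matched pair of the sub-adjacent Lie algebras of $A$ and $A^*$, with actions $-L_\ast^*$, has $R_\ast(y)$ in both places. With it, write
\[
(\delta_r-\tau\delta_r)(x)=-P(x)(r-\tau(r))+(1-\tau)(I\otimes R_\ast(x))r.
\]
The $R_\ast$-terms then cancel by the $R_\ast$-form of the pre-Lie identity above, not the $L_\ast$-form you quote. What remains is exactly $-(P(x\ast y)-P(x)P(y))(r-\tau(r))$.
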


\begin{pro} \label{Cg4}
Let $(A,\ast)$ be a pre-Lie algebra and $r=\sum\limits_{i}a_{i}\otimes b_{i}\in A\otimes A$ symmetric.
Assume that $\delta_{r}:A\longrightarrow A\otimes A$ is given
by Eq.~(\ref{CB3}).
Then $(A, \ast,\delta_{r})$ 
 is a pre-Lie bialgebra if and only if
  the following equations hold:
\begin{align}\label{Ly1}(L_\ast(x) \otimes I\otimes I+I\otimes L_\ast(x) \otimes I+I \otimes \otimes I\otimes \mathrm{ad}(x))S(r)=0,
\end{align}
where \begin{equation*}
S(r)=r_{12}\ast r_{23}-r_{12}\ast r_{13}+[r_{13}, r_{23}]=0.\end{equation*}
\end{pro}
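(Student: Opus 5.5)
The plan is to specialize Theorem \ref{Cg3} to the case $r=\tau(r)$. Let $r=\sum_i a_i\otimes b_i$ be symmetric, so $r-\tau(r)=0$. Then condition \eqref{Pa7} holds trivially, since it is linear in $r-\tau(r)$. Consequently $(A,\ast,\delta_r)$ is a pre-Lie bialgebra if and only if \eqref{Pa6} holds.

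It therefore suffices to show that, for symmetric $r$, the tensor $[[r,r]]$ coincides with $S(r)$, possibly up to an overall sign or a permutation of tensor legs that commutes with the operator $L_\ast(x)\otimes I\otimes I+I\otimes L_\ast(x)\otimes I+I\otimes I\otimes \mathrm{ad}(x)$. A sign would not affect the vanishing condition.

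To do this I will rewrite each summand of $[[r,r]]=r_{13}\ast r_{12}+[r_{23},r_{12}]-r_{23}\ast r_{21}-[r_{13},r_{21}]+[r_{23},r_{13}]$ using the symmetry $\sum_i a_i\otimes b_i=\sum_i b_i\otimes a_i$. This lets me relabel freely, so $r_{21}=r_{12}$, $r_{31}=r_{13}$ and $r_{32}=r_{23}$ as tensors. After relabelling, the summands become:
\begin{itemize}
\item[(i)] $r_{13}\ast r_{12}=\sum a_j\ast a_i\otimes b_i\otimes b_j$;
\item[(ii)] $[r_{23},r_{12}]$, whose middle leg is $[a_j,b_i]$ with $a_i$ and $b_j$ in the outer legs;
\item[(iii)] $r_{23}\ast r_{21}$, which becomes $r_{23}\ast r_{12}$;
\item[(iv)] $[r_{13},r_{21}]=[r_{13},r_{12}]$, a bracket in the first leg;
\item[(v)] $[r_{23},r_{13}]$, which is already a bracket in the third leg.
\end{itemize}
Writing $[u,v]=u\ast v-v\ast u$, the terms with products in the middle leg, (ii) and (iii), combine to $-r_{12}\ast r_{23}$. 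This uses $r_{23}\ast r_{12}-r_{12}\ast r_{23}=[r_{23},r_{12}]$ at the middle leg. The first-leg terms (i) and (iv) give $r_{13}\ast r_{12}-(r_{13}\ast r_{12}-r_{12}\ast r_{13})=r_{12}\ast r_{13}$. The third-leg term (v) equals $-[r_{13},r_{23}]$.

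Collecting these, I expect $[[r,r]]=-\bigl(r_{12}\ast r_{23}-r_{12}\ast r_{13}+[r_{13},r_{23}]\bigr)=-S(r)$. Equation \eqref{Pa6} is then equivalent to \eqref{Ly1}, which proves the proposition.

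The main obstacle is the leg bookkeeping. The paper's notations $r_{ij}\ast r_{kl}$ are defined case by case in the Notations paragraph, and each identification must be checked against those definitions, with indices $i,j$ swapped where symmetry permits. If the signs or leg orders do not match exactly, the fallback is to compare both sides after applying $\tau_{13}$ or a similar permutation. That reduction only works if the permutation commutes with the operator $L_\ast\otimes I\otimes I+I\otimes L_\ast\otimes I+I\otimes I\otimes\mathrm{ad}$, which is not automatic, so I would verify it by direct expansion in components as the last check.
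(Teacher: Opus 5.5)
Your proposal is correct, and it is the standard derivation. The paper gives no proof of this proposition and simply recalls it from \cite{B1}, where it is obtained exactly by specializing Theorem \ref{Cg3} to $r=\tau(r)$: \eqref{Pa7} becomes vacuous, and $[[r,r]]=-S(r)$, so \eqref{Pa6} is equivalent to \eqref{Ly1}. Your term-by-term bookkeeping is consistent with the paper's notation. Symmetry is needed only to replace $r_{21}$ by $r_{12}$ in (iii) and (iv), and the identity $[[r,r]]=-S(r)$ holds on the nose, so the permutation fallback is unnecessary.
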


Let $(A,\ast)$ be a pre-Lie algebra and $r\in A\otimes A$. The equation 
$S(r)=0$ is called the S-equation in $(A,\ast)$.
 
\begin{thm} \label{Pb1}
	Let $(A,\ast)$ be a pre-Lie algebra and $r\in A\otimes A$. If $r$ is a symmetric solution of 
the S-equation in $(A,\ast)$, then $(A,\ast,\delta_r)$ is a pre-Lie bialgebra. 
\end{thm}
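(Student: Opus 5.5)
The plan is to derive Theorem~\ref{Pb1} directly from Proposition~\ref{Cg4}, since the hypotheses match exactly. Let $r=\sum_i a_i\otimes b_i$ be symmetric, that is $\tau(r)=r$.

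First I will check that Proposition~\ref{Cg4} applies. It concerns a pre-Lie algebra $(A,\ast)$, a symmetric $r$, and the map $\delta_r$ of Eq.~(\ref{CB3}). This is precisely our setting. The proposition states that $(A,\ast,\delta_r)$ is a pre-Lie bialgebra if and only if Eq.~(\ref{Ly1}) holds for all $x\in A$, i.e.
\[
\bigl(L_\ast(x)\otimes I\otimes I+I\otimes L_\ast(x)\otimes I+I\otimes I\otimes \mathrm{ad}(x)\bigr)S(r)=0 .
\]

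Next, the assumption that $r$ solves the S-equation means $S(r)=r_{12}\ast r_{23}-r_{12}\ast r_{13}+[r_{13},r_{23}]=0$. Applying a linear operator to the zero tensor gives zero, so Eq.~(\ref{Ly1}) holds for every $x$. Proposition~\ref{Cg4} then yields that $(A,\ast,\delta_r)$ is a pre-Lie bialgebra.

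The only substantive content lies inside Proposition~\ref{Cg4}, which we may assume. If one instead argued from Theorem~\ref{Cg3}, the main step would be the reduction it encodes. Symmetry kills Eq.~(\ref{Pa7}), because $r-\tau(r)=0$. One would then also need to show that, for symmetric $r$, the tensor $[[r,r]]$ agrees with $S(r)$ up to the identifications $r_{21}=r_{12}$, $r_{31}=r_{13}$, $r_{32}=r_{23}$. Concretely, $r_{13}\ast r_{12}-r_{23}\ast r_{21}$ and the bracket terms would be rewritten using $\tau(r)=r$ and compared term by term with $S(r)$.

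I expect this bookkeeping to be the main obstacle, if it is needed at all: matching the index conventions for products such as $r_{23}\ast r_{21}$ versus $r_{12}\ast r_{23}$ under the symmetry. Since Proposition~\ref{Cg4} is already available, however, the proof reduces to the two-line argument above.
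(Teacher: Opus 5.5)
Your proposal is correct and matches the paper: the paper recalls this result from Bai's work without writing out a proof, because it follows immediately from Proposition~\ref{Cg4}, where $S(r)=0$ makes Eq.~(\ref{Ly1}) hold trivially. Your aside on Theorem~\ref{Cg3} is also right in substance: for symmetric $r$, Eq.~(\ref{Pa7}) vanishes since $r-\tau(r)=0$, and one gets $[[r,r]]=-S(r)$, where the minus sign is harmless.
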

 
 \subsection{Noncommutative pre-Poisson bialgebras}
This section develops the bialgebra theory for noncommutative pre-Poisson algebras.
\begin{defi}  A {\bf noncommutative pre-Poisson coalgebra} is a quadruple
$(A,\Delta_{\succ},\Delta_{\prec},\delta)$, where $(A,\Delta_{\succ},\Delta_{\prec})$ is a dendriform coalgebra
and $(A,\delta)$ is a pre-Lie coalgebra and $\Delta_{\succ},\Delta_{\prec},\delta$ are compatible in the following sense:
\begin{small}
\begin{align}\label{Pc1}&(\delta\otimes I)\Delta_{\succ}-
(\tau\otimes I)(\delta\otimes I)\Delta_{\succ}=(I\otimes \Delta_{\succ})\delta
-(\tau\otimes I)(I\otimes \delta)\Delta_{\succ},\\&
\label{Pc2}(I\otimes \delta)\Delta_{\prec}-
(I\otimes \tau)(I\otimes \delta)\Delta_{\prec}=(\tau\otimes I)
(I\otimes \Delta_{\prec})\delta
-(\tau\otimes I)(\delta\otimes I)\Delta_{\prec}
\\&\label{Pc3}(\Delta_{\succ}\otimes I)\delta+(\Delta_{\prec}\otimes I)\delta=(I\otimes \tau)(\delta\otimes I)\Delta_{\prec}+
(I\otimes \delta)\Delta_{\succ}
.\end{align}
\end{small}
\end{defi}

\begin{defi} \label{Ac1} A noncommutative pre-Poisson coalgebra
$(A,\Delta_{\succ},\Delta_{\prec},\delta)$ is called {\bf coherent} if 
it satisfies
\begin{align}\label{Pc4}(\Delta_{\succ}\otimes I+\Delta_{\prec}\otimes I)\delta=(I\otimes \Delta_{\succ})\delta+
(\tau\otimes I)(I\otimes \tau)(I\otimes \Delta_{\prec})\delta.
\end{align}
\end{defi}

\begin{defi}\label{Ac2} Let  $(A,\succ,\prec,\ast)$ be a coherent noncommutative pre-Poisson algebra.
Suppose that there are comultiplications
$\Delta_{\succ},\Delta_{\prec},\delta:A\longrightarrow A\otimes A$ such that
$(A,\Delta_{\succ},\Delta_{\prec},\delta)$ is a coherent noncommutative pre-Poisson coalgebra. 
If in addition, $(A,\succ,\prec,\Delta_{\succ},\Delta_{\prec})$ is a dendriform bialgebra, $(A,\ast, \delta)$ is
a pre-Lie bialgebra and $\Delta_{\succ},\Delta_{\prec},\delta$ satisfy the following
compatible conditions:
\begin{small}
\begin{align}&\label{NPB1}
	\Delta(x\ast y)=( L_{\ast}(x)\otimes I)\Delta(y)+(I\otimes L_{\ast}(x))\Delta(y) -
(I \otimes L_{\prec}(y))\delta(x)-(R_{\succ}(y)\otimes I)\tau\delta(x),\\&
\label{NPB2}(\delta-\tau\delta)(y\prec x)=(I \otimes R_{\prec}(x))	(\delta-\tau\delta)(y)+(R_{\ast}(y)\otimes I)\tau\Delta_{\succ}(x)
+(I \otimes L_{\prec}(y))\delta(x)-( L_{\ast}(x)\otimes I)\Delta(y),\\&
\label{NPB3}\delta(x\cdot y)=(I\otimes R_{\cdot}(y))\delta(x)+(I\otimes L_{\cdot}(x))\delta(y)-
( L_{\ast}(x)\otimes I)\tau\Delta_{\prec}(y)-( L_{\ast}(y)\otimes I)\Delta_{\succ}(x),\\&
\label{NPB4}\Delta_{\prec}(x\ast y-y\ast x)=(\mathrm{ad}(x)\otimes I)\Delta_{\prec}(y)+(I\otimes  L_{\succ}(y))\tau\delta(x)
+(I\otimes L_{\ast}(x))\Delta_{\prec}(y)-(R_{\cdot}(y)\otimes I)\tau \delta(x),\end{align}
\end{small}
for all $x,y\in A$, then $(A,\succ,\prec,\ast,\Delta_{\succ},\Delta_{\prec},\delta)$ is called
a {\bf noncommutative pre-Poisson bialgebra}.\end{defi}

\begin{rmk} \label {Bc} $(A,\Delta_{\succ},\Delta_{\prec},\delta)$ is 
a (coherent) noncommutative pre-Poisson coalgebra 
 if and only if $(A^{*},\succ_{A^{*}},\prec_{A^{*}},\ast_{A^{*}})$
 is a (coherent) noncommutative pre-Poisson algebra, where $\succ_{A^{*}},\prec_{A^{*}},\ast_{A^{*}}$ 
 are the linear dual of $\Delta_{\succ},\Delta_{\prec},\delta$
 respectively, that is,
 \begin{align}&\label{Dm1}\langle \Delta_{\succ}(x),\zeta\otimes \eta\rangle=\langle x,\zeta\succ_{A^{*}} \eta\rangle, \ \ \
 \langle \Delta_{\prec}(x),\zeta\otimes \eta\rangle=\langle x,\zeta\prec_{A^{*}} \eta\rangle,
\\&\label{Dm3}\langle \delta(x),\zeta\otimes \eta\rangle=\langle x,\zeta\ast_{A^{*}} \eta\rangle,~\forall~x\in A,\zeta,\eta\in A^{*}.
\end{align}
 Thus, a noncommutative pre-Poisson bialgebra $(A,\succ,\prec,\ast,\Delta_{\succ},\Delta_{\prec},\delta)$ is sometimes
denoted by $(A,\succ,\prec,\ast,A^{*},\succ_{A^{*}},\prec_{A^{*}},\ast_{A^{*}})$, where the coherent
noncommutative pre-Poisson
 algebra structure $(\succ_{A^{*}},\prec_{A^{*}},\ast_{A^{*}})$
 on the dual space $A^{*}$
corresponds to the coherent noncommutative pre-Poisson coalgebra $(A,\Delta_{\succ},\Delta_{\prec},\delta)$
through Eqs.~(\ref{Dm1})-(\ref{Dm3}).
\end{rmk}

\begin{thm} \label{Mb3} Let $(A, \succ_1,\prec_1,\ast_1)$ be a coherent noncommutative pre-Poisson algebra
equipped with comultiplications $\Delta_{\succ},\Delta_{\prec},\delta:A\longrightarrow
A\otimes A$. Suppose that $\Delta_{\succ}^{*},\Delta_{\prec}^{*},\delta^{*}:A^{*}\otimes
A^{*}\longrightarrow A^{*}$ induce a coherent noncommutative pre-Poisson algebra structure
on $A^{*}$. Put $\succ_2=\Delta_{\succ}^{*},\prec_2=\Delta_{\prec}^{*},\ast_2=\delta^{*}$. Then the
following conditions are equivalent:
\begin{enumerate}
	\item\label{Ea1} $(A\oplus A^{*},\cdot,[ \ , \ ],\omega)$ is a phase space of the noncommutative Poisson algebra $(A,\cdot_1, [ \ , \ ]_1)$.
\item\label{Ea2} $(A\oplus A^{*},\omega,A,A^{*})$ is a Manin triple of noncommutative pre-Poisson algebras, 
where the skew-symmetric bilinear form $\omega$ on $A\oplus A^{*}$ is given by Eq.~(\ref{C2}).
\item\label{Ea3}
$(A, A^{*}, R_{\cdot_1}^{*},-L_{\prec_1}^{*},-R_{\succ_1}^{*},L_{\cdot_1}^{*},-\mathrm{ad_1}^{*},R^{*}_{\ast_1},
 R_{\cdot_2}^{*},-L_{\prec_2}^{*},-R_{\succ_2}^{*},L_{\cdot_2}^{*},-\mathrm{ad_2}^{*},R^{*}_{\ast_2})$ 
 is a matched pair of coherent noncommutative pre-Poisson algebras.
 \item\label{Ea4}
$(A, A^{*},R_{\prec_1}^{*}, L_{\succ_1}^{*},-L_{\ast_1}^{*},R_{\prec_2}^{*},L_{\succ_2}^{*},-L_{\ast_2}^{*}) $ is a matched pair of coherent 
noncommutative Poisson algebras.
	\item\label{Ea5} $(A, \succ_1,\prec_1,\ast_1,\Delta_{\succ},\Delta_{\prec},\delta)$ is a noncommutative pre-Poisson bialgebra.
\end{enumerate}
\end{thm}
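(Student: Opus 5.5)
Theorem \ref{Mp1} already gives the equivalence of \eqref{Ea1}--\eqref{Ea4}, since here $A^{*}$ carries the coherent noncommutative pre-Poisson structure $(\succ_2,\prec_2,\ast_2)$ dual to $(\Delta_{\succ},\Delta_{\prec},\delta)$. It therefore suffices to prove \eqref{Ea3}$\Longleftrightarrow$\eqref{Ea5}. I will use the definition of a matched pair in Proposition \ref{Mp} with the specific coadjoint-type actions of \eqref{Ea3}. By Example \ref{Cr}(c), coherence of $A$ and $A^{*}$ makes these actions genuine representations, so conditions (a) and (b) of Proposition \ref{Mp} hold automatically.

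Next I dispose of conditions (c) and (d) by citing known results. Condition (c) asks for a matched pair of dendriform algebras with the actions $R_{\cdot}^{*},-L_{\prec}^{*},-R_{\succ}^{*},L_{\cdot}^{*}$. By Theorem 4.1.5 of \cite{B2} (as already used in Theorem \ref{Mp1}), this is equivalent to $(A,\succ_1,\prec_1,\Delta_{\succ},\Delta_{\prec})$ being a dendriform bialgebra. Condition (d) asks for a matched pair of pre-Lie algebras with actions $-\mathrm{ad}^{*},R_{\ast}^{*}$. By Proposition 4.2 of \cite{B1}, this is equivalent to $(A,\ast_1,\delta)$ being a pre-Lie bialgebra. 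The coalgebra part of Definition \ref{Ac2}, namely that $(A,\Delta_{\succ},\Delta_{\prec},\delta)$ is a coherent noncommutative pre-Poisson coalgebra, holds by Remark \ref{Bc} because $A^{*}$ is a coherent noncommutative pre-Poisson algebra.

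It remains to show that the eighteen mixed compatibility conditions \eqref{ppmp eq1.4}--\eqref{ppmp eq1.21}, for these actions, are equivalent to Eqs.~\eqref{NPB1}--\eqref{NPB4}. I pair each condition against test elements: for an identity in $A^{*}$ with $x\in A$ and $a,b\in A^{*}$, I take its pairing with $y\in A$ and rewrite every term with $\langle \Delta_{\succ}(x),a\otimes b\rangle=\langle x,a\succ_2 b\rangle$ and its analogues, together with the definitions of the dual maps. Done systematically, each identity in \eqref{ppmp eq1.13}--\eqref{ppmp eq1.21}, with $A$ acting on $A^{*}$, becomes an identity of the form $\langle \text{comultiplication}(\,\cdot\,), a\otimes b\rangle$, i.e.\ one of \eqref{NPB1}--\eqref{NPB4} or a combination of them. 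For example, \eqref{ppmp eq1.19}, which concerns $r_{\ast_1}(x)(a\cdot_2 b)$, should give \eqref{NPB3}, which expresses $\delta(x\cdot y)$. Similarly \eqref{ppmp eq1.13}--\eqref{ppmp eq1.15}, which involve $\succ$ and the commutator, should reproduce \eqref{NPB1} and \eqref{NPB4}. The symmetric family \eqref{ppmp eq1.4}--\eqref{ppmp eq1.12}, with the roles of $A$ and $A^{*}$ swapped, should dualize to the same identities. For that family I pair against $\zeta\otimes x\otimes\eta$ and use the symmetry of $\omega$-type pairings. I expect the transposition maps $\tau$, $(I\otimes\tau)$ to appear exactly as in \eqref{NPB1}--\eqref{NPB4}.

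The main obstacle is this bookkeeping: showing that the eighteen conditions collapse to only four independent identities. Several of them will be equivalent to one another after dualization, or will follow from others combined with coherence \eqref{Np4} and the representation identities \eqref{r70}--\eqref{r80}. My first approach is to pair everything against a common triple $a\otimes b\otimes y$ and sort the terms by which comultiplication appears. I will check that each condition either equals one of \eqref{NPB1}--\eqref{NPB4} or is a linear combination of them modulo the dendriform and pre-Lie bialgebra axioms. If some condition does not reduce, I will use coherence of $A$ and $A^{*}$ (Eqs.~\eqref{Np4} and \eqref{Pc4}) to supply the missing relation.
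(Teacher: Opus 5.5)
Your reductions before the last step are fine: Theorem~\ref{Mp1}, the representation conditions, the dendriform and pre-Lie parts, and the coalgebra part. The gap is the last step, which is the whole new content of the theorem. You only announce it, and it is not the term-by-term dualization you describe.

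Pair the conditions with $y\in A$ and look at the terms that fix their type:
\begin{itemize}
\item In \eqref{ppmp eq1.19}, $r_{\ast_1}(x)(a\cdot_2b)$ pairs to $\Delta(y\ast_1x)$. The right side becomes coproducts of $y$ and $(\delta-\tau\delta)(x)$. So this is neither \eqref{NPB3}, as you predict, nor \eqref{NPB1}, which writes $\Delta(y\ast_1x)$ through $\Delta(x)$ and $\delta(y)$.
\item The term $r_{\succ_1}(x)[a,b]_2$ in \eqref{ppmp eq1.13} pairs to $(\delta-\tau\delta)(x\prec_1y)$. The term $l_{\succ_1}(x)(a\ast_2b)$ in \eqref{ppmp eq1.14} pairs to $\delta(y\cdot_1x)$. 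These are the types of \eqref{NPB2} and \eqref{NPB3}, not of \eqref{NPB1} and \eqref{NPB4} as you expect.
\item \eqref{ppmp eq1.15} and \eqref{ppmp eq1.18} pair to identities for $\Delta_{\succ}([x,y]_1)$ and $(\delta-\tau\delta)(y\succ_1x)$. These expressions do not occur in \eqref{NPB1}--\eqref{NPB4} at all.
\end{itemize}
Translating through $\omega$, the last two are the components of the Leibniz rule on $A\oplus A^{*}$ that govern $r_2(a)[x,y]_1$ and $r_1(x)[a,b]_2$. Neither is among \eqref{MP1}--\eqref{MP4}.

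So for \eqref{Ea5}$\Rightarrow$\eqref{Ea3} you would have to derive identities of genuinely new shape from the four conditions, plus the dendriform and pre-Lie bialgebra axioms and coherence. Your proposal gives no mechanism for this. ``If some condition does not reduce, use coherence'' is a hope, not an argument.

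The paper never touches the eighteen conditions. Theorem~\ref{Mp1}, which you invoke anyway, already gives \eqref{Ea3}$\Leftrightarrow$\eqref{Ea4}, so the paper proves \eqref{Ea4}$\Leftrightarrow$\eqref{Ea5}. The associative and Lie matched-pair parts of Proposition~\ref{Df} correspond to the dendriform and pre-Lie bialgebra parts by the results already used in Theorem~\ref{Mp1}. With $l_1=R^{*}_{\prec_1}$, $r_1=L^{*}_{\succ_1}$, $\rho_1=-L^{*}_{\ast_1}$, $l_2=R^{*}_{\prec_2}$, $r_2=L^{*}_{\succ_2}$, $\rho_2=-L^{*}_{\ast_2}$, the four conditions \eqref{MP1}--\eqref{MP4} dualize one by one into \eqref{NPB1}--\eqref{NPB4}. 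For instance,
\[
\langle -L^{*}_{\ast_1}(x)(a\cdot_2b),y\rangle=-\langle\Delta(x\ast_1y),a\otimes b\rangle,\qquad
\langle -R^{*}_{\prec_1}(L^{*}_{\ast_2}(a)x)b,y\rangle=-\langle(I\otimes L_{\prec_1}(y))\delta(x),a\otimes b\rangle .
\]
The left sides of \eqref{MP2}--\eqref{MP4} pair, against $y\in A$ for \eqref{MP2} and against $b\in A^{*}$ for \eqref{MP3}--\eqref{MP4}, to $(\delta-\tau\delta)(y\prec_1x)$, $\delta(x\cdot_1y)$ and $\Delta_{\prec}([x,y]_1)$. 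Replacing your last step with this check closes the gap.

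Be aware that this route rests on Proposition~\ref{Df}, whose compatibility conditions are exactly \eqref{MP1}--\eqref{MP4}. That is why the two mirror Leibniz components above never arise there. Your route via \eqref{Ea3} would force you to confront them.
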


\begin{proof} According to Theorem \ref{Mp1}, we only need to show that \eqref{Ea4} $\Longleftrightarrow$ \eqref{Ea5}, 
that is,
 $(\ref{MP1})\Longleftrightarrow (\ref{NPB1}),~(\ref{MP2})\Longleftrightarrow (\ref{NPB2}),
~(\ref{MP3})\Longleftrightarrow (\ref{NPB3}),~(\ref{MP4})\Longleftrightarrow (\ref{NPB4})$
under the conditions
 \begin{align*}l_1=R_{\prec_1}^{*}, \ \ \ r_1=L_{\succ_1}^{*},\ \ \ \rho_1=-L_{\ast_1}^{*},\ \ \
 l_2=R_{\prec_2}^{*},\ \ \ r_2=L_{\succ_2}^{*},\ \ \ \rho_2=-L_{\ast_2}^{*}.\end{align*}
In fact, we obtain for all $x,y\in A,a,b\in A^{*}$, 
\begin{align*}
\langle -R_{\prec_1}^{*}(L_{\ast_2}^{*}(a)x)b,y\rangle
&=-\langle b,y\prec_1 (L_{\ast_2}^{*}(a)x)\rangle
=-\langle L_{\prec_1}^{*}(y)b, L_{\ast_2}^{*}(a)x\rangle
\\&=-\langle a\ast_2(L_{\prec_1}^{*}(y)b), x\rangle
=-\langle (I\otimes L_{\prec_1}(y))\delta(x),a\otimes b\rangle,\end{align*}
\begin{align*}
\langle -L_{\succ_1}^{*}(L_{\ast_2}^{*}(b)x)a,y\rangle
&=-\langle a,(L_{\ast_2}^{*}(b)x)\succ_1 y\rangle
=-\langle R_{\succ_1}^{*}(y)a, L_{\ast_2}^{*}(b)x\rangle
\\&=-\langle b\ast_2(R_{\succ_1}^{*}(y)a), x\rangle
=-\langle (R_{\succ_1}(y)\otimes I )\tau\delta(x),a\otimes b\rangle,\end{align*}
\begin{align*}\langle -(L_{\ast_1}^{*}(x)a)\cdot_2 b,y
\rangle =-\langle (L_{\ast_1}(x)\otimes I)\Delta(y),a\otimes b\rangle,
\end{align*}
\begin{align*}&
\langle -a\cdot_2 L_{\ast_1}^{*}(x)b,y\rangle=-\langle (I\otimes L_{\ast_1}(x))\Delta(y),a\otimes b\rangle,
\end{align*}
\begin{align*}&
\langle -L_{\ast_1}^{*}(x)(a\cdot_2 b),y\rangle=-\langle a\cdot_2 b,x\ast_{1}y\rangle
=-\langle \delta(x\ast_{1}y),a\otimes b\rangle.\end{align*}
Thus, $(\ref{MP1})\Longleftrightarrow (\ref{NPB1})$. The other equivalence can be proved analogously.
The proof is completed.
\end{proof}

Let $(A, \succ_A,\prec_A,\ast_A,\Delta_{\succ},\Delta_{\prec},\delta)$ be a noncommutative pre-Poisson bialgebra. 
By Proposition \ref{Ps4} and Theorem \ref{Mb3}, we know that $(D=A\oplus A^{*},\succ_{D},\prec_{D},\ast_{D})$
 is a coherent noncommutative pre-Poisson algebra, where
 \begin{align}\label{Db1}(x+a)\succ_{D}(y+b)=&x\succ_A y+( R_{\succ_{A^*}}^{*}+R_{\prec_{A^*}}^{*})
(a)y-L_{\prec_{A^*}}^{*}
(b)x\\&+a\succ_{A^*}b+(R_{\succ_A}^{*}+R_{\prec_A}^{*})(x)b-L_{\prec_A}^{*}(y)a\nonumber
,\end{align}
 \begin{align}\label{Db2}(x+a)\prec_{D}(y+b)=&x\prec_A y-R_{\succ_{A^*}}^{*}
(a)y+(L_{\succ_{A^*}}^{*}+L_{\prec_{A^*}}^{*})
(b)x\\&+a\prec_{A^*}b-R_{\succ_A}^{*}(x)b+(L_{\succ_A}^{*}+L_{\prec_A}^{*})(y)a\nonumber
,\end{align}
 \begin{align}\label{Db3}(x+a)\ast_{D}(y+b)=&x\ast_A y+(R^{*}_{\ast_{A^*}}-L_{\ast_{A^*}}^{*})
(a)y+R^{*}_{\ast_{A^*}}
(b)x\\&+a\ast_{A^*}b+(R^{*}_{\ast_A}-L_{\ast_A}^{*})(x)b+R^{*}_{\ast_A}(y)a\nonumber
,\end{align}
for all $x,y\in A,a,b\in A^{*}$.
$(D=A\oplus A^{*},\succ_{D},\prec_{D},\ast_{D})$ is called the double noncommutative pre-Poisson algebra.


\subsection{Coboundary noncommutative pre-Poisson bialgebras and the noncommutative pre-Poisson Yang-Baxter equation}

In this section, we consider the coboundary noncommutative pre-Poisson
bialgebras. 

\begin{defi} A noncommutative pre-Poisson bialgebra $(A, \succ,\prec,\ast,\Delta_{\succ,r},\Delta_{\prec,r},\delta_{r})$ 
is called coboundary if there is some
$r\in A\otimes A$ such that Eqs.~\eqref{CB1}-\eqref{CB2} and \eqref{CB3} hold.
\end{defi}

\begin{pro} \label{Ac3}
Let $(A,\succ,\prec,\ast,\Delta_{\succ,r},\Delta_{\prec,r},\delta_{r})$ be a noncommutative pre-Poisson bialgebra
and $r=\sum\limits_{i}a_{i}\otimes b_{i}\in A\otimes A$, where
$\Delta_{\succ,r},\Delta_{\prec,r},\delta_r:A\longrightarrow A\otimes A$ are given by Eqs.~(\ref{CB1})-(\ref{CB2}) and
(\ref{CB3}) respectively. Then
\begin{enumerate}
\item \label{CNPB1} Eq.~(\ref{Pc1}) holds if and only if
\begin{small}
\begin{align}\label{CA1}&
(L_{\ast}(x)\otimes I \otimes I)(r_{32}\prec r_{12}-r_{13}\cdot r_{32}+r_{21}\succ r_{31})
\\&-(I\otimes R_{\prec}(x)\otimes I)(r_{12}\ast r_{13}+[r_{32},r_{13}]-r_{12}\ast r_{32})
\nonumber\\&+(I\otimes I \otimes L_{\cdot}(x))(r_{31}\ast r_{12}+[r_{32},r_{12}]-r_{32}\ast r_{21}-[r_{31},r_{21}]+[r_{32},r_{13}])
\nonumber\\&+\sum_{i}(L_{\ast}(b_i\prec x)\otimes I \otimes I +I \otimes L_{\ast}(b_i\prec x)\otimes I)((\tau(r)-r)\otimes a_i)
\nonumber\\&+(L_{\ast}( x\cdot a_i )\otimes I \otimes I)(\tau\otimes I)(b_i\otimes r) -
(L_{\ast}( x\cdot b_i )\otimes I \otimes I)(\tau\otimes I)(a_i\otimes \tau(r))
=0.\nonumber\end{align}\end{small}

\item \label{CNPB2}
	 Eq.~(\ref{Pc2}) holds if and only if
\begin{small}
\begin{align}\label{CA2}&
(R_{\cdot}(x)\otimes I\otimes I)(r_{12}\ast r_{23}+[r_{13},r_{23}]-r_{13}\ast r_{32}+[r_{32},r_{12}]+[r_{13},r_{21}])
\\&+(I \otimes L_{\ast}(x)\otimes I)(r_{32}\succ r_{13}-r_{13}\cdot r_{21}+r_{21}\prec r_{23})
\nonumber\\&+(I \otimes I\otimes L_{\succ}(x))(r_{23}\ast r_{13}-r_{23}\ast r_{21}-[r_{13}, r_{21}])
\nonumber\\&+\sum_{i}(I \otimes L_{\ast}( x\succ b_i)\otimes I+I \otimes I\otimes  L_{\ast}( x\succ b_i))(a_i\otimes(\tau(r)-r))
\nonumber\\&+(I \otimes R_{\ast}( x\succ a_i)\otimes I)(b_i\otimes r)-(I \otimes R_{\ast}( x\succ b_i)\otimes I)(a_i\otimes \tau(r))
=0.\nonumber
\end{align}\end{small}

\item \label{CNPB3} Eq.~(\ref{Pc3}) holds if and only if
\begin{small}
\begin{align}\label{CA3}&
(R_{\prec}(x)\otimes I\otimes I)(r_{21}\ast r_{23}+[r_{31},r_{23}]-r_{12}\ast r_{13})
\\&+(I \otimes L_{\succ}(x)\otimes I)(r_{12}\ast r_{13}+[r_{32},r_{13}]-r_{21}\ast r_{23})
\nonumber\\&+(I \otimes I\otimes \mathrm{ad}(x))(r_{23}\cdot r_{21}-r_{21}\prec r_{13}-r_{23}\succ r_{12}+r_{12}\cdot r_{13}-r_{13}\cdot r_{32})
\nonumber\\&+\sum_{i}(R_{\prec}(x\ast a_i)\otimes I \otimes I)((r-\tau(r))\otimes b_i)
+(I \otimes L_{\succ}(x\ast a_i)\otimes I)((\tau(r)-r)\otimes b_i)
\nonumber\\&+(I \otimes I\otimes \mathrm{ad}(x\cdot b_i))(a_i\otimes \tau(r))-(I \otimes I\otimes \mathrm{ad}(x\cdot a_i))(b_i\otimes r)
=0.\nonumber
\end{align}\end{small}

\item \label{CNPB4} Eq.~(\ref{Pc4}) holds if and only if
\begin{small}
\begin{align}\label{CA4}&
(L_{\ast}(x)\otimes I \otimes I)(r_{32}\prec r_{12}-r_{13}\cdot r_{32}+r_{21}\succ r_{31})
\\&+(I \otimes L_{\ast}(x)\otimes I)(r_{21}\succ r_{31}-r_{31}\cdot r_{23}+r_{32}\prec r_{12})
\nonumber\\&+(I \otimes I\otimes \mathrm{ad}(x))(r_{23}\cdot r_{21}-r_{21}\prec r_{13}-r_{23}\succ r_{12}+r_{12}\cdot r_{13}-r_{13}\cdot r_{32})
\nonumber\\&+\sum_{i}(R_{\prec}(x\ast a_i)\otimes I \otimes I)((r-\tau(r)\otimes b_i)
\nonumber\\&+(R_{\succ}(x\ast a_i)\otimes I \otimes I)(r\otimes b_i)-(R_{\succ}(x\ast b_i)\otimes I \otimes I)(\tau(r)\otimes a_i)
\nonumber\\&+( I \otimes L_{\prec}(x\ast a_i)\otimes I)(\tau(r)\otimes b_i)-
( I \otimes L_{\prec}(x\ast b_i)\otimes I)(r\otimes a_i)
\nonumber\\&+(I \otimes I\otimes R_{\cdot}([x,a_i])(\tau\otimes I)(b_i\otimes r)
-(I \otimes I\otimes R_{\cdot}([x,b_i])(\tau\otimes I)(a_i\otimes \tau(r))
=0.\nonumber
\end{align}\end{small}

\item \label{CNPB5} Eq.~(\ref{NPB1}) holds if and only if
\begin{small}
	\begin{align}\label{CA5}
		&(I\otimes L_{\succ}(x\ast y)-I\otimes L_{\ast}(x)L_{\succ}( y)-I\otimes L_{\prec}(y)\mathrm{ad}(x)
+L_{\ast}(x)R_{\cdot}(y)\otimes I\\&-R_{\cdot}(x\ast y)\otimes I+R_{\cdot}(y)\otimes L_{\ast}(x)-L_{\ast}(x)\otimes
L_{\cdot}( y))(r-\tau(r))=0.\nonumber\end{align}
\item \label{CNPB6} Eq.~(\ref{NPB2}) holds if and only if
		\begin{align}\label{CA6}&
			(I\otimes R_{\prec}(x)L_{\ast}(y)-I\otimes L_{\ast}(y\prec x)-\mathrm{ad}(y\prec x) \otimes I
-L_{\ast}(x)R_{\prec}(y)\otimes I\\&+\mathrm{ad}(y) \otimes R_{\prec}(x)+L_{\ast}(x)\otimes
L_{\cdot}( y))(r-\tau(r))=0.\nonumber\end{align}

\item \label{CNPB6} Eq.~(\ref{NPB3}) holds if and only if
		\begin{align}\label{CA7}
			(L_{\ast}(x)\otimes R_{\cdot}(y)+L_{\ast}(y)\otimes L_{\cdot}(y)-L_{\ast}(x\cdot y)\otimes I)(r-\tau(r))=0.\end{align}
\item \label{CNPB6} Eq.~(\ref{NPB4}) holds if and only if
		\begin{align}\label{CA8}
			(R_{\cdot}(y)\otimes L_{\ast}(x)-\mathrm{ad}(x)\otimes L_{\succ}(y)+\mathrm{ad}(x)R_{\cdot}(y)\otimes I-
R_{\cdot}([x,y])\otimes I-I\otimes L_{\succ}(y) L_{\ast}(x))(r-\tau(r))=0.\end{align}
\end{small}
\end{enumerate}
\end{pro}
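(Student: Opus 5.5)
The plan is a direct expansion: substitute the coboundary formulas \eqref{CB1}, \eqref{CB2} and \eqref{CB3} into each of the compatibility conditions \eqref{Pc1}--\eqref{Pc4} and \eqref{NPB1}--\eqref{NPB4}, and then regroup the resulting terms. Throughout I write $r=\sum_i a_i\otimes b_i$ and $\tau(r)=\sum_i b_i\otimes a_i$. In this notation
\[
\Delta_{\succ,r}(x)=\sum_i b_i\otimes x\cdot a_i-b_i\prec x\otimes a_i,\qquad
\Delta_{\prec,r}(x)=\sum_i a_i\cdot x\otimes b_i-a_i\otimes x\succ b_i,
\]
\[
\delta_r(x)=-\sum_i x\ast a_i\otimes b_i+a_i\otimes[x,b_i].
\]
Every side of \eqref{Pc1}--\eqref{Pc4} then becomes a double sum over $i,j$ of elementary tensors in $A^{\otimes 3}$. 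Each of \eqref{NPB1}--\eqref{NPB4} becomes a single sum in $A^{\otimes 2}$, with operators built from $L,R$ of $x$ and $y$ applied to $r$ or $\tau(r)$.

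For the coalgebra parts (a)--(d) I would proceed as follows.

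\begin{enumerate}
\item Expand both sides of \eqref{Pc1} as sums of terms of the form $(\text{product})\otimes(\text{product})\otimes(\text{product})$. Each term contains exactly one of $x$, $a_i$, $b_i$, $a_j$, $b_j$ in a nontrivial product.
\item Use the axioms of the noncommutative pre-Poisson algebra to move $x$ to the outside. For example, use \eqref{Np1} to rewrite $(x\ast a_i-a_i\ast x)\succ a_j$, and \eqref{Np3} to rewrite $(b_i\cdot x)\ast a_j$. This collects the terms into the forms $(L_\ast(x)\otimes I\otimes I)(\cdots)$, $(I\otimes R_\prec(x)\otimes I)(\cdots)$ and $(I\otimes I\otimes L_\cdot(x))(\cdots)$, where $(\cdots)$ is a combination of the tensors $r_{pq}\ast r_{st}$ listed in the Notations.
\item The leftover terms are those in which $x$ sits inside a product with a single $a_i$ or $b_i$, such as $L_\ast(b_i\prec x)$ or $L_\ast(x\cdot a_i)$. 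For these I do not apply an axiom. Instead I pair each such term with its counterpart obtained by swapping the roles of $r$ and $\tau(r)$, which produces the $(\tau(r)-r)$ terms of \eqref{CA1}.
\end{enumerate}

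Parts (b), (c) and (d) follow the same template. For (c) I use \eqref{Np2}, \eqref{Np3} and the pre-Lie identity. For (d) I additionally use the coherence condition \eqref{Np4}, which is what allows the $\mathrm{ad}(x)$ and $R_\cdot([x,a_i])$ terms in \eqref{CA4} to appear.

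For parts (e)--(h) the computation is in $A\otimes A$ and is much shorter. Take \eqref{NPB1} as an example.
\begin{itemize}
\item Compute the left side $\Delta_r(x\ast y)$, where $\Delta_r=\Delta_{\succ,r}+\Delta_{\prec,r}$.
\item Compute the right side, which involves $\Delta_r(y)$ and $\delta_r(x)$.
\item Reduce the products $(x\ast y)\cdot a_i$ and $b_i\prec(x\ast y)$ using \eqref{Np2} and \eqref{Np3}, together with the dendriform axioms. After this, everything expressible through $r$ alone cancels.
\item What remains is an operator applied to $r$ and the same operator applied to $\tau(r)$ with opposite sign, which gives \eqref{CA5}.
\end{itemize}
Parts (f), (g) and (h) are analogous, using \eqref{Np1}, \eqref{Np4} and \eqref{Np1}--\eqref{Np2} respectively. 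I also use the identities \eqref{r70}--\eqref{r80} for the regular representation to rewrite the commutator terms.

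The main obstacle is the bookkeeping in (a)--(d). The difficulty is deciding which combinations of the terms $r_{pq}\ast r_{st}$ assemble into the stated brackets (for instance $[[r,r]]$-type expressions), and which terms must instead be left as explicit $r-\tau(r)$ corrections. My first approach is to match the terms against Theorems \ref{Cg1} and \ref{Cg3}, whose proofs already handle the pure dendriform and pure pre-Lie pieces. Only the genuinely mixed terms, those involving both $\ast$ and $\succ/\prec$, then need to be tracked by hand. I would record each mixed term by which tensor slot carries $x$, and cancel terms slot by slot.
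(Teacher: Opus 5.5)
Your route is the same as the paper's: substitute \eqref{CB1}, \eqref{CB2}, \eqref{CB3}, expand, sort the terms by the tensor slot that carries $x$, and reduce with the axioms. For item (a) this does work. Note, however, that (a) already needs \eqref{Np0} for $[x\cdot a_i,b_j]$ in slot $3$, and \eqref{Np2} to turn $b_j\ast(b_i\prec x)+b_i\prec[x,b_j]$ into $(b_j\ast b_i)\prec x$ in slot $2$. It also needs the coherence identity \eqref{Np4} in slot $1$: the leftover $-\sum_{i,j}b_j\ast(b_i\prec x)\otimes a_j\otimes a_i$ must be rewritten as $(L_\ast(x)\otimes I\otimes I)(r_{21}\succ r_{31})-\sum_i(L_\ast(x\cdot b_i)\otimes I\otimes I)(\tau\otimes I)(a_i\otimes\tau(r))$. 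So your claim that the leftover terms need no axiom is not accurate, though this is minor.

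The genuine gap is your claim for (e)--(h) that what remains is ``an operator applied to $r$ and the same operator applied to $\tau(r)$ with opposite sign.'' Read the statement as you and the paper do: $A$ merely coherent, and the three maps given by the coboundary formulas. Carrying out the expansion then gives the following.

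\textbf{Item (g).} Use \eqref{Np0} on $[x\cdot y,b_i]$ and \eqref{Np4} on $x\ast(y\succ b_i)+y\ast(b_i\prec x)$. The difference of the two sides of \eqref{NPB3} is exactly $(L_\ast(x)\otimes R_\cdot(y)+L_\ast(y)\otimes L_\cdot(x)-L_\ast(x\cdot y)\otimes I)(r+\tau(r))$, with the same sign on $r$ and $\tau(r)$. Note also that \eqref{CA7} has $L_\cdot(y)$ where $L_\cdot(x)$ belongs.

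\textbf{Item (e).} Let $O$ be the operator in \eqref{CA5}. The difference for \eqref{NPB1} is $-O(r-\tau(r))+2\bigl((I\otimes L_\prec(y))\delta_r(x)+(R_\succ(y)\otimes I)\tau\delta_r(x)\bigr)$, and the extra term does not vanish in general.

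\textbf{Item (h).} \eqref{NPB4} reduces, via \eqref{Np0} and \eqref{Np1}, to the operator of \eqref{CA8} applied to $r+\tau(r)$, not $r-\tau(r)$.

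\textbf{Counterexample.} In Example \ref{Ae} take $r=e_1\otimes e_1$. It is symmetric, and $D(r)=S(r)=0$. With $x=y=e_1$ one gets $\delta_r(e_1)=-e_1\otimes e_1$, $\Delta_{\succ,r}(e_1)=e_1\otimes e_1$ and $\Delta_{\prec,r}(e_1)=0$. The two sides of \eqref{NPB3} are then $-e_1\otimes e_1$ and $-3e_1\otimes e_1$. The two sides of \eqref{NPB1} are $e_1\otimes e_1$ and $3e_1\otimes e_1$. Meanwhile \eqref{CA5} and \eqref{CA7} hold trivially, because $r-\tau(r)=0$.

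So items (e) and (g) are false as stated, and no bookkeeping can close this step. The paper's own proof has the same gap: it works out only (a) and says the remaining items are ``checked in the same way.'' What your expansion actually establishes are the corrected conditions above, not \eqref{CA5}, \eqref{CA7}, \eqref{CA8}.
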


\begin{proof}
Combining Eqs.~\eqref{CB1}-\eqref{CB2} and \eqref{CB3}, we have for all $x\in A$, 
\begin{small}
\begin{align*}&(\delta_{r}\otimes I)\Delta_{\succ,r}(x)-
(\tau\otimes I)(\delta_{r}\otimes I)\Delta_{\succ,r}(x)-(I\otimes \Delta_{\succ,r})\delta_{r}(x)
+(\tau\otimes I)(I\otimes \delta)\Delta_{\succ,r}(x)\\=&
\sum_{i,j}b_i\ast a_j\otimes b_j\otimes x\cdot a_i+a_j\otimes [b_i,b_j]\otimes x\cdot a_i-(b_i\prec x)\ast a_j\otimes b_j\otimes a_i
-a_j\otimes [b_i\prec x,b_j]\otimes a_i\\&
+b_j\otimes (b_i\prec x)\ast a_j\otimes a_i+ [b_i\prec x,b_j]\otimes a_j\otimes a_i
-b_j\otimes b_i\ast a_j\otimes  x\cdot a_i-[b_i,b_j]\otimes a_j\otimes  x\cdot a_i
\\&+ x\ast a_i\otimes b_j\prec b_i\otimes a_j-x\ast a_i\otimes b_j\otimes b_i\cdot a_j
+a_i\otimes b_j\prec [x,b_i]\otimes a_j-a_i\otimes b_j\otimes[x,b_i]\cdot a_j
\\&-a_i\ast a_j\otimes b_i\prec x\otimes b_j-a_j\otimes b_i\prec x\otimes [a_i,b_j]
+(x\cdot a_i)\ast a_j\otimes b_i\otimes b_j+a_j\otimes b_i \otimes [x\cdot a_i,b_j]\\=&A(1)+A(2)+A(3),
\end{align*}
\end{small}
where
\begin{small}
\begin{align*}A(1)=&\sum_{i,j}-(b_i\prec x)\ast a_j\otimes b_j\otimes a_i
+ [b_i\prec x,b_j]\otimes a_j\otimes a_i
\\&+ x\ast a_i\otimes b_j\prec b_i\otimes a_j-x\ast a_i\otimes b_j\otimes b_i\cdot a_j
+(x\cdot a_i)\ast a_j\otimes b_i\otimes b_j,
\end{align*}
\begin{align*}A(2)=&\sum_{i,j}-a_j\otimes [b_i\prec x,b_j]\otimes a_i
+b_j\otimes (b_i\prec x)\ast a_j\otimes a_i
\\&+a_i\otimes b_j\prec [x,b_i]\otimes a_j
-a_i\ast a_j\otimes b_i\prec x\otimes b_j-a_j\otimes b_i\prec x\otimes [a_i,b_j],\end{align*}
\begin{align*}A(3)=&\sum_{i,j}b_i\ast a_j\otimes b_j\otimes x\cdot a_i+a_j\otimes [b_i,b_j]\otimes x\cdot a_i
-b_j\otimes b_i\ast a_j\otimes  x\cdot a_i\\&-[b_i,b_j]\otimes a_j\otimes  x\cdot a_i
-a_i\otimes b_j\otimes[x,b_i]\cdot a_j
+a_j\otimes b_i \otimes [x\cdot a_i,b_j].\end{align*}\end{small}
Using Eqs.~(\ref{Np1})-(\ref{Np4}) and (\ref{Np0}), we have
\begin{small}
\begin{align*}A(1)=&(L_{\ast}(b_i\prec x)\otimes I\otimes I)((\tau(r)-r)\otimes a_i)
-b_j\ast(b_i\prec x)\otimes a_j\otimes a_i\\&+(L_{\ast}(x)\otimes I\otimes I)(r_{32}\prec r_{12}-r_{13}\cdot r_{32})
+a_i\ast (a_j\prec x)\otimes b_i\otimes b_j+x\ast(a_i\succ a_j)\otimes b_i\otimes b_j
\\=&(L_{\ast}(b_i\prec x)\otimes I\otimes I)((\tau(r)-r)\otimes a_i)
-(R_{\ast}(b_i\prec x)\otimes I\otimes I)(\tau(r)\otimes a_i)
\\&+(L_{\ast}(x)\otimes I\otimes I)(r_{32}\prec r_{12}-r_{13}\cdot r_{32}+r_{12}\succ r_{13})
+(R_{\ast}(a_i\prec x)\otimes I\otimes I)(r\otimes b_i)
\end{align*}
\begin{align*}A(2)=&\sum_{i,j}a_j\otimes b_j\ast (b_i\prec x)\otimes a_i+(I\otimes L_{\ast}(b_i\prec x) \otimes I)((\tau(r)-r)\otimes a_i)
+a_i\otimes(b_i\ast b_j)\otimes \otimes a_j\\&-a_i\otimes b_j\prec [x,b_i]\otimes a_j
-(I\otimes R_{\prec}(x)\otimes I)(r_{12}\ast r_{13}+[r_{32},r_{13}])
\\=&(I\otimes L_{\ast}(b_i\prec x) \otimes I)((\tau(r)-r)\otimes a_i)
-(I\otimes R_{\prec}(x)\otimes I)(r_{12}\ast r_{13}+[r_{32},r_{13}]-r_{12}\ast r_{32})
,\end{align*}
\begin{align*}A(3)=(I\otimes I\otimes L_{\cdot}(x))(r_{31}\ast r_{12}+[r_{32},r_{12}]-r_{32}\ast r_{21}-[r_{31},r_{21}]
-[r_{13},r_{32}])
.\end{align*}\end{small}
Therefore, Eq.~(\ref{Pc1}) holds if and only if Eq.~(\ref{CA1}) holds. The remaining Items can be checked in the same way. \end{proof}

\begin{thm} \label{Bc1} Let $(A,\succ,\prec,\ast)$ be a coherent noncommutative pre-Poisson algebra equipped with
linear maps $\Delta_{\succ,r},\Delta_{\prec,r},\delta_r:A\longrightarrow A\otimes A$ given 
by Eqs.~\eqref{CB1}-\eqref{CB2} and \eqref{CB3} respectively. 
Then $(A, \succ,\prec,\ast,\Delta_{\succ,r},\Delta_{\prec,r},\delta_r)$ 
 is a noncommutative pre-Poisson bialgebra if and only if 
 Eqs.~\eqref{Pa1}-\eqref{Pa5}, \eqref{Pa6}-\eqref{Pa7} and
 \eqref{CA1}-\eqref{CA8} hold.\end{thm}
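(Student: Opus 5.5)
The plan is to unpack Definition~\ref{Ac2} into its separate requirements and to match each one with a condition on $r$ that has already been worked out. By Definition~\ref{Ac2}, $(A,\succ,\prec,\ast,\Delta_{\succ,r},\Delta_{\prec,r},\delta_r)$ is a noncommutative pre-Poisson bialgebra exactly when four things hold:
\begin{enumerate}
\item $(A,\succ,\prec,\Delta_{\succ,r},\Delta_{\prec,r})$ is a dendriform bialgebra;
\item $(A,\ast,\delta_r)$ is a pre-Lie bialgebra;
\item $(A,\Delta_{\succ,r},\Delta_{\prec,r},\delta_r)$ is a coherent noncommutative pre-Poisson coalgebra, that is, Eqs.~\eqref{Pc1}-\eqref{Pc4} hold in addition to the dendriform and pre-Lie coalgebra axioms;
\item the compatibility conditions \eqref{NPB1}-\eqref{NPB4} hold.
\end{enumerate}
Since $(A,\succ,\prec,\ast)$ is assumed coherent, nothing further is required on the algebra side.

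Each block then reduces to an earlier result.
\begin{itemize}
\item By Theorem~\ref{Cg1}, item (1), together with the dendriform coalgebra axioms it contains, is equivalent to Eqs.~\eqref{Pa1}-\eqref{Pa5}.
\item By Theorem~\ref{Cg3}, item (2), together with the pre-Lie coalgebra axiom, is equivalent to Eqs.~\eqref{Pa6}-\eqref{Pa7}.
\item The remaining mixed coalgebra compatibilities \eqref{Pc1}-\eqref{Pc4} are, by items (a)-(d) of Proposition~\ref{Ac3}, equivalent to Eqs.~\eqref{CA1}-\eqref{CA4}.
\item The bialgebra compatibilities \eqref{NPB1}-\eqref{NPB4} are, by the remaining items of Proposition~\ref{Ac3}, equivalent to Eqs.~\eqref{CA5}-\eqref{CA8}.
\end{itemize}
Collecting these equivalences gives both directions of the statement at once.

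One point needs care. Proposition~\ref{Ac3} is phrased with the bialgebra assumed as a hypothesis, but its computations only use the algebra identities \eqref{Np0}, \eqref{Np1}-\eqref{Np4} and the explicit forms \eqref{CB1}-\eqref{CB3}. I would therefore note that each equivalence there holds for an arbitrary $r$ on a coherent noncommutative pre-Poisson algebra, and could be re-derived directly in that setting. For \eqref{CA5}-\eqref{CA8}, for instance, one substitutes \eqref{CB1}-\eqref{CB3} into \eqref{NPB1}-\eqref{NPB4}. The terms involving $r$ cancel by Eqs.~\eqref{Np1}-\eqref{Np3}, leaving only expressions in $r-\tau(r)$.

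The main obstacle is this bookkeeping. One must confirm that the equivalences from Theorems~\ref{Cg1}, \ref{Cg3} and Proposition~\ref{Ac3} are genuine if-and-only-if statements that do not secretly assume the other structures. One must also confirm that coherence of the coalgebra, Eq.~\eqref{Pc4}, is exactly accounted for by \eqref{CA4}, so that no condition is missing or double-counted. I would check this by listing the defining identities of Definition~\ref{Ac2} one by one against the equation numbers above.
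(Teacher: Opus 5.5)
Your proposal is correct and follows the paper's proof, which simply combines Theorem~\ref{Cg1}, Theorem~\ref{Cg3} and Proposition~\ref{Ac3} in the same way. Your caveat is a fair point that the paper leaves implicit: Proposition~\ref{Ac3} is stated with the bialgebra property as a hypothesis. It must therefore be read as a computation valid for arbitrary $r$, since otherwise the argument would be circular.
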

\begin{proof} This follows from Theorem \ref{Cg1}, Theorem \ref{Cg3} and Proposition \ref{Ac3}.\end{proof}

Denote 
\begin{align*}&D_{1}(r)=r_{23}\succ r_{13}-r_{13}\cdot r_{12}+r_{12}\prec r_{23},\\&
D_{2}(r)=r_{13}\cdot r_{23}-r_{12}\succ r_{13}-r_{23}\prec r_{12},
\\&S_1(r)=r_{23}\ast r_{12}+[r_{13},r_{12}]-r_{23}\ast r_{13}.\end{align*}

The following conclusion is apparent.

\begin{pro} \label{Ac4}
Let $(A,\succ,\prec,\ast)$ be a coherent noncommutative pre-Poisson algebra and $r=\sum\limits_{i}a_{i}\otimes b_{i}\in A\otimes A$ symmetric.
Define linear maps $\Delta_{\succ,r},\Delta_{\prec,r},\delta_r:A\longrightarrow A\otimes A$ 
by Eqs.~(\ref{CB1})-(\ref{CB2}) and (\ref{CB3}) respectively
Then $(A, \succ,\prec,\ast,\Delta_{\succ,r},\Delta_{\prec,r},\delta_r)$. 
 is a noncommutative pre-Poisson bialgebra if and only if
 Eqs.~(\ref{Dy1})-(\ref{Dy3}), (\ref{Ly1}) and
  the following equations hold:
\begin{align}&\label{Py1}
(L_{\ast}(x)\otimes I \otimes I)D_{2}(r)
+(I\otimes R_{\prec}(x)\otimes I-I\otimes I \otimes L_{\cdot}(x))S(r)=0,
\\&\label{Py2}
(R_{\cdot}(x)\otimes I\otimes I-I \otimes I\otimes L_{\succ}(x))S_1(r)
+(I \otimes L_{\ast}(x)\otimes I)D_{1}(r)=0,
\\&\label{Py3}
(R_{\prec}(x)\otimes I\otimes I-I \otimes L_{\succ}(x)\otimes I)S(r)
-(I \otimes I\otimes \mathrm{ad}(x))D_{2}(r)=0,
\\&\label{Py4}
(L_{\ast}(x)\otimes I \otimes I+I \otimes L_{\ast}(x)\otimes I+I \otimes I\otimes \mathrm{ad}(x))D_{2}(r)
=0.
\end{align}

\end{pro}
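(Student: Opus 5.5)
The plan is to specialize Theorem \ref{Bc1} to a symmetric $r$, i.e. $\tau(r)=r$, and read off which of its conditions survive. By Theorem \ref{Bc1}, $(A,\succ,\prec,\ast,\Delta_{\succ,r},\Delta_{\prec,r},\delta_r)$ is a noncommutative pre-Poisson bialgebra if and only if Eqs.~\eqref{Pa1}-\eqref{Pa5}, \eqref{Pa6}-\eqref{Pa7} and \eqref{CA1}-\eqref{CA8} hold. When $r-\tau(r)=0$, every term containing the factor $r-\tau(r)$ disappears. This has three effects.
\begin{enumerate}
\item Eqs.~\eqref{Pa1}, \eqref{Pa2}, \eqref{Pa7} and \eqref{CA5}-\eqref{CA8} hold trivially.
\item Eqs.~\eqref{Pa3}-\eqref{Pa5} reduce to \eqref{Dy1}-\eqref{Dy3}, exactly as in Proposition \ref{Cg2}.
\item Eq.~\eqref{Pa6} reduces to \eqref{Ly1}, as in Proposition \ref{Cg4}.
\end{enumerate}
So the dendriform and pre-Lie parts follow directly from the earlier results, and only the coalgebra compatibility conditions \eqref{CA1}-\eqref{CA4} still need to be matched with \eqref{Py1}-\eqref{Py4}.

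For these four equations I would use symmetry in two ways. First, since $r=\tau(r)$, one has $\sum_i a_i\otimes b_i=\sum_i b_i\otimes a_i$, so the index labels $r_{ij}$ and $r_{ji}$ may be freely exchanged. For example $r_{32}\prec r_{12}=r_{23}\prec r_{12}$ and $r_{21}\succ r_{31}=r_{12}\succ r_{13}$. Second, the leftover single-sum terms, such as $(L_\ast(x\cdot a_i)\otimes I\otimes I)(\tau\otimes I)(b_i\otimes r)-(L_\ast(x\cdot b_i)\otimes I\otimes I)(\tau\otimes I)(a_i\otimes\tau(r))$, cancel pairwise after relabeling $a_i\leftrightarrow b_i$.

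The surviving three-tensor expressions then have to be identified with $D_2(r)$, $D_1(r)$, $S(r)$ and $S_1(r)$.
\begin{enumerate}
\item In \eqref{CA1}, the first group becomes $r_{23}\prec r_{12}-r_{13}\cdot r_{23}+r_{12}\succ r_{13}=-D_2(r)$.
\item The $R_\prec(x)$ group in \eqref{CA1} becomes $r_{12}\ast r_{13}+[r_{23},r_{13}]-r_{12}\ast r_{23}=-S(r)$.
\item The $L_\cdot(x)$ group in \eqref{CA1} should collapse, after cancelling the repeated brackets, to $S(r)$ up to sign. This yields \eqref{Py1} after an overall sign change.
\end{enumerate}
The same relabeling turns \eqref{CA2} into \eqref{Py2}, \eqref{CA3} into \eqref{Py3}, and \eqref{CA4} into \eqref{Py4}. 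In \eqref{CA4}, the $R_\succ$, $L_\prec$ and $R_\cdot([x,\cdot])$ single sums should cancel in pairs by symmetry.

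I expect the main obstacle to be the bookkeeping in the $L_\cdot(x)$ and $\mathrm{ad}(x)$ components. The five-term expressions there, for instance $r_{31}\ast r_{12}+[r_{32},r_{12}]-r_{32}\ast r_{21}-[r_{31},r_{21}]+[r_{32},r_{13}]$, must be rewritten term by term under symmetry. Some brackets cancel and the rest reassemble into $S(r)$ or $S_1(r)$, with a consistent overall sign. I would carry out this rewriting carefully for \eqref{CA1}, where the sign conventions for $[r_{ij},r_{kl}]$ matter. The other three equations follow the same pattern.
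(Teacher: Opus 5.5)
Your route is the one the paper intends. The paper gives no separate argument and treats the proposition as the specialization $\tau(r)=r$ of Theorem~\ref{Bc1}. Your handling of \eqref{Pa1}--\eqref{Pa7}, of \eqref{CA5}--\eqref{CA8} and of the single-sum terms is correct. The relabelling does turn \eqref{CA2} into \eqref{Py2}, \eqref{CA3} into \eqref{Py3}, and \eqref{CA4} into \eqref{Py4} up to an overall sign.

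The step that fails as written is the claim that \eqref{CA1} yields \eqref{Py1} after an overall sign change. Your own two identifications already fix a relative sign. The $L_\ast(x)$-group is $-D_2(r)$. The $R_\prec(x)$-group is $-S(r)$, but it carries the prefactor $-(I\otimes R_\prec(x)\otimes I)$ in \eqref{CA1}.

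The $L_\cdot(x)$-group is exactly $-S(r)$. Nothing cancels there; the brackets are absorbed into the $\ast$-terms via three identities:
\[
r_{13}\ast r_{12}-[r_{13},r_{12}]=r_{12}\ast r_{13},\qquad
[r_{23},r_{12}]-r_{23}\ast r_{12}=-r_{12}\ast r_{23},\qquad
[r_{23},r_{13}]=-[r_{13},r_{23}].
\]
Hence \eqref{CA1} becomes
\[
(L_\ast(x)\otimes I\otimes I)D_2(r)-(I\otimes R_\prec(x)\otimes I-I\otimes I\otimes L_\cdot(x))S(r)=0 .
\]
This is \eqref{Py1} with the opposite sign in front of the $S(r)$-term, and no overall sign change converts one into the other.

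I checked this independently by expanding \eqref{Pc1} directly for symmetric $r$. The middle-slot terms combine via \eqref{Np2}, the first-slot terms via the coherence condition \eqref{Np4}, and the third-slot terms via the Leibniz rule \eqref{Np0}. The result is the same equation with the minus sign, so \eqref{Py1} as printed contains a sign typo.

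You should therefore state and prove the corrected equation rather than assert \eqref{Py1} verbatim. The two versions impose different conditions on $r$, so the ``if and only if'' holds only with the corrected sign. This does not affect Theorem~\ref{Ac5}, where $S(r)=D_2(r)=0$ anyway.
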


\begin{rmk} \label{Ac6} Let $\sigma_{132},\sigma_{23},\sigma_{13}:A\otimes A\otimes A\longrightarrow A\otimes A\otimes A$ 
be linear maps defined by
\begin{equation*}\sigma_{132}(x\otimes y\otimes z)=z\otimes x\otimes y, \ \ \
\sigma_{23}(x\otimes y\otimes z)=x\otimes z\otimes y, \ \ \
\sigma_{13}(x\otimes y\otimes z)=z\otimes y\otimes x \end{equation*}
for all $x,y,z\in A$. If $r$ is symmetric, then
\begin{align*}&\sigma_{13}S(r)=S_{1}(r),\ \ \  \sigma_{23}D(r)=-D_{1}(r),\ \ \  \sigma_{132}D(r)=D_{2}(r).\end{align*}
\end{rmk}

\begin{thm} \label{Ac5} Let $(A,\succ,\prec,\ast)$ be a coherent noncommutative pre-Poisson algebra
and $r\in A\otimes A$ be symmetric.
Then $(A, \succ,\prec,\ast,\Delta_{\succ,r},\Delta_{\prec,r},\delta_r)$ is a noncommutative pre-Poisson bialgebra if
$S(r)=D(r)=0$, where $\Delta_{\succ,r},\Delta_{\prec,r},\delta_r$ are defined by Eqs.~\eqref{CB1}-\eqref{CB2} and \eqref{CB3} respectively.
\end{thm}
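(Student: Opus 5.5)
The plan is to reduce everything to Proposition \ref{Ac4}, which already handles symmetric $r$. By that proposition, $(A,\succ,\prec,\ast,\Delta_{\succ,r},\Delta_{\prec,r},\delta_r)$ is a noncommutative pre-Poisson bialgebra if and only if Eqs.~(\ref{Dy1})-(\ref{Dy3}), (\ref{Ly1}) and (\ref{Py1})-(\ref{Py4}) all hold. So it suffices to show that each of these expressions vanishes once $S(r)=0$ and $D(r)=0$.

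The first step is to rewrite every tensor occurring in these equations in terms of $D(r)$ and $S(r)$. By Remark \ref{Ac6}, symmetry of $r$ gives
\[
S_1(r)=\sigma_{13}S(r),\qquad D_1(r)=-\sigma_{23}D(r),\qquad D_2(r)=\sigma_{132}D(r).
\]
Hence $S(r)=D(r)=0$ forces $S_1(r)=D_1(r)=D_2(r)=0$. Equation (\ref{Ly1}) is a linear operator applied to $S(r)$, so it holds at once. Equations (\ref{Py1})-(\ref{Py4}) are linear combinations of operators applied to $D_2(r)$, $S(r)$, $S_1(r)$ and $D_1(r)$, so they vanish as well.

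The remaining step, and the only one needing care, is Eqs.~(\ref{Dy1})-(\ref{Dy3}). Their inner tensors are $r_{12}\cdot r_{13}-r_{23}\succ r_{12}-r_{13}\prec r_{23}$, $r_{13}\succ r_{23}+r_{12}\prec r_{13}-r_{23}\cdot r_{12}$ and $r_{23}\prec r_{12}+r_{12}\succ r_{13}-r_{13}\cdot r_{23}$.
\begin{itemize}
\item The first is exactly $D(r)$.
\item The third equals $-D_2(r)=-\sigma_{132}D(r)$.
\item For the second, I will expand using $r=\tau(r)$ and the notation conventions of the introduction, and check that it equals $-D_1(r)$ after relabelling the summation indices $i\leftrightarrow j$. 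By Remark \ref{Ac6}, $-D_1(r)=\sigma_{23}D(r)$.
\end{itemize}
These identifications are the main obstacle, since the index bookkeeping for symmetric $r$ is where sign or ordering errors are likely. I will verify each component-wise with $r=\sum_i a_i\otimes b_i=\sum_i b_i\otimes a_i$. Alternatively, Theorem \ref{Zb1} directly gives that $(A,\succ,\prec,\Delta_{\succ,r},\Delta_{\prec,r})$ is a dendriform bialgebra when $D(r)=0$ and $r$ is symmetric, which settles (\ref{Dy1})-(\ref{Dy3}) by Proposition \ref{Cg2}.

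In the same way, Theorem \ref{Pb1} gives the pre-Lie bialgebra part, i.e.\ (\ref{Ly1}), from $S(r)=0$. Combining these with the vanishing of (\ref{Py1})-(\ref{Py4}) and invoking Proposition \ref{Ac4} completes the proof.
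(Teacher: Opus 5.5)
Your argument is essentially the paper's. Like the paper, you reduce to Proposition~\ref{Ac4}, kill \eqref{Ly1} and \eqref{Py1}--\eqref{Py4} via Remark~\ref{Ac6}, and get the dendriform and pre-Lie parts from Theorem~\ref{Zb1} (with Proposition~\ref{Cg2}) and Theorem~\ref{Pb1}.

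One planned identification is wrong, though harmless. The inner tensor $r_{13}\succ r_{23}+r_{12}\prec r_{13}-r_{23}\cdot r_{12}$ of \eqref{Dy2} is not $-D_1(r)$, since it carries a $\prec$-product in the first slot where $D_1(r)$ carries a $\cdot$-product. For symmetric $r$ it is instead the image of $-D(r)$ under $x\otimes y\otimes z\mapsto z\otimes x\otimes y$, so it still vanishes when $D(r)=0$. Your fallback via Theorem~\ref{Zb1} covers this case anyway.
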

\begin{proof} 
The statement follows from Theorem \ref{Zb1}, Theorem \ref{Pb1}, Theorem \ref{Bc1}, Proposition \ref{Ac4} and Remark \ref{Ac6}.
\end{proof}

\begin{defi} 
Let $(A,\succ,\prec,\ast)$ be a coherent noncommutative pre-Poisson algebra and $r\in
A\otimes A$. We say that  $r$ satisfies {\bf the noncommutative-pre-Poisson Yang-Baxter equation} or {\bf NPP-YBE} in short
 if $r$ satisfies both the D-equation 
\begin{equation*}D(r)=r_{12}\cdot r_{13}-r_{23}\succ r_{12}-r_{13}\prec r_{23}=0\end{equation*}
 and the S-equation:
\begin{equation*} S(r)=r_{12}\ast r_{23}-r_{12}\ast r_{13}+[r_{13}, r_{23}]=0,\end{equation*}
where $x\cdot y=x\succ y+x\prec y$ and $[x,y]=x\ast y-y\ast x$.
 \end{defi}
 
  \begin{ex} Let $(A, \succ, \prec, \ast)$ be
 the 3-dimensional coherent noncommutative pre-Poisson algebra 
  defined in Example \ref{Ae1}, whose basis  is${e_1, e_2, e_3}$.
   By direct calculation, one finds that the tensor $r = e_2 \otimes e_2$ is a symmetric solution of 
   the noncommutative pre-Poisson Yang-Baxter equation in $(A, \succ, \prec, \ast)$.
  \end{ex}
 
Consequently, we arrive at the following conclusion.
 
\begin{cor}\label{Bc2} 
	Let $(A,\succ,\prec,\ast)$ be a coherent noncommutative pre-Poisson algebra and $r\in A\otimes A$ be
	a symmetric solution of the NPP-YBE in $(A,\succ,\prec,\ast)$.
	Then $(A, \succ,\prec,\ast,\Delta_{\succ,r},\Delta_{\prec,r},\delta_r)$
 is a noncommutative pre-Poisson bialgebra, where $\Delta_{\succ,r},\Delta_{\prec,r},\delta_r:A\rightarrow A\otimes A$ are 
linear maps given by Eqs.~\eqref{CB1}-\eqref{CB2} and \eqref{CB3} respectively.
\end{cor}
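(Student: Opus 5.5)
This corollary is a direct restatement of Theorem \ref{Ac5} in the language of the NPP-YBE. By definition, $r$ is a solution of the NPP-YBE exactly when $D(r)=0$ and $S(r)=0$. So, for a symmetric solution $r$ in the coherent noncommutative pre-Poisson algebra $(A,\succ,\prec,\ast)$, the hypotheses of Theorem \ref{Ac5} hold verbatim. That theorem then says that $(A,\succ,\prec,\ast,\Delta_{\succ,r},\Delta_{\prec,r},\delta_r)$ is a noncommutative pre-Poisson bialgebra. The proof therefore consists of invoking Theorem \ref{Ac5}.

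To make the argument self-contained, I would also trace the chain behind Theorem \ref{Ac5}, since that is where the content lies.

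\begin{enumerate}
\item Since $r$ is symmetric, $r-\tau(r)=0$. Theorem \ref{Bc1} reduces the bialgebra property to the conditions of Proposition \ref{Ac4}: Eqs.~\eqref{Dy1}-\eqref{Dy3}, \eqref{Ly1} and \eqref{Py1}-\eqref{Py4}. The reason is that every term involving $r-\tau(r)$ in \eqref{Pa1}-\eqref{Pa7} and \eqref{CA1}-\eqref{CA8} vanishes. The remaining cubic expressions then collapse to $D$, $D_1$, $D_2$, $S$, $S_1$ via the index identifications $r_{ij}=r_{ji}$.
\item Remark \ref{Ac6} gives $D_1(r)=-\sigma_{23}D(r)$, $D_2(r)=\sigma_{132}D(r)$ and $S_1(r)=\sigma_{13}S(r)$. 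Hence $D(r)=S(r)=0$ forces $D_1(r)=D_2(r)=S_1(r)=0$, and \eqref{Py1}-\eqref{Py4} hold trivially.
\item Eqs.~\eqref{Dy1}-\eqref{Dy3} follow from Theorem \ref{Zb1}. Note that \eqref{Dy2} and \eqref{Dy3} involve the permuted expressions $D_1$ and $D_2$ up to the identifications above, so they also vanish.
\item Eq.~\eqref{Ly1} follows from Theorem \ref{Pb1}, since $S(r)=0$.
\end{enumerate}

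Coherence of $A$ is needed for the dual quasi-representation to be a representation (Example \ref{Cr}), and hence for Theorem \ref{Mb3} and Definition \ref{Ac2} to apply. This is why coherence appears in the hypothesis.

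The only delicate point is step 1. One must confirm that, under symmetry, the cubic combinations in \eqref{CA1}-\eqref{CA4} really coincide with the combinations in \eqref{Py1}-\eqref{Py4}, so that Remark \ref{Ac6} applies. For example, in \eqref{CA1} the third bracket reduces to $-S(r)$ up to the relabeling $r_{21}=r_{12}$, $r_{31}=r_{13}$, $r_{32}=r_{23}$. However, this identification is already encoded in Proposition \ref{Ac4}, which the paper states as apparent. Taking Proposition \ref{Ac4} as given, the corollary follows immediately.
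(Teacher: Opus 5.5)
Your proposal is correct and is exactly the paper's argument: the corollary is stated right after the NPP-YBE is defined as the pair of equations $D(r)=S(r)=0$, and it follows at once from Theorem~\ref{Ac5}. Your optional unpacking also matches the paper's proof of Theorem~\ref{Ac5}, which cites precisely Theorems~\ref{Zb1}, \ref{Pb1}, \ref{Bc1}, Proposition~\ref{Ac4} and Remark~\ref{Ac6}.
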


For a vector space $A$, the isomorphism $A\otimes A^{*}\simeq Hom (A^{*},A)$ identifies an element $r\in A\otimes A$ with a map
$T_{r}:A^{*}\longrightarrow A$. Explicitly, 
\begin{equation} \label{Op1} T_{r}:A^{*}\longrightarrow A,\ \ \  \langle T_{r}(\zeta),\eta\rangle=\langle r,\zeta\otimes\eta\rangle,
\ \ \ \forall~\zeta,\eta\in A^{*}.\end{equation}
It is clearly that $T_{r}^{*}=T_{\tau(r)},~T_{r-\tau(r)}^{*}=-T_{r-\tau(r)}$.

\begin{defi} Assume that $(A,\succ,\prec,\ast)$ is a noncommutative pre-Poisson algebra and $(V, $\ \ \ \ \ \ \ \ \ \ $ \succ_V,\prec_V,\ast_V,l_{\succ},r_{\succ},l_{\prec},r_{\prec},l_{\ast},r_{\ast})$ is
 an A-noncommutative pre-Poisson algebra.
A relative Rota-Baxter operator $T$ of weight $\lambda$ on $(A,\succ,\prec,\ast)$ associated to
$(V,\succ_V,\prec_V,\ast_V,l_{\succ},r_{\succ},l_{\prec},r_{\prec},l_{\ast},r_{\ast})$
   is a linear map $T:V\longrightarrow A$ satisfying
\begin{align*}&T(u)\succ T(v)=T (l_{\succ}(T(u))v+r_{\succ}(T(v))u+\lambda u\succ_V v),\\&
T(u)\prec T(v)=T (l_{\prec}(T(u))v+r_{\prec}(T(v))u+\lambda u\prec_V v),\\&
T(u)\ast T(v)=T (l_{\ast}(T(u))v+r_{\ast}(T(v))u+\lambda u\ast_V v).
\end{align*}
 When $u\succ_Vv=u\prec_Vv=u\ast_Vv=0$ for all $u,v\in V$, then $T$ is simply a relative Rota-Baxter operator ( $\mathcal O$-operator) on
$(A,\succ,\prec,\ast)$ associated to the representation $(V,l_{\succ},r_{\succ},l_{\prec},r_{\prec},l_{\ast},r_{\ast})$.
\end{defi}

Combining Theorem 4.4.11 \cite{B2} and Theorem 6.6 \cite{B1}, we have the following conclusion.

\begin{thm} \label{Opp} Let $(A,\succ,\prec,\ast)$ be a coherent noncommutative pre-Poisson algebra and
$r\in A\otimes A$ be symmetric. Then the following conditions are equivalent:
\begin{enumerate}
		\item $r$ is a solution of the NPP-YBE in $(A,\succ,\prec,\ast)$.
 \item $T_r$
is an $\mathcal O$-operator of the noncommutative Poisson algebra $(A, \cdot, [ \ , \ ])$ associated to
 $(A^{*}, R_{\prec}^{*}, L_{\succ}^{*},-L_{\ast}^{*})$.
\item $T_r$
is an $\mathcal O$-operator of $(A,\succ,\prec,\ast)$ associated to 
$(A^{*},R_{\cdot}^{*},-L_{\prec}^{*},-R_{\succ}^{*},L_{\cdot}^{*},-\mathrm{ad}^{*},R^{*}_{\ast})$.
\end{enumerate}
\end{thm}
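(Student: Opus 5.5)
The plan is to split the NPP-YBE into its two components, the D-equation $D(r)=0$ and the S-equation $S(r)=0$, and to treat each separately with the cited results. Write $T=T_r$ and set $\zeta,\eta,\theta\in A^{*}$. Since $r$ is symmetric, $T^{*}=T$. We then pair $D(r)$ and $S(r)$ against $\zeta\otimes\eta\otimes\theta$.

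For the dendriform part, Theorem 4.4.11 of \cite{B2} says the following. For symmetric $r$, $D(r)=0$ holds if and only if $T$ satisfies
\begin{align*}
&T(\zeta)\succ T(\eta)=T\big(R_{\cdot}^{*}(T(\zeta))\eta-L_{\prec}^{*}(T(\eta))\zeta\big),\\
&T(\zeta)\prec T(\eta)=T\big(-R_{\succ}^{*}(T(\zeta))\eta+L_{\cdot}^{*}(T(\eta))\zeta\big).
\end{align*}
That is, $T$ is an $\mathcal O$-operator of $(A,\succ,\prec)$ associated to $(A^{*},R_{\cdot}^{*},-L_{\prec}^{*},-R_{\succ}^{*},L_{\cdot}^{*})$. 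For the pre-Lie part, Theorem 6.6 of \cite{B1} gives the analogous statement. For symmetric $r$, $S(r)=0$ holds if and only if
$$T(\zeta)\ast T(\eta)=T\big(-\mathrm{ad}^{*}(T(\zeta))\eta+R_{\ast}^{*}(T(\eta))\zeta\big).$$
The $\mathcal O$-operator identities for $\succ,\prec,\ast$ together are exactly condition (c). So (a)$\Leftrightarrow$(c) follows at once. If the index conventions of \cite{B2,B1} differ from the paper's $r_{12}\cdot r_{13}$ notation, I would redo the pairing by hand. For instance, $\langle r_{12}\cdot r_{13},\zeta\otimes\eta\otimes\theta\rangle=\langle T(\eta)\cdot T(\theta),\zeta\rangle$, and similarly for the other terms, so $\langle D(r),\zeta\otimes\eta\otimes\theta\rangle$ becomes the $\zeta$-component of $T(\eta)\cdot T(\theta)-T(\ldots)$.

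For (c)$\Rightarrow$(b), I would add the $\succ$ and $\prec$ identities. This gives
$$T(\zeta)\cdot T(\eta)=T\big(R_{\prec}^{*}(T(\zeta))\eta+L_{\succ}^{*}(T(\eta))\zeta\big),$$
using $R_{\cdot}^{*}-R_{\succ}^{*}=R_{\prec}^{*}$ and $L_{\cdot}^{*}-L_{\prec}^{*}=L_{\succ}^{*}$. Antisymmetrizing the $\ast$ identity gives
$$[T(\zeta),T(\eta)]=T\big(-L_{\ast}^{*}(T(\zeta))\eta+L_{\ast}^{*}(T(\eta))\zeta\big),$$
since $-\mathrm{ad}^{*}-R_{\ast}^{*}=-L_{\ast}^{*}$. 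These two identities are condition (b).

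The main obstacle is (b)$\Rightarrow$(a), because summing loses information. My approach is to show that for symmetric $r$ the associative identity in (b) is equivalent to the associative Yang-Baxter equation $r_{12}\cdot r_{13}-r_{13}\cdot r_{23}+r_{23}\cdot r_{12}=0$. I would then express this as a combination of $D(r)$ and its permutations, using Remark \ref{Ac6} ($\sigma_{132}D(r)=D_2(r)$, $\sigma_{23}D(r)=-D_1(r)$). The hope is that the dendriform axioms, together with symmetry of $r$, force $D(r)=0$, as in the argument of \cite{B2}. I would argue the Lie/pre-Lie analogue the same way via $\sigma_{13}S(r)=S_1(r)$. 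If that direct route fails, I would instead pair $D(r)$ with $\zeta\otimes\eta\otimes\theta$, rewrite every term through $T$, and use the nondegenerate pairing and $T^{*}=T$ to move all terms onto one side. The remainder is then the $\zeta$-component of the (b)-identity, which vanishes.
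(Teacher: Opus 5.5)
Your fallback argument is correct and is in fact the whole proof. The primary route you propose for (b)$\Rightarrow$(a) would fail, so drop it.

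The associative identity in (b) is built from $R_{\prec}^{*}$ and $L_{\succ}^{*}$, so it sees the dendriform splitting of $\cdot$. It is not equivalent to $r_{12}\cdot r_{13}-r_{13}\cdot r_{23}+r_{23}\cdot r_{12}=0$. Take $A=ke$ with $e\succ e=e$ and $\prec=\ast=0$, which is a coherent noncommutative pre-Poisson algebra. Then $r=e\otimes e$ gives $D(r)=S(r)=0$, yet $r_{12}\cdot r_{13}-r_{13}\cdot r_{23}+r_{23}\cdot r_{12}=e\otimes e\otimes e\neq 0$. The dendriform axioms are not needed anywhere either.

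What actually happens is purely linear-algebraic. For symmetric $r$ and $T=T_r$,
\[
\langle D(r),\zeta\otimes\eta\otimes\theta\rangle=\langle T(\eta)\cdot T(\theta),\zeta\rangle-\langle T(\theta)\succ T(\zeta),\eta\rangle-\langle T(\zeta)\prec T(\eta),\theta\rangle .
\]
Up to sign and relabelling, this single scalar is at once:
\begin{itemize}
\item the $\zeta$-component of the defect of the associative identity in (b);
\item the $\eta$-component of the defect of the $\succ$-identity in (c);
\item the $\theta$-component of the defect of the $\prec$-identity in (c).
\end{itemize}
In the same way, $\langle S(r),\zeta\otimes\eta\otimes\theta\rangle$ is the $\theta$-component of the Lie defect in (b) and the $\eta$-component of the $\ast$-defect in (c). So each of the five identities is separately equivalent to $D(r)=0$ or to $S(r)=0$. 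Summing loses no information, and the ``main obstacle'' does not exist. Note also that the hand pairing you sketch in your (a)$\Leftrightarrow$(c) paragraph, read in the $\zeta$-slot, produces the identity of (b), not those of (c).

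The paper proves the theorem purely by citation. Theorem 4.4.11 of \cite{B2} and Theorem 6.6 of \cite{B1} are used to supply, for symmetric $r$, the equivalence of the D-equation (resp.\ S-equation) with both the associative and the dendriform (resp.\ Lie and pre-Lie) $\mathcal{O}$-operator conditions. The statement then follows componentwise, with no separate (b)$\Rightarrow$(a) step. Your route is a hybrid: citation for (a)$\Leftrightarrow$(c), summation for (c)$\Rightarrow$(b), and direct pairing for (b)$\Rightarrow$(a). It is valid and makes the last implication self-contained. Carrying out the same pairing in all three slots would make the whole theorem independent of the citations.
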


\begin{thm} \label{Yo}
 Let $(V,l_{\succ},r_{\succ},l_{\prec},r_{\prec},l_{\ast},r_{\ast})$ be
  a representation of a coherent noncommutative pre-Poisson algebra satisfying Eqs.~\eqref{r10}-\eqref{r12}.
  Assume that $(V^*,r_{\cdot}^{*},-l_{\prec}^{*},-r_{\succ}^{*},l_{\cdot}^{*},r_{\ast}^{*}-l_{\ast}^{*},r_{\ast}^{*})$
 is the dual representation of $A$ given by Proposition \ref{Dr}. 
 Denote $\hat{A}=A\ltimes V^{*}$ and suppose
that $T:V\longrightarrow A$ is a linear map which is identifies an element in $\hat{A}\otimes \hat{A}$ through
 ($Hom(V,A)\simeq A\otimes V^{*}\subseteq \hat{A}\otimes \hat{A}$).
  Then $r=T+\tau(T)$ is a symmetric solution of
 the NPP-YBE in the coherent noncommutative pre-Poisson algebra $\hat{A}$ if and only if $T$ is an $\mathcal O$-operator
on $(A,\succ,\prec,\ast)$ associated to $(V,l_{\succ},r_{\succ},l_{\prec},r_{\prec},l_{\ast},r_{\ast})$.
\end{thm}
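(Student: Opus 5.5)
Our route is Theorem \ref{Opp}, applied in the enlarged algebra $\hat{A}=A\ltimes V^{*}$, rather than a direct computation of $D(r)$ and $S(r)$. By Remark \ref{Ms}(b), the hypotheses (coherence of $A$ and Eqs.~\eqref{r10}-\eqref{r12} for $V$) make $\hat{A}$ a coherent noncommutative pre-Poisson algebra, so Theorem \ref{Opp} is available there. It says that the symmetric tensor $r=T+\tau(T)$ solves the NPP-YBE in $\hat{A}$ if and only if $T_r:\hat{A}^{*}\to\hat{A}$ is an $\mathcal O$-operator of $\hat{A}$ associated to the coregular quasi-representation $(\hat{A}^{*},R_{\cdot}^{*},-L_{\prec}^{*},-R_{\succ}^{*},L_{\cdot}^{*},-\mathrm{ad}^{*},R^{*}_{\ast})$. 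It therefore suffices to show that this $\mathcal O$-operator condition on $T_r$ is equivalent to $T$ being an $\mathcal O$-operator on $A$ associated to $V$.

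First, compute $T_r$ explicitly. Identify $\hat{A}^{*}=A^{*}\oplus V$ and write $T=\sum_i T(v_i)\otimes v_i^{*}$ for a basis $\{v_i\}$ of $V$. A direct check with Eq.~\eqref{Op1} gives
\[
T_r(\zeta+u)=T(u)+T^{*}(\zeta),\qquad \zeta\in A^{*},\ u\in V,
\]
where $T^{*}:A^{*}\to V^{*}$ is the dual map (up to a harmless sign convention that I will fix once).

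Second, expand the three $\mathcal O$-operator identities for $T_r$. For instance, the $\succ$-identity reads
\[
T_r(X)\succ_{\hat A} T_r(Y)=T_r\big(R_{\cdot}^{*}(T_r(X))Y-L_{\prec}^{*}(T_r(Y))X\big),\qquad X=\zeta+u,\ Y=\eta+v,
\]
and the $\prec$- and $\ast$-identities are analogous. By bilinearity, each splits into four cases according to whether $X,Y$ lie in $A^{*}$ or in $V$. In the case $X=u,\ Y=v\in V$, the left side is $T(u)\succ T(v)\in A$. The coadjoint actions of $\hat{A}$ restricted to $V\subset\hat{A}^{*}$ should reproduce, via the dual semidirect structure, exactly $l_{\succ}(T(u))v+r_{\succ}(T(v))u$, because the coregular representation of $\hat A$ on $\hat A^*\supset V$ is the double dual of the action on $V^{*}$. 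This component is thus precisely the $\mathcal O$-operator identity for $T$. In the cases with at least one argument in $A^{*}$, the products land in $V^{*}$. Pairing with an element of $V$ turns each such identity into one of the same three $\mathcal O$-operator identities for $T$, paired against an element of $A^{*}$; this is the usual transposition argument, as in Theorem 4.4.11 \cite{B2} and Theorem 6.6 \cite{B1}, which handle the dendriform and pre-Lie parts respectively. The $A^{*}\otimes A^{*}$ component vanishes identically because $V^{*}\succ V^{*}=0$ in the semidirect product, and the corresponding right-hand side also lies in the trivial part.

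The main obstacle is the bookkeeping in the mixed cases. There we must verify that the dual of the dual representation, namely $(r_{\cdot}^{*},-l_{\prec}^{*},\dots)$ dualized once more through the coregular formulas $R_\cdot^{*},-L_\prec^{*},\dots$ of $\hat A$, returns $l_{\succ},r_{\succ},l_{\prec},r_{\prec},l_{\ast},r_{\ast}$ on the nose, including the pre-Lie twist $r_\ast^*-l_\ast^*$ against $-\mathrm{ad}^*$. I would first reduce to the dendriform and pre-Lie components separately, quoting the cited theorems for each. Then no new compatibility enters, since the $\mathcal O$-operator notion for $(A,\succ,\prec,\ast)$ is just the conjunction of the dendriform and pre-Lie $\mathcal O$-operator conditions; likewise, by Theorem \ref{Opp} applied in $\hat A$, the NPP-YBE is the conjunction of the D- and S-equations. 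Combining the two equivalences yields the theorem.
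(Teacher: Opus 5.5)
Your proposal is correct, but it follows a different route from the paper. The paper only notes, via Remark \ref{Ms}, that $\hat A=A\ltimes V^{*}$ is coherent. It then directly cites the dendriform and pre-Lie versions of this same statement, Theorem 4.4.13 of \cite{B2} and Theorem 6.9 of \cite{B1}, splitting the NPP-YBE into the D- and S-equations and the $\mathcal O$-operator condition into its dendriform and pre-Lie parts. You instead apply the paper's own Theorem \ref{Opp} inside $\hat A$ and check the block decomposition by hand. This amounts to re-deriving those two cited results from the ``symmetric $r$ versus coregular $\mathcal O$-operator'' results (Theorems 4.4.11 and 6.6) that underlie Theorem \ref{Opp}.

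The computation works. $T_r(\zeta+u)=T(u)+T^{*}(\zeta)$ holds with no sign. For $x\in A$, restricted to $V\subset\hat A^{*}$, one finds $R_{\cdot}^{*}(x)|_V=l_{\succ}(x)$, $-L_{\prec}^{*}(x)|_V=r_{\succ}(x)$, $-R_{\succ}^{*}(x)|_V=l_{\prec}(x)$, $L_{\cdot}^{*}(x)|_V=r_{\prec}(x)$, $-\mathrm{ad}^{*}(x)|_V=l_{\ast}(x)$ and $R_{\ast}^{*}(x)|_V=r_{\ast}(x)$. So the $V\otimes V$ block is exactly the three $\mathcal O$-operator identities for $T$. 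The $A^{*}\otimes A^{*}$ block reads $0=0$, because $V^{*}$ is a square-zero ideal of $\hat A$ that pairs trivially with $A^{*}$.

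One claim needs correcting: the mixed blocks do not each give one of the three identities. For example, the $\succ$-identity with $X=u\in V$, $Y=\eta\in A^{*}$, paired with $w\in V$, gives $T(w)\cdot T(u)=T(l_{\cdot}(T(w))u+r_{\cdot}(T(u))w)$, which is the sum of the $\succ$- and $\prec$-identities. Likewise, the $\ast$-identity with $X\in V$, $Y\in A^{*}$ gives only its antisymmetrization. This is harmless: these are consequences of the three identities, and the implication from $T_r$ to $T$ uses only the $V\otimes V$ block.

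Your route is more self-contained, since it rests only on a result already stated in the paper plus a transparent computation. It also shows why coherence of $\hat A$ is needed: it is what makes Theorem \ref{Opp} and the coregular representation of $\hat A$ available. The paper's route is shorter but outsources all the work to \cite{B2} and \cite{B1}.
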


\begin{proof}
Based on Remark \ref{Ms}, if $(V,l_{\succ},r_{\succ},l_{\prec},r_{\prec},l_{\ast},r_{\ast})$ is a representation of
 a coherent noncommutative pre-Poisson algebra $(A,\succ,\prec,\ast)$ satisfying Eqs.~\eqref{r10}-\eqref{r12},
 then $A\ltimes V^{*}$ is a coherent noncommutative pre-Poisson algebra. Combining 
 Theorem 4.4.13 \cite{B2} and Theorem 6.9 \cite{B1}, we obtain the desired result.
\end{proof}

\begin{ex} Let $(A=ke_1\oplus ke_2,\succ,\prec,\ast)$ be the 2-dimensional
 coherent noncommutative 
pre-Poisson algebra given in Example \ref{Ae}. Define a linear map $T:A\longrightarrow A$
by $T(e_1)=e_2,~T(e_2)=0$.
 It is easy to check that $T$ an $\mathcal{O}$-operator of $(A,\succ,\prec,\ast)$ 
 associated to the regular representation
$(A,L_{\succ},R_{\succ},L_{\prec},R_{\prec},L_{\ast},R_{\ast})$.
Denote the dual basis of $A^{*}$ by $\{e_1^{*}, e_2^{*}  \}$. 
By Remark \ref{Ms}, the semi-direct product 
$A\ltimes A^{*}$ with respect to its representation
$(A^{*},R_{\cdot}^{*},-L_{\prec}^{*},-R_{\succ}^{*},L_{\cdot}^{*},-\mathrm{ad}^{*},R^{*}_{\ast})$
is a coherent noncommutative pre-Poisson algebra with the multiplications $(\succ,\prec,\ast)$ defined by 
 \begin{align*}&
 (x+u)\succ (y+v)=x\succ y+R_{\cdot}^{*}(x)v-L_{\prec}^{*}(y)u, \\&
(x+u)\prec (y+v)=x\prec y-R_{\succ}^{*}(x)v+L_{\cdot}^{*}(y)u,\\&
(x+u)\ast (y+v)=x\ast y-\mathrm{ad}^{*}(x)v+R_{\ast}^{*}(y)u,
\ \ ~\forall~x,y\in A,u,v\in A^{*}.
	\end{align*}
 Explicitly, the non-trivial multiplications $(\succ,\prec,\ast)$ are given by 
 \begin{align*} &e_1\succ e_2=-e_1 \prec e_2=e_2\prec e_1=e_2\ast e_1=e_2,\ \ \ \ \ 
 e_1 \succ e_1=e_1\ast e_1=e_1,\\&
e_2^{*}\succ e_2=-e_1\succ e_1^{*}=e_2^{*}\prec e_2=e_1\prec e_1^{*}=e_1^{*}\prec e_1=e_2\ast e_2^{*}=-e_1^{*}\ast e_1=e_1^{*}
\\& e_1\succ e_2^{*}= e_2^{*}\succ e_1= e_1\ast e_2^{*}=e_2^{*}\ast e_1= -e_2^{*},
\end{align*}
In view of Theorem \ref{Yo}, we know that
 $r=\sum_{i =1}^{2}T(e_i)\otimes e_i^{*}+e_i^{*}\otimes T(e_i)$
 is a symmetric solution of the NPP-YBE in the coherent noncommutative pre-Poisson algebra $ A\ltimes A^{*} $. 
 According to Corollary \ref{Bc2}, 
$ (A\ltimes A^{*}, \Delta_{\succ}, \Delta_{\prec},\delta)$ is a noncommutative pre-Poisson bialgebra with the linear maps
$\Delta_{\succ}, \Delta_{\prec},\delta:A\ltimes A^{*}\longrightarrow (A\ltimes A^{*})\otimes (A\ltimes A^{*})$ defined respectively by
 \begin{align*}&\Delta_{\succ}(x)=(I\otimes L_{\cdot}(x)-R_{\prec}(x)\otimes I)\tau(r),\\&
 \Delta_{\prec}(x)=(R_{\cdot}(x)\otimes I-I\otimes L_{\succ}(x))r,\\&
 \delta(x)=-(L_{\ast}(x)\otimes I+I\otimes \mathrm{ad}(x))r,~~\forall~x\in A\ltimes A^{*}.
\end{align*}
Explicitly, the comultiplications $(\Delta_{\succ}, \Delta_{\prec},\delta)$ are given as follows (only non-zero operations are listed): 
 \begin{align*}&
\delta(e_1)=e_1^{*}\otimes e_2-e_2\otimes e_1^{*}, \ \ \ \delta(e_{2}^{*})=e_1^{*}\otimes e_1^{*},
\\&
\Delta_{\succ}(e_1)=-e_2\otimes e_1^{*} -e_1^{*}\otimes e_2 ,\ \ \ \Delta_{\succ}(e_2^{*})=e_1^{*} \otimes e_1^{*},
\\&
  \Delta_{\prec}(e_1)=e_2\otimes e_1^{*}-2e_1^{*}\otimes e_2.
\end{align*}
\end{ex}
\section{Quasi-triangular noncommutative pre-Poisson bialgebras and factorizable noncommutative pre-Poisson bialgebras}
Having shown in Theorem \ref{Ac5} that symmetric solutions of the NPP-YBE
 yield coboundary noncommutative pre-Poisson bialgebras, we now address the more general case of non-symmetric solutions.

\subsection{Quasi-triangular noncommutative pre-Poisson bialgebras}
\begin{defi} \label{In}
 Let $(A,\succ,\prec,\ast)$ be a noncommutative pre-Poisson algebra and $r\in A\otimes A$. Then $r$ is called {\bf invariant} if 
 \begin{align}&\label{Iv1}
(I\otimes L_{\cdot}(x)-R_{\prec}(x)\otimes I)\tau(r)=0,
\\&\label{Iv2}(R_{\cdot}(x)\otimes I-I\otimes L_{\succ}(x))r=0,
\\&\label{Iv3}(L_{\ast}(x)\otimes I+I\otimes \mathrm{ad}(x))r=0,~\forall~x\in A.
\end{align}
\end{defi} 

\begin{lem} \label{In1}
 Let $(A, \succ,\prec,\ast)$ be a noncommutative pre-Poisson algebra and $r\in A\otimes A$. Then $r$ is invariant if and only if
 \begin{align}&
\label{Iv4}R_{\prec}(x) T_{r}(\zeta)=T_{r}(L_{\cdot}^{*}(x)\zeta),\\&
\label{Iv5}L_{\succ}(x) T_{r}(\zeta)=T_{r}(R_{\cdot}^{*}(x)\zeta),
\\&\label{Iv6}\mathrm{ad}(x) T_{r}(\zeta)=-T_{r}(L_{\ast}^{*}(x)\zeta),~~\forall~x\in A,\zeta\in A^{*}.
\end{align}
Moreover, Eqs.~(\ref{Iv1})-(\ref{Iv3}) hold if and only if the following equations hold:
\begin{align}&\label{Iv7}R_{\cdot}^{*}(T_{\tau(r)}(\zeta))\eta=L_{\prec}^{*}(T_{r}(\eta))\zeta,\\&
\label{Iv8}L_{\cdot}^{*} (T_{\tau(r)}(\eta))\zeta=R_{\succ}^{*}(T_{r}(\zeta))\eta
\\&\label{Iv9}\mathrm{ad}^{*} (T_{r}(\zeta))\eta=R_{\ast}^{*}(T_{\tau(r)}(\eta))\zeta,~~\forall~x\in A,\zeta\in A^{*}.
\end{align}
\end{lem}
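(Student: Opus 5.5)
The whole argument is a dualization: pair each of Eqs.~(\ref{Iv1})--(\ref{Iv3}) with an arbitrary $\zeta\otimes\eta\in A^{*}\otimes A^{*}$ and rewrite the result through Eq.~(\ref{Op1}). We use the convention that $r=\sum_i a_i\otimes b_i$ gives $T_r(\zeta)=\sum_i\langle\zeta,a_i\rangle b_i$ and $T_{\tau(r)}(\zeta)=\sum_i\langle\zeta,b_i\rangle a_i$. We also use that for any linear map $f$ on $A$ and $\xi\in A^{*}$ one has $\langle f^{*}\xi,y\rangle=\langle\xi,fy\rangle$. A tensor in $A\otimes A$ vanishes if and only if its pairing with every $\zeta\otimes\eta$ vanishes. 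Each of (\ref{Iv1})--(\ref{Iv3}) is therefore equivalent to an identity in $\zeta,\eta$. We read that identity in two ways: fixing $\zeta$ and letting $\eta$ vary gives Eqs.~(\ref{Iv4})--(\ref{Iv6}), and fixing $x$ while moving $x$ into the argument of a dual operator gives Eqs.~(\ref{Iv7})--(\ref{Iv9}).

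We start with Eq.~(\ref{Iv2}). Pairing gives
\[\sum_i\langle\zeta,a_i\cdot x\rangle\langle\eta,b_i\rangle-\langle\zeta,a_i\rangle\langle\eta,x\succ b_i\rangle=0.\]
The first term equals $\langle T_r(R_{\cdot}^{*}(x)\zeta),\eta\rangle$ and the second equals $\langle L_{\succ}(x)T_r(\zeta),\eta\rangle$. Since $\eta$ is arbitrary, this is exactly Eq.~(\ref{Iv5}). For the second reading, the same expression can be rewritten as
\[\langle x, L_{\cdot}^{*}(T_{\tau(r)}(\eta))\zeta\rangle-\langle x, R_{\succ}^{*}(T_r(\zeta))\eta\rangle.\]
Here we use $\sum_i\langle\zeta,a_i\cdot x\rangle\langle\eta,b_i\rangle=\langle\zeta,T_{\tau(r)}(\eta)\cdot x\rangle$ and $\sum_i\langle\zeta,a_i\rangle\langle\eta,x\succ b_i\rangle=\langle\eta,x\succ T_r(\zeta)\rangle$. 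Letting $x$ vary yields Eq.~(\ref{Iv8}).

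Eq.~(\ref{Iv1}) is handled the same way. Its $\tau(r)=\sum_i b_i\otimes a_i$ gives
\[\sum_i\langle\zeta,b_i\rangle\langle\eta,x\cdot a_i\rangle-\langle\zeta,b_i\prec x\rangle\langle\eta,a_i\rangle.\]
Reading in $\eta$ with $\zeta$ and $\eta$ interchanged in roles (pairing against $\eta\otimes\zeta$) gives Eq.~(\ref{Iv4}). Reading in $x$ gives Eq.~(\ref{Iv7}), using $\langle R_{\cdot}^{*}(y)\eta,x\rangle=\langle\eta,x\cdot y\rangle$ and $\langle L_{\prec}^{*}(y)\zeta,x\rangle=\langle\zeta,y\prec x\rangle$.

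For Eq.~(\ref{Iv3}), the pairing is
\[\sum_i\langle\zeta,x\ast a_i\rangle\langle\eta,b_i\rangle+\langle\zeta,a_i\rangle\langle\eta,[x,b_i]\rangle.\]
This equals $\langle T_r(L_{\ast}^{*}(x)\zeta),\eta\rangle+\langle\mathrm{ad}(x)T_r(\zeta),\eta\rangle$, which gives Eq.~(\ref{Iv6}). Read in $x$, it gives $\langle x,R_{\ast}^{*}(T_{\tau(r)}(\eta))\zeta\rangle-\langle x,\mathrm{ad}^{*}(T_r(\zeta))\eta\rangle$, using $[x,y]=-[y,x]$. That is Eq.~(\ref{Iv9}).

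Every step is reversible, so each equivalence holds in both directions. There is no conceptual obstacle; the only delicate point is bookkeeping. Specifically, we must track which tensor slot $T_r$ versus $T_{\tau(r)}$ reads, and keep consistent the roles of $\zeta,\eta$ and the placement of $x$ relative to $a_i,b_i$ in the nonsymmetric products $\succ,\prec,\ast$. We expect the statement's $\zeta/\eta$ labels to match only after the appropriate relabeling, with the sign in Eq.~(\ref{Iv9}) coming from the skew-symmetry of $\mathrm{ad}$.
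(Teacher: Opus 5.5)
Your proposal is correct and follows the paper's own route. The paper pairs Eq.~(\ref{Iv1}) with $\zeta\otimes\eta$ and reads the result once through $T_r$ and once as $\langle\,\cdot\,,x\rangle$, obtaining (\ref{Iv1})$\Leftrightarrow$(\ref{Iv4})$\Leftrightarrow$(\ref{Iv7}), and handles the other two cases ``by the same token''; your explicit computations for (\ref{Iv2}) and (\ref{Iv3}) check out, with (\ref{Iv7})--(\ref{Iv9}) matching the stated labels directly, so only (\ref{Iv4}) needs the $\zeta\leftrightarrow\eta$ swap.
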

\begin{proof} Based on Lemma 2.12 \cite{Wa} and Lemma 2.9 \cite{Wbl}, Eqs.~(\ref{Iv4})-(\ref{Iv6}) hold.
For all $~x\in A,\zeta,\eta\in A^{*}$, we have
\begin{small}
\begin{align*}\langle(I\otimes L_{\cdot}(x)-R_{\prec}(x)\otimes I)\tau(r),\zeta\otimes \eta\rangle
=&\langle \tau(r),\zeta\otimes L_{\cdot}^{*}(x)\eta\rangle-\langle \tau(r),R_{\prec}^{*}(x)\zeta\otimes \eta\rangle
\\=&\langle T_{\tau(r)}(\zeta), L_{\cdot}^{*}(x)\eta\rangle-\langle T_{r}(\eta), R_{\prec}^{*}(x)\zeta\rangle
\\=&\langle x\cdot T_{\tau(r)}(\zeta),\eta\rangle-\langle T_{r}(\eta)\prec x, \zeta\rangle
\\=&\langle R_{\cdot}^{*}(T_{\tau(r)}(\zeta))\eta-L_{\prec}^{*}(T_{r}(\eta))\zeta,x\rangle.\end{align*}
\end{small}
Thus, Eqs.~$(\ref{Iv1})\Longleftrightarrow (\ref{Iv4})\Longleftrightarrow (\ref{Iv7})$. By the same token,
Eqs.~$(\ref{Iv2})\Longleftrightarrow (\ref{Iv5})\Longleftrightarrow (\ref{Iv8})$
and Eqs.~$(\ref{Iv3})\Longleftrightarrow (\ref{Iv6})\Longleftrightarrow (\ref{Iv9})$.

\end{proof}

\begin{pro} \label{Si}
 Let $(A,\succ,\prec,\ast)$ be a noncommutative pre-Poisson algebra and $r\in A\otimes A$. Then the following
 conditions are equivalent:
  \begin{enumerate}
\item $r-\tau(r)$ is invariant.
\item The following equations hold:
\begin{small}
\begin{align}&\label{Iv10}R_{\cdot}^{*}(T_{r-\tau(r)}(\zeta))\eta=-L_{\prec}^{*}(T_{r-\tau(r)}(\eta))\zeta,\ \
L_{\cdot}^{*} (T_{r-\tau(r)}(\eta))\zeta=-R_{\succ}^{*}(T_{r-\tau(r)}(\zeta))\eta, \ \
\mathrm{ad}^{*} (T_{r-\tau(r)}(\zeta))\eta=-R_{\ast}^{*}(T_{r-\tau(r)}(\eta))\zeta
.\end{align} \end{small}
\item The following equations hold:
\begin{small}
\begin{align}&\label{Iv11}R_{\prec}(x) T_{r-\tau(r)}(\zeta)=T_{r-\tau(r)}(L_{\cdot}^{*}(x)\zeta), \ \
L_{\succ}(x) T_{r-\tau(r)}(\zeta)=T_{r-\tau(r)}(R_{\cdot}^{*}(x)\zeta),\ \
\mathrm{ad}(x) T_{r-\tau(r)}(\zeta)=-T_{r-\tau(r)}(L_{\ast}^{*}(x)\zeta).\end{align}\end{small}
\item The following equations hold:
\begin{small}
\begin{align}&\label{Iv12} T_{r-\tau(r)}(R_{\prec}^{*}x)(\zeta)=L_{\cdot}(x)T_{r-\tau(r)}(\zeta), \ \
 T_{r-\tau(r)}(L_{\succ}^{*}(x)\zeta)=R_{\cdot}(x)T_{r-\tau(r)}(\zeta),\ \
T_{r-\tau(r)}(\mathrm{ad}^{*}(x) \zeta)=-L_{\ast}(x)T_{r-\tau(r)}(\zeta).\end{align}\end{small}
\end{enumerate}
for all $x\in A,~\zeta,\eta\in A^{*}$
 \end{pro}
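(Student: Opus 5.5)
The plan is to apply Lemma \ref{In1} to the single tensor $s:=r-\tau(r)$. Its key feature is skew-symmetry: $\tau(s)=-s$. On the operator side this gives $T_{\tau(s)}=-T_s$ and, by the remark following Eq.~\eqref{Op1}, $T_s^{*}=-T_s$. Each item (b), (c), (d) should then be a direct rewriting of the invariance conditions \eqref{Iv1}--\eqref{Iv3} for $s$, so I would prove (a)$\Leftrightarrow$(b), (a)$\Leftrightarrow$(c) and (c)$\Leftrightarrow$(d).

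For (a)$\Leftrightarrow$(c), I would apply the first part of Lemma \ref{In1} with $r$ replaced by $s$. By that lemma, $s$ is invariant if and only if \eqref{Iv4}--\eqref{Iv6} hold for $T_s$, and these are literally the three identities in \eqref{Iv11}.

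For (a)$\Leftrightarrow$(b), I would use the second part of Lemma \ref{In1}: $s$ is invariant if and only if \eqref{Iv7}--\eqref{Iv9} hold with $r$ replaced by $s$. In \eqref{Iv7} and \eqref{Iv8} I substitute $T_{\tau(s)}=-T_s$, which produces the sign change and yields the first two identities of \eqref{Iv10}. For \eqref{Iv9}, which reads $\mathrm{ad}^{*}(T_s(\zeta))\eta=R_{\ast}^{*}(T_{\tau(s)}(\eta))\zeta$, the same substitution gives the third identity of \eqref{Iv10}.

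For (c)$\Leftrightarrow$(d), I would dualize each identity in \eqref{Iv11} using $T_s^{*}=-T_s$. For the first identity, pair both sides with $\eta\in A^{*}$. On the left,
$\langle R_{\prec}(x)T_s(\zeta),\eta\rangle=\langle \zeta,T_s^{*}R_{\prec}^{*}(x)\eta\rangle=-\langle\zeta,T_s(R_{\prec}^{*}(x)\eta)\rangle$.
On the right,
$\langle T_s(L_{\cdot}^{*}(x)\zeta),\eta\rangle=-\langle L_{\cdot}^{*}(x)\zeta,T_s(\eta)\rangle=-\langle\zeta,L_{\cdot}(x)T_s(\eta)\rangle$.
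Since $\zeta$ is arbitrary, the first identity of \eqref{Iv11} is equivalent to $T_s(R_{\prec}^{*}(x)\eta)=L_{\cdot}(x)T_s(\eta)$, the first identity of \eqref{Iv12}. The second identity is handled identically, with $(L_{\succ},R_{\cdot})$ in place of $(R_{\prec},L_{\cdot})$. For the third, note that $\mathrm{ad}(x)^{*}=\mathrm{ad}^{*}(x)$ and $(L_{\ast}^{*}(x))^{*}=L_{\ast}(x)$; this turns $\mathrm{ad}(x)T_s=-T_sL_{\ast}^{*}(x)$ into $T_s\,\mathrm{ad}^{*}(x)=-L_{\ast}(x)T_s$. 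Every step is reversible, so we obtain the equivalence.

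None of these steps is conceptually hard. The only real obstacle is sign bookkeeping: tracking the two signs coming from $T_{\tau(s)}=-T_s$ and $T_s^{*}=-T_s$, and checking that each dualization swaps the correct pair of operators ($R_{\prec}\leftrightarrow L_{\cdot}$, $L_{\succ}\leftrightarrow R_{\cdot}$, $\mathrm{ad}\leftrightarrow L_{\ast}$). I would verify each of the three identities separately by the pairing computation above, rather than relying on the formal symmetry between them.
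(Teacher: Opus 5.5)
Your proposal is correct and follows essentially the paper's route: you apply Lemma~\ref{In1} to the skew-symmetric tensor $s=r-\tau(r)$, and $T_{\tau(s)}=-T_s$ turns \eqref{Iv4}--\eqref{Iv9} into \eqref{Iv11} and \eqref{Iv10}. The only difference is that the paper gets the transposed form \eqref{Iv12} by citing \cite{Wa} and \cite{Wbl}, whereas you derive it directly from \eqref{Iv11} using $T_s^{*}=-T_s$, and your sign bookkeeping there is right.
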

  
  \begin{proof} It follows from Lemma 2.13 \cite{Wa} and Proposition 2.12 \cite{Wbl} and Lemma \ref{In1}.
\end{proof}
  
By Eqs.~(\ref{Iv10})-(\ref{Iv12}), we have 
 \begin{align}&\label{Iv13} 
T_{r-\tau(r)}(R_{\succ}^{*}(x)\zeta)=- L_{\prec}(x)T_{r-\tau(r)}(\zeta),\ \ \ T_{r-\tau(r)}(L_{\prec}^{*}(x)\zeta)=- R_{\succ}(x)T_{r-\tau(r)}(\zeta),
 \\&\label{Iv14}T_{r-\tau(r)}(R_{\ast}^{*}(x)\zeta)=R_{\ast}(x)T_{r+\tau(r)}(\zeta),\\&
\label{Iv15} L_{\ast}^{*} (T_{r-\tau(r)}(\zeta))\eta=-L_{\ast}^{*}(T_{r-\tau(r)}(\eta))\zeta, \ \ \ R_{\prec}^{*}(T_{r-\tau(r)}(\zeta))\eta=L_{\succ}^{*}(T_{r-\tau(r)}(\eta))\zeta.
\end{align}

\begin{thm} \label{Qs} Let $(A,\succ,\prec,\ast)$ be a coherent noncommutative pre-Poisson algebra
 and $r=\sum_{i}a_i\otimes b_i\in A\otimes A$. 
 Assume that
$\Delta_{\succ,r},\Delta_{\prec,r},\delta_r$ are given by Eqs.~(\ref{CB1})-(\ref{CB2}) and (\ref{CB3}).
 If $r$ is a solution of the 
NPP-YBE in $(A,\succ,\prec,\ast)$ and $r-\tau(r)$ is invariant.
Then $(A,\succ,\prec,\ast,\Delta_{\succ,r},\Delta_{\prec,r},\delta_r)$ is a noncommutative pre-Poisson bialgebra.
\end{thm}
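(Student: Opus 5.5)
The strategy is to invoke Theorem \ref{Bc1}. It reduces the claim to verifying Eqs.~\eqref{Pa1}--\eqref{Pa5}, \eqref{Pa6}--\eqref{Pa7} and \eqref{CA1}--\eqref{CA8} for the given $r$. Throughout, set $s=r-\tau(r)$.

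The first group of conditions involves $s$ only linearly, through operators applied to $s$. These are \eqref{Pa1}, \eqref{Pa2}, \eqref{Pa7} and \eqref{CA5}--\eqref{CA8}, together with the extra summands in \eqref{Pa3}, \eqref{Pa5} and \eqref{CA1}--\eqref{CA4} that contain $r-\tau(r)$. Invariance of $s$ (Definition \ref{In}) says $(I\otimes L_{\cdot}(x)-R_{\prec}(x)\otimes I)\tau(s)=0$, $(R_{\cdot}(x)\otimes I-I\otimes L_{\succ}(x))s=0$ and $(L_{\ast}(x)\otimes I+I\otimes \mathrm{ad}(x))s=0$. Since $\tau(s)=-s$, this also gives $Q(x)s=0$ and companion identities.

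I would rewrite each of these $s$-linear expressions, using the dendriform and pre-Lie axioms together with \eqref{Np1}--\eqref{Np4}, as a combination of terms of the form $(\phi\otimes\psi)\circ(\text{invariance operator})\,s$. Each such term then vanishes. Lemma \ref{In1} and Proposition \ref{Si} help with the bookkeeping here: in the form \eqref{Iv11}--\eqref{Iv15}, the map $T_s$ intertwines the regular actions with the coadjoint-type actions. The term-by-term vanishing can then be checked by pairing with $\zeta\otimes\eta$ and moving operators through $T_s$. This is essentially what is done for the dendriform case in \cite{Wa} and for the pre-Lie case in \cite{Wbl}. For the mixed conditions \eqref{CA5}--\eqref{CA8}, I would use the coherence identity \eqref{Np4} and the Leibniz rule \eqref{Np0} of the sub-adjacent algebra to commute $L_\ast$ past $L_\cdot$, $R_\cdot$.

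The second group is the cubic terms. After the $s$-terms are removed, \eqref{Pa3}--\eqref{Pa5}, \eqref{Pa6} and \eqref{CA1}--\eqref{CA4} involve expressions in $r_{ij}$ whose indices and orderings are permuted: $D_1$, $D_2$, $S_1$, $[[r,r]]$ and the combinations appearing in \eqref{CA1}--\eqref{CA4}. For symmetric $r$, Remark \ref{Ac6} turned these into $\sigma(D(r))$ and $\sigma(S(r))$. Here $r$ is not symmetric, so I would write $\tau(r)=r-s$ and expand. Each cubic expression then becomes
\[
(\text{permutation of } D(r) \text{ or } S(r)) \;+\; (\text{terms of the form } r_{ij}\circ s_{kl}).
\]
The first part vanishes by the NPP-YBE. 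For the correction terms, I would again show via invariance that they are either zero outright or cancel against the remaining $s$-linear summands already present in \eqref{Pa3}, \eqref{Pa5} and \eqref{CA1}--\eqref{CA4}. The key instance is that $s_{13}\cdot r_{12}$-type terms can be converted, using $(R_\cdot(a_i)\otimes I)s=(I\otimes L_\succ(a_i))s$ applied in the appropriate tensor slots, into $r_{12}\succ s_{23}$-type terms that cancel.

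The main obstacle is this second step. It requires tracking, for each of roughly a dozen cubic identities, exactly which $s$-corrections arise from replacing $r_{ji}$ by $r_{ij}-s_{ij}$, and checking that they cancel via invariance. I would handle it systematically by dualizing: pair everything with $\zeta\otimes\eta\otimes\theta$ and express the conditions through $T_r$ and $T_s=T_r-T_{\tau(r)}$. The NPP-YBE then reads as the statement that $T_r$ is an $\mathcal O$-operator-type identity with a correction, and invariance lets $T_s$ be moved past all actions. Reducing the dendriform part to \cite{Wa} and the pre-Lie part to \cite{Wbl} leaves only the genuinely mixed identities \eqref{CA1}--\eqref{CA4} to check directly by this method.
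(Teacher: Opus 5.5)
Your route is the paper's. You reduce via Theorem~\ref{Bc1}, take the dendriform and pre-Lie bialgebra parts from \cite{Wa} and \cite{Wbl}, and handle the mixed conditions by rewriting each cubic expression as a permuted $D(r)$ or $S(r)$ plus corrections in $s=r-\tau(r)$. Those corrections are then disposed of by pairing with $\zeta\otimes\eta\otimes\theta$ and moving operators through $T_s$ via \eqref{Iv11}--\eqref{Iv15}. The paper does this explicitly for \eqref{CA1} and treats the rest ``by the same token''.

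One step in your first group is false: invariance does not give $Q(x)s=0$. Since $\tau(s)=-s$, \eqref{Iv1} and \eqref{Iv2} say $(I\otimes L_{\cdot}(x))s=(R_{\prec}(x)\otimes I)s$ and $(I\otimes L_{\succ}(x))s=(R_{\cdot}(x)\otimes I)s$. Hence
\[
Q(x)s=(R_{\succ}(x)\otimes I)s=-(I\otimes L_{\prec}(x))s,
\]
which is nonzero in general. In the paper's quasi-triangular example ($r=e_1\otimes e_2$ in Example~\ref{Ae}, where $s$ is indeed invariant) one gets $Q(e_1)s=e_1\otimes e_2$.

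Consequently, your claim that the extra summands of \eqref{Pa3}, \eqref{Pa5} and \eqref{CA1}--\eqref{CA4} ``then vanish'' is wrong. Several of them are not even of the form (operator)$(s)$ but bilinear in $r$ and $s$, e.g.\ $(L_{\ast}(x\cdot a_i)\otimes I\otimes I)(\tau\otimes I)(b_i\otimes r)-(L_{\ast}(x\cdot b_i)\otimes I\otimes I)(\tau\otimes I)(a_i\otimes\tau(r))$ in \eqref{CA1}. These terms survive and must cancel against the $s$-corrections produced by rewriting the cubic parts. That is what the paper's \eqref{CA1} computation does: its pieces give nonzero contributions such as $\pm\langle \mathrm{ad}(x\cdot T_{\tau(r)}(\eta))T_{s}(\zeta),\theta\rangle$ and $\langle \eta,R_{\prec}(x)L_{\ast}(T_s(\zeta))T_r(\theta)\rangle$ versus $-\langle R_{\prec}(x)R_{\ast}(T_r(\theta))T_s(\zeta),\eta\rangle$, which cancel only in the total.

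Your second step already describes this mechanism. So drop $Q(x)s=0$ and route all those summands through the second step. The conditions that genuinely have the form (operator)$(s)$ are \eqref{Pa1}, \eqref{Pa2}, \eqref{Pa7} and \eqref{CA5}--\eqref{CA8}. They do vanish, but by the correct identities above combined with the algebra axioms, not by $Q(x)s=0$. For instance, \eqref{Pa1} reduces to $\bigl(R_{\succ}(x\succ y)-R_{\succ}(y)R_{\cdot}(x)\bigr)\otimes I$ applied to $s$, which is zero by the dendriform axiom. Coherence \eqref{Np4} enters, as you say, for \eqref{CA7}.
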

\begin{proof}
Since $r$ is a solution of the 
NPP-YBE in $(A,\succ,\prec,\ast)$ and $r-\tau(r)$ is invariant, $S(r)=D(r)=0$ and
Eqs.~(\ref{Iv10})-(\ref{Iv15}) hold. By Proposition 2.16 \cite{Wa}, By Proposition 2.10 \cite{Wbl} and Theorem 2.14 \cite{Wbl},
 $(A,\succ,\prec,\Delta_{\succ,r},\Delta_{\prec,r})$ is a dendriform bialgebra
 and $(A,\ast,\delta_{r})$ is a pre-Lie bialgebra.
 Note that
\begin{align*}
&r_{32}\prec r_{12}-r_{13}\cdot r_{32}+r_{21}\succ r_{31}
\\=&-\sigma_{132}D(r)+(r_{31}-r_{13})\cdot r_{32}+(r_{21}-r_{12})\succ r_{31}
\\=&-\sigma_{132}D(r)+\sum_i(I\otimes I\otimes R_{\cdot}(a_i))(\tau\otimes I)(b_i\otimes (\tau(r)-r))
+(R_{\succ}(b_i)\otimes I\otimes I )((\tau(r)-r)\otimes a_i),
\\&r_{12}\ast r_{13}+[r_{32},r_{13}]-r_{12}\ast r_{32}
\\=&-S(r)+r_{12}\ast(r_{23}-r_{32})+[r_{13},r_{23}-r_{32}]
\\=&-S(r)+\sum_i(I\otimes L_{\ast}(b_i)\otimes I)(a_i\otimes(r-\tau(r)))
+(I\otimes I\otimes \mathrm{ad}(b_i))(a_i\otimes(r-\tau(r))),
\\&r_{31}\ast r_{12}+[r_{32},r_{12}]-r_{32}\ast r_{21}-[r_{31},r_{21}]+[r_{32},r_{13}]
\\=&-(\sigma_{23}+\sigma_{13})S(r)+r_{32}\ast(r_{13}-r_{31})
\\=&-(\sigma_{23}+\sigma_{13})S(r)+\sum_i(I\otimes I\otimes L_{\ast}(a_i)
-R_{\ast}(a_i)\otimes I\otimes I)(\tau\otimes I)(b_i\otimes (r-\tau(r))).\end{align*}
Using Eqs.~(\ref{Iv11})-(\ref{Iv12}) and (\ref{Op1}), we have for all $\zeta,\eta,\theta\in A^{*}$,
\begin{align*}&
\langle(I\otimes R_{\prec}(x)L_{\ast} (b_i)\otimes I+I\otimes R_{\prec}(x)\otimes \mathrm{ad} (b_i))(a_i\otimes(r-\tau(r))),
\zeta\otimes\eta\otimes \theta\rangle
\\=&\langle r-\tau(r),L_{\ast}^{*} (b_i)R_{\prec}^{*}(x)\eta\otimes \theta+R_{\prec}^{*}(x)\eta\otimes \mathrm{ad}^{*} (b_i)\theta\rangle
\langle a_i,\zeta\rangle
\\=&\langle T_{r-\tau(r)}(L_{\ast}^{*} (b_i)R_{\prec}^{*}(x)\eta)+\mathrm{ad} (b_i)T_{r-\tau(r)}(R_{\prec}^{*}(x)\eta),\theta \rangle
\langle a_i,\zeta\rangle
\\=&\langle-\mathrm{ad} (T_{r}(\zeta))T_{r-\tau(r)}(R_{\prec}^{*}(x)\eta)+\mathrm{ad} (T_{r}(\zeta))T_{r-\tau(r)}(R_{\prec}^{*}(x)\eta),\theta \rangle
\\=&0.\end{align*}
By Eqs.~(\ref{Iv11}), (\ref{Iv14}), (\ref{r70}) and (\ref{Op1}), we get for all $\zeta,\eta,\theta\in A^{*}$,
\begin{align*}&
\langle(L_{\ast}(x)\otimes I \otimes R_{\cdot}(a_i)+I\otimes I\otimes L_{\cdot}(x)L_{\ast} (a_i)-R_{\ast} (a_i)\otimes I\otimes L_{\cdot}(x))(\tau\otimes I)(b_i\otimes (\tau(r)-r)),
\zeta\otimes\eta\otimes\theta\rangle
\\=&\langle \tau(r)-r,L_{\ast}^{*} (x)\zeta\otimes R_{\cdot}^{*}(a_i) \theta+\zeta\otimes L_{\ast}^{*} (a_i)L_{\cdot}^{*}(x)\theta
-R_{\ast}^{*} (a_i)\zeta\otimes L_{\cdot}^{*}(x)\theta
\rangle
\langle b_i,\eta\rangle
\\=&-\langle R_{\cdot}(a_i)T_{r-\tau(r)}(L_{\ast}^{*} (x)\zeta)+L_{\cdot}(x)L_{\ast} (a_i)T_{r-\tau(r)}(\zeta)
-L_{\cdot}(x)T_{r-\tau(r)}(R_{\ast}^{*} (a_i)\zeta),\theta\rangle\langle b_i,\eta\rangle
\\=&\langle R_{\cdot}(a_i)\mathrm{ad}(x)T_{r-\tau(r)}(\zeta)+L_{\cdot}(x)L_{\ast} (a_i)T_{r-\tau(r)}(\zeta)
-L_{\cdot}(x)R_{\ast}(a_i)T_{r-\tau(r)}(\zeta),\theta\rangle\langle b_i,\eta\rangle
\\=&\langle R_{\cdot}(a_i)\mathrm{ad}(x)T_{r-\tau(r)}(\zeta)+L_{\cdot}(x)\mathrm{ad}(a_i)T_{r-\tau(r)}(\zeta)
,\theta\rangle\langle b_i,\eta\rangle
\\=&\langle \mathrm{ad}(x\cdot T_{\tau(r)}(\eta))T_{r-\tau(r)}(\zeta),\theta\rangle.\end{align*}
According to Eqs.~(\ref{Iv11}), (\ref{Iv15}), (\ref{r70}) and (\ref{Op1}), we get for all $\zeta,\eta,\theta\in A^{*}$,
\begin{align*}&
\langle(L_{\ast}( x\cdot a_i )\otimes I \otimes I)(\tau\otimes I)(b_i\otimes r)- 
(L_{\ast}( x\cdot b_i )\otimes I \otimes I)(\tau\otimes I)(a_i\otimes \tau(r)),
\zeta\otimes\eta\otimes\theta\rangle\\=&
\langle b_i,\eta\rangle\langle L_{\ast}^{*}( x\cdot a_i )\zeta\otimes \theta,r\rangle
-\langle a_i,\eta\rangle\langle L_{\ast}^{*}( x\cdot b_i )\zeta\otimes \theta,\tau(r)\rangle
\\=&
\langle T_{r}(L_{\ast}^{*}( x\cdot T_{\tau(r)}(\eta) )\zeta),\theta\rangle
-\langle T_{\tau(r)}( L_{\ast}^{*}( x\cdot T_{r}(\eta) )\zeta), \theta\rangle
\\=&\langle T_{r-\tau(r)}(L_{\ast}^{*}( x\cdot T_{\tau(r)}(\eta) )\zeta),\theta\rangle
-\langle T_{\tau(r)}( L_{\ast}^{*}( x\cdot T_{r-\tau(r)}(\eta) )\zeta), \theta\rangle
\\=&-\langle \mathrm{ad}( x\cdot T_{\tau(r)}(\eta) T_{r-\tau(r)}(\zeta),\theta\rangle
-\langle T_{\tau(r)}( L_{\ast}^{*}( T_{r-\tau(r)}(R_{\prec}^{*}(x)\eta) )\zeta), \theta\rangle
\\=&-\langle \mathrm{ad}( x\cdot T_{\tau(r)}(\eta) T_{r-\tau(r)}(\zeta),\theta\rangle
+\langle  L_{\ast}^{*}(T_{r-\tau(r)}(\zeta))R_{\prec}^{*}(x)\eta, T_{r}(\theta)\rangle
\\=&-\langle \mathrm{ad}( x\cdot T_{\tau(r)}(\eta) T_{r-\tau(r)}(\zeta),\theta\rangle
+\langle \eta, R_{\prec}(x) L_{\ast}(T_{r-\tau(r)}(\zeta))T_{r}(\theta)\rangle.\end{align*}
In view of Eqs.~(\ref{Iv11}), (\ref{Iv13}), (\ref{r8}) and (\ref{Op1}), 
we obatin for all $\zeta,\eta,\theta\in A^{*}$,
\begin{align*}&
\langle(L_{\ast}(b_i\prec x)\otimes I \otimes I +I \otimes L_{\ast}(b_i\prec x)\otimes I
+L_{\ast}(x)R_{\succ}(b_i)\otimes I \otimes I)((\tau(r)-r)\otimes a_i)
,\zeta\otimes\eta\otimes \theta\rangle\\=& \langle\tau(r)-r,L_{\ast}^{*}(b_i\prec x)\zeta\otimes \eta+\zeta \otimes L_{\ast}^{*}(b_i\prec x)\eta
+R_{\succ}^{*}(b_i)L_{\ast}^{*}(x)\zeta\otimes \eta\rangle
\langle a_i,\theta\rangle
\\=&-\langle T_{r-\tau(r)}(L_{\ast}^{*}(b_i\prec x)\zeta)+L_{\ast}(b_i\prec x)T_{r-\tau(r)}(\zeta)
+T_{r-\tau(r)}(R_{\succ}^{*}(b_i)L_{\ast}^{*}(x)\zeta), \eta\rangle
\langle a_i,\theta\rangle
\\=&\langle (\mathrm{ad}(b_i\prec x)-L_{\ast}(b_i\prec x)-L_{\prec}(b_i)\mathrm{ad}(x))T_{r-\tau(r)}(\zeta), \eta\rangle
\langle a_i,\theta\rangle
\\=&\langle -R_{\prec}(x)R_{\ast}(T_{r}(\theta))T_{r-\tau(r)}(\zeta), \eta\rangle.
\end{align*}
Combining the above equalities, we get that Eq.~(\ref{CA1}) holds. By the same token, Eqs.~(\ref{CA2})-(\ref{CA8}) hold. 
Combining Theorem \ref{Bc1}, we complete the proof.
\end{proof}

\begin{ex} Let $(A=ke_1\oplus ke_2,\succ,\prec,\ast)$ be the 2-dimensional
 coherent noncommutative 
pre-Poisson algebra given in Example \ref{Ae}. It is easy to check that
$r=e_1\otimes e_2$ is a non-symmetric solution of the NPP-YBE in $(A,\succ,\prec,\ast)$ 
and $r-\tau(r)=e_1\otimes e_2-e_2\otimes e_1$ is invariant.
\end{ex}

Let $(A, \succ,\prec,\ast,\Delta_{\succ,r},\Delta_{\prec,r},\delta_r)$ be a noncommutative pre-Poisson bialgebra and $r\in A\otimes A$, 
where the comultiplications
$\Delta_{\succ,r},\Delta_{\prec,r},\delta_r$ are defined by Eqs.~(\ref{CB1})-(\ref{CB2}) and (\ref{CB3}). According to 
Remark \ref{Bc}, there is a coherent noncommutative pre-Poisson algebra structure $(\succ_r,\prec_r,\ast_r)$ on $A^{*}$.
By direct computations, \begin{small}
\begin{align}&\label{Ic1}\zeta\succ_{r}\eta=R_{\cdot}^{*}(T_{\tau(r)}(\zeta))\eta-L_{\prec}^{*}(T_{r}(\eta))\zeta,
\\&\label{Ic2}\zeta\prec_{r}\eta=-R_{\succ}^{*}(T_{r}(\zeta))\eta+L_{\cdot}^{*} (T_{\tau(r)}(\eta))\zeta,\\&
\label{Ic3}\zeta\ast_{r}\eta=\mathrm{ad}^{*} (T_{r}(\zeta))\eta-R_{\ast}^{*}(T_{\tau(r)}(\eta))\zeta, 
\end{align}
\end{small}
for all $\zeta,\eta\in A^{*}$. And the sub-adjacent noncommutative Poisson 
algebra structure $(\cdot_r,[ \ , \ ]_{r})$ on $A^{*}$ is
given by 
\begin{equation*}\zeta\cdot_{r}\eta=\zeta\succ_{r}\eta+\zeta\prec_{r}\eta
, \ \ \ [ \zeta , \eta ]_{r}=\zeta\ast_{r}\eta-\eta\ast_{r}\zeta,
\end{equation*}
where $\mathrm{ad}=L_{\ast}-R_{\ast},~ \cdot=\succ+\prec$.

\begin{defi} \label{Qt1}
 Let $(A,\succ,\prec,\ast)$ be a coherent noncommutative pre-Poisson algebra 
 and $r\in A\otimes A$. If $r$ is a solution of the NPP-YBE in $(A,\succ,\prec,\ast)$
 and $r-\tau(r)$ is invariant, then the noncommutative pre-Poisson 
  bialgebra $(A, \succ,\prec,\ast,\Delta_{\succ,r},\Delta_{\prec,r},\delta_{r})$ induced by $r$
   is called a {\bf quasi-triangular} noncommutative pre-Poisson
 bialgebra. In particular, $(A, \succ,\prec,\ast,\Delta_{\succ,r},\Delta_{\prec,r},\delta_{r})$
is called a {\bf triangular} noncommutative pre-Poisson bialgebra if $r$ is symmetric, 
where $\Delta_{\succ,r},\Delta_{\prec,r}$ and $\delta_{r}$ 
are given by Eqs.~(\ref{CB1})-(\ref{CB2}) and (\ref{CB3}) respectively.
\end{defi}

Combining Theorem 2.18 \cite {Wa} and Theorem 2.14 \cite {Wbl}, we get the following results:
\begin{thm}\label{QB0} Let $(A,\succ,\prec,\ast)$ be a coherent noncommutative pre-Poisson algebra and
$r\in A\otimes A$. Suppose that $r-\tau(r)$ is invariant. Then the following conditions are equivalent:
 \begin{enumerate}
\item $r$ is a solution of the NPP-YBE in $(A,\succ,\prec,\ast)$.
 \item $(A^{*},\cdot_r, [ \ , \ ]_r)$ is a coherent Poisson algebra and the linear maps
$T_{r},T_{\tau(r)}$ are both Poisson algebra
 homomorphisms from $(A^{*},\cdot_r, [ \ , \ ]_r)$ to $(A,\cdot, [ \ , \ ])$.
 \item $(A^{*},\succ_r,\prec_r,\ast_r)$ is a coherent noncommutative pre-Poisson algebra and the linear maps
$T_{r},T_{\tau(r)}$ are noncommutative pre-Poisson algebra
 homomorphisms from $(A^{*},\succ_r,\prec_r,\ast_r)$ to $(A,\succ,\prec,\ast)$.
 \end{enumerate}
\end{thm}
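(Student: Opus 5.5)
The plan is to split the noncommutative pre-Poisson structure into its dendriform part $(\succ,\prec)$ and its pre-Lie part $\ast$, and to use the known results for each part. For dendriform algebras with $r-\tau(r)$ invariant, Theorem 2.18 of \cite{Wa} says: $D(r)=0$ holds if and only if $(A^{*},\succ_r,\prec_r)$ is a dendriform algebra and $T_r,T_{\tau(r)}$ are dendriform homomorphisms. It also gives the equivalent statement for the sub-adjacent associative product $\cdot_r$, namely that $T_r,T_{\tau(r)}$ are associative homomorphisms. For pre-Lie algebras, Theorem 2.14 of \cite{Wbl} gives the analogue: $S(r)=0$ holds if and only if $(A^{*},\ast_r)$ is a pre-Lie algebra and $T_r,T_{\tau(r)}$ are pre-Lie homomorphisms. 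Its Lie version says the same with $[\ ,\ ]_r$ and Lie homomorphisms. Both theorems apply here because the invariance conditions in Definition \ref{In} are exactly the dendriform and pre-Lie invariance conditions of those papers.

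For (a)$\Rightarrow$(c), suppose $r$ solves the NPP-YBE. By Theorem \ref{Qs}, $(A,\succ,\prec,\ast,\Delta_{\succ,r},\Delta_{\prec,r},\delta_r)$ is a noncommutative pre-Poisson bialgebra. Remark \ref{Bc} then shows that $(A^{*},\succ_r,\prec_r,\ast_r)$, given by Eqs.~(\ref{Ic1})--(\ref{Ic3}), is a coherent noncommutative pre-Poisson algebra. Applying the two cited theorems, $T_r$ and $T_{\tau(r)}$ preserve $\succ,\prec$ and $\ast$, so they are noncommutative pre-Poisson homomorphisms.

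For (c)$\Rightarrow$(b), pass to sub-adjacent structures. $A^{*}$ with $\cdot_r=\succ_r+\prec_r$ and $[\ ,\ ]_r$ is then a coherent noncommutative Poisson algebra. Any map preserving $\succ,\prec,\ast$ preserves $\cdot$ and the commutator, so $T_r$ and $T_{\tau(r)}$ are homomorphisms of noncommutative Poisson algebras.

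For (b)$\Rightarrow$(a), I would show $D(r)=0$ and $S(r)=0$ separately. $T_r$ and $T_{\tau(r)}$ being associative homomorphisms from $(A^{*},\cdot_r)$ is, by the converse direction of Theorem 2.18 of \cite{Wa}, equivalent to $D(r)=0$. Likewise, their being Lie homomorphisms from $(A^{*},[\ ,\ ]_r)$ is, by Theorem 2.14 of \cite{Wbl}, equivalent to $S(r)=0$.

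The main obstacle is this last step: checking that the cited results really do run from the sub-adjacent level back to the equations. Concretely, pairing $D(r)$ with $\zeta\otimes\eta\otimes\theta$ should produce an expression of the form $T_r(\zeta)\cdot T_r(\eta)-T_r(\zeta\cdot_r\eta)$ paired with $\theta$. I would verify this using Eqs.~(\ref{Iv10})--(\ref{Iv15}), which let the terms involving $r-\tau(r)$ cancel. This cancellation is exactly what invariance of $r-\tau(r)$ provides, and the same pairing argument handles $S(r)$ and the bracket.
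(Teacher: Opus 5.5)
Your strategy is the paper's own, since its proof consists only of citing Theorem 2.18 of \cite{Wa} and Theorem 2.14 of \cite{Wbl}. The dendriform/associative half of your argument is sound. By (\ref{Iv10}) one has $\zeta\cdot_r\eta=R_{\prec}^{*}(T_r(\zeta))\eta+L_{\succ}^{*}(T_{\tau(r)}(\eta))\zeta$. So by Proposition~\ref{Ya1}(a), $T_r$ preserves $\cdot_r$ if and only if $D_2(r)=0$, and under invariance $D_2(r)$ is a permutation of $D(r)$ (see the proof of Proposition~\ref{Ya2}). Similarly, $\prec_r$ and $\succ_r$ are handled by Proposition~\ref{Ya1}(b),(c).

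\textbf{The gap is in the pre-Lie half.} You say the cited pre-Lie theorem applies because the invariance conditions agree. But you also need its coboundary and sign conventions to match (\ref{CB3}), and they do not.

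\emph{The sign comes out reversed.} Assume $r-\tau(r)$ is invariant. From (\ref{Ic3}) and (\ref{Iv10}) one gets
\[
\zeta\ast_r\eta=\mathrm{ad}^{*}(T_{\tau(r)}(\zeta))\eta-R_{\ast}^{*}(T_r(\eta))\zeta,
\]
\[
[\zeta,\eta]_r=L_{\ast}^{*}(T_{\tau(r)}(\zeta))\eta-L_{\ast}^{*}(T_r(\eta))\zeta .
\]
Carrying out the pairing you propose then gives
\[
\langle S(r),\zeta\otimes\eta\otimes\theta\rangle=\langle [T_r(\zeta),T_r(\eta)]+T_r([\zeta,\eta]_r),\theta\rangle .
\]
In the same way, Propositions~\ref{Ya1}(e) and \ref{Ya2}(a) give $T_r(\zeta\ast_r\eta)=-T_r(\zeta)\ast T_r(\eta)$ whenever $S(r)=0$. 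So the NPP-YBE makes $-T_r$ a pre-Lie and Lie homomorphism, not $T_r$. For symmetric $r$ this is already visible in Theorem~\ref{Opp}: the $\mathcal{O}$-operator product $-\mathrm{ad}^{*}(T_r(\zeta))\eta+R^{*}_{\ast}(T_r(\eta))\zeta$ equals $-\zeta\ast_r\eta$.

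\emph{A concrete counterexample.} Let $A$ have basis $\{e,f\}$, with $\succ=\prec=0$, $e\ast e=e$, $e\ast f=-f$, and all other products zero.
\begin{itemize}
\item $A$ is a coherent noncommutative pre-Poisson algebra.
\item $r=e\otimes f+f\otimes e$ is symmetric and satisfies $D(r)=S(r)=0$, so (a) holds.
\item With dual basis $\{e',f'\}$: $T_r(e')=f$, $T_r(f')=e$, $\delta_r(e)=f\otimes e$ and $\delta_r(f)=-f\otimes f$.
\item Hence $f'\ast_r f'=-f'$ and $[f',e']_r=e'$.
\item So $T_r(f'\ast_r f')=-e\neq e=T_r(f')\ast T_r(f')$, and $T_r([f',e']_r)=f\neq -f=[T_r(f'),T_r(e')]$.
\end{itemize}

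\emph{Consequences for your proof.} The implications (a)$\Rightarrow$(b) and (a)$\Rightarrow$(c) fail as stated. Your (b)$\Rightarrow$(a) step for the bracket would in fact require $-T_r$, not $T_r$, to be a Lie homomorphism. The defect is inherited from the paper's citation-only proof.

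\emph{What your argument actually proves.} It establishes the corrected statement, with $\ast_r$ and $[\ ,\ ]_r$ replaced by $-\ast_r$ and $-[\ ,\ ]_r$. Equivalently, replace $\delta_r$ by $-\delta_r$. In this corrected form $(A^{*},\succ_r,\prec_r,-\ast_r)$ is still a coherent noncommutative pre-Poisson algebra: the pre-Lie identity is quadratic, and every term of (\ref{Np1})--(\ref{Np4}) contains exactly one $\ast$.
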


\begin{cor} \label{QB} Let $(A, \succ,\prec,\ast,\Delta_{\succ,r},\Delta_{\prec,r},\delta_{r})$ be 
a quasi-triangular noncommutative pre-Poisson bialgebra.
 Then  \begin{enumerate}
\item $T_{r},T_{\tau(r)}$ are both Poisson algebra
 homomorphisms from $(A^{*},\cdot_r, [ \ , \ ]_r)$ to $(A,\cdot, [ \ , \ ])$.
 \item $T_{r},T_{\tau(r)}$ are noncommutative pre-Poisson algebra
 homomorphisms from $(A^{*},\succ_r,\prec_r,\ast_r)$ to $(A,\succ,\prec,\ast)$.
 \end{enumerate}
\end{cor}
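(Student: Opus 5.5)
The corollary should follow directly from Theorem \ref{QB0}, with Theorem \ref{Qs} and Remark \ref{Bc} supplying the hypotheses. By Definition \ref{Qt1}, a quasi-triangular noncommutative pre-Poisson bialgebra $(A,\succ,\prec,\ast,\Delta_{\succ,r},\Delta_{\prec,r},\delta_r)$ comes from an element $r\in A\otimes A$ with two properties. First, $r$ solves the NPP-YBE in the coherent noncommutative pre-Poisson algebra $(A,\succ,\prec,\ast)$. Second, $r-\tau(r)$ is invariant. These are exactly the hypotheses of Theorem \ref{QB0}, together with its condition (a). The equivalences (a)$\Leftrightarrow$(b) and (a)$\Leftrightarrow$(c) then give both items at once:
\begin{enumerate}
\item item (1) is the homomorphism part of (b);
\item item (2) is the homomorphism part of (c).
\end{enumerate}

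One point needs checking: the structures $(\succ_r,\prec_r,\ast_r)$ and $(\cdot_r,[\ ,\ ]_r)$ in the corollary must be the same as those in Theorem \ref{QB0}. By Theorem \ref{Qs}, the triple $(\Delta_{\succ,r},\Delta_{\prec,r},\delta_r)$ is a coherent noncommutative pre-Poisson coalgebra. Remark \ref{Bc} then says its linear dual is a coherent noncommutative pre-Poisson algebra on $A^{*}$, namely the one written out in Eqs.~(\ref{Ic1})--(\ref{Ic3}). Its sub-adjacent noncommutative Poisson algebra is $(\cdot_r,[\ ,\ ]_r)$.

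To confirm the formulas, I would dualize Eqs.~(\ref{CB1})--(\ref{CB2}) and (\ref{CB3}) against $\zeta\otimes\eta$ using $T_r$ from Eq.~(\ref{Op1}). For example,
\[
\langle \Delta_{\succ,r}(x),\zeta\otimes\eta\rangle=\langle x\cdot T_{\tau(r)}(\zeta),\eta\rangle-\langle T_r(\eta)\prec x,\zeta\rangle ,
\]
which gives Eq.~(\ref{Ic1}). The other two operations follow in the same way.

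Finally, to make the statement self-contained, I would sketch why $T_r$ and $T_{\tau(r)}$ are homomorphisms. The sketch for $\succ$ goes as follows. Apply $T_r$ to $\zeta\succ_r\eta$. Use Eqs.~(\ref{Iv11})--(\ref{Iv13}) to move $T_{r-\tau(r)}$ past $R_{\cdot}^{*}$ and $L_{\prec}^{*}$. Reduce the remaining terms to $T_r(\zeta)\succ T_r(\eta)$ using the $\mathcal O$-operator-type identity that encodes $D(r)=0$, in analogy with Theorem \ref{Opp}. The same pattern handles $\prec$ and $\ast$, with $S(r)=0$ playing the role of $D(r)=0$ for $\ast$. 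The case of $T_{\tau(r)}$ follows from the one for $T_r$, since $T_{\tau(r)}=T_r-T_{r-\tau(r)}$ and invariance makes $T_{r-\tau(r)}$ compatible with the dual actions.

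The only real obstacle is this bookkeeping with the invariance identities. Since Theorem \ref{QB0} is already established, it can be cited rather than redone.
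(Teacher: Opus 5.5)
Your proposal is correct and matches the paper, which gives no separate argument: the corollary follows at once from Theorem~\ref{QB0}, because a quasi-triangular bialgebra supplies exactly its hypothesis ($r-\tau(r)$ invariant) and its condition (a) ($r$ solves the NPP-YBE), so (b) and (c) give the two items. Your extra checks are consistent but not needed: that $\succ_r,\prec_r,\ast_r$ are the duals in Eqs.~(\ref{Ic1})--(\ref{Ic3}), and that the $T_{\tau(r)}$ case follows from the $T_r$ case via the invariance identities.
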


\subsection{Factorizable noncommutative pre-Poisson bialgebras} 
We begin by introducing the notion of a factorizable noncommutative pre-Poisson bialgebra,
 which constitutes a key subclass of the quasi-triangular family.
  We then prove that the double of any noncommutative pre-Poisson bialgebra 
  naturally carries the structure of such a factorizable bialgebra.

\begin{defi} \label{Qt} A quasi-triangular noncommutative pre-Poisson bialgebra
 $(A, \succ,\prec,\ast,\Delta_{\succ,r},\Delta_{\prec,r},\delta_{r})$ is
called factorizable if  
 the linear map $T_{r-\tau(r)}:A^{*}\longrightarrow A$ given by Eq.~(\ref{Op1}) is a linear isomorphism of vector spaces, 
where $\Delta_{\succ,r}, \Delta_{\prec,r},\delta_{r} $
are defined by Eqs.~(\ref{CB1})-(\ref{CB2}) and (\ref{CB3}) respectively.\end{defi}

\begin{pro} \label{Qf} Let $(A, \succ,\prec,\ast,\Delta_{\succ,r},\Delta_{\prec,r},\delta_{r})$ be a quasi-triangular
 (factorizable) noncommutative pre-Poisson bialgebra. 
 Then $(A, \succ,\prec,\ast,\Delta_{\succ,\tau(r)},\Delta_{\prec,\tau(r)},\delta_{\tau(r)})$
 is also a quasi-triangular
 (factorizable) noncommutative pre-Poisson bialgebra.\end{pro}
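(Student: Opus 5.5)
The plan is to reduce the statement to Theorem \ref{QB0}, using the invariance of $r-\tau(r)$ to show that $r$ and $\tau(r)$ induce the same structure on $A^{*}$. By Definition \ref{Qt1}, we must show two things: $\tau(r)$ is a solution of the NPP-YBE in $(A,\succ,\prec,\ast)$, and $\tau(r)-\tau(\tau(r))=\tau(r)-r$ is invariant. Theorem \ref{Qs} then gives that $(A,\succ,\prec,\ast,\Delta_{\succ,\tau(r)},\Delta_{\prec,\tau(r)},\delta_{\tau(r)})$ is a noncommutative pre-Poisson bialgebra, and it is quasi-triangular by definition.

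The invariance part is immediate. Eqs.~(\ref{Iv1})--(\ref{Iv3}) are linear in the tensor, so $\tau(r)-r=-(r-\tau(r))$ is invariant whenever $r-\tau(r)$ is. For the factorizable case, $T_{\tau(r)-r}=-T_{r-\tau(r)}$ is a linear isomorphism exactly when $T_{r-\tau(r)}$ is. So factorizability transfers once quasi-triangularity is established.

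The main step is showing that $\tau(r)$ satisfies $D(\tau(r))=S(\tau(r))=0$. Rather than manipulating the tensor equations directly, I will apply Theorem \ref{QB0} twice, first to $r$ and then to $\tau(r)$; both are admissible because both $r-\tau(r)$ and $\tau(r)-r$ are invariant. Since $r$ solves the NPP-YBE, Theorem \ref{QB0} tells us that $(A^{*},\succ_r,\prec_r,\ast_r)$ is a coherent noncommutative pre-Poisson algebra and that $T_r,T_{\tau(r)}$ are homomorphisms from it to $A$. Next I compare the structures given by Eqs.~(\ref{Ic1})--(\ref{Ic3}) for $r$ and for $\tau(r)$, using $T_{\tau(\tau(r))}=T_r$. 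For example,
\begin{equation*}
\zeta\succ_{\tau(r)}\eta-\zeta\succ_r\eta
=-R_{\cdot}^{*}(T_{r-\tau(r)}(\zeta))\eta-L_{\prec}^{*}(T_{r-\tau(r)}(\eta))\zeta ,
\end{equation*}
and the right-hand side vanishes by the first identity of Eq.~(\ref{Iv10}). The second and third identities of Eq.~(\ref{Iv10}) give in the same way that $\prec_{\tau(r)}=\prec_r$ and $\ast_{\tau(r)}=\ast_r$.

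Hence $(A^{*},\succ_{\tau(r)},\prec_{\tau(r)},\ast_{\tau(r)})=(A^{*},\succ_r,\prec_r,\ast_r)$ is coherent. Moreover, the pair of maps $\{T_{\tau(r)},T_{\tau(\tau(r))}\}=\{T_{\tau(r)},T_r\}$ consists of homomorphisms from it to $A$. Condition (c) of Theorem \ref{QB0} therefore holds for $\tau(r)$, so $\tau(r)$ solves the NPP-YBE.

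The only place needing care is the sign bookkeeping in this comparison: one must check that each difference is exactly the combination appearing in Eq.~(\ref{Iv10}), not the one with the opposite relative sign. If this failed, I would fall back on Remark \ref{Ac6}-style permutation identities, writing $D(\tau(r))$ and $S(\tau(r))$ as permutations of $D(r)$ and $S(r)$ plus terms linear in $r-\tau(r)$, and then killing those terms with Eqs.~(\ref{Iv11})--(\ref{Iv15}).
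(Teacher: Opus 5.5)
Your argument takes a different route from the paper's. The paper works directly with tensors. It expands $D(\tau(r))-D(r)$ into terms such as $(R_{\cdot}(b_i)\otimes I-I\otimes L_{\succ}(b_i))(r-\tau(r))\otimes a_i$, each killed by invariance of $r-\tau(r)$, and calls the $S$-part analogous. That is exactly your fallback plan, and it uses nothing but the invariance identities.

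You instead pass through Theorem~\ref{QB0} after showing that $r$ and $\tau(r)$ induce the same structure on $A^{*}$. That observation is right. Your displayed difference has the opposite overall sign; it is $R_{\cdot}^{*}(T_{r-\tau(r)}(\zeta))\eta+L_{\prec}^{*}(T_{r-\tau(r)}(\eta))\zeta$. The relative sign still matches Eq.~\eqref{Iv10}, so it vanishes. It can also be said more cleanly: Eqs.~\eqref{Iv1}--\eqref{Iv3} state exactly that $\Delta_{\succ,r-\tau(r)}=\Delta_{\prec,r-\tau(r)}=\delta_{r-\tau(r)}=0$. By linearity, the coproducts defined by $r$ and by $\tau(r)$ coincide, so the two bialgebras are literally equal. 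You also treat the invariance of $\tau(r)-r$ and the factorizable clause explicitly, which the paper's proof omits.

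The cost is that all the substance is delegated to Theorem~\ref{QB0}, which the paper only cites and does not prove. By Proposition~\ref{Ya1}(b), the claim in (c) that $T_{\tau(r)}$ is a $\prec$-homomorphism from $(A^{*},\prec_r)=(A^{*},\prec_{\tau(r)})$ is already equivalent to $D(\tau(r))=0$. So the real content of the proposition sits inside that citation.

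Worse, with the sign in \eqref{CB3}, the equivalence of (a) and (c) fails in its $\ast$-part as printed. Take $A=ke$ with $\succ=\prec=0$ and $e\ast e=e$; this is a coherent noncommutative pre-Poisson algebra. Take $r=e\otimes e$, so $D(r)=S(r)=0$ and $r-\tau(r)=0$. Then $\delta_r(e)=-e\otimes e$ gives $e^{*}\ast_r e^{*}=-e^{*}$. Since $T_r(e^{*})=e$, we get $T_r(e^{*}\ast_r e^{*})=-e\neq e=T_r(e^{*})\ast T_r(e^{*})$.

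So the step from (a) to (c) for $r$ is not actually available as stated. Your argument needs a sign-corrected version of Theorem~\ref{QB0}; your coincidence $\ast_{\tau(r)}=\ast_r$ is unaffected by such a correction. The direct tensor computation avoids all of this and is self-contained, which makes it the more reliable proof here.
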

\begin{proof}
Since $(A, \succ,\prec,\ast,\Delta_{\succ,r},\Delta_{\prec,r},\delta_{r})$ is a quasi-triangular
 (factorizable) noncommutative pre-Poisson bialgebra, 
 $D(r)=S(r)=0$ and $r-\tau(r)$ is invariant.
Note that
\begin{align*}&D(\tau(r))=r_{21}\cdot r_{31}-r_{32}\succ r_{21}-r_{31}\prec r_{32}\\=&
D(r)+(r_{21}-r_{12})\cdot r_{31}+r_{12}\cdot(r_{31}-r_{13})-r_{32}\succ(r_{21}-r_{12})\\&+(r_{23}-r_{32})\succ r_{12}
+(r_{13}-r_{31})\prec r_{32}+r_{13}\prec(r_{23}-r_{32})
\\=&D(r)-\sum_i(R_{\cdot}(b_i)\otimes I-I\otimes L_{\succ}(b_i))(r-\tau(r))\otimes a_i
+a_i\otimes( R_{\succ}(b_i)\otimes I+I \otimes L_{\prec}(b_i))(r-\tau(r))
\\&-(L_{\cdot}(a_i)\otimes I\otimes I-I\otimes I\otimes R_{\prec}(a_i))(\tau\otimes I)(b_i\otimes(r-\tau(r))
\\=&0.\end{align*}
An analogous argument shows that $S(\tau(r))=0$. The proof is completed.
\end{proof}

\begin{pro}
Let $(A, \succ,\prec,\ast,\Delta_{\succ,r},\Delta_{\prec,r},\delta_{r})$ be a factorizable noncommutative pre-Poisson bialgebra.
Then $\mathrm{Im}(T_{r}\oplus T_{\tau(r)})$ is a noncommutative pre-Poisson subalgebra of the direct sum noncommutative pre-Poisson
algebra $A\oplus A$,
which is isomorphic to the noncommutative pre-Poisson algebra $(A^{*},\succ_r,\prec_r,\ast_r)$. 
Furthermore, any $x\in A$ has a unique
decomposition $x=x_1-x_2$, where $(x_1,x_2)\in \mathrm{Im}(T_{r}\oplus T_{\tau(r)})$ and
\begin{equation*}T_{r}\oplus T_{\tau(r)}:A^{*}\longrightarrow A\oplus A,~~(T_{r}\oplus T_{\tau(r)})(\zeta)=(T_{r}(\zeta),
 T_{\tau(r)}(\zeta)),~\forall~\zeta\in A^{*}. \end{equation*}
\end{pro}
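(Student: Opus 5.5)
The plan is to use Corollary \ref{QB}: since a factorizable bialgebra is quasi-triangular, both $T_r$ and $T_{\tau(r)}$ are noncommutative pre-Poisson algebra homomorphisms from $(A^{*},\succ_r,\prec_r,\ast_r)$ to $(A,\succ,\prec,\ast)$. The direct sum $A\oplus A$ carries the componentwise operations $(x,y)\succ(x',y')=(x\succ x',y\succ y')$ and similarly for $\prec$ and $\ast$. Under these operations the map $T_r\oplus T_{\tau(r)}$ is itself a homomorphism of noncommutative pre-Poisson algebras, because each of its components is. Consequently its image $\mathrm{Im}(T_{r}\oplus T_{\tau(r)})$ is closed under $\succ,\prec,\ast$, so it is a noncommutative pre-Poisson subalgebra of $A\oplus A$.

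Next I will show that $T_r\oplus T_{\tau(r)}$ is injective. Suppose $(T_r(\zeta),T_{\tau(r)}(\zeta))=(0,0)$. Then $T_{r-\tau(r)}(\zeta)=T_r(\zeta)-T_{\tau(r)}(\zeta)=0$. By factorizability (Definition \ref{Qt}), $T_{r-\tau(r)}$ is an isomorphism, so $\zeta=0$. Therefore $T_r\oplus T_{\tau(r)}$ is an isomorphism of noncommutative pre-Poisson algebras from $(A^{*},\succ_r,\prec_r,\ast_r)$ onto its image.

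For the decomposition, take $x\in A$ and set $\zeta=T_{r-\tau(r)}^{-1}(x)$. Put $x_1=T_r(\zeta)$ and $x_2=T_{\tau(r)}(\zeta)$. Then $(x_1,x_2)\in\mathrm{Im}(T_r\oplus T_{\tau(r)})$, and by linearity of $r\mapsto T_r$ we get $x_1-x_2=T_{r-\tau(r)}(\zeta)=x$. For uniqueness, suppose $x=x_1'-x_2'$ with $(x_1',x_2')=(T_r(\zeta'),T_{\tau(r)}(\zeta'))$. Then $T_{r-\tau(r)}(\zeta')=x=T_{r-\tau(r)}(\zeta)$, and injectivity of $T_{r-\tau(r)}$ gives $\zeta'=\zeta$, hence $(x_1',x_2')=(x_1,x_2)$.

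There is no serious obstacle here. The only substantive input is the homomorphism property supplied by Theorem \ref{QB0} and Corollary \ref{QB}, which we take as given, and the rest is linear algebra. One point needs a moment's care: the relevant map is the single diagonal-type map $\zeta\mapsto(T_r(\zeta),T_{\tau(r)}(\zeta))$ from one copy of $A^*$, not a map from two copies. With that reading, the subalgebra claim and the isomorphism claim both follow directly from the componentwise definition of the operations on $A\oplus A$.
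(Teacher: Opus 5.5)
Your proposal is correct and follows the paper's proof essentially step for step. Corollary \ref{QB} makes both components homomorphisms, factorizability gives $\mathrm{Ker}(T_r\oplus T_{\tau(r)})=0$, and the decomposition is $x=T_rT_{r-\tau(r)}^{-1}(x)-T_{\tau(r)}T_{r-\tau(r)}^{-1}(x)$; your explicit uniqueness check via injectivity of $T_{r-\tau(r)}$ fills in a step the paper leaves implicit.
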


\begin{proof} In view of Definition \ref{Qt} and Corollary \ref{QB}, $T_{r}, T_{\tau(r)}$ are noncommutative pre-Poisson algebra 
homomorphisms and $T_{r-\tau(r)}$ is a linear isomorphism. It follows that
$T_{r}\oplus T_{\tau(r)}$ is a noncommutative pre-Poisson algebra homomorphism and $\mathrm{Ker}(T_{r}\oplus T_{\tau(r)})=0$.
Therefore, $\mathrm{Im}(T_{r}\oplus T_{\tau(r)})$ is isomorphic to $(A^{*},\succ_r,\prec_r,\ast_r)$ as noncommutative pre-Poisson algebras.
For all $x\in A$, 
\begin{equation*}x=T_{r-\tau(r)}T_{r-\tau(r)}^{-1}(x)=T_{r}T_{r-\tau(r)}^{-1}(x)-T_{\tau(r)}T_{r-\tau(r)}^{-1}(x)=x_1-x_2
,\end{equation*}
where $x_1=T_{r}T_{r-\tau(r)}^{-1}(x)\in \mathrm{Im}(T_{r}),~
x_2=T_{\tau(r)}T_{r-\tau(r)}^{-1}(x)\in \mathrm{Im}(T_{\tau(r)})$.
The proof is finished.
\end{proof}

\begin{pro}
 Let $(A, \succ_{A},\prec_{A},\ast_{A}, \Delta_{\succ,r},  \Delta_{\prec,r},\delta_{r})$
 be a factorizable noncommutative pre-Poisson bialgebra. Then the double noncommutative pre-Poisson algebra $(D=A\oplus A^{*},\succ_D,\prec_D,\ast_D)$
is isomorphic to the direct sum $A\oplus A$ of noncommutative pre-Poisson algebras, 
where $\succ_D,\prec_D$ and $\ast_D$ are given by Eqs.~(\ref{Db1})-(\ref{Db3}) respectively.
\end{pro}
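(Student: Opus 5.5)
We will write down an explicit map, following the standard pattern for factorizable Lie, pre-Lie and infinitesimal bialgebras, and prove that it is a bijective homomorphism.

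\textbf{Step 1: the map.} Define
\begin{equation*}
\varphi:D=A\oplus A^{*}\longrightarrow A\oplus A,\qquad \varphi(x+\zeta)=\bigl(x+T_{r}(\zeta),\,x+T_{\tau(r)}(\zeta)\bigr),
\end{equation*}
for all $x\in A$ and $\zeta\in A^{*}$.

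\textbf{Step 2: bijectivity.} Suppose $\varphi(x+\zeta)=0$. Subtracting the two components gives $T_{r-\tau(r)}(\zeta)=0$. Since the bialgebra is factorizable (Definition \ref{Qt}), $T_{r-\tau(r)}$ is injective, so $\zeta=0$, and then $x=0$. Both spaces have dimension $2\dim A$, so $\varphi$ is a linear isomorphism. This is essentially the decomposition $x=x_1-x_2$ from the preceding proposition.

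\textbf{Step 3: the homomorphism property, by components.} Because $\succ_D,\prec_D,\ast_D$ are bilinear and split according to Eqs.~(\ref{Db1})--(\ref{Db3}), it suffices to treat three kinds of pairs.
\begin{itemize}
\item Pairs $(x,y)$ with $x,y\in A$: here $\varphi(x\succ_D y)=(x\succ y,\,x\succ y)=\varphi(x)\succ\varphi(y)$, and likewise for $\prec$ and $\ast$.
\item Pairs $(\zeta,\eta)$ with $\zeta,\eta\in A^{*}$: the products on $A^{*}\subset D$ are $\succ_r,\prec_r,\ast_r$ from Eqs.~(\ref{Ic1})--(\ref{Ic3}). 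By Corollary \ref{QB}(b), $T_r$ and $T_{\tau(r)}$ are homomorphisms of noncommutative pre-Poisson algebras from $(A^{*},\succ_r,\prec_r,\ast_r)$ to $A$. Hence $\varphi(\zeta\circ_D\eta)=\varphi(\zeta)\circ\varphi(\eta)$ for each of the three operations $\circ$.
\item Mixed pairs $(x,\zeta)$ and $(\zeta,x)$: these need the most work, described next.
\end{itemize}

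\textbf{Step 4: the mixed products.} Take $\succ$ as the model case. By Eq.~(\ref{Db1}),
\begin{equation*}
x\succ_D\zeta=-L_{\prec_{A^*}}^{*}(\zeta)x+R_{\cdot_A}^{*}(x)\zeta .
\end{equation*}
We must show this equals
\begin{equation*}
\varphi^{-1}\bigl(x\succ T_r(\zeta),\,x\succ T_{\tau(r)}(\zeta)\bigr).
\end{equation*}
Equivalently, we need the pair of identities
\begin{align*}
&-L_{\prec_{A^*}}^{*}(\zeta)x+T_r\bigl(R_{\cdot}^{*}(x)\zeta\bigr)=x\succ T_r(\zeta),\\
&-L_{\prec_{A^*}}^{*}(\zeta)x+T_{\tau(r)}\bigl(R_{\cdot}^{*}(x)\zeta\bigr)=x\succ T_{\tau(r)}(\zeta).
\end{align*}
The plan for these two identities is as follows.
\begin{itemize}
\item Subtracting them, what remains is $T_{r-\tau(r)}(R_{\cdot}^{*}(x)\zeta)=L_{\succ}(x)T_{r-\tau(r)}(\zeta)$, which is the invariance identity (\ref{Iv11}).
\item So it is enough to prove the first identity. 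To do this, pair both sides with an arbitrary $\eta\in A^{*}$ and use the definition of $\prec_{A^*}=\prec_r$ in (\ref{Ic2}), i.e.\ the coboundary formula (\ref{CB2}) for $\Delta_{\prec,r}$. This gives
\begin{equation*}
\langle L_{\prec_{A^*}}^{*}(\zeta)x,\eta\rangle=\langle x,\zeta\prec_r\eta\rangle,
\end{equation*}
which expands into terms in $T_r$ and $T_{\tau(r)}$ via $-R_{\succ}^{*}(T_r(\zeta))\eta+L_{\cdot}^{*}(T_{\tau(r)}(\eta))\zeta$.
\item The resulting expression must match $\langle x\succ T_r(\zeta)-T_r(R_{\cdot}^{*}(x)\zeta),\eta\rangle$. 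Carrying this out, possibly after replacing $T_{\tau(r)}$ by $T_r-T_{r-\tau(r)}$ and applying the identities (\ref{Iv10}), (\ref{Iv13}) and (\ref{Iv15}), should close the identity.
\end{itemize}
The remaining mixed cases ($\zeta\succ_D x$, both orders for $\prec_D$, and both orders for $\ast_D$, where $\mathrm{ad}^{*}$ and $R_{\ast}^{*}$ appear) are handled the same way. Each reduces to one of the three invariance identities in (\ref{Iv11})--(\ref{Iv14}), together with the explicit formulas (\ref{Ic1})--(\ref{Ic3}).

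\textbf{Main obstacle.} The hard part is the bookkeeping in Step 4. Because the coboundary formulas mix $r$ and $\tau(r)$, one must make sure that the cross terms cancel using only the invariance of $r-\tau(r)$, and not invariance of $r$ itself. The $\ast$-case will probably be the most delicate, since $\ast$ is neither associative nor symmetric: identity (\ref{Iv14}) involves $T_{r+\tau(r)}$ and has to be combined with the skew-symmetry relation (\ref{Iv15}) for $L_{\ast}^{*}$. If the direct pairing computation becomes unwieldy, I would first try to identify the double $D$ with the Manin triple description in Theorem \ref{Mb3} and transport the problem through $\omega$. Alternatively, I would cite the analogous isomorphisms for factorizable dendriform bialgebras \cite{Wa} and pre-Lie bialgebras \cite{Wbl}: the same map $\varphi$ works there, which settles the pure $(\succ,\prec)$ and pure $\ast$ parts, and the pre-Poisson structure only adds compatibility conditions that are automatically preserved by a bijective map that respects each operation separately.
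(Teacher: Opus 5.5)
\paragraph{The $\ast$-part does not go through.}
Your plan is the paper's own proof: the same map $\varphi$, the same bijectivity argument, and the same treatment of $x\succ_D\zeta$. There the first identity follows from \eqref{Ic2} alone, and the second follows from it together with \eqref{Iv11}. The gap is in the $\ast$-part, which both you and the paper call ``analogous''. With the sign in \eqref{CB3}, $\varphi$ does not respect $\ast_D$.

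By \eqref{Db3}, $x\ast_D\zeta=R^{*}_{\ast_{A^*}}(\zeta)x-\mathrm{ad}^{*}(x)\zeta$. Pairing the first component of $\varphi(x\ast_D\zeta)$ with $\eta$ and using \eqref{Ic3} gives
\begin{equation*}
\langle x,\eta\ast_{r}\zeta\rangle-\langle T_r(\mathrm{ad}^{*}(x)\zeta),\eta\rangle
=\langle [T_{r}(\eta)+T_{\tau(r)}(\eta),x],\zeta\rangle-\langle x\ast T_{\tau(r)}(\zeta),\eta\rangle .
\end{equation*}
\begin{itemize}
\item Equality with $\langle x\ast T_r(\zeta),\eta\rangle$ is therefore equivalent to $\mathrm{ad}(x)T_{r+\tau(r)}(\eta)=-T_{r+\tau(r)}(L_{\ast}^{*}(x)\eta)$.
\item Since $T_{r+\tau(r)}=2T_r-T_{r-\tau(r)}$, and $T_{r-\tau(r)}$ satisfies this identity by \eqref{Iv11}, the condition is \eqref{Iv6} for $r$ itself.
\item By Lemma~\ref{In1} that is \eqref{Iv3} for $r$, i.e.\ $\delta_r=0$.
\end{itemize}
Your subtraction trick only certifies the difference of the two components. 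The cross terms really do need invariance of $r$, which is exactly the danger you flagged.

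The pure $A^{*}$ case fails for the same reason. Proposition~\ref{Ya1}(d) together with the third identity of \eqref{Iv10} turns $S(r)=0$ into $T_r([\zeta,\eta]_r)=-[T_r(\zeta),T_r(\eta)]$. So Corollary~\ref{QB}(b), on which your second bullet rests, has the wrong sign for $\ast$ in this normalization.

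\paragraph{A concrete counterexample.}
Keep the pre-Lie product of Example~\ref{Ae} ($e_1\ast e_1=e_1$, $e_2\ast e_1=e_2$), put $\succ=\prec=0$, and take $r=e_1\otimes e_2$. Then:
\begin{itemize}
\item $\Delta_{\succ,r}=\Delta_{\prec,r}=0$, $\delta_r(e_1)=0$ and $\delta_r(e_2)=-e_2\otimes e_2$;
\item $S(r)=0$, $r-\tau(r)$ is invariant and $T_{r-\tau(r)}$ is bijective;
\item every compatibility condition involving the dendriform data is vacuous, and one checks directly that $D$ is pre-Lie.
\end{itemize}
So this is a factorizable bialgebra. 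Here $T_r(e_1^{*})=e_2$, $T_r(e_2^{*})=0$, $T_{\tau(r)}(e_1^{*})=0$, $T_{\tau(r)}(e_2^{*})=e_1$, and $e_2^{*}\ast_D e_2^{*}=-e_2^{*}$. Hence $\varphi(e_2^{*}\ast_D e_2^{*})=(0,-e_1)$, whereas $\varphi(e_2^{*})\ast\varphi(e_2^{*})=(0,e_1)$. Likewise $e_2\ast_D e_2^{*}=-e_2-e_1^{*}$ is sent to $(-2e_2,-e_2)$ instead of $(0,e_2)$.

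\paragraph{What is missing.}
With \eqref{CB3} as written, the mixed $\ast$-identities are satisfied instead by $x+\zeta\mapsto(x-T_{\tau(r)}(\zeta),\,x-T_r(\zeta))$. In this example that map is an isomorphism, so $D\cong A\oplus A$ still holds here. In general, however, it fails for $\succ_D$ unless $\prec_{A^{*}}=0$. The dendriform and pre-Lie parts thus want opposite signs, and invoking \cite{Wa} and \cite{Wbl} separately would give two different maps rather than one.

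What is missing is a consistent sign. If $\delta_r$ in \eqref{CB3} is replaced by $-\delta_r$, the relation above becomes $T_r([\zeta,\eta]_r)=+[T_r(\zeta),T_r(\eta)]$. Every mixed $\ast$-case for your $\varphi$ then closes using only the third identities of \eqref{Iv11} and \eqref{Iv12}, together with $T_{r-\tau(r)}R_{\ast}^{*}(x)=R_{\ast}(x)T_{r-\tau(r)}$, which follows from them. As the statement stands, Step~4 cannot be completed with $\varphi$.
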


\begin{proof} Since $(A, \succ_{A},\prec_{A},\ast_{A}, \Delta_{\succ,r},  \Delta_{\prec,r},\delta_{r})$
 is a factorizable noncommutative pre-Poisson bialgebra,
$T_{r-\tau(r)}$ is a linear isomorphism and $r-\tau(r)$ is invariant.
Define $\varphi:A\oplus A^{*}\longrightarrow A\oplus A$ by
\begin{equation}\label{FD1}\varphi(x,\zeta)=(x+T_{r}(\zeta),x+T_{\tau(r)}(\zeta)),~\forall~(x,\zeta)\in A\oplus A^{*}.\end{equation}
Then $\varphi$ is a bijection.
By Eq.~(\ref{Ic2}), we get
\begin{small}
\begin{align*}&\langle\eta,-L_{\prec_{A^*}}^{*}(\zeta)x+T_{r}(R_{\cdot_A}^{*}(x)\zeta\rangle
\\=&\langle -\zeta\prec \eta ,x\rangle+\langle \eta,T_{r}(R_{\cdot_A}^{*}(x)\zeta\rangle
\\=&\langle R_{\succ}^{*}(T_{r}(\zeta))\eta-L_{\cdot}^{*} (T_{\tau(r)}(\eta))\zeta,x\rangle+\langle \eta,T_{r}(R_{\cdot_A}^{*}(x)\zeta\rangle
\\=&\langle x\succ_A T_{r}(\zeta),\eta\rangle-\langle \zeta,T_{\tau(r)}(\eta)\cdot_A x\rangle+\langle \eta,T_{r}(R_{\cdot_A}^{*}(x)\zeta\rangle
\\=&\langle x\succ_A T_{r}(\zeta),\eta\rangle-\langle T_{r}(R_{\cdot_A}^{*}(x)\zeta),\eta\rangle+\langle \eta,T_{r}(R_{\cdot_A}^{*}(x)\zeta\rangle
\\=&\langle x\succ_A T_{r}(\zeta),\eta\rangle,
\end{align*}\end{small}
which implies that
\begin{equation}\label{FD2}x\succ_{A} T_{r}(\zeta)=-L_{\prec_{A^*}}^{*}(\zeta)x+T_{r}(R_{\cdot_A}^{*}(x)\zeta.\end{equation}
Similarly, using Eqs.~(\ref{Ic2}) and (\ref{Iv11}), we have
\begin{equation}\label{FD3}x\succ_{A} T_{\tau(r)}(\zeta)=-L_{\prec_{A^*}}^{*}(\zeta)x+T_{\tau(r)}(R_{\cdot_A}^{*}(x)\zeta.\end{equation}
By Eqs.~(\ref{Db2}) and (\ref{FD2})-(\ref{FD3}), we get 
\begin{small}
\begin{align*}&\varphi(x\succ_D \zeta)=\varphi(-L_{\prec_{A^*}}^{*}(\zeta)x+R_{\cdot_A}^{*}(x)\zeta)
\\
=&(-L_{\prec_{A^*}}^{*}(\zeta)x+T_{r}(R_{\cdot_A}^{*}(x)\zeta),-L_{\prec_{A^*}}^{*}(\zeta)x+T_{\tau(r)}(R_{\cdot_A}^{*}(x)\zeta))
\\=&(x\succ_A T_{r}(\zeta),x\succ_A T_{\tau(r)}(\zeta))
=(x,x)\succ (T_{r}(\zeta),T_{\tau(r)}(\zeta))
\\=&\varphi(x)\succ_A \varphi(\zeta).
\end{align*}\end{small}
Analogously, $\varphi(\zeta\succ_D x)=\varphi(\zeta)\succ_A \varphi(x) $. Thus,
$\varphi((x,\zeta)\succ_D (y,\eta))=\varphi(x,\zeta)\succ \varphi(y,\eta) $ for all
$(x,\zeta), (y,\eta)\in A\oplus A^{*}$. Analogously, $\varphi((x,\zeta)\prec_D (y,\eta))=\varphi(x,\zeta)\prec \varphi(y,\eta) $
and
 $\varphi((x,\zeta)\ast_D (y,\eta))=\varphi(x,\zeta)\ast \varphi(y,\eta) $.
In all, $\varphi$ is an isomorphism of noncommutative pre-Poisson algebras.
The proof is completed.
\end{proof}

\begin{thm} \label{Ft}
Let $(A, \succ,\prec,\ast, \Delta_{\succ,r},  \Delta_{\prec,r},\delta_{r})$ be a noncommutative pre-Poisson
 bialgebra. Assume that $\{e_1,...,e_n\}$ is a basis of $A$ and $\{e^{*}_1,...,e^{*}_n\}$ is the dual basis.
 Let \begin{equation*}r=\sum_{i=1}^{n}e_{i}\otimes e_{i}^{*}\in A\otimes A^{*}\subseteq D\otimes D.\end{equation*}
Then $(D,\succ_D,\prec_D,\ast_D,\Delta_{\succ,r},  \Delta_{\prec,r},\delta_{r})$ 
 is a factorizable noncommutative pre-Poisson bialgebra with 
$\Delta_{\succ,r},  \Delta_{\prec,r},\delta_{r}$ given by Eqs.~(\ref{CB1})-(\ref{CB2}) and (\ref{CB3}) respectively.
\end{thm}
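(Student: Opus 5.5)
By Definition \ref{Qt} and Theorem \ref{Qs}, the statement reduces to three claims about the canonical element $r=\sum_{i}e_i\otimes e_i^{*}\in D\otimes D$, where $D$ carries the operations $\succ_D,\prec_D,\ast_D$ of Eqs.~(\ref{Db1})-(\ref{Db3}):
\begin{enumerate}
\item $r$ satisfies the NPP-YBE, i.e. $D(r)=0$ and $S(r)=0$;
\item $r-\tau(r)$ is invariant in the sense of Definition \ref{In};
\item $T_{r-\tau(r)}:D^{*}\to D$ is a linear isomorphism.
\end{enumerate}
Once these hold, Theorem \ref{Qs} gives that $(D,\succ_D,\prec_D,\ast_D,\Delta_{\succ,r},\Delta_{\prec,r},\delta_r)$ is a quasi-triangular bialgebra. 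Note that $D$ is coherent by Proposition \ref{Ps4}, since $(D,A,A^{*},\omega)$ is a Manin triple by Theorem \ref{Mb3}. Claim (c) then makes the bialgebra factorizable.

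Claim (c) is immediate. Identify $D^{*}\cong A^{*}\oplus A$ through the natural pairing. For $\zeta\in A^{*}$ and $x\in A$ we get $T_r(\zeta+x)=x$ and $T_{\tau(r)}(\zeta+x)=\zeta$. Hence $T_{r-\tau(r)}(\zeta+x)=x-\zeta$, which is clearly bijective.

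For the NPP-YBE I will work with the $\mathcal O$-operator criterion rather than expand $r_{12}\cdot r_{13}$ and the other terms directly.
\begin{itemize}
\item First I compute $D(r)$ and $S(r)$ on the basis elements. Each term of the form $e_i\,\square\, e_j$ or $e_i^{*}\,\square\, e_j^{*}$, with $\square\in\{\succ_D,\prec_D,\ast_D\}$, should be expressed through the structure constants of $A$ and $A^{*}$. The mixed products follow from Eqs.~(\ref{Db1})-(\ref{Db3}).
\item Alternatively, and this is my preferred route, I note that Theorem \ref{QB0} also characterizes solutions. Once (b) is known, $r$ solves the NPP-YBE if and only if $T_r$ and $T_{\tau(r)}$ are noncommutative pre-Poisson homomorphisms from $(D^{*},\succ_r,\prec_r,\ast_r)$ to $D$.
\item From Eqs.~(\ref{Ic1})-(\ref{Ic3}), with $T_r$ the projection onto $A$ and $T_{\tau(r)}$ the projection onto $A^{*}$, I expect the induced structure on $D^{*}=A^{*}\oplus A$ to be the direct sum of $A^{*}$ with the opposite-type structure and $A$. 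Concretely, for $a,b\in A^{*}$ the mixed terms should cancel and give $\zeta\succ_r\eta$ equal to the $D$-product. The homomorphism property then reduces to $A$ and $A^{*}$ being subalgebras of $D$, which they are.
\end{itemize}
Since Theorem \ref{QB0} is stated as an equivalence, this is legitimate provided (b) is established first.

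Claim (b) I will prove via Proposition \ref{Si}(c), i.e. Eq.~(\ref{Iv11}), using $T_{r-\tau(r)}(\zeta+x)=x-\zeta$. The three identities become, for $X\in D$:
\begin{align*}
R_{\prec_D}(X)(x-\zeta)&=T_{r-\tau(r)}(L_{\cdot_D}^{*}(X)(\zeta+x)),\\
L_{\succ_D}(X)(x-\zeta)&=T_{r-\tau(r)}(R_{\cdot_D}^{*}(X)(\zeta+x)),\\
\mathrm{ad}_D(X)(x-\zeta)&=-T_{r-\tau(r)}(L_{\ast_D}^{*}(X)(\zeta+x)).
\end{align*}
Here the duals are taken with respect to the natural pairing of $D$ with $D^{*}$. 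This pairing differs from $\omega$ in Eq.~(\ref{C2}) exactly by the sign twist built into $x-\zeta$. These identities are therefore reformulations of the invariance conditions (\ref{Ib1})-(\ref{Ib2}) of $\omega$ on $D$, which hold because $(D,\omega)$ is quadratic by Theorem \ref{Mb3}. I would verify each identity by pairing both sides with an arbitrary $Y+\eta\in D$ and applying (\ref{Ib1})-(\ref{Ib2}). The pairing between $D$ and $D^{*}$ has to be set up carefully before this works.

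The main obstacle is sign bookkeeping in (b) and in the identification of $(D^{*},\succ_r,\prec_r,\ast_r)$. The invariance conventions of Definition \ref{In} mix $R_{\prec}$, $L_{\succ}$ and $\mathrm{ad}$ asymmetrically, and these must match the forms (\ref{Ib1})-(\ref{Ib2}) exactly. If the $\omega$-translation fails for one of the three identities, I will fall back on a direct check on basis elements using Eqs.~(\ref{Db1})-(\ref{Db3}).
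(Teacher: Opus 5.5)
\textbf{Gap in (a).} Parts (b) and (c) work, but your preferred route to the NPP-YBE through Theorem~\ref{QB0} breaks at its key step. The products induced on $D^{*}=A^{*}\oplus A$ do coincide with those of $A^{*}$ and $A$ for $\succ_r,\prec_r$, but not for $\ast_r$.

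First, a small correction: $T_r(\zeta+x)=\zeta\in A^{*}\subset D$ and $T_{\tau(r)}(\zeta+x)=x$. You have these swapped, which only flips the sign of $T_{r-\tau(r)}$ and is harmless for (b) and (c).

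Now take $\zeta,\eta\in A^{*}\subset D^{*}$. Since $T_{\tau(r)}(\eta)=0$, Eq.~(\ref{Ic3}) gives $\zeta\ast_r\eta=\mathrm{ad}_D^{*}(\zeta)\eta$. By Eq.~(\ref{Db3}), the $A$-component of $[\zeta,y]_D$ for $y\in A$ is $-L^{*}_{\ast_{A^*}}(\zeta)y$, so $\zeta\ast_r\eta=-\zeta\ast_{A^{*}}\eta$. In the same way $x\ast_r y=-x\ast_A y$ for $x,y\in A\subset D^{*}$, and the mixed products vanish.

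Hence $T_r(\zeta\ast_r\eta)=-T_r(\zeta)\ast_D T_r(\eta)$. It is $-T_r$ and $-T_{\tau(r)}$, not $T_r$ and $T_{\tau(r)}$, that preserve the pre-Lie product. So the homomorphism condition of Theorem~\ref{QB0} fails for the double as soon as $\ast_A$ or $\ast_{A^{*}}$ is nonzero. This happens even though $r$ does satisfy $S(r)=0$. In other words, Theorem~\ref{QB0}, with the sign convention of Eq.~(\ref{CB3}), actually fails on this very example and cannot serve as your criterion.

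\textbf{How to repair it.} The dendriform half of your idea survives. Pairing $D(r)$ with $\zeta\otimes\eta\otimes\theta$, as in the proof of Proposition~\ref{Ya1}, shows that $D(r)=0$ is equivalent to $T_r(\zeta\prec_r\eta)=T_r(\zeta)\prec T_r(\eta)$, with no invariance needed. This holds because $\prec_r=\prec_{A^{*}}\oplus\prec_A$.

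For the S-equation you must use the direct computation from your first bullet, or cite the pre-Lie double theorem as the paper does. The tensor $S(r)$ has components only in $A\otimes A\otimes A^{*}$ and $A\otimes A^{*}\otimes A^{*}$. In $r_{12}\ast r_{23}$, write $e_i^{*}\ast_D e_j$ via Eq.~(\ref{Db3}):
\begin{itemize}
\item its $A$-part $(R^{*}_{\ast_{A^*}}-L^{*}_{\ast_{A^*}})(e_i^{*})e_j$ cancels $[r_{13},r_{23}]$;
\item its $A^{*}$-part $R^{*}_{\ast_A}(e_j)e_i^{*}$ cancels $-r_{12}\ast r_{13}$.
\end{itemize}

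\textbf{On (b).} Your argument is a genuine and self-contained alternative to the paper. One checks that $T_{r-\tau(r)}=(\omega^{\sharp})^{-1}$ for the form in Eq.~(\ref{C2}), i.e.\ $r-\tau(r)=r_\omega$. So invariance is exactly Lemma~\ref{Fb1} applied to the quadratic double $(D,\omega)$. The paper instead imports invariance and $D(r)=S(r)=0$ from Theorem 2.21 of \cite{Wa} and Theorem 2.18 of \cite{Wbl}, and then only computes $T_{r-\tau(r)}$.
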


\begin{proof} On the basis of Theorem 2.21 \cite{Wa} and Theorem 2.18 \cite{Wbl}, we get that
$r-\tau(r)$ is invariant and $S(r)=D(r)=0$. Thus, 
$(A, \succ,\prec,\ast, \Delta_{\succ,r},  \Delta_{\prec,r},\delta_{r})$
is a quasi-triangular noncommutative pre-Poisson bialgebra.
 Furthermore, the linear maps $T_{r},T_{\tau(r)}:D^{*}\longrightarrow D$ are respectively defined by 
$T_{r}(\zeta,x)=\zeta,~T_{\tau(r)}(\zeta,x)=x$ for all $x\in A,\zeta\in A^{*}$. Thus,
$T_{r-\tau(r)}(\zeta,x)=(\zeta,-x)$ is a linear isomorphism. Hence, 
$(D,\succ_D,\prec_D,\ast_D,\Delta_{\succ,r},\Delta_{\prec,r},\delta_{r})$
is a factorizable noncommutative pre-Poisson bialgebra.
\end{proof}

\section{Relative Rota-Baxter operators and quadratic Rota-Baxter noncommutative pre-Poisson algebras }
\subsection{Relative Rota-Baxter operators and the NPP-YBE}
This section focuses on solutions of the NPP-YBE with invariant skew-symmetric parts, 
studying them specifically via relative Rota-Baxter operators of weight $\lambda$ on noncommutative pre-Poisson algebras

 \begin{pro}
Let $(A,\succ,\prec,\ast)$ be a coherent noncommutative pre-Poisson algebra and $r\in A\otimes A$ be skew-symmetric and invariant. Define the
operations $\succ_r,\prec_r,\ast_r:A^{*}\otimes A^{*}\longrightarrow A^{*}$ by
\begin{align}&\label{Apa1}\zeta\succ_{r}\eta=R_{\cdot}^{*}(T_{r}(\zeta))\eta=-L_{\prec}^{*}(T_{r}(\eta))\zeta,
\\&\label{Apa2}\zeta\prec_{r}\eta=-R_{\succ}^{*}(T_{r}(\zeta))\eta=L_{\cdot}^{*} (T_{r}(\eta))\zeta,\\&
\label{Apa3}\zeta\ast_{r}\eta=-\mathrm{ad}^{*} (T_{r}(\zeta))\eta=R_{\ast}^{*}(T_{r}(\eta))\zeta,~~\forall~\zeta,\eta\in A^{*}.
\end{align}
Then 
$(A^{*},\succ_r,\prec_r,\ast_r,R_{\cdot}^{*},-L_{\prec}^{*},-R_{\succ}^{*},L_{\cdot}^{*},R^{*}_{\ast}-L_{\ast}^{*},R^{*}_{\ast})$
is an A-noncommutative pre-Poisson algebra
and $(A^{*},\cdot_r,[ \ , \ ]_{r},R_{\prec}^{*}, L_{\succ}^{*},-L_{\ast}^{*})$ is an A-noncommutative Poisson algebra, where
the Poisson algebra structure $(\cdot_r,[ \ , \ ]_{r})$ on $A^{*}$ is
given by 
\begin{small}\begin{align}&\label{Apa4}\zeta\cdot_{r}\eta=R_{\cdot}^{*}(T_{r}(\zeta))\eta-R_{\succ}^{*}(T_{r}(\zeta))\eta
=-L_{\prec}^{*}(T_{r}(\eta))\zeta+L_{\cdot}^{*} (T_{r}(\eta))\zeta
,\\&\label{Apa5}
[ \zeta , \eta ]_{r}=\mathrm{ad}^{*} (T_{r}(\eta))\zeta-\mathrm{ad}^{*} (T_{r}(\zeta))\eta
=R_{\ast}^{*}(T_{r}(\zeta))\eta-R_{\ast}^{*}(T_{r}(\eta))\zeta.
\end{align}\end{small}
\end{pro}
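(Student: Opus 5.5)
The plan is to use the identities already available for a skew-symmetric invariant $r$, so that the new products on $A^{*}$ can be realised inside the dual representation. Since $r$ is skew-symmetric, $\tau(r)=-r$ and $r-\tau(r)=2r$. Hence invariance of $r$ is equivalent to invariance of $r-\tau(r)$, and Proposition \ref{Si} together with Eqs.~(\ref{Iv13})--(\ref{Iv15}) apply with $T_{r-\tau(r)}=2T_r$. Dividing by $2$, Eq.~(\ref{Iv10}) gives exactly the equalities of the two expressions in Eqs.~(\ref{Apa1})--(\ref{Apa3}). So the products $\succ_r,\prec_r,\ast_r$ are well defined, and each admits both a ``left'' form and a ``right'' form.

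Write $\rho=(R_{\cdot}^{*},-L_{\prec}^{*},-R_{\succ}^{*},L_{\cdot}^{*},R^{*}_{\ast}-L_{\ast}^{*},R^{*}_{\ast})$. By Example \ref{Cr}(c) this is a representation of $(A,\succ,\prec,\ast)$, because $A$ is coherent.

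The key observation is the following. In terms of $\rho$, the three products become
\[
\zeta\succ_r\eta=\rho_{l_\succ}(T_r\zeta)\eta=\rho_{r_\succ}(T_r\eta)\zeta ,
\]
and analogously for $\prec_r$ and $\ast_r$. Thus $\succ_r,\prec_r,\ast_r$ are precisely the products induced on $V=A^{*}$ by the map $T_r$ acting through $\rho$. The bracket $\ast_r$ needs a sign check: the right form $R^{*}_{\ast}(T_r\eta)\zeta$ should match the $r_\ast$ slot of $\rho$, and the left form $-\mathrm{ad}^{*}(T_r\zeta)\eta$ should match the $l_\ast$ slot.

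Next we show that $T_r$ is an $\mathcal O$-operator of weight $0$ and intertwines the actions. Eq.~(\ref{Iv11}), divided by $2$, gives
\[
T_r(\rho(x)\zeta)=(\text{regular action of }x)(T_r\zeta).
\]
Using this, the identity $T_r(\zeta\succ_r\eta)=T_r\zeta\succ T_r\eta$ and its $\prec,\ast$ analogues follow. With this in hand, the standard argument shows that $(A^{*},\succ_r,\prec_r,\ast_r)$ is a noncommutative pre-Poisson algebra: each axiom of the form $F(\zeta,\eta,\theta)=0$ can be rewritten, using the left forms, as an operator identity on $\theta$ of type $\rho(T_r\zeta)\rho(T_r\eta)-\rho(T_r\zeta\ \square\ T_r\eta)$. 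The required operator identity is exactly one of the representation axioms (\ref{r1})--(\ref{r9}) and the dendriform/pre-Lie representation axioms for $\rho$.

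It remains to check the compatibility conditions (\ref{ppmp eq1.22})--(\ref{Dl7}) of an A-noncommutative pre-Poisson algebra. These say that $\rho(x)$ acts by derivation-like rules on the products of $A^{*}$. To verify them, expand, for example,
\[
\rho_{r_\succ}(x)(\zeta\ast_r\eta-\eta\ast_r\zeta).
\]
For each factor we choose whichever of the two forms of the product makes $T_r$ appear next to $x$, and then move $T_r$ past $\rho(x)$ using Eqs.~(\ref{Iv11})--(\ref{Iv12}) and Eqs.~(\ref{Iv13})--(\ref{Iv15}). After this the identity reduces to a representation axiom of $\rho$, with $T_r\zeta$ or $T_r\eta$ as one argument. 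Alternatively one can sidestep part of this by citing Lemma 2.13 and Proposition 2.16 of \cite{Wa} and the pre-Lie analogue in \cite{Wbl}, which give the dendriform and pre-Lie parts, leaving only the mixed conditions (\ref{ppmp eq1.22})--(\ref{ppmp eq1.30}). The main obstacle is this bookkeeping for the mixed conditions, specifically choosing the correct form of each product so that every term collapses onto one of (\ref{r1})--(\ref{r12}). I would treat (\ref{ppmp eq1.22}) in detail and state that the others are analogous.

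Finally, the A-noncommutative Poisson statement follows by passing to sub-adjacent structures. By Proposition \ref{Dr}(c) and Example \ref{Cr}(e), the pair $(R_{\prec}^{*},L_{\succ}^{*},-L_{\ast}^{*})$ is the sub-adjacent representation of $\rho$. The sums $\zeta\succ_r\eta+\zeta\prec_r\eta$ and $\zeta\ast_r\eta-\eta\ast_r\zeta$ give exactly Eqs.~(\ref{Apa4})--(\ref{Apa5}), using $R_\cdot^{*}-R_\succ^{*}=R_\prec^{*}$ and $L_\cdot^{*}-L_\prec^{*}=L_\succ^{*}$. Summing the corresponding compatibility conditions from the pre-Poisson level then yields those listed in Remark \ref{La}.
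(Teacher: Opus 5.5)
Your route is essentially the paper's. The paper checks \eqref{ppmp eq1.22} by switching between the two forms in \eqref{Apa1}--\eqref{Apa5} and moving $T_r$ across the dual operators with \eqref{Iv10} and \eqref{Iv13}. Its final identity is exactly axiom \eqref{r7} for your $\rho$, i.e.\ the dual of \eqref{Np4}. Your organisation via $\rho$, the two forms and the intertwining property is a more systematic version of the same computation. It also covers the axioms \eqref{Np1}--\eqref{Np3} for $(A^{*},\succ_r,\prec_r,\ast_r)$ itself, which the paper leaves implicit.

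Two points need fixing.

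First, $T_r$ is \emph{not} an $\mathcal O$-operator of weight $0$ associated to $\rho$. Since the two forms of each product agree,
$T_r\big(l_\succ(T_r\zeta)\eta+r_\succ(T_r\eta)\zeta\big)=2\,T_r(\zeta\succ_r\eta)=2\,T_r(\zeta)\succ T_r(\eta)$,
so the $\mathcal O$-operator identity would force $T_r(\zeta)\succ T_r(\eta)=0$. For example, $r=e_1\otimes e_2-e_2\otimes e_1$ in Example~\ref{Ae} is skew-symmetric and invariant, yet $T_r(e_2^{*})\succ T_r(e_2^{*})=e_1\succ e_1=e_1\neq 0$. You never actually use this claim. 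What you use, and what is true, is that $T_r$ intertwines $\rho$ with the regular representation and hence is a homomorphism $(A^{*},\succ_r,\prec_r,\ast_r)\to(A,\succ,\prec,\ast)$. So just delete the $\mathcal O$-operator claim.

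Second, passing through $r-\tau(r)=2r$ and dividing by $2$ is unnecessary, and it breaks down in characteristic $2$: there $r-\tau(r)=0$ and Proposition~\ref{Si} gives nothing. Instead, apply Lemma~\ref{In1} to $r$ itself with $T_{\tau(r)}=-T_r$, and transpose using $T_r^{*}=-T_r$. This gives \eqref{Iv10}--\eqref{Iv15} with $T_r$ in place of $T_{r-\tau(r)}$ directly.

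Also, when you carry out the ``analogous'' cases, note that \eqref{ppmp eq1.28} is misprinted. Applying \eqref{Np3} to $(a,b,x)$ gives $r_{\ast_1}(x)(a\cdot_2 b)=(r_{\ast_1}(x)a)\prec_2 b+a\succ_2(r_{\ast_1}(x)b)$, and it is this version that reduces to \eqref{r5}. Similarly, the last expression in \eqref{Apa5} should read $R_{\ast}^{*}(T_{r}(\eta))\zeta-R_{\ast}^{*}(T_{r}(\zeta))\eta$.
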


\begin{proof} 
By Eqs.~(\ref{Apa1}), (\ref{Apa3}), (\ref{Apa5}), (\ref{Iv10}), (\ref{Iv13}) and (\ref{Np4}), we obtain
\begin{align*}
	&\zeta \ast_{r} L_{\prec}^{*}(x)\eta+\eta \succ_{r}(R_{\ast}^{*}(x)\zeta)-L_{\prec}^{*}(x)[\zeta, \eta]_{r}\\=&
-\mathrm{ad}^{*} (T_{r}(\zeta))L_{\prec}^{*}(x)\eta+R_{\cdot}^{*}(T_{r}(\eta))R_{\ast}^{*}(x)\zeta
+L_{\prec}^{*}(x)(\mathrm{ad}^{*} (T_{r}(\zeta))\eta-\mathrm{ad}^{*} (T_{r}(\eta))\zeta)
\\=&R_{\ast}^{*}(T_{r}(L_{\prec}^{*}(x)\eta))\zeta+R_{\cdot}^{*}(T_{r}(\eta))R_{\ast}^{*}(x)\zeta
-L_{\prec}^{*}(x)(R_{\ast}^{*} (T_{r}(\eta))\zeta+\mathrm{ad}^{*} (T_{r}(\eta))\zeta)
\\=&-R_{\ast}^{*}(R_{\succ}(x)T_{r}(\eta))\zeta+R_{\cdot}^{*}(T_{r}(\eta))R_{\ast}^{*}(x)\zeta
-L_{\prec}^{*}(x)L_{\ast}^{*} (T_{r}(\eta))\zeta
\\=&0,\end{align*}
which means that Eq.~(\ref{ppmp eq1.22}) holds in the case that 
$l_{\ast}=R_{\ast}^{*}-L_{\ast}^{*},~ r_{\ast}=R_{\ast}^{*},~ l_{\succ}=R_{\cdot}^{*},~r_{\succ}=-L_{\prec}^{*},~
 l_{\prec}=-R_{\succ}^{*},~r_{\prec}=L_{\cdot}^{*}$.
 Similarly, we prove that Eqs.~(\ref{ppmp eq1.22})-(\ref{ppmp eq1.30}) hold
and $(A^{*},\ast_r,R^{*}_{\ast}-L_{\ast}^{*},R^{*}_{\ast})$
is an A-pre-Lie algebra. By Lemma 3.4 \cite{Wa}, $(A^{*},\succ_r,\prec_r,R_{\cdot}^{*},-L_{\prec}^{*},-R_{\succ}^{*},L_{\cdot}^{*})$
is an A-dendriform algebra. 
Thus, $(A^{*},\succ_r,\prec_r,\ast_r,R_{\cdot}^{*},
-L_{\prec}^{*},-R_{\succ}^{*},L_{\cdot}^{*},R^{*}_{\ast}-L_{\ast}^{*},R^{*}_{\ast})$
is an A-noncommutative pre-Poisson algebra. 
Then $(A^{*},\cdot_r,[ \ , \ ]_{r},R_{\prec}^{*}, L_{\succ}^{*},-L_{\ast}^{*})$ is 
an A-noncommutative Poisson algebra holds naturally.
This completes the proof.
\end{proof}

\begin{pro} \label{Ya1} Let $(A,\succ,\prec,\ast)$ be a noncommutative pre-Poisson algebra and
$r\in A\otimes A$. Then 
 \begin{enumerate}
 \item $r$ is a solution of the
equation $ D_2(r)=r_{13}\cdot r_{23}-r_{12}\succ r_{13}-r_{23}\prec r_{12}=0$
in $(A,\succ,\prec)$
 if and only if the following equation holds:
 \begin{equation*}T_{r}(\zeta)\cdot T_{r}(\eta)=T_{r}(R_{\prec}^{*}(T_{r}(\zeta))\eta
 +L_{\succ}^{*}(T_{\tau(r)}(\eta))\zeta),~~\forall~\zeta,\eta\in A^{*}.
\end{equation*}
\item $r$ is a solution of the
D-equation 
$ D(r)=r_{12}\cdot r_{13}-r_{23}\succ r_{12}-r_{13}\prec r_{23}=0$ in $(A,\succ,\prec)$
 if and only if the following equation holds:
 \begin{equation*}T_{r}(\zeta)\prec T_{r}(\eta)=T_{r}(L_{\cdot}^{*}(T_{\tau(r)}(\eta))\zeta
 -R_{\succ}^{*}(T_{r}(\zeta))\eta),~~\forall~\zeta,\eta\in A^{*}.
\end{equation*}
\item $r$ is a solution of the equation 
$D_3(r)=r_{23}\cdot r_{12}-r_{13}\succ r_{23}-r_{12}\prec r_{13}=0$
if and only if the following equation holds:
\begin{equation*}T_{r}(\zeta)\succ T_{r}(\eta)=T_{r}(R_{\cdot}^{*}(T_{r}(\zeta))\eta
 -L_{\prec}^{*}(T_{\tau(r)}(\eta))\zeta),~~\forall~\zeta,\eta\in A^{*}.
 \end{equation*}
 \item  $r$ is a solution of the S-equation
$S(r)=r_{12}\ast r_{23}-r_{12}\ast r_{13}+[r_{13}, r_{23}]=0$
if and only if the following equation holds:
  \begin{equation*}[T_{r}(\zeta), T_{r}(\eta)]=T_{r}(L_{\ast}^{*}(T_{\tau(r)}(\eta))\zeta-
 L_{\ast}^{*}(T_{r}(\zeta))\eta),~~\forall~\zeta,\eta\in A^{*}.
\end{equation*}
\item $r$ is a solution of the equation 
$S_{1}(r)= r_{23}\ast r_{12}-r_{23}\ast r_{13}-[r_{12}, r_{13}]=0$
if and only if the following equation holds:
  \begin{equation*}T_{r}(\zeta)\ast T_{r}(\eta)=T_{r}(R_{\ast}^{*}(T_{r}(\eta))\zeta-\mathrm{ad}^{*}(T_{\tau(r)}(\zeta))\eta
 ),~~\forall~\zeta,\eta\in A^{*}.
\end{equation*}
\end{enumerate}
\end{pro}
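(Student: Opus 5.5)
The plan is to prove all five items by one pairing argument. For each tensor equation $X(r)=0$ with $X\in\{D_2,D,D_3,S,S_1\}$, I pair $X(r)\in A\otimes A\otimes A$ against $\zeta\otimes\eta\otimes\theta\in A^{*}\otimes A^{*}\otimes A^{*}$ and rewrite each term using Eq.~\eqref{Op1}. Writing $r=\sum_i a_i\otimes b_i$, we have $T_r(\zeta)=\sum_i\langle \zeta,a_i\rangle b_i$ and $T_{\tau(r)}(\zeta)=\sum_i\langle\zeta,b_i\rangle a_i$. The resulting scalar identity, valid for all $\theta$ in the free slot, becomes an identity in $A$, which is exactly the operator equation in the statement. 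Since $\langle X(r),\zeta\otimes\eta\otimes\theta\rangle=0$ for all $\zeta,\eta,\theta$ if and only if $X(r)=0$, each item is an equivalence. No algebra axioms are needed, only linear duality. This is why the proposition holds for an arbitrary noncommutative pre-Poisson algebra, and the result is the direct analogue of the computations in \cite{B2} and \cite{B1} behind Theorem~\ref{Opp}.

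For item (a), take $D_2(r)=r_{13}\cdot r_{23}-r_{12}\succ r_{13}-r_{23}\prec r_{12}$. By the notational conventions, $r_{13}\cdot r_{23}=\sum a_i\otimes a_j\otimes b_i\cdot b_j$. Pairing gives $\langle T_r(\zeta)\cdot T_r(\eta),\theta\rangle$. The term $r_{12}\succ r_{13}=\sum a_i\succ a_j\otimes b_i\otimes b_j$ pairs to $\langle T_{\tau(r)}(\eta)\succ\,\cdot\,\rangle$-type expressions. I then move the operator across the pairing:
\[
\langle \zeta, a_i\succ a_j\rangle=\langle L_{\succ}^{*}(a_i)\zeta,a_j\rangle,
\]
and summing over the remaining indices yields $\langle T_r(L_{\succ}^{*}(T_{\tau(r)}(\eta))\zeta),\theta\rangle$. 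Similarly, the $\prec$ term becomes $\langle T_r(R_{\prec}^{*}(T_r(\zeta))\eta),\theta\rangle$. The index placement is chosen so that the free slot (the third factor) is where $T_r$ of the combined functional lands. The signs then give exactly the identity in (a).

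Items (b)–(e) follow the same template. For each, I first identify which tensor slot carries the output product (slot 1 for $D$, slot 2 for $D_3$, and so on), so that the equation is read as a statement about that slot paired with $\theta$. I then relabel $\zeta,\eta,\theta$ accordingly. For each remaining term, I transfer the multiplication operator onto the dual argument via $L^*$, $R^*$, or $\mathrm{ad}^*$, using $\mathrm{ad}=L_\ast-R_\ast$ for the bracket terms. In (d) and (e), $[r_{13},r_{23}]$ splits into two $\ast$-terms, and these are what produce the $L_\ast^*$ versus $\mathrm{ad}^*$ combinations.

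The main obstacle is pure bookkeeping: matching which of $T_r$ or $T_{\tau(r)}$ appears, which happens whenever a leg of $r$ is read in reverse order ($r_{21}$, $r_{32}$, and so on), and getting the signs right. I would handle this by always fixing the leg containing the product as the output, and checking each item against the symmetric case of Theorem~\ref{Opp} as a consistency test. If the stated formulas disagree with the computation by a transposition, I would first re-derive using $T_r^{*}=T_{\tau(r)}$.
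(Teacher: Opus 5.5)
Your method is the paper's: pair each term with $\zeta\otimes\eta\otimes\theta$ and use only Eq.~\eqref{Op1}. The paper writes out (a) and declares (b)--(e) similar. Your computation of (a) is correct, including the $\prec$-term $\langle T_r(R_{\prec}^{*}(T_r(\zeta))\eta),\theta\rangle$. The paper's displayed step for this term has the two factors in the wrong order, which is why it shows an $L_{\prec}^{*}$ that does not match the statement.

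The gap is in your recipe for (b)--(e). The free slot is not slot~1 for $D$ or slot~2 for $D_3$; it is slot~3 in every item. Each left-hand side has the form $T_r(\cdot)\,\mathrm{op}\,T_r(\cdot)$, and the only term that pairs to such an expression is the one multiplying two second legs of $r$. These terms are $r_{13}\prec r_{23}$ in $D(r)$, $r_{13}\succ r_{23}$ in $D_3(r)$, $[r_{13},r_{23}]$ in $S(r)$ and $r_{23}\ast r_{13}$ in $S_1(r)$, and all of them carry their product in slot~3. For example,
\[
\langle D(r),\zeta\otimes\eta\otimes\theta\rangle=\langle T_r(L_{\cdot}^{*}(T_{\tau(r)}(\eta))\zeta)-T_r(R_{\succ}^{*}(T_r(\zeta))\eta)-T_r(\zeta)\prec T_r(\eta),\theta\rangle ,
\]
which is exactly (b).

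If you free slot~1 instead, as you propose, you get $T_{\tau(r)}(\zeta)\cdot T_{\tau(r)}(\eta)=T_{\tau(r)}(L_{\succ}^{*}(T_{\tau(r)}(\eta))\zeta+R_{\prec}^{*}(T_r(\zeta))\eta)$. This is true, but it is a different identity, not a transposed form of (b). So your fallback via $T_r^{*}=T_{\tau(r)}$ would not recover the statement; you would have to go back to the tensor and free slot~3. Freeing slot~2 of $D_3(r)$ is worse: it produces the mixed product $T_{\tau(r)}(\eta)\cdot T_r(\zeta)$.

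The same misreading affects your remark on (d)--(e). The term $[r_{13},r_{23}]$ occurs only in $S(r)$, where it gives the left side $[T_r(\zeta),T_r(\eta)]$ with no dualization at all. The $L_{\ast}^{*}$-terms of (d) come from $r_{12}\ast r_{23}$ and $r_{12}\ast r_{13}$. The $\mathrm{ad}^{*}$-term of (e) comes from $[r_{12},r_{13}]$, whose bracket sits in slot~1.

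In (e), freeing slot~3 gives the identity with $\zeta$ and $\eta$ interchanged, so rename at the end. This uses the natural leg placement $r_{23}\ast r_{13}=\sum_{i,j}a_j\otimes a_i\otimes b_i\ast b_j$. The corresponding entry in the paper's notation list would instead give a version of (e) with $\zeta,\eta$ swapped only on the right-hand side. With slot~3 free throughout, your template gives all five items.
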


\begin{proof} According to Eq.~\eqref{Op1}, we have for all $\zeta,\eta,\theta\in A^{*}$, 
\begin{align*}\langle \zeta\otimes \eta\otimes \theta,r_{13}\cdot r_{23}\rangle
&=\sum_{i,j}\langle \zeta\otimes \eta\otimes \theta,a_i\otimes a_j \otimes b_i\cdot  b_j\rangle
\\&=\sum_{i,j}\langle \theta,b_i\cdot b_j\rangle\langle \zeta,a_i\rangle \langle \eta,a_j\rangle 
=\langle \theta,T_{r}(\zeta)\cdot T_{r}(\eta) \rangle
,\end{align*}
\begin{align*}\langle \zeta\otimes \eta\otimes \theta,r_{23}\prec r_{12}\rangle
&=\sum_{i,j}\langle \zeta\otimes \eta\otimes \theta,a_j \otimes a_i\prec b_j\otimes b_i\rangle
\\&=\sum_{i,j}\langle \zeta,a_j\rangle\langle \theta,b_i\rangle \langle \eta,b_i\prec b_j\rangle 
=\langle  \eta,T_{r}(\zeta) \prec T_{\tau(r)}(\theta)\rangle
\\&=\langle  L_{\prec}^{*}(T_{r}(\zeta))\eta,T_{\tau(r)}(\theta)\ \rangle
=\langle \theta, T_{r}(L_{\prec}^{*}(T_{r}(\zeta))\eta) \rangle,\end{align*}
 \begin{align*}\langle \zeta\otimes \eta\otimes \theta,r_{12}\succ r_{13}
  \rangle
 &=\sum_{i,j}\langle \zeta\otimes \eta\otimes \theta,a_i \succ a_j\otimes b_i\otimes b_j\rangle
 \\& =\sum_{i,j}\langle \eta,b_i\rangle\langle \zeta,a_i\succ a_j\rangle \langle \theta,b_j\rangle 
 =\langle \zeta,T_{\tau(r)}(\eta)\succ T_{\tau(r)}(\theta) \rangle
  \\&=\langle L_{\succ}^{*}(T_{\tau(r)}(\eta))\zeta,T_{\tau(r)}(\theta)\rangle
 =\langle \theta,T_{r}(L_{\succ}^{*}(T_{\tau(r)}(\eta))\zeta\rangle.\end{align*}
 Thus, Item (a) holds.
Similarly, Items (b)-(e) hold.\end{proof}

\begin{pro}\label{Ya2} Let $(A,\succ,\prec,\ast)$ be a noncommutative pre-Poisson algebra and
$r\in A\otimes A$. Assume that $r-\tau(r)$ is invariant. Then 
\begin{enumerate}
 \item $r$ is a solution of the
S-equation $S(r)=0$ if and only if 
$r$ is a solution of $S_1(r)=0$.
\item $r$ is a solution of $D(r)=0$ if and only if 
$r$ is a solution of $D_3(r)=D_2(r)=0$.
\end{enumerate}
\end{pro}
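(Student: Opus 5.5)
Write $s=r-\tau(r)$, so that $T_{\tau(r)}=T_r-T_s$. The plan is to translate each tensor equation into an operator identity via Proposition \ref{Ya1}, and then show that the identities differ only by terms that vanish when $s$ is invariant. The invariance enters through Proposition \ref{Si}, in the forms (\ref{Iv10})--(\ref{Iv15}).

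\textbf{Part (a).} By Proposition \ref{Ya1}(d), $S(r)=0$ is equivalent to
\begin{equation*}
[T_r(\zeta),T_r(\eta)]=T_r\big(L_{\ast}^{*}(T_{\tau(r)}(\eta))\zeta-L_{\ast}^{*}(T_r(\zeta))\eta\big).
\end{equation*}
By Proposition \ref{Ya1}(e), $S_1(r)=0$ is equivalent to
\begin{equation*}
T_r(\zeta)\ast T_r(\eta)=T_r\big(R_{\ast}^{*}(T_r(\eta))\zeta-\mathrm{ad}^{*}(T_{\tau(r)}(\zeta))\eta\big).
\end{equation*}
I would substitute $T_{\tau(r)}=T_r-T_s$ in both identities. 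The $T_s$-terms are then rewritten with the invariance identities: $\mathrm{ad}^{*}(T_s(\zeta))\eta=-R_{\ast}^{*}(T_s(\eta))\zeta$ from (\ref{Iv10}), $L_{\ast}^{*}(T_s(\zeta))\eta=-L_{\ast}^{*}(T_s(\eta))\zeta$ from (\ref{Iv15}), and $T_s(L^{*}_{\ast}(x)\zeta)$-type relations from (\ref{Iv11}), (\ref{Iv12}), (\ref{Iv14}). The goal is to show that the $S_1$-identity, minus the same identity with $\zeta,\eta$ swapped, reproduces the $S$-identity up to terms that cancel.

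It may be cleaner to stay at the tensor level. Remark \ref{Ac6} gives $\sigma_{13}S(r)=S_1(r)$ exactly when $r$ is symmetric. In general I would expand $\sigma_{13}S(r)-S_1(r)$. Every difference arises from replacing some $r_{ij}$ by $r_{ji}$, so it is a sum of expressions of the form $(\text{operator on one leg})(a_i\otimes s)$. Each such term is killed by invariance (\ref{Iv1})--(\ref{Iv3}) applied to $s$, exactly as in the proof of Proposition \ref{Qf}, where $D(\tau(r))-D(r)$ was reduced to invariance expressions. Since $\sigma_{13}$ is invertible, $S(r)=0\iff S_1(r)=0$.

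\textbf{Part (b).} I would argue the same way. Expand $\sigma_{132}D(r)-D_2(r)$ and $\sigma_{23}D(r)+D_3(r)$ by writing $r_{ji}=r_{ij}-s_{ij}$. Note that $D_3$ differs from $-D_1$ only by renaming $r\leftrightarrow$ legs, so I would first check which permutation sends $D$ to $D_3$. The leftover terms should be grouped, as in Proposition \ref{Qf}, into
\begin{equation*}
\sum_i (R_{\cdot}(b_i)\otimes I-I\otimes L_{\succ}(b_i))s\otimes a_i,\qquad a_i\otimes(R_{\succ}(b_i)\otimes I+I\otimes L_{\prec}(b_i))s,
\end{equation*}
together with similar terms. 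These vanish by (\ref{Iv1})--(\ref{Iv2}) and their consequences (\ref{Iv13}). This gives $D(r)=0\Rightarrow D_2(r)=D_3(r)=0$, and the converse from either one by applying the inverse permutation.

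\textbf{Main obstacle.} The main difficulty is the bookkeeping. Some leftover terms carry $s$ in legs $(1,3)$ or $(3,2)$ rather than $(1,2)$, for instance terms like $r_{12}\cdot(r_{31}-r_{13})$. For these one needs the "transposed" invariance identities such as $(I\otimes R_{\succ}(x)+L_{\prec}(x)\otimes I)s=0$, which follow from (\ref{Iv1}) and skew-symmetry of $s$, and the $\mathrm{ad}$/$L_\ast$ variants from (\ref{Iv3}). I would first list all six leg-placements of each invariance identity and then match terms.
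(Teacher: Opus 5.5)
Your tensor-level plan is the paper's own proof. You write $S_1(r)$, $D_2(r)$, $D_3(r)$ as a leg permutation of $S(r)$ or $D(r)$, plus sums of invariance operators applied to $s=r-\tau(r)$.

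In (a), your $\sigma_{13}$ works just as well as the paper's choice; the paper writes $S_1(r)=-\sigma S(r)+\cdots$ with the 3-cycle $\sigma(x\otimes y\otimes z)=z\otimes x\otimes y$. The terms of $\sigma_{13}S(r)-S_1(r)$ combine into $\sum_j b_j\otimes(I\otimes R_{\ast}(a_j)-R_{\ast}(a_j)\otimes I)s$, $\sum_i ((\mathrm{ad}(a_i)\otimes I+I\otimes L_{\ast}(a_i))s)\otimes b_i$ and $\sum_i(\mathrm{ad}(b_i)\otimes I\otimes I+I\otimes I\otimes L_{\ast}(b_i))(\tau\otimes I)(a_i\otimes s)$. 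All three vanish by (\ref{Iv3}) and its flip. Note that it is these groups that vanish, not the individual one-leg terms you describe.

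In (b), your permutations are mislabelled, so the expansions as you set them up would not close. With $\sigma_{132}$ as literally defined in Remark \ref{Ac6} ($x\otimes y\otimes z\mapsto z\otimes x\otimes y$), $\sigma_{132}D(r)$ equals $D_3(r)$ for symmetric $r$, not $D_2(r)$. $D_2$ comes from the other 3-cycle $\pi(x\otimes y\otimes z)=y\otimes z\otimes x$; the paper's proof uses this map under the name $\sigma_{132}$, which is where the confusion originates. Likewise $\sigma_{23}D(r)=-D_1(r)$ for symmetric $r$, so $\sigma_{23}D(r)+D_3(r)$ does not reduce to invariance terms in general.

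With $\pi$ you get
$D_2(r)-\pi D(r)=\sum_i a_i\otimes(I\otimes L_{\cdot}(b_i)-R_{\prec}(b_i)\otimes I)s+\sum_i(I\otimes I\otimes R_{\cdot}(a_i)-L_{\succ}(a_i)\otimes I\otimes I)(\tau\otimes I)(b_i\otimes s)$.
This is killed by (\ref{Iv1}) and the flip of (\ref{Iv2}).

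You also do not need a second reduction for $D_3$. The identity $D_2(r)+D_3(r)=-D(r)$ holds for every $r$, and the paper uses it. It gives $D_3(r)=0$ once $D(r)=D_2(r)=0$, and it makes the converse immediate.

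A minor point: $(L_{\prec}(x)\otimes I+I\otimes R_{\succ}(x))s=0$ needs both (\ref{Iv1}) and (\ref{Iv2}), not (\ref{Iv1}) plus skew-symmetry alone.
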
 

\begin{proof}
Note that $D_3(r)+D_2(r)=-D(r)$ and
\begin{align*}S_{1}(r)=&-\sigma_{123}S(r)+r_{23}\ast (r_{31}-r_{13})
+[r_{21},r_{31}-r_{13}]+r_{23}\ast (r_{12}-r_{21})+[r_{21}-r_{12},r_{13}]
\\=&-\sigma_{123}S(r)-\sum_{i}(I\otimes I\otimes L_{\ast}(b_i)
+\mathrm{ad}(b_i)\otimes I\otimes I)(\tau\otimes I)(a_i\otimes (r-\tau(r)))
\\&+(I\otimes L_{\ast}(a_i)+\mathrm{ad}(a_i)\otimes I) (r-\tau(r))\otimes b_i,\end{align*} 
\begin{align*} D_3(r)=&\sigma_{123}D(r)+r_{23}\cdot (r_{12}-r_{21})-(r_{12}-r_{21})\prec r_{13}
-(r_{13}-r_{31})\succ r_{23}-r_{21}\prec(r_{13}-r_{31})
\\=&\sigma_{123}D(r)+\sum_{i}(I\otimes L_{\cdot}(a_i)-R_{\prec}(a_i)\otimes I)(r-\tau(r))\otimes b_i\\&-(I\otimes I\otimes R_{\succ}(b_i)
+L_{\prec}(b_i)\otimes I\otimes I)
(\tau\otimes I)(a_i\otimes (r-\tau(r))),
\end{align*} 
\begin{align*} D_2(r)=&\sigma_{132}D(r)+r_{13}\cdot (r_{23}-r_{32})-(r_{23}-r_{32})\prec r_{12}+(r_{13}-r_{31})\cdot r_{32}+
r_{12}\succ (r_{31}-r_{13})
\\=&\sigma_{132}D(r)+\sum_{i}a_i\otimes
(I\otimes L_{\cdot}(b_i)-R_{\prec}(b_i)\otimes I)(r-\tau(r))\\&+(I\otimes I\otimes R_{\cdot}(a_i)
-L_{\succ}(a_i)\otimes I\otimes I)(\tau\otimes I)(b_i\otimes (r-\tau(r))).
\end{align*} 
Thus, the conclusions hold.
\end{proof}

The following Theorem is a generalization of Theorem \ref{Opp}.
\begin{thm} \label{Aa}
Let $(A,\succ,\prec,\ast)$ be a coherent noncommutative pre-Poisson algebra and $r\in A\otimes A$. Assume that $r-\tau(r)$ is invariant.
Then the following
conditions are equivalent.
 \begin{enumerate}
\item $r$ is a solution of the NPP-YBE in $(A,\succ,\prec,\ast)$
 such that $(A,\succ,\prec,\ast,\Delta_{\succ,r},\Delta_{\prec,r},\delta_{r})$ is a quasi-triangular noncommutative pre-Poisson bialgebra
  with $\Delta_{\succ,r},\Delta_{\prec,r},\delta_{r}$ given by Eqs.~(\ref{CB1})-(\ref{CB2}) and (\ref{CB3}) respectively.
\item $T_r$ is a relative Rota-Baxter operator of weight $-1$ on  $(A,\succ,\prec,\ast)$
 with respect to the A-noncommutative pre-Poisson algebra 
 $(A^{*},\succ_{r-\tau(r)},\prec_{r-\tau(r)},\ast_{r-\tau(r)},R_{\cdot}^{*},-L_{\prec}^{*},-R_{\succ}^{*},
 L_{\cdot}^{*},R^{*}_{\ast}-L_{\ast}^{*},R^{*}_{\ast})$,
 that is,
 \begin{small}
 \begin{align}\label{AD5}&
T_{r}(\zeta)\succ T_{r}(\eta)=T_{r}(R_{\cdot}^{*}(T_{r}(\zeta))\eta-L_{\prec}^{*}(T_{r}(\eta))\zeta
-\zeta\succ_{r-\tau(r)}\eta),\\&
\label{AD6}T_{r}(\zeta)\prec T_{r}(\eta)=T_{r}(-R_{\succ}^{*}(T_{r}(\zeta))\eta+L_{\cdot}^{*}(T_{r}(\eta))\zeta
-\zeta\prec_{r-\tau(r)}\eta),
\\&\label{AD7}T_{r}(\zeta)\ast T_{r}(\eta)=T_{r}((R^{*}_{\ast}-L_{\ast}^{*})(T_{r}(\zeta))\eta
 +R_{\ast}^*(T_{r}(\eta))\zeta-\zeta\ast_{r-\tau(r)}\eta).
 \end{align}\end{small}
\item $T_r$ is a relative Rota-Baxter operator of weight $-1$ on $(A,\cdot, [ \ , \ ])$
 with respect to the A-noncommutative Poisson algebra $(A^{*},\cdot_{r-\tau(r)},[ \ , \ ]_{r-\tau(r)},R_{\prec}^{*}, L_{\succ}^{*},-L_{\ast}^{*})$, that is,
 \begin{small}
 \begin{align}\label{AD8} &
 T_{r}(\zeta)\cdot T_{r}(\eta)=T_{r}(R_{\prec}^{*}(T_{r}(\zeta))\eta+ L_{\succ}^{*}(T_{r}(\eta))\zeta-\zeta\cdot_{r-\tau(r)}\eta),\\&
 \label{AD9} [T_{r}(\zeta),T_{r}(\eta)]=T_{r}(L_{\ast}^{*}(T_{r}(\eta))\zeta-L_{\ast}^{*}(T_{r}(\zeta))\eta-[\zeta,\eta]_{r-\tau(r)}), 
 \end{align}\end{small}
  \end{enumerate}
for all $\zeta,\eta\in A^{*}$, where $\succ_{r-\tau(r)},\prec_{r-\tau(r)},\ast_{r-\tau(r)}$ and $\cdot_{r-\tau(r)},
[ \ , \ ]_{r-\tau(r)}$ are given by Eqs.~(\ref{Apa1})-(\ref{Apa5}).
\end{thm}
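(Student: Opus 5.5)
Under the standing hypothesis that $r-\tau(r)$ is invariant, I would translate each tensor equation into an operator identity for $T_r$ via Proposition \ref{Ya1}, and then absorb the "non-$T_r$" terms into the products $\succ_{r-\tau(r)},\prec_{r-\tau(r)},\ast_{r-\tau(r)}$ using Proposition \ref{Si}. By Theorem \ref{Qs}, once $r$ solves the NPP-YBE with $r-\tau(r)$ invariant, the bialgebra is quasi-triangular by Definition \ref{Qt1}. So (a) is equivalent to $D(r)=S(r)=0$, and the task is to show $D(r)=S(r)=0$ iff (\ref{AD5})--(\ref{AD7}) hold.

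For (a)$\Rightarrow$(b), I would first use Proposition \ref{Ya2}: $D(r)=0$ gives $D_2(r)=D_3(r)=0$, and $S(r)=0$ gives $S_1(r)=0$. Proposition \ref{Ya1}(b),(c),(e) then yields formulas for $T_r(\zeta)\prec T_r(\eta)$, $T_r(\zeta)\succ T_r(\eta)$ and $T_r(\zeta)\ast T_r(\eta)$ in which one argument carries $T_{\tau(r)}$ instead of $T_r$. I would write $T_{\tau(r)}=T_r-T_{r-\tau(r)}$. For instance, in (c),
\begin{equation*}
-L_{\prec}^{*}(T_{\tau(r)}(\eta))\zeta=-L_{\prec}^{*}(T_r(\eta))\zeta+L_{\prec}^{*}(T_{r-\tau(r)}(\eta))\zeta ,
\end{equation*}
and by (\ref{Apa1}) applied to the skew-symmetric invariant tensor $r-\tau(r)$, the last term equals $-\zeta\succ_{r-\tau(r)}\eta$. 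This gives (\ref{AD5}). Likewise, $L_\cdot^*(T_{r-\tau(r)}(\eta))\zeta=\zeta\prec_{r-\tau(r)}\eta$ by (\ref{Apa2}), which gives (\ref{AD6}). Finally, $\mathrm{ad}^*(T_{r-\tau(r)}(\zeta))\eta=-\zeta\ast_{r-\tau(r)}\eta$ by (\ref{Apa3}), which gives (\ref{AD7}) after writing $-\mathrm{ad}^*=R_\ast^*-L_\ast^*$.

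The converse (b)$\Rightarrow$(a) reverses this computation. From (\ref{AD5}) and (\ref{AD6}) we recover $D_3(r)=D_2(r)=0$; here we need the $D_2$ version, which comes by adding (\ref{AD5}) and (\ref{AD6}) and using Proposition \ref{Ya1}(a). Proposition \ref{Ya2} then gives $D(r)=0$, and (\ref{AD7}) gives $S_1(r)=0$, hence $S(r)=0$. It is immediate that (b) is a relative Rota-Baxter operator of weight $-1$: the A-noncommutative pre-Poisson structure on $A^*$ is supplied by the proposition preceding Proposition \ref{Ya1}, applied to $r-\tau(r)$.

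For (b)$\Leftrightarrow$(c), summing (\ref{AD5}) and (\ref{AD6}) and using $R_\cdot^*-R_\succ^*=R_\prec^*$ and $L_\cdot^*-L_\prec^*=L_\succ^*$ gives (\ref{AD8}). Antisymmetrizing (\ref{AD7}) gives (\ref{AD9}), by (\ref{Apa4})--(\ref{Apa5}). The reverse direction is the main obstacle, since a single sum cannot recover the separate $\succ$ and $\prec$ identities. My plan is to use (c) to obtain $D_2(r)=0$ and $S(r)=0$ through Proposition \ref{Ya1}(a),(d), after rewriting $T_{\tau(r)}=T_r-T_{r-\tau(r)}$ in the same way as above. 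Proposition \ref{Ya2} gives $S_1(r)=0$ from $S(r)=0$. The main point of the proof is then to show that $D_2(r)=0$ alone forces $D(r)=0$ when $r-\tau(r)$ is invariant. The identity for $D_2(r)$ in the proof of Proposition \ref{Ya2} shows that $D_2(r)-\sigma_{132}D(r)$ vanishes under invariance, so $D_2(r)=\sigma_{132}D(r)$ and the two conditions are equivalent. This closes the cycle (c)$\Rightarrow$(a)$\Rightarrow$(b).
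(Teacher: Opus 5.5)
Your proposal is correct and follows the same route as the paper, whose proof is only an appeal to Propositions \ref{Ya1} and \ref{Ya2}. Your explicit observation that, when $r-\tau(r)$ is invariant, $D_2(r)$ equals a cyclic permutation of $D(r)$ (namely its image under $x\otimes y\otimes z\mapsto y\otimes z\otimes x$) is exactly the step (c)$\Rightarrow$(a) needs, because (\ref{AD8}) only yields $D_2(r)=0$; this gives $D(r)=0$ and then $D_3(r)=-D(r)-D_2(r)=0$, a step the paper leaves implicit. A small simplification: in (b)$\Rightarrow$(a) the detour through $D_2$ is unnecessary, since (\ref{AD6}) is literally the condition of Proposition \ref{Ya1}(b), i.e.\ $D(r)=0$.
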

  \begin{proof}
  Based on Proposition \ref{Ya1} and Proposition \ref{Ya2}, we can check the statement holds.
  \end{proof}

\subsection{Quadratic Rota-Baxter noncommutative pre-Poisson algebras and factorizable noncommutative pre-Poisson bialgebras}
In this section, we first introduce the notion of quadratic Rota-Baxter noncommutative pre-Poisson algebras. 
We then establish the relationship between factorizable noncommutative pre-Poisson bialgebras 
and these quadratic Rota-Baxter noncommutative pre-Poisson algebras.

\begin{defi} Assume that $(A,\succ,\prec,\ast,P)$ is a Rota-Baxter noncommutative pre-Poisson algebra of weight $\lambda$
and $(A,\succ,\prec,\ast,\omega)$ a quadratic noncommutative pre-Poisson algebra. Then $(A,\succ,\prec,\ast,P,\omega)$
is called a \textbf{quadratic Rota-Baxter noncommutative pre-Poisson algebra of weight $\lambda$} if the following condition holds:
\begin{equation} \label{Fs}\omega (P(x),y)+\omega(x, P(y))+\lambda\omega(x,y)=0, ~\forall~x, y \in A.\end{equation}
\end{defi}

\begin{defi} Let $(A,\cdot,[ \ , \ ],P)$ be a Rota-Baxter noncommutative Poisson algebra of weight $\lambda$ and
 $(A,\cdot,[ \ , \ ],\omega )$ be a symplectic noncommutative Poisson algebra if the following condition holds: 
 \begin{equation} \label{Fs1}\omega (P(x),y)+\omega(x, P(y))+\lambda\omega(x,y)=0, ~\forall~x, y \in A.\end{equation}
Then $(A,\cdot,[ \ , \ ],P,\omega)$ is called a \textbf{ symplectic Rota-Baxter noncommutative Poisson algebra of 
weight $\lambda$}.
\end{defi}

Observe that for all $x, y \in A$,
\begin{align*} &\lambda\omega(x,y)+\omega (-\lambda(x)- P(x),y)+\omega(x, -\lambda(y)- P(y))\\=&
-\lambda\omega(x,y)-\omega (P(x),y)-\omega(x,  P(y)), \end{align*}
from which we obtain the following conclusions.

\begin{pro} \label{Fb2} Let $(A,\succ,\prec,\ast,P,\omega)$ be a quadratic Rota-Baxter noncommutative pre-Poisson
 algebra and let $P:A\longrightarrow A$ be a linear
map. Then $(A,\succ,\prec,\ast,P,\omega)$ is a quadratic Rota-Baxter 
noncommutative pre-Poisson algebra of weight $\lambda$ if and only if
$(A,\succ,\prec,\ast,-\lambda I-P,\omega)$ is a quadratic Rota-Baxter noncommutative pre-Poisson algebra of weight $\lambda$.
\end{pro}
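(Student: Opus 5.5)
The statement has two independent parts: the Rota-Baxter property of $P$ versus $-\lambda I-P$, and the compatibility condition \eqref{Fs} for the pair $(P,\omega)$. I will treat them separately. The quadratic structure $(A,\succ,\prec,\ast,\omega)$ is the same on both sides, so only these two points need checking.

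\textbf{Rota-Baxter property.} First I show that if $P$ is a Rota-Baxter operator of weight $\lambda$ on $(A,\succ,\prec,\ast)$, then so is $P'=-\lambda I-P$. This is the standard fact, and it is checked operation by operation. For $\circ\in\{\succ,\prec,\ast\}$, expand
\begin{equation*}
P'(x)\circ P'(y)=\lambda^2 x\circ y+\lambda\big(P(x)\circ y+x\circ P(y)\big)+P(x)\circ P(y).
\end{equation*}
Then substitute the Rota-Baxter identity
\begin{equation*}
P(x)\circ P(y)=P\big(P(x)\circ y+x\circ P(y)+\lambda x\circ y\big).
\end{equation*}
Separately, expand
\begin{equation*}
P'\big(P'(x)\circ y+x\circ P'(y)+\lambda x\circ y\big)=(-\lambda I-P)\big(-\lambda x\circ y-P(x)\circ y-x\circ P(y)\big).
\end{equation*}
Both sides reduce to the same expression,
\begin{equation*}
\lambda^2 x\circ y+\lambda P(x)\circ y+\lambda x\circ P(y)+P\big(P(x)\circ y+x\circ P(y)+\lambda x\circ y\big).
\end{equation*}
This proves the claim for each of the three operations.

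\textbf{Compatibility condition.} The displayed computation just before the proposition gives
\begin{equation*}
\lambda\omega(x,y)+\omega(P'(x),y)+\omega(x,P'(y))=-\big(\lambda\omega(x,y)+\omega(P(x),y)+\omega(x,P(y))\big).
\end{equation*}
So \eqref{Fs} holds for $P$ exactly when it holds for $P'$.

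\textbf{Converse.} The map $P\mapsto -\lambda I-P$ is an involution, since $-\lambda I-(-\lambda I-P)=P$. The converse direction therefore follows by applying the forward direction to $P'$.

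There is no real obstacle here. The only care needed is to do the Rota-Baxter bookkeeping for all three operations, and that is identical in each case because only bilinearity is used.
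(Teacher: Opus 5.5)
Your proof is correct and follows the paper's argument, whose only written step is the displayed identity showing that the left-hand side of \eqref{Fs} changes sign under $P\mapsto -\lambda I-P$. The paper leaves implicit the standard fact that $-\lambda I-P$ is again a Rota-Baxter operator of weight $\lambda$ for each of $\succ,\prec,\ast$; you verify this explicitly, so your version is simply more complete.
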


\begin{pro} Let $(A,\cdot,[ \ , \ ],\omega)$ be a quadratic noncommutative Poisson algebra and let $P:A\longrightarrow A$ be a linear
map. Then $(A,\cdot,[ \ , \ ] ,P,\omega)$ is a symplectic Rota-Baxter noncommutative Poisson algebra of weight $\lambda$ if and only if
$(A,\cdot,[ \ , \ ],-\lambda I-P,\omega)$ is a symplectic Rota-Baxter noncommutative Poisson algebra of weight $\lambda$.
\end{pro}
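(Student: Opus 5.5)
The statement splits into two independent facts, and I would check them in turn.

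The first fact concerns the compatibility condition (\ref{Fs1}) for the form $\omega$. It is linear in the operator, so it reduces to the displayed identity preceding Proposition \ref{Fb2}. Write $P'=-\lambda I-P$. Then
\begin{equation*}
\omega(P'(x),y)+\omega(x,P'(y))+\lambda\omega(x,y)=-\bigl(\omega(P(x),y)+\omega(x,P(y))+\lambda\omega(x,y)\bigr)
\end{equation*}
for all $x,y\in A$. Hence $P$ satisfies (\ref{Fs1}) if and only if $P'$ does. The symplectic structure $(A,\cdot,[\ ,\ ],\omega)$ is unchanged in this step.

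The second fact is the standard one: $P$ is a Rota-Baxter operator of weight $\lambda$ on the noncommutative Poisson algebra $(A,\cdot,[\ ,\ ])$ if and only if $P'$ is. The two structures are handled separately, and each computation is bilinear. Let $\circ$ be either $\cdot$ or $[\ ,\ ]$, and use only bilinearity of $\circ$. Expand $P'(x)\circ P'(y)$; the terms $\lambda x\circ P(y)+\lambda P(x)\circ y+\lambda^2 x\circ y$ appear. Then expand
\begin{equation*}
P'\bigl(P'(x)\circ y+x\circ P'(y)+\lambda x\circ y\bigr)=P'\bigl(-\lambda x\circ y-P(x)\circ y-x\circ P(y)\bigr).
\end{equation*}
Subtracting the two expansions leaves exactly
\begin{equation*}
P(x)\circ P(y)-P\bigl(P(x)\circ y+x\circ P(y)+\lambda x\circ y\bigr).
\end{equation*}
So the Rota-Baxter identity holds for $P'$ exactly when it holds for $P$. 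This is the usual argument for associative and Lie Rota-Baxter operators, and no compatibility between $\cdot$ and $[\ ,\ ]$ is needed.

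Combining the two facts gives the equivalence. The reverse implication follows by symmetry, since $-\lambda I-(-\lambda I-P)=P$. I expect no real obstacle. The only point needing care is the sign bookkeeping in the $\lambda^2$ and $\lambda P$ terms when verifying that the difference of the two expansions equals the displayed expression.
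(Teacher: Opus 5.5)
Your proof is correct and follows the paper's route. The paper's only written step is the same sign identity showing that (\ref{Fs1}) holds for $P$ if and only if it holds for $-\lambda I-P$. It leaves implicit the standard fact that $-\lambda I-P$ is a Rota-Baxter operator of weight $\lambda$ for both $\cdot$ and $[\ ,\ ]$ exactly when $P$ is, and your bilinear expansion supplies that omitted step correctly: the $\lambda^2 x\circ y$ and $\lambda P(x)\circ y+\lambda x\circ P(y)$ terms cancel as you claim.
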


Based on Proposition \ref{Ps1}, the following can be obtained easily.

\begin{thm}\label{Fb0} Suppose that $(A,\cdot,[ \ , \ ],P,\omega)$ is a symplectic Rota-Baxter noncommutative Poisson algebra of 
weight $\lambda$. Then $(A,\succ,\prec,\ast,P,\omega)$ is a quadratic Rota-Baxter noncommutative pre-Poisson
algebra of weight $\lambda$, where $\succ,\prec,\ast$ are defined by Eqs.~(\ref{Ib1}) and (\ref{Ib2}) respectively.
 On the other hand, let $(A,\succ,\prec,\ast,P,\omega)$ be a quadratic Rota-Baxter noncommutative pre-Poisson algebra of weight $\lambda$.
Then $(A,\cdot,[ \ , \ ],P,\omega)$ is a symplectic Rota-Baxter noncommutative Poisson algebra
 of weight $\lambda$, where $x\cdot y=x\succ y+x\prec y,~[x,y]=x\ast y-y\ast x$.
\end{thm}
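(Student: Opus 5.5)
The proof splits into two directions. In both, the only substantive work is to compare the two algebraic structures; the Rota-Baxter compatibility condition \eqref{Fs}/\eqref{Fs1} is literally the same equation on both sides and is carried over unchanged.

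\emph{First direction.} Start from a symplectic Rota-Baxter noncommutative Poisson algebra $(A,\cdot,[\ ,\ ],P,\omega)$ of weight $\lambda$.

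1. Apply Proposition \ref{Ps1}. It gives a compatible coherent noncommutative pre-Poisson algebra $(A,\succ,\prec,\ast)$, defined through \eqref{Ib1}--\eqref{Ib2} and nondegeneracy of $\omega$.
2. By construction, $(A,\succ,\prec,\ast,\omega)$ is quadratic, and its sub-adjacent algebra is the original $(A,\cdot,[\ ,\ ])$. To see this, add the two identities in \eqref{Ib2} and use skew-symmetry and the cocycle condition \eqref{Bs2} to recover $\omega(x\succ y+x\prec y,z)=\omega(x\cdot y,z)$; argue similarly for the bracket using \eqref{Bs1}.
3. It remains to show that $P$ is a Rota-Baxter operator of weight $\lambda$ for $\succ,\prec,\ast$, meaning
\[
P(x)\succ P(y)=P\big(P(x)\succ y+x\succ P(y)+\lambda x\succ y\big),
\]
and likewise for $\prec$ and $\ast$.

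For step 3, I pair both sides with an arbitrary $z$ under $\omega$. On the left, $\omega(P(x)\succ P(y),z)=\omega(P(y),z\cdot P(x))$. Then I move each $P$ across $\omega$ using \eqref{Fs}, in the form $\omega(P(u),v)=-\omega(u,P(v))-\lambda\omega(u,v)$. On the right, I expand each of the three terms via \eqref{Ib2} and again transfer $P$ onto the other argument with \eqref{Fs1}. This leaves an expression involving $P(z)\cdot P(x)$, $P(z\cdot P(x)+P(z)\cdot x+\lambda z\cdot x)$ and lower terms. Applying the Rota-Baxter identity for $\cdot$, the two sides match.

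The $\ast$ case runs the same way, using \eqref{Ib1} and the Rota-Baxter identity for $[\ ,\ ]$. Because $\omega$ is nondegenerate, equality after pairing with every $z$ gives the identity itself.

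\emph{Converse.} Start from a quadratic Rota-Baxter noncommutative pre-Poisson algebra $(A,\succ,\prec,\ast,P,\omega)$ of weight $\lambda$.

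1. The converse part of Proposition \ref{Ps1} shows that $(A,\cdot,[\ ,\ ],\omega)$ is symplectic.
2. The Rota-Baxter identities for $\cdot$ and $[\ ,\ ]$ follow directly: add the $\succ$ and $\prec$ identities to get the one for $\cdot$, and antisymmetrize the $\ast$ identity to get the one for $[\ ,\ ]$. Both steps use only linearity of $P$.

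\emph{Main obstacle.} The delicate point is the $\omega$-transfer bookkeeping in step 3 of the first direction. The $\lambda$-terms produced by moving $P$ across $\omega$ via \eqref{Fs} must cancel exactly against the weight term in the Rota-Baxter identity for $\cdot$ (respectively $[\ ,\ ]$). I would first check the $\succ$ case carefully, since the $\prec$ case is symmetric to it and the $\ast$ case follows the same pattern.
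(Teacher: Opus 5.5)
Your proposal is correct and follows the paper's route. The paper invokes Proposition~\ref{Ps1} for the underlying structures and dismisses the rest as easy. Your $\omega$-pairing argument supplies exactly that omitted Rota--Baxter check, and it goes through: the $\lambda$-terms produced by transferring $P$ via \eqref{Fs1} cancel, leaving $\omega(y,\,P(z)\cdot P(x)-P(P(z)\cdot x+z\cdot P(x)+\lambda z\cdot x))$ in the $\succ$ case. The $\prec$ and $\ast$ cases give analogous expressions, and each vanishes by the Rota--Baxter identity for $\cdot$ (resp.\ $[\ ,\ ]$).
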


Let $\omega$ be a non-degenerate bilinear form on a vector space $A$. Then there is an isomorphism
$\omega^{\sharp}:A\longrightarrow A^{*}$ given by
\begin{equation} \omega(x,y)=\langle\omega^{\sharp}(x),y \rangle,~~\forall~x,y\in A.\end{equation}
Define an element $r_{\omega}\in A\otimes A$ such that $T_{r_{\omega}}=(\omega^{\sharp})^{-1}$,
that is, 
\begin{equation}\label{Nd1} \langle T_{r_{\omega}}(\zeta),\eta\rangle=\langle r_{\omega},\zeta \otimes\eta\rangle=
\langle (\omega^{\sharp})^{-1}(\zeta), \eta\rangle, \ \ \forall~\zeta,\eta\in A^{*}. \end{equation}

\begin{lem}\label{Fb1} Let $(A,\succ,\prec,\ast)$ be a noncommutative pre-Poisson algebra
 and $\omega$ be a non-degenerate bilinear form on $A$. Then
$(A, \succ,\prec,\ast,\omega)$ is a quadratic noncommutative pre-Poisson algebra if and only if the corresponding $r_{\omega}\in A\otimes A$ given by
Eq.~(\ref{Nd1}) is skew-symmetric and invariant.\end{lem}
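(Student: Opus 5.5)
The plan is to translate each of the two defining properties (skew-symmetry of $\omega$, and the three invariance identities \eqref{Ib1}–\eqref{Ib2}) into a statement about $T_{r_\omega}=(\omega^\sharp)^{-1}$. Then we match these against the characterization of invariance in Lemma \ref{In1}, i.e.\ Eqs.~\eqref{Iv4}–\eqref{Iv6}.

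\emph{Skew-symmetry.} Since $T_{\tau(r)}=T_r^{*}$, the element $r_\omega$ is skew-symmetric if and only if $T_{r_\omega}^{*}=-T_{r_\omega}$. This is equivalent to $(\omega^\sharp)^{*}=-\omega^\sharp$. Unwinding, $\langle(\omega^\sharp)^{*}(x),y\rangle=\langle\omega^\sharp(y),x\rangle=\omega(y,x)$, so the condition says exactly $\omega(y,x)=-\omega(x,y)$. Hence skew-symmetry of $r_\omega$ and of $\omega$ are equivalent, and from now on we assume both.

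\emph{Invariance.} Write $\zeta=\omega^\sharp(u)$, so that $T_{r_\omega}(\zeta)=u$; this is legitimate because $\omega^\sharp$ is bijective. Consider \eqref{Iv5}, $L_\succ(x)T_{r}(\zeta)=T_r(R_\cdot^{*}(x)\zeta)$. Apply $\omega^\sharp$ and pair with $z$ to get
\[
\omega(x\succ u,z)=\langle R_\cdot^{*}(x)\omega^\sharp(u),z\rangle=\omega(u,z\cdot x).
\]
This is the first identity of \eqref{Ib2}. Similarly, \eqref{Iv4} becomes $\omega(u\prec x,z)=\langle L^{*}_\cdot(x)\omega^\sharp(u),z\rangle=\omega(u,x\cdot z)$, which is the second identity of \eqref{Ib2} with the variables renamed. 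Finally, \eqref{Iv6} becomes $\omega(\mathrm{ad}(x)u,z)=-\omega(u,x\ast z)$.

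The last step is the main obstacle: we must show that, given skew-symmetry, this form is equivalent to \eqref{Ib1}, namely $\omega(x\ast u,z)=-\omega(u,[x,z])$. The plan is to rewrite the \eqref{Iv6}-form using skew-symmetry as
\[
\omega(x\ast u,z)-\omega(u\ast x,z)+\omega(u,x\ast z)=0.
\]
\eqref{Ib1} in turn reads $\omega(x\ast u,z)+\omega(u,x\ast z)-\omega(u,z\ast x)=0$. So the two conditions are equivalent provided $\omega(u\ast x,z)=\omega(u,z\ast x)$, i.e.\ $\omega(u\ast x,z)=-\omega(z\ast x,u)$.

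To close this gap I would use the following manipulation. Apply \eqref{Ib1} with the roles $(u;x,z)$ to get $\omega(u\ast x,z)=-\omega(x,[u,z])$, and with $(z;x,u)$ to get $\omega(z\ast x,u)=-\omega(x,[z,u])=\omega(x,[u,z])$. These give $\omega(u\ast x,z)=-\omega(z\ast x,u)$, so \eqref{Ib1} implies the \eqref{Iv6}-form.

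For the converse, the \eqref{Iv6}-form reads $\omega([x,u],z)=-\omega(u,x\ast z)$, i.e.\ $\omega(u,x\ast z)=\omega(z,[x,u])$. Symmetrizing in $(u,z)$ and subtracting yields the cyclic identity $\omega(u,x\ast z)+\omega(z,x\ast u)=\omega(z,[x,u])+\omega(u,[x,z])$, from which \eqref{Ib1} should be solvable by the analogous substitution. This is exactly the argument used in \cite{Wbl} (Lemma 2.9 / pre-Lie case) and in \cite{Wa} for the dendriform part, and I would cite those for the routine verification. Together with Lemma \ref{In1}, this gives the claimed equivalence.
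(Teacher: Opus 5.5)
\textbf{Gap: the converse of the pre-Lie part.} Your treatment of skew-symmetry is correct. So is the passage from \eqref{Iv4}--\eqref{Iv5} to the two identities in \eqref{Ib2}, which does not even need skew-symmetry. Your proof of \eqref{Ib1}$\Rightarrow$ the \eqref{Iv6}-form is also correct. What is missing is the converse: you never derive \eqref{Ib1} from $\omega([x,u],z)=-\omega(u,x\ast z)$. You leave it to ``the analogous substitution'' and to the external references. Those references do not supply this step.

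Worse, the symmetrized identity you propose to start from is genuinely weaker than what you need. Put $E(x;y,z)=\omega(x\ast y,z)+\omega(y,[x,z])$, so that \eqref{Ib1} says $E\equiv 0$. Skew-symmetry turns your displayed identity $\omega(u,x\ast z)+\omega(z,x\ast u)=\omega(z,[x,u])+\omega(u,[x,z])$ into $E(x;z,u)+E(x;u,z)=0$. This only records that $E$ is antisymmetric in its last two arguments, so on its own it does not give $E\equiv 0$.

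\textbf{The fix.} It takes one line, and it shows the ``main obstacle'' is not real. You already rewrote the \eqref{Iv6}-form as $\omega(u,x\ast z)=\omega(z,[x,u])$. Apply skew-symmetry once more, to the left-hand side. This gives $\omega(x\ast z,u)=-\omega(z,[x,u])$, which is \eqref{Ib1} verbatim with $(y,z)$ renamed $(z,u)$. Every step in this chain is an equivalence, so it proves both directions at once. The auxiliary identity $\omega(u\ast x,z)=\omega(u,z\ast x)$ is never needed.

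\textbf{Comparison with the paper.} With this repair your proof is complete, and it differs from the paper's only in bookkeeping. The paper keeps the acting element $x$ free and uses skew-symmetry at every step. It thereby lands on the dual-action form \eqref{Iv10}, which is equivalent to \eqref{Iv7}--\eqref{Iv9} with $T_{\tau(r_\omega)}=-T_{r_\omega}$, and then invokes Proposition \ref{Si}. You keep the last argument of $\omega$ free and land on the intertwining form \eqref{Iv4}--\eqref{Iv6} of Lemma \ref{In1}. Your version has the small advantage that the dendriform conditions match with no appeal to skew-symmetry at all.
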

\begin{proof}
It is obvious that $\omega$ is skew-symmetric if and only if $r_{\omega}$ is skew-symmetric. 
For all $x,y\in A$, put $\omega^{\sharp}(x)=\zeta,\omega^{\sharp}(y)=\eta,
\omega^{\sharp}(z)=\theta$ with $\zeta,\eta,\theta\in A^{*}$. If $\omega$ is skew-symmetric, we have
\begin{small}
\begin{align*} \omega (x \ast y, z)+\omega(y, [x, z])
=&\langle -\omega^{\sharp}(z), x\ast(\omega^{\sharp})^{-1}(\eta)\rangle
+\langle \omega^{\sharp}(y), [x, (\omega^{\sharp})^{-1}(\theta)]\rangle
\\=&-\langle \theta, x\ast T_{r_{\omega}}(\eta)\rangle
+\langle \eta, [x, T_{r_{\omega}}(\theta)]\rangle
\\=&\langle -R_{\ast}^{*}(T_{r_{\omega}}(\eta))\theta-\mathrm{ad}^{*}(T_{r_{\omega}}(\theta))\eta, x\rangle.\end{align*}
\end{small}
Thus, if $\omega$ is skew-symmetric, $\omega (x \ast y, z)=\omega(y, [x, z])$ if and only if
$R_{\ast}^{*}(T_{r_{\omega}}(\eta))\theta=-\mathrm{ad}^{*}(T_{r_{\omega}}(\theta))\eta$.
By the same token, if $\omega$ is skew-symmetric, $\omega (x \succ y, z)=\omega(y, z\cdot x)$ if and only if
$ L_{\cdot}^{*} (T_{r_{\omega}}(\zeta))\eta=-R_{\succ}^{*}(T_{r_{\omega}}(\eta))\zeta).$
Combining Proposition \ref{Si}, we complete the proof.
 \end{proof}
 
\begin{pro} \label{QF1} Let $(A, \succ,\prec,\ast,\omega)$ be a quadratic noncommutative pre-Poisson algebra and $r\in A\otimes A$. 
Assume that $r-\tau(r)$ is invariant. Define a linear map 
\begin{equation}\label{Nd2}P:A\longrightarrow A,\ \ \ P(x)=-T_{r}\omega^{\sharp}(x),~~\forall~x\in A.\end{equation}
Then $r$ is a solution of the NPP-YBE in $(A, \succ,\prec,\ast)$ if and only if $P$ satisfies 
\begin{small}
\begin{align}&\label{Nd3}P(x)\succ P(y)= P(P(x)\succ y+x\succ P(y)-x\succ T_{r-\tau(r)}\omega^{\sharp}(y)),\\&
\label{Nd4}P(x)\prec P(y)= P(P(x)\prec y+x\prec P(y)-x\prec T_{r-\tau(r)}\omega^{\sharp}(y))
\\&\label{Nd5}P(x)\ast P(y)= P(P(x)\ast y+x\ast P(y)-x\ast T_{r-\tau(r)}\omega^{\sharp}(y)),~~\forall~x,y\in A.
 \end{align}\end{small}
\end{pro}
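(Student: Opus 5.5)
Since $r-\tau(r)$ is invariant, Theorem \ref{Aa} applies: $r$ solves the NPP-YBE if and only if $T_r$ satisfies Eqs.~(\ref{AD5})--(\ref{AD7}). The plan is to translate these three identities, via the isomorphism $\omega^{\sharp}$, into Eqs.~(\ref{Nd3})--(\ref{Nd5}). Put $\zeta=\omega^{\sharp}(x)$ and $\eta=\omega^{\sharp}(y)$, so that $T_r(\zeta)=-P(x)$ and $T_r(\eta)=-P(y)$. The left-hand sides then agree: $T_r(\zeta)\succ T_r(\eta)=P(x)\succ P(y)$, and likewise for $\prec$ and $\ast$. On the right-hand side we have $T_r(\,\cdot\,)=-P((\omega^{\sharp})^{-1}(\,\cdot\,))$, so the task reduces to identifying $(\omega^{\sharp})^{-1}$ of each argument.

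The key step is a set of ``transport'' identities from the quadratic structure. By Lemma \ref{Fb1}, $r_\omega$ is skew-symmetric and invariant with $T_{r_\omega}=(\omega^{\sharp})^{-1}$, and Eqs.~(\ref{Ib1})--(\ref{Ib2}) read dually as equivariance of $\omega^{\sharp}$. Concretely, pairing $\omega^{\sharp}(z)$ with $w$, and using skew-symmetry together with Eq.~(\ref{Ib2}), I expect identities of the form
\[
R_{\cdot}^{*}(a)\,\omega^{\sharp}(y)=\pm\,\omega^{\sharp}(a\succ y),\qquad L_{\prec}^{*}(a)\,\omega^{\sharp}(x)=\pm\,\omega^{\sharp}(x\succ a),
\]
with the analogous identities for $-R_{\succ}^{*}$, $L_{\cdot}^{*}$ (giving $\prec$), and for $R^{*}_{\ast}-L^{*}_{\ast}$ and $R^{*}_{\ast}$ (giving $\ast$, via Eq.~(\ref{Ib1})). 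These are exactly the statements that $\omega^{\sharp}$ intertwines the regular representation with the coregular one $(R_{\cdot}^{*},-L_{\prec}^{*},-R_{\succ}^{*},L_{\cdot}^{*},R^{*}_{\ast}-L^{*}_{\ast},R^{*}_{\ast})$. Applying them with $a=T_r(\zeta)=-P(x)$ or $a=-P(y)$ turns the first two terms of Eq.~(\ref{AD5}) into $-\omega^{\sharp}(P(x)\succ y+x\succ P(y))$ (after the signs are fixed).

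The remaining term is $\zeta\succ_{r-\tau(r)}\eta$. By Eq.~(\ref{Apa1}), which holds because $r-\tau(r)$ is skew-symmetric and invariant, this equals $-L_{\prec}^{*}(T_{r-\tau(r)}(\eta))\zeta$. The same transport identity gives $\pm\omega^{\sharp}(x\succ T_{r-\tau(r)}\omega^{\sharp}(y))$. Applying $T_r=-P\circ(\omega^{\sharp})^{-1}$ then yields Eq.~(\ref{Nd3}). Eqs.~(\ref{Nd4}) and (\ref{Nd5}) follow in the same way from Eqs.~(\ref{AD6}), (\ref{AD7}), (\ref{Apa2}) and (\ref{Apa3}). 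Each step is an equivalence, because $\omega^{\sharp}$ is bijective and $\zeta,\eta$ range over all of $A^*$; this gives the ``if and only if''.

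The main obstacle is sign bookkeeping. The skew-symmetry of $\omega$, the convention $T_{r_\omega}=(\omega^{\sharp})^{-1}$, the minus sign in $P=-T_r\omega^{\sharp}$, and the choice among the two expressions in each of Eqs.~(\ref{Apa1})--(\ref{Apa3}) must all combine so that the weight term appears with coefficient $-1$ and sits in the slot $x\succ T_{r-\tau(r)}\omega^{\sharp}(y)$, not in the slot $T_{r-\tau(r)}\omega^{\sharp}(x)\succ y$. I would fix the transport identities first by explicitly pairing with a test vector. I would then choose in each of Eqs.~(\ref{Apa1})--(\ref{Apa3}) the representation in which $T_{r-\tau(r)}$ acts on $\eta$, which is what makes the term land in that slot.
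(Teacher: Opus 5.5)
Your route is the paper's route. You invoke Theorem~\ref{Aa} and transport \eqref{AD5}--\eqref{AD7} through $\omega^{\sharp}$, using that $\omega^{\sharp}$ intertwines the regular representation with $(R_{\cdot}^{*},-L_{\prec}^{*},-R_{\succ}^{*},L_{\cdot}^{*},R^{*}_{\ast}-L^{*}_{\ast},R^{*}_{\ast})$; this is the invariance of $r_\omega$, Lemma~\ref{Fb1} with \eqref{Iv11}--\eqref{Iv13}. The gap is the step you postpone: the signs do not combine to give the weight term coefficient $-1$. Pairing with a test vector and using \eqref{Ib2} gives $\omega^{\sharp}(a\succ y)=R_{\cdot}^{*}(a)\omega^{\sharp}(y)$ and $\omega^{\sharp}(x\succ a)=-L_{\prec}^{*}(a)\omega^{\sharp}(x)$. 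Put $\zeta=\omega^{\sharp}(x)$ and $\eta=\omega^{\sharp}(y)$, so $T_r(\zeta)=-P(x)$, and note that \eqref{Apa1} gives $-\zeta\succ_{r-\tau(r)}\eta=L_{\prec}^{*}(T_{r-\tau(r)}(\eta))\zeta$. Then the argument of $T_r$ in \eqref{AD5} is
\[
R_{\cdot}^{*}(T_{r}(\zeta))\eta-L_{\prec}^{*}(T_{r}(\eta))\zeta-\zeta\succ_{r-\tau(r)}\eta
=-\omega^{\sharp}\bigl(P(x)\succ y+x\succ P(y)+x\succ T_{r-\tau(r)}\omega^{\sharp}(y)\bigr).
\]
Applying $T_r=-P(\omega^{\sharp})^{-1}$ turns \eqref{AD5} into
\[
P(x)\succ P(y)=P\bigl(P(x)\succ y+x\succ P(y)+x\succ T_{r-\tau(r)}\omega^{\sharp}(y)\bigr),
\]
with a plus sign. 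The same happens for $\prec$ and $\ast$. Switching to the other expression in \eqref{Apa1}--\eqref{Apa3} cannot change this, because the two expressions are the same element. By invariance one also has $T_{r-\tau(r)}\omega^{\sharp}(x)\succ y=x\succ T_{r-\tau(r)}\omega^{\sharp}(y)$, so the choice of slot is not an issue either.

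Carried out explicitly, your argument proves the equivalence with $+\,x\succ T_{r-\tau(r)}\omega^{\sharp}(y)$ (and the analogous $\prec$ and $\ast$ terms), not the identities as displayed. The two versions genuinely differ. Take the double $D=A\oplus A^{*}$ of Theorem~\ref{Ft} with $\omega$ from \eqref{C2}; then $P=-\pi_{A^{*}}$ and $T_{r-\tau(r)}\omega^{\sharp}=I$, and this $r$ solves the NPP-YBE. Yet \eqref{Nd3} as displayed, for $x,y\in A^{*}$, reads $x\succ_{A^{*}}y=3\,x\succ_{A^{*}}y$. The plus sign is also the one consistent with Lemma~\ref{QF2} and the later correspondence with quadratic Rota--Baxter algebras of weight $\lambda$, where $T_{r-\tau(r)}\omega^{\sharp}=\lambda I$. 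The paper's own proof has the same slip. It correctly computes $P(x\succ T_{r-\tau(r)}\omega^{\sharp}(y))=-T_{r}(\zeta\succ_{r-\tau(r)}\eta)$, which together with \eqref{AD5} yields the plus sign, and then asserts the equivalence, citing \eqref{AD6} instead of \eqref{AD5}. You should fix the transport identities explicitly, as you planned, and state the conclusion with the corrected sign.
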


\begin{proof} By Lemma \ref{Fb1}, $r_{\omega}$ is skew-symmetric and invariant. In light of Proposition \ref{Si},
 Eqs.~(\ref{Iv11})-(\ref{Iv13}) and (\ref{Apa2}),
 for all $x,y\in A$, put $\omega^{\sharp}(x)=\zeta,\omega^{\sharp}(y)=\eta$ with 
$\zeta,\eta\in A^{*}$, we get
\begin{small}
\begin{align*}& P(x)\succ P(y)=T_{r}\omega^{\sharp}(x)\succ T_{r}\omega^{\sharp}(y)=T_{r}(\zeta)\succ T_{r}(\eta),\\
&P(P(x)\succ y)=T_{r}\omega^{\sharp}(T_{r}(\zeta)\succ T_{r_{\omega}}(\eta))=
T_{r}\omega^{\sharp} T_{r_{\omega}}(R_{\cdot}^{*}(T_{r}(\zeta))\eta)
=T_{r}(R_{\cdot}^{*}(T_{r}(\zeta))\eta),
\\
&P(x\succ P(y))=T_{r}\omega^{\sharp}(T_{r_{\omega}}(\zeta)\succ T_{r}(\eta))=-
T_{r}\omega^{\sharp}T_{r_{\omega}}(L_{\prec}^{*}(T_{r}(\eta))\zeta)
=-T_{r}(L_{\prec}^{*}(T_{r}(\eta))\zeta),
\\
&P(x\succ T_{r-\tau(r)}\omega^{\sharp}(y))
=-T_{r}\omega^{\sharp}(T_{r_{\omega}}(\zeta)\succ T_{r-\tau(r)}(\eta))
=T_{r}\omega^{\sharp}  T_{r_{\omega}}( L_{\prec}^{*}(T_{r-\tau(r)}(\eta))\zeta)
\\=&T_{r}(( L_{\prec}^{*}(T_{r+\tau(r)}(\eta))\zeta)
=-T_{r}(\zeta\succ_{r-\tau(r)}\eta).\end{align*}\end{small}
 Thus, Eq.~(\ref{Nd3}) holds if and only if Eq.~(\ref{AD6}) holds.
By the same token, we can verify that Eq.~(\ref{Nd4})-(\ref{Nd5}) hold if and only if Eqs.~(\ref{AD5})-(\ref{AD6}) hold. 
Combining Theorem \ref{Aa}, we get the conclusion.

 \end{proof}
 
\begin{lem} \label{QF2} Let $A$ be a vector space and $\omega$ be a non-degenerate skew-symmetric bilinear form. 
Let $r\in A\otimes A$,
$\lambda\in k$ and a linear map $P$ given by Eq.~(\ref{Nd2}). Then $r$ satisfies
\begin{equation} \label{Nd5} r-\tau(r)=\lambda r_{\omega} \end{equation}
if and only if $P$ satisfies Eq.~(\ref{Fs}).
\end{lem}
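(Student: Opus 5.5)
The plan is to translate both conditions into pairings against $\zeta\otimes\eta\in A^{*}\otimes A^{*}$, using that $\omega^{\sharp}:A\to A^{*}$ is a bijection. For $x,y\in A$ put $\zeta=\omega^{\sharp}(x)$ and $\eta=\omega^{\sharp}(y)$. As $x,y$ range over $A$, the pairs $(\zeta,\eta)$ range over all of $A^{*}\times A^{*}$. So Eq.~(\ref{Fs}) for all $x,y$ is equivalent to the vanishing of some tensor paired against all $\zeta\otimes\eta$. I aim to show that this tensor is exactly $r-\tau(r)-\lambda r_{\omega}$.

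First I treat the two terms involving $P$, using the skew-symmetry of $\omega$ and Eq.~(\ref{Op1}).
\begin{align*}
\omega(P(x),y)&=-\omega(y,P(x))=\langle \eta, T_{r}(\zeta)\rangle=\langle r,\zeta\otimes\eta\rangle,\\
\omega(x,P(y))&=-\langle \zeta, T_{r}(\eta)\rangle=-\langle r,\eta\otimes\zeta\rangle=-\langle \tau(r),\zeta\otimes\eta\rangle.
\end{align*}

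Next, for the weight term, I write $y=(\omega^{\sharp})^{-1}(\eta)=T_{r_{\omega}}(\eta)$ by Eq.~(\ref{Nd1}). This gives
\begin{equation*}
\lambda\omega(x,y)=\lambda\langle\zeta,T_{r_{\omega}}(\eta)\rangle=\lambda\langle r_{\omega},\eta\otimes\zeta\rangle=\lambda\langle\tau(r_{\omega}),\zeta\otimes\eta\rangle.
\end{equation*}
As noted at the start of the proof of Lemma~\ref{Fb1}, $r_{\omega}$ is skew-symmetric because $\omega$ is. Hence $\tau(r_{\omega})=-r_{\omega}$, and the weight term equals $-\lambda\langle r_{\omega},\zeta\otimes\eta\rangle$.

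Adding the three terms gives
\begin{equation*}
\omega(P(x),y)+\omega(x,P(y))+\lambda\omega(x,y)=\langle r-\tau(r)-\lambda r_{\omega},\zeta\otimes\eta\rangle .
\end{equation*}
The elements $\zeta\otimes\eta$ span $A^{*}\otimes A^{*}$, which separates points of $A\otimes A$ since everything is finite-dimensional. So Eq.~(\ref{Fs}) holds for all $x,y$ if and only if $r-\tau(r)=\lambda r_{\omega}$.

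The only delicate step is bookkeeping: keeping the transposition conventions of Eq.~(\ref{Op1}) and Eq.~(\ref{Nd1}) consistent, and placing the skew-symmetry sign correctly in $\omega(x,y)=\langle\omega^{\sharp}(x),y\rangle$. I would double-check each sign by testing on a rank-one $r=a\otimes b$.
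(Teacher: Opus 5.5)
Your proof is correct and is essentially the paper's argument. Both set $\zeta=\omega^{\sharp}(x)$, $\eta=\omega^{\sharp}(y)$ and identify the three terms of Eq.~(\ref{Fs}) with $\langle r,\zeta\otimes\eta\rangle$, $-\langle\tau(r),\zeta\otimes\eta\rangle$ and $-\lambda\langle r_{\omega},\zeta\otimes\eta\rangle$; the only cosmetic difference is that for the weight term the paper first writes $\omega(x,y)=-\omega(y,x)$ and then converts, whereas you convert first and then use $\tau(r_{\omega})=-r_{\omega}$.
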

\begin{proof} For all $x,y\in A$, put $\omega^{\sharp}(x)=\zeta,\omega^{\sharp}(y)=\eta$ with $\zeta,\eta\in A^{*}$.
\begin{small}
\begin{align*}& \omega(P(x),y)=-\omega(y,P(x))=\langle\omega^{\sharp}(y),T_{r}\omega^{\sharp}(x)\rangle
=\langle \eta,T_{r}(\zeta)\rangle
=\langle r,\zeta\otimes\eta\rangle,\\
&\omega(x,P(y))=-\langle \omega^{\sharp}(x),T_{r}\omega^{\sharp}(y)\rangle
=-\langle \zeta,T_{r}(\eta)\rangle
=-\langle r,\eta\otimes\zeta\rangle
=-\langle \tau(r),\zeta\otimes\eta\rangle,
\\
&\lambda\omega(x,y)=-\lambda \omega(y,x)
=-\lambda\langle \omega^{\sharp}(y),(\omega^{\sharp})^{-1}\omega^{\sharp}(x)\rangle
=-\lambda\langle r_{\omega},\zeta\otimes\eta\rangle
.\end{align*}\end{small}
Thus, Eq.~(\ref{Nd5}) holds if and only if Eq.~(\ref{Fs}) holds.
 \end{proof}
 
\begin{cor} \label{Fb3} Suppose that $(A, \succ,\prec,\ast,P,\omega)$ is a quadratic Rota-Baxter noncommutative pre-Poisson algebra of weight 0.
 Then there is a triangular
noncommutative pre-Poisson bialgebra $(A, \succ,\prec,\ast, \Delta_{\succ,r}, \Delta_{\prec,r},\delta_{r})$
 with $\Delta_{\succ,r}, \Delta_{\prec,r},\delta_{r}$ given
 by Eqs.~(\ref{CB1})-(\ref{CB2}) and (\ref{CB3}), where $r\in A\otimes A$ given by 
$T_{r}(\zeta)=P(\omega^{\sharp})^{-1}(\zeta)$ for all $\zeta\in A^{*}$.
\end{cor}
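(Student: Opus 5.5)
Set $r\in A\otimes A$ by $T_r=P(\omega^{\sharp})^{-1}$, equivalently $P=T_r\omega^{\sharp}$. Since the construction in Proposition \ref{QF1} uses $P=-T_r\omega^{\sharp}$, I will apply the earlier results to the weight-zero Rota-Baxter operator $-P$; the scalar $-1$ does not affect the weight-zero identities. Concretely, put $P'=-P$, so that $P'=-T_r\omega^{\sharp}$ exactly as in Eq.~(\ref{Nd2}). Then $(A,\succ,\prec,\ast,P',\omega)$ is again a quadratic Rota-Baxter noncommutative pre-Poisson algebra of weight $0$. Indeed, the Rota-Baxter identities of weight $0$ are homogeneous of degree two in the operator, and Eq.~(\ref{Fs}) with $\lambda=0$ is linear in $P$.

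\textbf{Step 1 (symmetry of $r$).} Apply Lemma \ref{QF2} to $P'$ with $\lambda=0$. Eq.~(\ref{Fs}) holds for $P'$, so Eq.~(\ref{Nd5}) gives $r-\tau(r)=0$. Hence $r$ is symmetric. In particular $r-\tau(r)=0$ is trivially invariant, and $T_{r-\tau(r)}=0$.

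\textbf{Step 2 (NPP-YBE).} Since $(A,\succ,\prec,\ast,\omega)$ is quadratic, Proposition \ref{Ps4} (or Proposition \ref{Ps1}) shows that $A$ is coherent. So we may invoke Proposition \ref{QF1} with $P'$ in place of $P$. Because $T_{r-\tau(r)}=0$, the correction terms in Eqs.~(\ref{Nd3})--(\ref{Nd5}) vanish, and these equations become exactly the weight-$0$ Rota-Baxter identities for $P'$:
\[
P'(x)\succ P'(y)=P'(P'(x)\succ y+x\succ P'(y)),
\]
together with the analogous identities for $\prec$ and $\ast$. These hold by hypothesis, so $r$ solves the NPP-YBE.

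\textbf{Step 3 (conclusion).} By Corollary \ref{Bc2}, a symmetric solution of the NPP-YBE in a coherent noncommutative pre-Poisson algebra yields a coboundary noncommutative pre-Poisson bialgebra $(A,\succ,\prec,\ast,\Delta_{\succ,r},\Delta_{\prec,r},\delta_r)$. Since $r$ is symmetric and $r-\tau(r)$ is invariant, Definition \ref{Qt1} says this bialgebra is triangular.

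The main point needing care is the sign convention, $T_r=P(\omega^\sharp)^{-1}$ versus $P=-T_r\omega^\sharp$ in Eq.~(\ref{Nd2}). Passing to $-P$ as above resolves it. One should also check that the invariance hypothesis of Proposition \ref{QF1}, which is formulated only in terms of $r-\tau(r)$, is indeed satisfied by the zero tensor; it is.
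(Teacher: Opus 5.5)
Your proof is correct and follows the same route as the paper. Lemma \ref{QF2} with $\lambda=0$ gives $r=\tau(r)$, Proposition \ref{QF1} turns the weight-zero Rota--Baxter identities into the NPP-YBE, and Corollary \ref{Bc2} together with Definition \ref{Qt1} finishes; your passage to $-P$ to match the sign convention of Eq.~(\ref{Nd2}), and your check of coherence via Proposition \ref{Ps1}, settle two points the paper's two-line proof leaves implicit.
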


 \begin{proof} Since $r-\tau(r)=\lambda r_{\omega}=0$, $r$ is symmetric.
On the basis of Proposition \ref{QF1} and Lemma \ref{QF2}, we get the conclusion. \end{proof}
 
\begin{thm} \label{Fb3} Let $(A, \succ,\prec,\ast, \Delta_{\succ,r}, \Delta_{\prec,r},\delta_{r})$ be a factorizable 
noncommutative pre-Poisson bialgebra
with $r\in A\otimes A$. Then $(A,\succ,\prec,\ast, P,\omega)$
 is a quadratic Rota-Baxter noncommutative pre-Poisson algebra of weight $\lambda$ with $P$ given by Eq.~(\ref{Nd2}), and $\omega$ is given by
\begin{equation} \label{Nd6} \omega(x,y)=\lambda\langle T_{r-\tau(r)}^{-1}(x),y \rangle,~~\forall~x,y\in A.\end{equation}
Conversely, let $(A,\succ,\prec,\ast,P,\omega)$ be a quadratic Rota-Baxter noncommutative pre-Poisson algebra of weight $\lambda~(\lambda\neq 0)$.
Then there exists a factorizable noncommutative pre-Poisson bialgebra $(A, \succ,\prec,\ast,  \Delta_{\succ,r}, \Delta_{\prec,r},\delta_{r})$
 with $\Delta_{\succ,r}, \Delta_{\prec,r},\delta_{r}$ defined by Eqs.~(\ref{CB1})-(\ref{CB2}) and (\ref{CB3}) respectively, where $r\in A\otimes A$ is
given through the operator form $T_r=-P(\omega^{\sharp})^{-1}$.
\end{thm}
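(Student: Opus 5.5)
The plan is to reduce everything to Proposition \ref{QF1}, Lemma \ref{QF2} and Lemma \ref{Fb1}. In both directions the weight $\lambda$ should be nonzero: if $\lambda=0$, then the form in Eq.~(\ref{Nd6}) vanishes and cannot be non-degenerate.

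\emph{Forward direction.} Let $(A,\succ,\prec,\ast,\Delta_{\succ,r},\Delta_{\prec,r},\delta_r)$ be factorizable. Then $r$ solves the NPP-YBE, $r-\tau(r)$ is invariant, and $T_{r-\tau(r)}$ is invertible. Define $\omega$ by Eq.~(\ref{Nd6}), so that $\omega^{\sharp}=\lambda T_{r-\tau(r)}^{-1}$. Since $T_{r-\tau(r)}^{*}=-T_{r-\tau(r)}$, the form $\omega$ is skew-symmetric, and it is non-degenerate. Moreover $T_{r_\omega}=(\omega^{\sharp})^{-1}=\lambda^{-1}T_{r-\tau(r)}$, so $r_\omega=\lambda^{-1}(r-\tau(r))$. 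This element is skew-symmetric and invariant, so Lemma \ref{Fb1} shows that $(A,\succ,\prec,\ast,\omega)$ is a quadratic noncommutative pre-Poisson algebra. The identity $r-\tau(r)=\lambda r_\omega$ is exactly Eq.~(\ref{Nd5}), so Lemma \ref{QF2} gives Eq.~(\ref{Fs}) for $P=-T_r\omega^{\sharp}$.

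It remains to check the Rota-Baxter identities. Proposition \ref{QF1} applies and gives Eqs.~(\ref{Nd3})--(\ref{Nd5}). In those equations the extra term is $T_{r-\tau(r)}\omega^{\sharp}(y)=\lambda y$, so they become $P(x)\succ P(y)=P(P(x)\succ y+x\succ P(y)-\lambda x\succ y)$, and similarly for $\prec$ and $\ast$. I expect this to be the main obstacle, namely matching signs and the weight convention. This form is the Rota-Baxter identity of weight $-\lambda$ under the usual convention. To reconcile it with Eq.~(\ref{Fs}), I would try the sign choice $\omega^\sharp=-\lambda T^{-1}_{r-\tau(r)}$, or equivalently pass to $-\lambda I-P$ via Proposition \ref{Fb2}. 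I would then recheck the signs in Lemma \ref{QF2} and adjust them so that the weight appearing in Eq.~(\ref{Fs}) agrees with the weight in the Rota-Baxter identity. This is a bookkeeping issue and not a conceptual one.

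\emph{Converse.} Let $(A,\succ,\prec,\ast,P,\omega)$ be quadratic Rota-Baxter of weight $\lambda\neq0$, and define $r$ by $T_r=-P(\omega^{\sharp})^{-1}$, so that $P=-T_r\omega^\sharp$ as in Eq.~(\ref{Nd2}). By Lemma \ref{QF2}, Eq.~(\ref{Fs}) gives $r-\tau(r)=\lambda r_\omega$. By Lemma \ref{Fb1}, $r_\omega$ is invariant, so $r-\tau(r)$ is invariant. Also $T_{r-\tau(r)}=\lambda(\omega^\sharp)^{-1}$ is invertible because $\lambda\neq0$. With $T_{r-\tau(r)}\omega^\sharp=\lambda I$, the Rota-Baxter identities for $P$ are exactly Eqs.~(\ref{Nd3})--(\ref{Nd5}). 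Proposition \ref{QF1} then says that $r$ solves the NPP-YBE. Theorem \ref{Qs} now makes the coboundary bialgebra quasi-triangular, and the invertibility of $T_{r-\tau(r)}$ makes it factorizable.
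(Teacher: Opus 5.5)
Your route is the paper's: Lemma \ref{Fb1}, Lemma \ref{QF2}, Proposition \ref{QF1}, then Theorem \ref{Qs}. You have also found a real problem. Read literally, Proposition \ref{QF1} with $T_{r-\tau(r)}\omega^{\sharp}=\lambda\,\mathrm{id}$ gives a Rota--Baxter identity of weight $-\lambda$, while Lemma \ref{QF2} gives Eq.~(\ref{Fs}) with weight $\lambda$. The paper's one-line proof passes over this.

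\textbf{The gap.} Your plan leaves exactly this step open, and none of your repairs can close it.
\begin{itemize}
\item Taking $\omega^{\sharp}=-\lambda T_{r-\tau(r)}^{-1}$ contradicts Eq.~(\ref{Nd6}). It also does not help: $P=-T_r\omega^{\sharp}$ changes along with $\omega$, and under $\omega\mapsto c\,\omega$ both the Rota--Baxter weight and the weight in Eq.~(\ref{Fs}) are multiplied by $c$. The mismatch therefore persists.
\item Passing to $-\lambda I-P$ is not available. $P$ is fixed by Eq.~(\ref{Nd2}), and Proposition \ref{Fb2} presupposes that both conditions already hold with the same weight. The companion of a weight $-\lambda$ operator would in any case be $\lambda I-P$.
\item Lemma \ref{QF2} is correct as stated, so there is nothing to adjust there.
\end{itemize}
Your converse has the same defect. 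You assert that the weight-$\lambda$ identities are exactly Eqs.~(\ref{Nd3})--(\ref{Nd4}) and their $\ast$-analogue, which contradicts your own forward-direction reading.

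\textbf{The fix.} The sign in Proposition \ref{QF1} is wrong, and you should recompute it from Proposition \ref{Ya1}. Put $\zeta=\omega^{\sharp}(x)$ and $\eta=\omega^{\sharp}(y)$. Then $T_r=-P\,T_{r_\omega}$ and $T_{\tau(r)}(\eta)=-P(y)-T_{r-\tau(r)}\omega^{\sharp}(y)$. Eq.~(\ref{Ib2}) and skew-symmetry give $z\succ T_{r_\omega}(\eta)=T_{r_\omega}(R_{\cdot}^{*}(z)\eta)$ and $T_{r_\omega}(\zeta)\succ z=-T_{r_\omega}(L_{\prec}^{*}(z)\zeta)$. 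Hence
\begin{align*}
T_r\bigl(R_{\cdot}^{*}(T_r(\zeta))\eta\bigr)&=P\bigl(P(x)\succ y\bigr),\\
-T_r\bigl(L_{\prec}^{*}(T_{\tau(r)}(\eta))\zeta\bigr)&=-P\bigl(x\succ T_{\tau(r)}(\eta)\bigr)=P\bigl(x\succ P(y)\bigr)+P\bigl(x\succ T_{r-\tau(r)}\omega^{\sharp}(y)\bigr).
\end{align*}
By Proposition \ref{Ya1} (the item on $D_3$), $D_3(r)=0$ is therefore equivalent to
\[
P(x)\succ P(y)=P\bigl(P(x)\succ y+x\succ P(y)+x\succ T_{r-\tau(r)}\omega^{\sharp}(y)\bigr),
\]
with a plus sign. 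The analogous computations for $\prec$ (from $D(r)=0$) and for $\ast$ (from $S_1(r)=0$) also give a plus sign. With $T_{r-\tau(r)}\omega^{\sharp}=\lambda\,\mathrm{id}$ these are exactly the weight-$\lambda$ Rota--Baxter identities. They match Eq.~(\ref{Fs}) with no change to $\omega$ or $P$. Proposition \ref{Ya2} and Theorem \ref{Qs} then finish both directions as you intended.

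\textbf{A check.} Take Example \ref{Ae} with $r=e_1\otimes e_2$ and $\lambda=1$. Then $P(e_1)=0$ and $P(e_2)=-e_2$, and the weight-$1$ identities hold. The printed Eq.~(\ref{Nd3}) fails at $(x,y)=(e_1,e_2)$: its left side is $0$ and its right side is $P(-2e_2)=2e_2$.

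Your remark that $\lambda\neq 0$ is needed in the forward direction is correct.
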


\begin{proof} On the one hand, since $(A, \succ,\prec,\ast, \Delta_{\succ,r}, \Delta_{\prec,r},\delta_{r})$ is a factorizable 
noncommutative pre-Poisson bialgebra, $r-\tau(r)$ is invariant and $T_{r-\tau(r)}$ is a linear isomorphism. 
By Proposition \ref{QF1} and Lemma \ref{QF2}, we get that $(A,\succ,\prec,\ast,P,\omega)$
 is a quadratic Rota-Baxter noncommutative pre-Poisson algebra of weight $\lambda$, where $\omega^{\sharp}=\lambda T_{r-\tau(r)}^{-1}$. 
 Conversely, assume that $(A,\succ,\prec,\ast,P,\omega)$ is a quadratic Rota-Baxter noncommutative 
 pre-Poisson algebra of weight $\lambda~(\lambda\neq 0)$.
 In light of Lemma \ref{Fb1},  Lemma \ref{QF2} and Proposition \ref{QF1}, 
 $r-\tau(r)$ is invariant, $T_{r-\tau(r)}=\lambda (\omega^{\sharp})^{-1}$ is a linear isomorphism
 and $r$ is a solution of the NPP-YBE in $(A, \succ,\prec,\ast)$. Thus,
  $(A, \succ,\prec,\ast, \Delta_{\succ,r}, \Delta_{\prec,r},\delta_{r})$ is a factorizable noncommutative pre-Poisson bialgebra.
\end{proof}

By Theorem \ref{Fb0} and Theorem \ref{Fb3}, we have the following correspondence:
\[(A, \cdot, [ \ , \ ], P, \omega){\overset{\text{Theorem 5.10}}{\longleftrightarrow}} 
(A, \succ,\prec,\ast, P, \omega){\overset{\text{Theorem 5.15}}{\longleftrightarrow}}(A, \succ,\prec,\ast, \Delta_{\succ,r}, \Delta_{\prec,r}, \delta_r).\]

 According to Proposition~\ref{Fb2} and Theorem~\ref{Fb3}, a quadratic Rota-Baxter noncommutative 
 pre-Poisson algebra $(A, \succ, \prec, \ast, P, \omega)$ of a non-zero 
 weight yields a factorizable noncommutative pre-Poisson bialgebra. Consequently, 
 $(A, \succ, \prec, \ast, -\lambda I - P, \omega)$ yields one as well. Furthermore, 
 Proposition~\ref{Qf} and Theorem~\ref{Fb3} imply that if 
 $(A, \succ, \prec, \ast, \Delta_{\succ, r}, \Delta_{\prec, r}, \delta_{r})$ is such a 
 factorizable bialgebra, then $(A, \succ, \prec, \ast, \Delta_{\succ, \tau(r)}, \Delta_{\prec, \tau(r)}, \delta_{\tau(r)})$ 
 also produces a quadratic Rota-Baxter noncommutative pre-Poisson algebra of weight $\lambda$. Indeed, we have

\begin{pro} Assume that $(A, \succ,\prec, \ast, \Delta_{\succ,r}, \Delta_{\prec,r},\delta_{r})$ is a factorizable 
noncommutative pre-Poisson bialgebra which corresponds to a quadratic Rota-Baxter noncommutative 
pre-Poisson algebra of non-zero weight $\lambda$. 
Then the factorizable 
noncommutative pre-Poisson bialgebra $(A,\succ,\prec, \ast, \Delta_{\succ,\tau(r)},\Delta_{\prec,\tau(r)},\delta_{\tau(r)})$ corresponds to the 
quadratic Rota-Baxter noncommutative pre-Poisson algebra $(A,\succ,\prec, \ast,-\lambda I-P,\omega)$ of non-zero weight $\lambda$. 
	
\end{pro}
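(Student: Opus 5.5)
The plan is to trace the correspondence of Theorem \ref{Fb3} explicitly for both $r$ and $\tau(r)$ and compare the data. By Theorem \ref{Fb3}, the factorizable bialgebra built from $r$ corresponds to $(A,\succ,\prec,\ast,P,\omega)$ with
\[
P=-T_r\omega^{\sharp},\qquad \omega^{\sharp}=\lambda T_{r-\tau(r)}^{-1},
\]
as in Eqs.~(\ref{Nd2}) and (\ref{Nd6}). By Proposition \ref{Qf}, the triple built from $\tau(r)$ is again a factorizable noncommutative pre-Poisson bialgebra. Applying Theorem \ref{Fb3} to it yields the quadratic Rota-Baxter data $(P',\omega')$ with
\[
P'=-T_{\tau(r)}\omega'^{\sharp},\qquad \omega'^{\sharp}=\lambda' T_{\tau(r)-r}^{-1}.
\]
The weight must be chosen so that the form comes out as the same $\omega$.

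The first step is to pin down the form. Since $T_{\tau(r)-r}=-T_{r-\tau(r)}$, taking $\lambda'=-\lambda$ gives $\omega'^{\sharp}=\lambda T_{r-\tau(r)}^{-1}=\omega^{\sharp}$, hence $\omega'=\omega$. I expect the main obstacle here: the normalization of Eq.~(\ref{Nd6}) for the swapped tensor is not symmetric under $r\mapsto\tau(r)$. I would therefore check it directly against Lemma \ref{QF2}. That lemma requires $\tau(r)-r=\lambda'' r_{\omega}$ for the weight $\lambda''$ that makes Eq.~(\ref{Fs}) hold for $P'$. Since $r-\tau(r)=\lambda r_\omega$, this forces $\lambda''=-\lambda$ if $P'$ is built literally from $T_{\tau(r)}$. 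One therefore has to reconcile the sign convention, most likely by reading the correspondence so that the weight stays $\lambda$ and the operator absorbs the sign. Concretely, I would verify $\omega(P'x,y)+\omega(x,P'y)+\lambda\omega(x,y)=0$ for the candidate $P'=-\lambda I-P$, which holds by Proposition \ref{Fb2}.

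The second step is to compute the operator using $T_{\tau(r)}=T_r-T_{r-\tau(r)}$:
\[
-T_{\tau(r)}\omega^{\sharp}=-T_r\omega^{\sharp}+T_{r-\tau(r)}\,\lambda T_{r-\tau(r)}^{-1}=P+\lambda I .
\]
Comparing with the candidate, $-\lambda I-P=-(P+\lambda I)=T_{\tau(r)}\omega^{\sharp}$. So the operator attached to $\tau(r)$ is $-\lambda I-P$ exactly when the operator form is $T_{\tau(r)}=-P'(\omega'^{\sharp})^{-1}$ with $\omega'$ normalized via $T_{\tau(r)-r}$, i.e.\ when $\omega'^{\sharp}=-\omega^{\sharp}$. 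In that case $P'=T_{\tau(r)}\omega^{\sharp}=-\lambda I-P$.

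Finally, I would fix the convention consistently so that the pair is $(-\lambda I-P,\omega)$ of weight $\lambda$. Proposition \ref{Fb2} guarantees that $(A,\succ,\prec,\ast,-\lambda I-P,\omega)$ is a quadratic Rota-Baxter noncommutative pre-Poisson algebra of weight $\lambda$. The converse half of Theorem \ref{Fb3}, applied to it, produces the tensor $\tilde r$ with
\[
T_{\tilde r}=-(-\lambda I-P)(\omega^{\sharp})^{-1}=\lambda(\omega^{\sharp})^{-1}+P(\omega^{\sharp})^{-1}=T_{r-\tau(r)}-T_r=-T_{\tau(r)}.
\]
This recovers $\tau(r)$ up to the global sign coming from the orientation of $\omega$. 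The remaining verification is therefore purely bookkeeping of that sign, and the identity $T_{\tau(r)}=T_r-T_{r-\tau(r)}$ does all the real work.
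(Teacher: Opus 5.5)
Your core computation matches the paper's. Proposition~\ref{Qf} makes the $\tau(r)$-bialgebra factorizable. Theorem~\ref{Fb3} at weight $\lambda$ then gives ${\omega'}^{\sharp}=\lambda T_{\tau(r)-r}^{-1}=-\omega^{\sharp}$, and via $T_{\tau(r)}=T_r-T_{r-\tau(r)}$ it gives $P'=-T_{\tau(r)}{\omega'}^{\sharp}=T_{\tau(r)}\omega^{\sharp}=-\lambda I-P$.

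The gap is your last paragraph, where you dismiss the leftover sign on the form as bookkeeping. It cannot be removed:
\begin{itemize}
\item Your own converse computation shows that feeding $(A,\succ,\prec,\ast,-\lambda I-P,\omega)$ into Theorem~\ref{Fb3} returns $\tilde r=-\tau(r)$. Since $\Delta_{\succ,r},\Delta_{\prec,r},\delta_r$ are linear in $r$, the bialgebra of $-\tau(r)$ carries the coproducts $-\Delta_{\succ,\tau(r)},-\Delta_{\prec,\tau(r)},-\delta_{\tau(r)}$. In general this is a different bialgebra.
\item Changing the normalization does not help. Running Theorem~\ref{Fb3} at weight $-\lambda$ keeps $\omega$ but produces $P+\lambda I$, as your own second step shows, not $-\lambda I-P$.
\item Checking Eq.~\eqref{Fs} for $(-\lambda I-P,\omega)$ via Proposition~\ref{Fb2} only shows that this is \emph{some} quadratic Rota-Baxter algebra of weight $\lambda$. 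It does not show that it is the one attached to $\tau(r)$.
\item In your first step you mix the two normalizations. You take $\lambda'=-\lambda$ to obtain $\omega'=\omega$, but then test the weight-$\lambda$ operator $-\lambda I-P$.
\end{itemize}

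What your argument actually establishes is that the $\tau(r)$-bialgebra corresponds to $(A,\succ,\prec,\ast,-\lambda I-P,-\omega)$ of weight $\lambda$. This is exactly what the paper's own proof concludes in its final sentence. It is a genuine quadratic Rota-Baxter algebra of weight $\lambda$, because Eqs.~\eqref{Ib1}-\eqref{Ib2} and \eqref{Fs} are linear in $\omega$. Equivalently, at weight $-\lambda$ it corresponds to $(A,\succ,\prec,\ast,P+\lambda I,\omega)$.

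A non-degenerate form satisfies $\omega\neq-\omega$ in characteristic $\neq 2$. So the statement as printed, with $\omega$ in place of $-\omega$, contains a sign slip. You should state and prove the corrected version rather than absorb the sign into an unspecified change of convention.
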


\begin{proof} By
 Proposition \ref{Qf} and Theorem \ref{Fb1}, there is  a
quadratic Rota-Baxter noncommutative pre-Poisson algebra $(A,\succ,\prec, \ast,P^\prime,\omega^\prime)$ 
of non-zero weight $\lambda$
 corresponds to the factorizable noncommutative pre-Poisson bialgebra
  $(A, \succ,\prec, \ast, \Delta_{\succ,\tau(r)},\Delta_{\prec,\tau(r)},\delta_{\tau(r)})$.
Using Theorem \ref{Fb3},
\begin{equation*} 
\omega^{\prime}(x,y)=\lambda\langle T_{\tau(r)-r}^{-1}(x),y \rangle=-\omega(x,y).\end{equation*}
Using Eqs.~(\ref{Nd2}) and (\ref{Nd6}), 
\begin{small}
\begin{align*} 
P^{\prime}(x)=- T_{\tau(r)}{\omega^{\prime}}^{\sharp}(x)=T_{\tau(r)}\omega^{\sharp}(x)
&=\lambda T_{\tau(r)}T_{r-\tau(r)}^{-1}(x)=\lambda (T_{r}-T_{r-\tau(r)})T_{r-\tau(r)}^{-1}(x)
\\&=\lambda T_{r}T_{r-\tau(r)}^{-1}(x)-\lambda (x)
=T_{r}\omega^{\sharp}(x)-\lambda (x)=-P(x)-\lambda (x)
.\end{align*}\end{small}
Thus, the factorizable 
noncommutative pre-Poisson bialgebra $(A, \succ,\prec,\ast, \Delta_{\succ,\tau(r)},\Delta_{\prec,\tau(r)},\delta_{\tau(r)})$ yields a
quadratic Rota-Baxter noncommutative pre-Poisson algebra $(A,\succ,\prec, \ast,-\lambda I-P,-\omega)$ of non-zero weight $\lambda$.
Analogously, the converse part holds.
\end{proof}


\begin{center}{\textbf{Acknowledgments}}
\end{center}
This work was supported by the Natural Science
Foundation of Zhejiang Province of China (LY19A010001), the Science
and Technology Planning Project of Zhejiang Province
(2022C01118).

\begin{center} {\textbf{Statements and Declarations}}
\end{center}
 All datasets underlying the conclusions of the paper are available
to readers. No conflict of interest exits in the submission of this
manuscript.


\end {document}